\definecolor{darkblue}{rgb}{0,0,0.75}
\definecolor{darkgreen}{rgb}{0,0.75,0}
\definecolor{darkred}{rgb}{0.75,0,0}
\ttfamily\linespread{4},
\tikzset{
  font={\fontsize{7pt}{8}\selectfont}}
\crefname{paragraph}{Section}{Sections}
\Crefname{paragraph}{Section}{Sections}
\newtheorem{theorem}{Theorem}[section]
\newtheorem{corollary}[theorem]{Corollary}
\newtheorem{lemma}[theorem]{Lemma}
\newtheorem{proposition}[theorem]{Proposition}
\newtheorem{definition}[theorem]{Definition}
\newtheorem{remark}[theorem]{Remark}
\newtheorem{algo}[theorem]{Method description}
\DeclareMathAlphabet{\mathgtt}{LGR}{cmtt}{m}{n}
\NewDocumentCommand{\enum}{ O{;} m o }
 {
  \my_enum:nnn { #1 } { #2 } { #3 }
 }
 \NewDocumentCommand{\cfadd}{ m }
 {
   \seq_if_in:NnF \g_cflist_loaded { #1 } {
     \seq_if_in:NnF \g_cflist_pending { #1 } {
       \seq_gput_right:Nn \g_cflist_pending { #1 }
     }
   }
 }
 \NewDocumentCommand{\cfconsiderloaded}{ m }{
   \seq_gput_right:Nn \g_cflist_loaded {#1}
 }
 \NewDocumentCommand{\cfremove}{ m }
 {
   \seq_gremove_all:Nn \g_cflist_pending { #1 }
 }
 \NewDocumentCommand{\cfload}{ o }
 {
   \seq_if_empty:NTF \g_cflist_pending {\unskip} {
     (cf.\ \Cref{\seq_use:Nn \g_cflist_pending {,}})\IfValueTF{#1}{#1~}{\unskip}
     \seq_gconcat:NNN \g_cflist_loaded \g_cflist_loaded \g_cflist_pending
     \seq_gclear:N \g_cflist_pending
   }
 }
 \NewDocumentCommand{\cfclear} {} {
   \seq_gclear:N \g_cflist_loaded
   \seq_gclear:N \g_cflist_pending
 }
 \NewDocumentCommand{\cfout}{ o }
 {
   \seq_if_empty:NTF \g_cflist_pending {\unskip} {
     (cf.\ \Cref{\seq_use:Nn \g_cflist_pending {,}})\IfValueTF{#1}{#1~}{\unskip}
     \seq_gclear:N \g_cflist_pending
   }
 }
 \NewDocumentCommand{\ifnocf} { m } {
   \seq_if_empty:NT \g_cflist_pending { #1 }
 }
\NewDocumentEnvironment {athm} {m m o} {%
\IfNoValueTF{#3}{%
\begin{#1}\label{#2}\global\def\loc{#2}%
}{%
\begin{#1}[#3]\label{#2}\global\def\loc{#2}%
}
}{%
\end{#1}%
}
\NewDocumentEnvironment {adef} {m} {%
\begin{definition}\label{#1}\global\def\loc{#1}%
}{%
\end{definition}%
}
\NewDocumentEnvironment{aproof} {} {%
\begin{proof}[Proof~of~\cref{\loc}]%
\bool_gset_false:N \g_finishproof_bool
}{%
\bool_if:NTF \g_finishproof_bool {}
{\finishproofthus}
\end{proof}%
}
\NewDocumentCommand{\finishproofthus} {} {
  \bool_gset_true:N \g_finishproof_bool 
  The~proof~of~\cref{\loc}~is~thus~complete.}
\NewDocumentCommand{\finishproofthis} {} {
  \bool_gset_true:N \g_finishproof_bool 
  This~completes~the~proof~of~\cref{\loc}.}
\NewDocumentEnvironment{flexmath}{ m o }{
  \str_if_eq:noTF {a} {#1} {
    \equation
    \IfValueT{#2}{\label{eq:\loc.#2}}
    \aligned
  } {
    \catcode`&=9
    \renewcommand{\\}{}
    \str_if_eq:noTF {d} {#1} {
      \equation
      \IfValueT{#2}{\label{eq:\loc.#2}}
    } {
      \math
    }
  }
}{
  \str_if_eq:noTF {i} {#1} {
    \endmath
    \catcode`&=4
  } {
    \str_if_eq:noTF {d} {#1} {
      \endequation
    } {
      \endaligned
      \endequation
    }
  }
}
\NewDocumentCommand{\setnote}{}{
  \bool_gset_true:N \g_noteobserve
}
\NewDocumentCommand{\setobserve}{}{
  \bool_gset_false:N \g_noteobserve
}
\NewDocumentCommand{\nobs}{ o }{
  \IfValueT{#1}{
    \str_if_eq:noTF {note} {#1} {
      \bool_gset_true:N \g_noteobserve
    } {
      \str_if_eq:noTF {Note} {#1} {
        \bool_gset_true:N \g_noteobserve
      } {
        \bool_gset_false:N \g_noteobserve
      }
    }
  }
  \bool_if:nTF { \g_noteobserve } {
    \bool_gset_false:N \g_noteobserve
    note
  } {
    \bool_gset_true:N \g_noteobserve
    observe
  }
  \IfValueF{#1}{~}
}
\NewDocumentCommand{\Nobs}{ o }{
  \IfValueT{#1}{
    \str_if_eq:noTF {note} {#1} {
      \bool_gset_true:N \g_noteobserve
    } {
      \str_if_eq:noTF {Note} {#1} {
        \bool_gset_true:N \g_noteobserve
      } {
        \bool_gset_false:N \g_noteobserve
      }
    }
  }
  \bool_if:nTF { \g_noteobserve } {
    \bool_gset_false:N \g_noteobserve
    Note\ that\unskip
  } {
    \bool_gset_true:N \g_noteobserve
    Observe\  that\unskip
  }
  \IfValueF{#1}{
    \unskip
  }
  \IfValueT{#1}{
    \IfEndWith{#1}{,}{}
  }
}
\NewDocumentCommand{\hence}{ o }{
  \IfValueT{#1}{
    \str_if_eq:noTF {hence} {#1} {
      \bool_gset_true:N \g_hencetherefore
    } {
      \str_if_eq:noTF {Hence} {#1} {
        \bool_gset_true:N \g_hencetherefore
      } {
        \bool_gset_false:N \g_hencetherefore
      }
    }
  }
  \bool_if:nTF { \g_hencetherefore } {
    \bool_gset_false:N \g_hencetherefore
    hence
  } {
    \bool_gset_true:N \g_hencetherefore
    therefore
  }
  \IfValueF{#1}{~}
}
\NewDocumentCommand{\Hence}{ o }{
  \IfValueT{#1}{
    \str_if_eq:noTF {hence} {#1} {
      \bool_gset_true:N \g_hencetherefore
    } {
      \str_if_eq:noTF {Hence} {#1} {
        \bool_gset_true:N \g_hencetherefore
      } {
        \bool_gset_false:N \g_hencetherefore
      }
    }
  }
  \bool_if:nTF { \g_hencetherefore } {
    \bool_gset_false:N \g_hencetherefore
    Hence,~we~obtain
  } {
    \bool_gset_true:N \g_hencetherefore
    Therefore,~we~obtain
  }
  \IfValueF{#1}{~}
}
\NewDocumentCommand{\Moreover}{ o o }{
  \IfValueT{#1}{
    \str_case:nn {#1} {
	  {Next} {\int_gset:Nn {\g_furthermore} {0}}      
      {Furthermore} {\int_gset:Nn {\g_furthermore} {1}}
      {Moreover} {\int_gset:Nn {\g_furthermore} {2}}
      {In~addition} {\int_gset:Nn {\g_furthermore} {3}}
      {note} {\bool_gset_true:N \g_noteobserve}
      {observe} {\bool_gset_false:N \g_noteobserve}
    }
    \IfValueT{#2}{
      \str_case:nn {#2} {
	    {Next} {\int_gset:Nn {\g_furthermore} {0}}        
        {Furthermore} {\int_gset:Nn {\g_furthermore} {1}}
        {Moreover} {\int_gset:Nn {\g_furthermore} {2}}
        {In~addition} {\int_gset:Nn {\g_furthermore} {3}}
        {note} {\bool_gset_true:N \g_noteobserve}
        {observe} {\bool_gset_false:N \g_noteobserve}
      }
    }
  }
  \int_case:nn { \int_mod:nn {\g_furthermore} {4} } {
	{ 0 } { Next~\nobs ~that}    
    { 1 } { Furthermore,~\nobs ~that}
    { 2 } { Moreover,~\nobs ~that}
    { 3 } { In~addition,~\nobs ~that}
  }
  \int_incr:N \g_furthermore
 
}
\NewDocumentCommand{\Moreovera}{ o o }{
  \IfValueT{#1}{
    \str_case:nn {#1} {
	  {Next} {\int_gset:Nn {\g_furthermorea} {0}}      
      {Furthermore} {\int_gset:Nn {\g_furthermorea} {1}}
      {Moreover} {\int_gset:Nn {\g_furthermorea} {2}}
      {In~addition} {\int_gset:Nn {\g_furthermorea} {3}}
      {note} {\bool_gset_true:N \g_noteobserve}
      {observe} {\bool_gset_false:N \g_noteobserve}
    }
    \IfValueT{#2}{
      \str_case:nn {#2} {
	    {Next} {\int_gset:Nn {\g_furthermorea} {0}}        
        {Furthermore} {\int_gset:Nn {\g_furthermorea} {1}}
        {Moreover} {\int_gset:Nn {\g_furthermorea} {2}}
        {In~addition} {\int_gset:Nn {\g_furthermorea} {3}}
        {note} {\bool_gset_true:N \g_noteobserve}
        {observe} {\bool_gset_false:N \g_noteobserve}
      }
    }
  }
   \int_case:nn { \int_mod:nn {\g_furthermorea} {4} } {
	{ 0 } { Next~\nobs}    
    { 1 } { Furthermore,~\nobs}
    { 2 } { Moreover,~\nobs}
    { 3 } { In~addition,~\nobs}
  }
  \int_incr:N \g_furthermore
 
}
\NewDocumentCommand{\ex}{ o }{
  \IfValueT{#1}{
    \str_if_eq:noTF {ex} {#1} {
      \bool_gset_true:N \g_exampleinstance
    } {
      \str_if_eq:noTF {Ex} {#1} {
        \bool_gset_true:N \g_exampleinstance
      } {
        \bool_gset_false:N \g_exampleinstance
      }
    }
  }
  \bool_if:nTF { \g_exampleinstance} {
    \bool_gset_false:N \g_exampleinstance
    for~example,
  } {
    \bool_gset_true:N \g_exampleinstance
    for~instance,
  }
  \IfValueF{#1}{~}
}
\NewDocumentCommand{\Ex}{ o }{
  \IfValueT{#1}{
    \str_if_eq:noTF {ex} {#1} {
      \bool_gset_true:N \g_exampleinstance
    } {
      \str_if_eq:noTF {Ex} {#1} {
        \bool_gset_true:N \g_exampleinstance
      } {
        \bool_gset_false:N \g_exampleinstance
      }
    }
  }
  \bool_if:nTF { \g_exampleinstance } {
    \bool_gset_false:N \g_exampleinstance
    For~example,
  } {
    \bool_gset_true:N \g_exampleinstance
   For~instance, 
  }
  \IfValueF{#1}{~}
}
\newcommand{\I}{\rm{I}}
\newcommand{\algoG}{\mathcal{G}}
\newcommand{\algoM}{\mathcal{M}}
\newcommand{\N}{\mathbbm N}
\newcommand{\R}{\mathbbm R}
\newcommand{\PP}{\mathbb P}
\newcommand{\Borel}{\mathcal{B}}
\newcommand{\defaultNetDim}{\mathfrak d}
\newcommand{\netDim}{\defaultNetDim}
\newcommand{\deflink}[2]{#2}
\newcommand{\Aff}{\cfadd{def:affine}\deflink{def:affine}{\mathcal{A}}}
\newcommand{\probspacetilde}{(\tilde{\Omega},\tilde{\F}, \tilde{\P})}
\newcommand{\tildeOmega}{\tilde{\Omega}}
\NewDocumentCommand{\mEE}{ o m }{
  \IfValueTF{#1}{
    \str_case:on {#1} {
      {0}{\mathbb E\br{#2}}
      {1}{\mathbb E\br[\big]{#2}}
      {2}{\mathbb E\mkern-1.1mu\br[\Big]{#2}}
      {3}{\mathbb E\mkern-1.3mu\br[\bigg]{#2}}
      {4}{\mathbb E\mkern-1.5mu\br[\Bigg]{#2}}
    }
  } {
    \mathbb E\br{#2}
  }
}
\DeclarePairedDelimiter{\pr}{(}{)}
\DeclarePairedDelimiter{\br}{[}{]}
\DeclarePairedDelimiter{\abs}{\lvert}{\rvert}
\DeclarePairedDelimiter{\norm}{\lVert}{\rVert}
\newcommand{\mylabel}[2]{#2\def\@currentlabel{#2}\label{#1}}
\newcommand{\basicset}{V}
\newcommand{\E}{\mathbb{E}}
\newcommand{\thmL}{\mathscr{L}}
\newcommand{\thmW}{W}
\newcommand{\lossexp}{\mathscr{l}}
\newcommand{\trandom}{\mathbb{T}}
\newcommand{\xrandom}{\mathbb{X}}
\newcommand{\yrandom}{\mathbb{Y}}
\newcommand{\solutionset}{S}
\newcommand{\thmp}{p}
\newcommand{\thmb}{b}
\newcommand{\act}{\mathfrak{a}}
\newcommand{\F}{\mathcal{F}}
\renewcommand{\P}{\mathbb{P}}
\newcommand{\BM}{B}
\newcommand{\s}{s}
\newcommand{\activation}{a}
\newcommand{\qandq}{\qquad\text{and}\qquad}
\newcommand{\andq}{\text{and}\qquad}
\newcommand{\multdim}{\cfadd{def:multidim_version}\deflink{def:multidim_version}{\mathfrak M}}
\DeclareMathOperator{\id}{id}
\renewenvironment{pmatrix}{\mleft(\begin{matrix}}{\end{matrix}\mright)}
\newcommand{\beq}{\begin{equation}}
    \newcommand{\eeq}{\end{equation}}
    \NewDocumentEnvironment{cproof}{m}
    {\begin{proof}[Proof of \Cref{#1}]}%
    {\noindent The proof of \Cref{#1} is thus complete.
    \end{proof}}
    \NewDocumentEnvironment{cproof2}{m}
    {\begin{proof}[Proof of \Cref{#1}]}%
    {\noindent This completes the proof of \Cref{#1}.
    \end{proof}}
    \NewDocumentCommand{\prove}{ o }{
      \IfValueTF{#1}{
        \seq_clear:N \l_mytmps
        \seq_map_inline:Nn \g_prove_mru {
          \str_if_eq:nnTF {##1} {ensure} {
            \str_set:Nn \l_temps {n}
          } {
            \str_set:Nx \l_temps {\str_head_ignore_spaces:n {##1}}
          }
          \str_if_in:nVTF {#1} \l_temps {
            \seq_put_right:Nn \l_mytmps {##1}
          } { }
        }
        \seq_get_right:NN \l_mytmps \g_wordtmp
      } {
        \seq_get_right:NN \g_prove_mru \g_wordtmp
      }
      \tl_use:N \g_wordtmp
      \seq_gput_left:NV \g_prove_mru \g_wordtmp
      \seq_gremove_duplicates:N \g_prove_mru
    }
    \NewDocumentCommand{\proves}{ o }{
      \IfValueTF{#1}{
        \seq_clear:N \l_mytmps
        \seq_map_inline:Nn \g_prove_mru {
          \str_if_eq:nnTF {##1} {ensure} {
            \str_set:Nn \l_temps {n}
          } {
            \str_set:Nx \l_temps {\str_head_ignore_spaces:n {##1}}
          }
          \str_if_in:nVTF {#1} \l_temps {
            \seq_put_right:Nn \l_mytmps {##1}
          } { }
        }
        \seq_get_right:NN \l_mytmps \g_wordtmp
      } {
        \seq_get_right:NN \g_prove_mru \g_wordtmp
      }
      \str_set:NV \l_tmpa_str \g_wordtmp
      \prop_get:NVN \l__verbs \l_tmpa_str \l_tmpa_tl
      \tl_use:N \l_tmpa_tl
      \seq_gput_left:NV \g_prove_mru \g_wordtmp
      \seq_gremove_duplicates:N \g_prove_mru
    }
    \newcommand{\with}{\curvearrowleft}
    \crefname{enumi}{item}{items}
    \crefname{subsec}{Section}{Sections}
    \crefname{listing}{Source Code}{Source Codes}
    \crefname{equation}{}{}
    \crefname{lemma}{Lemma}{Lemmas}
    \crefname{figure}{Figure}{Figures}
\DeclarePairedDelimiterXPP{\Exp}[1]{\E}{[}{]}{}{#1}
\newcommand{\providecommandordefault}[2]{%
    \providecommand{#1}{}%
    \renewcommand{#1}{#2}%
}
\newcommand{\convolution}{\cfadd{def:convolution}{\circledast}}
\newcommand{\onetensor}{\cfadd{def:onetensor}\mathbf{I}}
\newcommand{\indicator}[1]{\mathbbm{1}_{#1}}
\newcommand{\tensordim}{d}
\newcommand{\dimindex}{k}
\newcommand{\matrixindex}{i}
\newcommand{\matrixindexalt}{j}
\newcommand{\matrixdim}{\mathfrak{d}}
\newcommand{\layerindex}{\ell}
\newcommand{\neuralOp}{\mathscr{N}}
\newcommand{\fd}{\mathfrak{d}}
\newcommand{\scp}[2]{\langle#1,#2\rangle}
\newcommand{\dft}{\mathtt{DFT}\cfadd{def:DFT}}
\newcommand{\idft}{\mathtt{IDFT}\cfadd{def:IDFT}}
\newcommand{\nrspacediscr}{N}
\newcommand{\cF}{\mathcal{F}}
\newcommand{\cL}{\mathcal{L}}
\newcommand{\Z}{\mathbb{Z}}
\newcommand{\C}{\mathbb{C}}
\newcommand{\fx}{\mathfrak{x}}
\DeclarePairedDelimiterX{\lossmetric}[1]{\lvert\!\lvert\!\lvert}{\rvert\!\rvert\!\rvert}{#1}
\newcommand{\laplaceFactor}{c}
\newcommand{\IVfunction}{g}
\newcommand{\sol}{u}
\newcommand{\solOp}{\mathcal{S}}
\newcommand{\solOpAlt}{\tilde{\mathcal{S}}}
\newcommand{\initialValues}{\mathcal{I}}
\newcommand{\EndValues}{\mathcal{O}}
\newcommand{\initialRV}{\mathfrak{I}}
\newcommand{\Ltwo}[1]{L^2(#1 ; \R)}
\newcommand{\EndVariable}{h}
\NewDocumentCommand{\abbr}{m m O{#1} m m O{#4}}{
    \expandafter\newcommand\csname#3\endcsname[1][]{
        \seq_if_in:NnTF \g_abbrs {#1} {
            \prop_get:NnN \g_abbr_counts {#1} \l_abbr_count_tl
            \prop_gput:Nnx \g_abbr_counts {#1} {\int_eval:n {\l_abbr_count_tl + 1}}
            \hyperref[#1]{#1}
        } {
            \seq_gput_left:Nn \g_abbrs {#1}
            \prop_gput:Nnn \g_abbr_counts {#1} {1}
            \expandafter\gdef\csname#1@def\endcsname{#2}
            \phantomsection\label{#1}
            \str_if_eq:nnTF{##1}{}{\emph{#2}}{##1}~(\hyperref[#1]{#1})
        }
    }
    \expandafter\newcommand\csname#6\endcsname[1][]{
        \seq_if_in:NnTF \g_abbrs {#1} {
            \prop_get:NnN \g_abbr_counts {#1} \l_abbr_count_tl
            \prop_gput:Nnx \g_abbr_counts {#1} {\int_eval:n {\l_abbr_count_tl + 1}}
            \hyperref[#1]{#4}
        } {
            \expandafter\gdef\csname#1@def\endcsname{#5}
            \seq_gput_left:Nn \g_abbrs {#1}
            \prop_gput:Nnn \g_abbr_counts {#1} {1}
            \phantomsection\label{#1}
            \str_if_eq:nnTF{##1}{}{\emph{#5}}{##1}~(\hyperref[#1]{#4})
        }
    }
}
\title{An Overview on Machine Learning Methods for \\ Partial Differential Equations: from Physics  Informed \\ Neural Networks to Deep Operator Learning}
\author{\noindent
Lukas Gonon$^{1}$, 
Arnulf Jentzen$^{2,3}$,
Benno Kuckuck$^{4}$,\\
Siyu Liang$^{5,6,7}$,
Adrian Riekert$^8$,
Philippe von Wurstemberger$^{9,10}$
\bigskip
\\
\small{$^1$ Department of Mathematics, Imperial College London,}
\vspace{-0.1cm}\\
\small{United Kingdom; e-mail: \texttt{l.gonon@imperial.ac.uk}}
\smallskip
\\
\small{$^2$ School of Data Science and Shenzhen Research Institute}
\vspace{-0.1cm}\\
\small{of Big Data, The Chinese University of Hong Kong, Shenzhen}
\vspace{-0.1cm}\\
\small{(CUHK-Shenzhen), China; e-mail: \texttt{ajentzen@cuhk.edu.cn}}
\smallskip
\\
\small{$^3$ Applied Mathematics: Institute for Analysis and Numerics,}
\vspace{-0.1cm}\\
\small{Faculty of Mathematics and Computer Science, University of}
\vspace{-0.1cm}\\
\small{M\"unster, Germany; e-mail: \texttt{ajentzen@uni-muenster.de}}
\smallskip
\\
\small{$^4$ Applied Mathematics: Institute for Analysis and Numerics,}
\vspace{-0.1cm}\\
\small{Faculty of Mathematics and Computer Science, University of}
\vspace{-0.1cm}\\
\small{M{\"u}nster,  Germany; e-mail: \texttt{bkuckuck@uni-muenster.de}}
\smallskip
\\
\small{$^5$ School of Mathematics and Statistics,}
\vspace{-0.1cm}\\
\small{Nanjing University of Science and Technology,}
\vspace{-0.1cm}\\
\small{Nanjing, China; e-mail: \texttt{liangsiyu@njust.edu.cn}}
\smallskip
\\
\small{$^6$ School of Data Science, The Chinese University}
\vspace{-0.1cm}\\
\small{of Hong Kong, Shenzhen (CUHK-Shenzhen), China}
\smallskip
\\
\small{$^{7}$ Mathematisches Institut,  Ludwig-Maximilians-Universit{\"a}t M{\"u}nchen,  Germany}
\smallskip
\\
\small{$^8$ Applied Mathematics: Institute for Analysis and Numerics,}
\vspace{-0.1cm}\\
\small{Faculty of Mathematics and Computer Science, University of}
\vspace{-0.1cm}\\
\small{M\"unster, Germany; e-mail: \texttt{ariekert@uni-muenster.de}}
\smallskip
\\
\small{$^{9}$ School of Data Science, The Chinese University}
\vspace{-0.1cm}\\
\small{of Hong Kong, Shenzhen (CUHK-Shenzhen),}
\vspace{-0.1cm}\\
\small{China; e-mail: \texttt{philippevw@cuhk.edu.cn}}
\smallskip
\\
\small{$^{10}$ Risklab, Department of Mathematics,} 
\vspace{-0.1cm}\\
\small{ETH Zurich, Switzerland;}
\vspace{-0.1cm}\\
\small{e-mail: \texttt{philippe.vonwurstemberger@math.ethz.ch}}
}
\begin{document}

\maketitle

\begin{abstract} 

The approximation of solutions of partial differential equations (PDEs) with numerical algorithms is a central topic in applied mathematics.
For many decades, various types of methods for this purpose have been developed and extensively studied.
One class of methods which has received a lot of attention in recent years are machine learning-based methods, which typically involve the training of artificial neural networks (ANNs) by means of stochastic gradient descent type optimization methods.
While approximation methods for PDEs using ANNs have first been proposed in the 1990s 
they have only gained wide popularity in the last decade with the rise of deep learning.
This article aims to provide an introduction to some of these methods and the mathematical theory on which they are based.
We discuss methods such as physics-informed neural networks (PINNs) and deep BSDE methods and consider several operator learning approaches.
\end{abstract}

\tableofcontents

\section{Introduction}

The approximation of solutions of \PDEs\ with numerical algorithms is a central topic in applied mathematics.
For many decades, various types of methods for this purpose have been developed and extensively studied such as \FDMs, \FEMs, and spectral methods.
One class of methods which has received a lot of attention in recent years are machine learning-based methods, which typically involve the training of \ANNs\ by means of \SGD-type optimization methods.
While approximation methods for \PDEs\ using \ANNs\ have first been proposed in the 1990s 
(cf., e.g., \cite{dissanayake1994neural,lagaris1998artificial,jianyu2003numerical}) they have only gained wide popularity in the last decade with the rise of deep learning 
(cf., e.g., \cite{Raissi19,MR3874585,MR3847747,MR3736669} for some seminal modern references).
This article aims to provide an introduction to some of these methods and the mathematical theory on which they are based.
There are already several other excellent survey articles on this topic and we refer, e.g., to 
\cite{beck2020overview,germain2021neural,ruf2019neural,MR4270459,MR4356985,MR4412280,Karniadakis21,MR4209661,MR4457972,yunus2022overview,huang2022partial,pratama2021anns,mishra2017deep,brunton2023machine,hu2023recent,MR4567794,MR4571806,pateras2023taxonomic,brenner2021machine,ghosh2023taxonomic,yadav2023data,karlsson2023solving,schmidhuber2015deep,capponi_lehalle_2023,sharma2023review,chen2023mathematical,gupta2023survey,tanyu2022deep,gopalakrishna2023essays,kumar2023deep,kianiharchegani2023data,rizvi2023data,herrmann2023deep,barbara2023train,herrmann2024deep,MR4680273,zhou2023deep,Jentzen2023}.

In this article we consider two different types problems that machine learning methods for \PDEs\ seek to address.
The first type of problems focuses on solving single instances of \PDEs\ while the second type of problems is concerned with the approximation of solution operators corresponding to \PDEs.
Machine learning methods for the first type of problems generally involve reformulating \PDE\ approximation problems as stochastic optimization problems 
which can be solved using \SGD-type optimization methods.
In this article we discuss two main classes of such methods and distinguish between them based on how this reformulation is achieved.
Methods in the first class make use of residuals of \PDEs, that is,
the difference between the left-hand sides and the right-hand sides of \PDEs\ for solution candidates, and are discussed in \cref{sect:residualformulations}.
On the other hand, methods in the second class make use of Feynman-Kac formulas which provide stochastic representations of solutions of parabolic \PDEs, and are discussed in \cref{sec:FCformulations}.
For both classes, we identify abstract reformulation results that allow the use of residuals and Feynman-Kac formulas, respectively, to recast \PDE\ problems as suitable stochastic optimization problems (cf.\ \cref{sect:reformulationI,sect:reformulationII}).
We illustrate how these reformulation results can be applied to various types of \PDEs.
Specifically, we present reformulations which underpin methods such as
\PINNs\ (cf., \cref{secpinn}),
deep Kolmogorov methods (cf., \cref{sect:FeynmanKac}),
and
deep \BSDE\ methods (cf., \cref{sect:FeynmanKac}).

In \cref{section:NeurOP} we discuss operator learning methods for \PDEs\ where the aim is to learn solution operators of \PDEs\ which corresponds to the second type of problems mentioned above.
For instance, such operators could be mappings which assign to all suitable initial values or boundary conditions the corresponding \PDE\ solutions.
We rigorously define several architectures which make it possible to learn such mappings from function spaces to function spaces and discuss how they can be trained.

In the remainder of this introduction we now introduce in the next subsection the definition of fully-connected feedforward \ANNs\ which we will use throughout this article.

\subsection{Fully-connected feedforward artificial neural networks (ANNs)}

The definitions in this subsection are strongly inspired by \cite[Chapter 1]{Jentzen2023}.

\begin{definition}[Affine linear functions]\label{def:affine}
 {%
 Let $\mathfrak d,m,n \in \N$, $s \in \N_0$, 
$ \theta = ( \theta_1, \dots,\allowbreak \theta_\netDim ) \in \R^\netDim $ 
satisfy $\netDim \geq \s + m n + m$.
Then we denote by $\Aff_{m,n}^{\theta, \s}\colon\R^n\to\R^m$ the function which satisfies 
for all $x = (x_1,\ldots, x_{n}) \in \R^{n}$ that
\begin{equation}
  \label{eq:affine}
\begin{split}
   &\Aff_{m,n}^{\theta,\s}( x ) 
= 
  \begin{pmatrix}
      \theta_{ \s + 1 }
    &
      \theta_{ \s + 2 }
    &
      \cdots
    &
      \theta_{ \s + n }
    \\
      \theta_{ \s + n + 1 }
    &
      \theta_{ \s + n + 2 }
    &
      \cdots
    &
      \theta_{ \s + 2 n }
    \\
      \theta_{ \s + 2 n + 1 }
    &
      \theta_{ \s + 2 n + 2 }
    &
      \cdots
    &
      \theta_{ \s + 3 n }
    \\
      \vdots
    &
      \vdots
    &
      \ddots
    &
      \vdots
    \\
      \theta_{ \s + ( m - 1 ) n + 1 }
    &
      \theta_{ \s + ( m - 1 ) n + 2 }
    &
      \cdots
    &
      \theta_{ \s + m n }
    \end{pmatrix}
    \begin{pmatrix}
      x_1
    \\
      x_2
    \\
      x_3
    \\
      \vdots 
    \\
      x_{n}
    \end{pmatrix}
  +
    \begin{pmatrix}
      \theta_{ \s + m n + 1 }
    \\
      \theta_{ \s + m n + 2 }
    \\
      \theta_{ \s + m n + 3 }
    \\
      \vdots 
    \\
      \theta_{ \s + m n + m }
    \end{pmatrix}
.
\end{split}
\end{equation}}
\end{definition}

\begingroup
\providecommand{\d}{}
\renewcommand{\d}{\fd}
\begin{definition}[Multi-dimensional versions of one-dimensional functions]
\label{def:multidim_version}
Let $\psi \colon \R \to \R$ be a function.
Then we denote by 
$
	\multdim_{\psi} 
\colon  
	\pr{
		\cup_{\tensordim \in \N} \cup_{\d_1, \d_2, \allowbreak \ldots, \d_\tensordim \in \N} 
			\R^{\d_1 \times \d_2 \times \ldots \times \d_\tensordim} 
	}
\to 
	\pr{
		\cup_{\tensordim \in \N} \cup_{\d_1, \d_2, \allowbreak \ldots, \d_\tensordim \in \N} 
			\R^{\d_1 \times \d_2 \times \ldots \times \d_\tensordim} 
	}
$ 
the function which satisfies for all 
$\tensordim \in \N$,
$\d_1, \d_2, \allowbreak \ldots, \d_\tensordim \in \N$,
$ 
	x 
= 
	( x_{\matrixindex_1, \matrixindex_2, \ldots, \matrixindex_\tensordim} )_{
		(\matrixindex_1, \matrixindex_2, \ldots, \matrixindex_\tensordim) \in (\bigtimes_{\dimindex = 1}^\tensordim  \{1, 2, \ldots, \d_\dimindex\})
	} \in \R^{\d_1 \times \d_2 \times \ldots \times \d_\tensordim}
$,
$ 
	y
= 
	( y_{\matrixindex_1, \matrixindex_2, \ldots, \matrixindex_\tensordim} )_{
		(\matrixindex_1, \matrixindex_2, \ldots, \matrixindex_\tensordim) \in (\bigtimes_{\dimindex = 1}^\tensordim  \{1, 2, \ldots, \d_\dimindex\})
	} \in \R^{\d_1 \times \d_2 \times \ldots \times \d_\tensordim}
$
with 
$\forall \, \matrixindex_1 \in \{1, 2, \ldots, \d_1\}$,  $\matrixindex_2 \in \{1, 2, \ldots, \d_2\}$, $\dots$, $\matrixindex_\tensordim \in \{1, 2, \ldots, \d_\tensordim\} \colon y_{\matrixindex_1, \matrixindex_2, \ldots, \matrixindex_\tensordim} = \psi(x_{\matrixindex_1, \matrixindex_2, \ldots, \matrixindex_\tensordim})$
that
\begin{equation}
\label{multidim_version:Equation}
\multdim_{\psi}( x ) 
=
y.
\end{equation}
\end{definition}
\endgroup

\begin{definition}[Realizations of fully-connected feedforward \ANNs]\label{def:nets}
{ \sl Let   $L \in \N \backslash \{ 1 \}$,  $l_0,l_1,\dots,l_L
\in \N$,       for every
$j\in\{0,1,\dots,L\}$
let
$\mathfrak d_j\in\N_0$
satisfy $\mathfrak d_j=\sum_{ k = 1 }^{ j } l_k (l_{k-1} + 1)$, 
let $\act\colon \R\rightarrow \R$ be a function, 
 and 
let
$\theta\in \R^{\mathfrak d_{L}}$.   Then we denote by
$
  \mathcal N^{l_0,l_1,\dots,l_L}_{\act,\theta}
 \colon \R^{l_0}\rightarrow \R^{l_L}
$
the function which satisfies 
\begin{equation}
  \label{eq:paramFFNN}
  {\mathcal N}^{l_0,l_1,\dots,l_L}_{\act,\theta}
  = 
   \Aff_{l_{L},l_{L-1}}^{\theta,\mathfrak d_{L-1}}\circ  \multdim_{ \act }
  \circ\Aff_{l_{L-1},l_{L-2}}^{\theta,\mathfrak d_{L-2}}
  \circ\ldots
  \circ  \multdim_{ \act }\circ \Aff_{l_1,l_0}^{\theta,\mathfrak d_0}
\end{equation}
\cfload.}
\end{definition}

\cfclear
\cfconsiderloaded{def:nets}

\section{Machine learning approximation methods for PDEs based on residual formulations}
\label{sect:residualformulations}

In this section we discuss machine learning approximation methods for \PDEs\ which are based on minimizing residuals of \PDEs.
We first introduce in \cref{sect:reformulationI}
an abstract reformulation result upon which many machine learning methods for \PDEs\ rely to reformulate \PDEs\ as stochastic minimization problems.
Thereafter, in \cref{secpinn} we discuss \PINNs\ which are one of the most famous machine learning approximation methods for \PDEs\ based on residual formulations.

\subsection{Basic reformulation result for machine learning methods for PDEs I}
\label{sect:reformulationI}

Machine learning generally involves the training of \ANNs\ through \SGD-type methods.
Applying \SGD-type methods to train \ANNs\ requires the formulation of a suitable training objective given by a parametric expectation.
Therefore, to develop machine learning techniques for \PDEs, it is necessary to reformulate \PDE\ problems into stochastic minimization problems, where the objective function is given by a parametric expectation.

In this section we introduce in \cref{lemmaBcor}
an abstract reformulation result upon which many machine learning methods for \PDEs\ rely to reformulate \PDE\ problems into such stochastic minimization problems.

 \begin{proposition}\label{lemmaB}
 {%

 Let $\solutionset$ and $\basicset$ be non-empty sets with $\solutionset  \subseteq \basicset$,  let $(\Omega, \F,\P)$  be a probability space,   for every $v \in \basicset$  let $\thmL_v\colon \Omega \to  [0,\infty]$ be a random variable,  
 assume for all $v \in \basicset$ that  $\P( \thmL_v = 0 ) = 1 $ if and only if $v \in \solutionset $,   and let $\lossexp \colon \basicset \to [0,\infty]$ satisfy for all $v \in \basicset$ that $\lossexp(v) =
 \E[ \thmL_v]$.
 Then
 \begin{enumerate}[(i)]
 \item  \label{prop21item1}
 for all $v \in \basicset$  it holds that
  \beq\label{IveqinfIw}\lossexp(v) = \inf_{w \in \basicset}\lossexp(w)  \eeq
if and only if
 $ v \in \solutionset   $
and
 \item\label{prop21item2}
  there exists $v \in \basicset$ such that
\beq\label{IveqinfIw2}\lossexp(v) = \inf_{w \in \basicset}\lossexp(w).\eeq
  \end{enumerate}
 }
 \end{proposition}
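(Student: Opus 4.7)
The plan is to reduce both claims to the observation that the infimum of $\ell$ over $\basicset$ equals $0$. First I would exploit the hypothesis that $\solutionset$ is non-empty: pick any $s \in \solutionset$. By the assumed equivalence, $\P(\thmL_s = 0) = 1$, which means $\thmL_s$ vanishes almost surely, and therefore $\lossexp(s) = \E[\thmL_s] = 0$. Since each $\thmL_w$ takes values in $[0,\infty]$, we have $\lossexp(w) = \E[\thmL_w] \geq 0$ for every $w \in \basicset$. Combining these two facts yields
\begin{equation*}
    0 \leq \inf_{w \in \basicset} \lossexp(w) \leq \lossexp(s) = 0,
\end{equation*}
so the infimum equals $0$ and is attained at $s$. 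This already establishes \cref{lemmaB}\eqref{prop21item2}.

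For \cref{lemmaB}\eqref{prop21item1}, I would prove the two implications of the ``if and only if'' separately, using that the infimum is $0$. If $v \in \solutionset$, then the same argument as above gives $\lossexp(v) = 0 = \inf_{w \in \basicset} \lossexp(w)$. Conversely, if $\lossexp(v) = \inf_{w \in \basicset} \lossexp(w) = 0$, then $\E[\thmL_v] = 0$ together with $\thmL_v \geq 0$ forces $\thmL_v = 0$ almost surely, i.e., $\P(\thmL_v = 0) = 1$; by the standing hypothesis this is equivalent to $v \in \solutionset$.

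There is essentially no technical obstacle here; the proposition is an immediate bookkeeping consequence of (a) non-negativity of $\thmL_v$, (b) non-emptiness of $\solutionset$, and (c) the characterization of $\solutionset$ as $\{v : \P(\thmL_v = 0) = 1\}$. The only mild subtlety worth flagging is the use of $[0,\infty]$-valued random variables: one must note that $\P(\thmL_s = 0) = 1$ entails $\E[\thmL_s] = 0$ even in the extended-real setting (since the integral of an a.s.-zero non-negative function is $0$), and, in the reverse direction, that $\E[\thmL_v] = 0$ with $\thmL_v \geq 0$ still forces $\thmL_v = 0$ a.s. Once these standard measure-theoretic facts are invoked, both items follow in a few lines.
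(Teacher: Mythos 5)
Your proof is correct and follows essentially the same route as the paper's: both establish $\inf_{w \in \basicset} \lossexp(w) = 0$ by evaluating $\lossexp$ at an element of the non-empty set $\solutionset$, then use the characterization of $\solutionset$ via $\P(\thmL_v = 0) = 1$ together with non-negativity of $\thmL_v$ to convert between $\lossexp(v) = 0$ and membership in $\solutionset$. The remark about the extended-real-valued setting is a sensible clarification, though the paper leaves it implicit.
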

 \begin{cproof}{lemmaB}
 \Nobs\ the assumption that      
   for all $v \in \basicset$  it holds that  $\P( \thmL_v = 0 ) = 1$  if and only if $v \in \solutionset $
  \proves\   that for all $v\in \solutionset $ it holds that 
   \beq  \P( \thmL_v = 0 ) = 1.\eeq
   The assumption that for all $v\in \basicset$  it holds that $\lossexp(v) =   \E[ \thmL_v]$ \hence \proves\  that
   for all $v\in \solutionset $ it holds that 
 \beq\label{Iveq0new}  \lossexp(v)=  \E[ \thmL_v]=0. \eeq
 Combining this and the fact that for all $w \in \basicset$ it holds that $\lossexp(w) \geq 0$ \proves\ that for all $v\in  \solutionset $  it holds that 
 \beq
 0\leq \inf_{w\in \basicset}\lossexp(w)\leq\lossexp(v)=0.
 \eeq
  \Hence    for all $v\in \solutionset $ that
\begin{equation}\label{eq:9}
 \lossexp(v) = \inf_{w \in \basicset}\lossexp(w)=0.
\end{equation}
Combining this and the assumption that $\solutionset  \neq \emptyset$  \proves\  \cref{prop21item2}.
 \Nobs\    \cref{eq:9} and the assumption that $\solutionset \neq\emptyset$    
  \prove\  that 
  \beq
  \inf_{w\in \basicset}\lossexp(w)=0.
  \eeq
  \Hence that   for all $v \in \basicset$   with
  $\lossexp(v) = \inf_{w \in \basicset}\lossexp(w)$  it holds that  
  $\lossexp(v)=  0$.
 The assumption that for all $v\in \basicset$  it holds that $\lossexp(v) =   \E[ \thmL_v]$ \hence \proves\  that
 for all $v \in \basicset$   with
  $\lossexp(v) = \inf_{w \in \basicset}\lossexp(w)$ 
  it holds  that
  \beq
  \E[ \thmL_v]=0.
  \eeq
  \Hence that for all $v \in \basicset$   with
  $\lossexp(v) = \inf_{w \in \basicset}\lossexp(w)$ it holds that
  \beq  \P( \thmL_v = 0 ) = 1.\eeq
  The assumption that for all $v \in \basicset$  it holds that $\P( \thmL_v = 0 ) = 1 $  if and only if $v \in \solutionset$ \hence \proves\ that for all  $v \in \basicset$  with $\lossexp(v) = \inf_{w \in \basicset}\lossexp(w)$  it holds that $v\in \solutionset $.  Combining this with \cref{eq:9}  \proves\  \cref{prop21item1}.
 \end{cproof}
 
  \begin{corollary}[Basic reformulation result for machine learning methods for \PDEs\ I]\label{lemmaBcor}
 {%

 Let $\solutionset$ and $\basicset$ be non-empty sets with $\solutionset  \subseteq \basicset$,  let $(\Omega, \F,\P)$  be a probability space,   
   let $( \mathfrak{R}, \mathcal{R})$ be a measurable space,   let $R \colon \Omega \to  \mathfrak{R}$ be a random variable,  let $N\in \N$, 
for every $v \in \basicset$   let $U_v \colon   \mathfrak{R}  \to \R^N$ be  measurable,
 assume for all $v \in \basicset$ that  $\P( U_v(R ) = 0 ) = 1 $ if and only if $v \in \solutionset $,   and let $\lossexp \colon \basicset \to [0,\infty]$ satisfy\footnote{\Nobs\   for all $d \in \N$,  $v = (v_1, \dots, v_d)$, $w = (w_1,\dots,w_d) \in \R^d$ it holds that $\| v \|^2 = \sum_{ i = 1 }^d | v_i |^2$ and  $\langle v, w \rangle = \sum_{ i = 1 }^d v_i w_i$.}
  for all $v \in \basicset$ that $\lossexp(v) =
 \E[  \| U_v(R ) \|^2]$.
 Then
 \begin{enumerate}[(i)]
 \item\label{lemmaBcoritemi} 
 for all $v \in \basicset$  it holds that
  \beq \lossexp(v) = \inf_{w \in \basicset}\lossexp(w)  \eeq
if and only if
 $ v \in \solutionset   $
and
 \item\label{lemmaBcoritemii} 
  there exists $v \in \basicset$ such that
\beq \lossexp(v) = \inf_{w \in \basicset}\lossexp(w).\eeq
  \end{enumerate}
 }
 \end{corollary}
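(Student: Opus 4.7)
The strategy is to reduce the claim directly to \cref{lemmaB} by specializing the random variables $\thmL_v$ to be the squared norms of the residuals. Specifically, I would set, for each $v \in \basicset$,
\begin{equation*}
\thmL_v := \norm{U_v(R)}^2 \colon \Omega \to [0,\infty).
\end{equation*}
Since $R\colon\Omega \to \mathfrak{R}$ is a random variable, $U_v \colon \mathfrak{R} \to \R^N$ is measurable, and the map $x \mapsto \norm{x}^2$ is continuous from $\R^N$ to $[0,\infty)$, the composition $\thmL_v$ is measurable and thus a $[0,\infty]$-valued random variable, as required by the hypotheses of \cref{lemmaB}.

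Next I would verify the two remaining hypotheses of \cref{lemmaB} for this choice. First, because $\norm{x}^2 = 0$ if and only if $x = 0$ for $x \in \R^N$, the events $\cu{\thmL_v = 0}$ and $\cu{U_v(R) = 0}$ coincide, so $\P(\thmL_v = 0) = \P(U_v(R) = 0)$. By assumption the latter equals $1$ if and only if $v \in \solutionset$, which gives exactly the biconditional $\P(\thmL_v = 0) = 1 \iff v \in \solutionset$ required by \cref{lemmaB}. Second, the hypothesis $\lossexp(v) = \E[\norm{U_v(R)}^2]$ of the corollary is precisely the identity $\lossexp(v) = \E[\thmL_v]$ needed to invoke \cref{lemmaB}.

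Having checked all hypotheses, I would then apply \cref{lemmaB} to obtain its conclusions \cref{prop21item1,prop21item2}, which are verbatim the statements \cref{lemmaBcoritemi,lemmaBcoritemii} of the corollary. This would complete the argument.

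There is no substantive obstacle in this proof: the content is essentially a rewording of \cref{lemmaB} via the canonical choice $\thmL_v = \norm{U_v(R)}^2$, and the only verification required is the elementary observation that $\norm{\cdot}^2$ vanishes precisely on the zero vector, together with the measurability of compositions. I would therefore keep the write-up short and direct, without reproving anything established in \cref{lemmaB}.
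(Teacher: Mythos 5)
Your proposal is correct and matches the paper's own proof: both apply \cref{lemmaB} with $\thmL_v = \|U_v(R)\|^2$, and the additional verifications you spell out (measurability of the composition, the coincidence of the events $\{\thmL_v=0\}$ and $\{U_v(R)=0\}$, and the identity $\lossexp(v)=\E[\thmL_v]$) are exactly the routine checks the paper leaves implicit.
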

 \begin{cproof}{lemmaBcor}
\Nobs\   \cref{lemmaB} (applied with $\basicset\with\basicset$,  $\solutionset\with\solutionset$,   $(\thmL_v)_{v\in\basicset} \with ( \|U_v(R)\|^2)_{v\in \basicset}$ in the notation of \cref{lemmaB})
\proves\  \cref{lemmaBcoritemi}  and \cref{lemmaBcoritemii}.
  \end{cproof}

\subsection{Physics-informed neural networks (PINNs)}\label{secpinn}

In this section we discuss \PINNs.
The term \PINNs\ and the corresponding methodology was introduced in \cite{Raissi2017PhysicsID,Raissi2017PhysicsID2,Raissi19} for two different types of problems.
The first type of problem is called data-driven solutions of \PDEs, where to goal is to approximate solutions of \PDEs\ using \ANNs.
The second type of problem is called data-driven discovery of \PDEs, where the goal is to estimate parameters of \PDEs\ based on observation data.
In this section we consider \PINNs\ for the first type of problem, i.e., the approximation of solutions of \PDEs.

\subsubsection{PINNs for general boundary value PDE problems}
\label{sect:PINNsBVP}

In this section we introduce a \PINNs\ methodology for general boundary value \PDE\ problem in \cref{algo26copy} below.
To motivate \cref{algo26copy} we introduce in \cref{thm213acopy} a result which allows reformulating solutions of general boundary value \PDE\ problems as solutions of stochastic minimization problems.
The proof of \cref{thm213acopy} relies on the abstract reformulation result for \PDEs\ in \cref{lemmaBcor} above and on \cref{sec2lem6} below.

\begin{lemma}\label{sec2lem6}
{%
  Let $d,\delta \in \N$,   let $D\subseteq  \R^d$,   let $f \in C( D, \R^{\delta} )$,    let $( \Omega, \F, \P )$ be a probability space,     let $X\colon \Omega \to D $  be a random variable, 
 assume for all $x \in D$,  $\varepsilon \in (0,\infty)$ that $\P( \|X-x\|< \varepsilon  ) > 0$,    and assume $\P( f( X ) = 0 ) = 1$.  Then it holds for all $x \in D$ that 
 \beq\label{fxequals0} f(x) = 0. \eeq}
\end{lemma}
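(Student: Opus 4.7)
The plan is to proceed by contraposition: assume there exists some $x_0 \in D$ with $f(x_0) \neq 0$ and derive a contradiction with $\P(f(X) = 0) = 1$. The heart of the argument is a continuity-plus-full-support argument and the obstacle is essentially bookkeeping rather than anything conceptually deep.

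First I would fix $x_0 \in D$ with $f(x_0) \neq 0$ and set $r = \tfrac{1}{2}\|f(x_0)\|  > 0$. By continuity of $f$ at $x_0$ (recall $f \in C(D, \R^\delta)$), there exists $\varepsilon \in (0,\infty)$ such that for every $y \in D$ with $\|y - x_0\| < \varepsilon$ we have $\|f(y) - f(x_0)\| < r$, and hence by the reverse triangle inequality
\[
\|f(y)\| \ge \|f(x_0)\| - \|f(y) - f(x_0)\| > \|f(x_0)\| - r = r > 0,
\]
so in particular $f(y) \neq 0$ for all such $y \in D$.

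Next I would exploit the full-support hypothesis. Since $X$ takes values in $D$, the event $\{\|X - x_0\| < \varepsilon\}$ coincides with $\{X \in D, \, \|X - x_0\| < \varepsilon\}$, and by the assumption that $\P(\|X - x\| < \varepsilon) > 0$ for all $x \in D$ and all $\varepsilon \in (0,\infty)$, we obtain $\P(\|X - x_0\| < \varepsilon) > 0$. On this event, the previous paragraph shows $f(X) \neq 0$, so
\[
\P(f(X) \neq 0) \ge \P(\|X - x_0\| < \varepsilon) > 0,
\]
which contradicts the assumption $\P(f(X) = 0) = 1$. Hence no such $x_0$ exists, i.e., $f(x) = 0$ for every $x \in D$, establishing \cref{fxequals0}.

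The only mild subtlety is ensuring measurability when writing $\P(\|X - x_0\| < \varepsilon)$ and $\P(f(X) \neq 0)$, but both follow immediately from $X$ being a random variable taking values in $D \subseteq \R^d$ and $f$ being continuous (hence Borel measurable). No heavy machinery is required; the argument is essentially the standard fact that a continuous function which vanishes on a dense set vanishes everywhere, combined with the observation that the full-support hypothesis forces the support of $X$ to be all of $D$.
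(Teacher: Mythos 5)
Your proof is correct and follows essentially the same contradiction argument as the paper: pick a hypothetical point where $f$ is nonzero, use continuity to find a neighborhood on which $\|f\|$ stays bounded away from zero, and note that the full-support hypothesis gives this neighborhood positive probability under $X$, contradicting $\P(f(X)=0)=1$. The only cosmetic difference is that you make the reverse triangle inequality step explicit and flag the measurability check, whereas the paper states the existence of the neighborhood directly and writes the contradiction as the chain $0 = \P(f(X)\neq 0) \geq \P(\|X-x\|<\eta) > 0$.
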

\begin{cproof}{sec2lem6}
We establish \cref{fxequals0} by a contradiction. 
For this let  $x \in D$ satisfy $f(x) \neq 0$.   \Nobs\ the fact that $\| f(x) \| > 0$  and the fact that $ \R \ni y \mapsto \|f(y)\| \in \R$ is continuous \prove\ that there exists $\eta \in (0,\infty)$ which satisfies  for all $y \in  \{ z\in D\colon \|z-x\|<\eta\}$  that
 \beq\label{absfpositive}\| f( y ) \| > 0.\eeq
\Moreover\   \cref{absfpositive}  and the assumption that $\P( f(X) = 0 ) = 1$  \prove\    that
\beq \label{eq:14}
0 = 1 -\P( f(X) = 0 ) = \P(f (X) \neq  0) = \P(\| f (X)\| > 0) \geq  \P(\| X - x\|< \eta).
\eeq
 \Moreover\  the assumption that for all $y \in  D$, $\varepsilon \in (0,\infty)$    it holds that
 $\P( \|X-y\|< \varepsilon  ) > 0$
  \proves\  that 
$\P( \|X-x\|< \eta ) > 0$.
Combining this with  \cref{eq:14}  \proves\  that 
\beq 0= \P( \|f(X)\| > 0 ) \geq \P( \| X - x \| < \eta ) > 0.\eeq
\end{cproof}

  \begin{theorem}[\PINNs\ for general boundary value \PDE\ problems]\label{thm213acopy}
 {%
 Let  $d,\delta,\thmb \in \N$,    let $D\subseteq \R^d$,
  let $\basicset\subseteq C(  D,\R^{\delta})$,  
  let $\thmW \subseteq \partial D$,
  let $\mathscr{D}\colon \basicset\to C( D,\R^{\delta}) $ and $\mathscr{B}\colon \basicset\to C(\thmW,\R^{\thmb})$ be functions,  let 
 $u\in \basicset$ satisfy  for all  $x\in D$,  $y\in \thmW$  that 
  \begin{equation}\label{weakpdeuacopy}
(\mathscr{D}(u))(x)=0 \qquad \text{and} \qquad
(\mathscr{B}(u))(y)=0,  \qquad 
\end{equation}
  let $(\Omega,\mathcal F,\PP)$ be a probability space,
  let  $\xrandom\colon \Omega\to D$ and $\yrandom\colon \Omega\to \thmW$
    be independent  random variables, 
       assume 
    for all
    $\mathcal{X},\mathcal{Y}\in\{\mathcal{Z}\subseteq \R^d\colon\mathcal{Z} \text{ is open}\}$
       with $\mathcal{X}\cap D \neq \emptyset$ and $\mathcal{Y}\cap \thmW \neq \emptyset $ 
    that $\PP( \{\xrandom\in \mathcal{X}\} \cap \{\yrandom\in \mathcal{Y}\})>0$,  
let  $\lossexp\colon \basicset\to [0,\infty]$ satisfy for all $v\in \basicset$ that
 \beq\label{Ivgeneraleqcopy}\begin{split}
  \lossexp(v)=\E \bigl[\|(\mathscr{D}(v))(\xrandom) \|^2 +\|(\mathscr{B}(v))(\yrandom) \|^2 \bigr],  \end{split}\eeq
 	 and let $v \in \basicset$.   Then the following two statements are equivalent:
 	\begin{enumerate}[(i)]
 	\item\label{thm23item1copy} 
 	  It holds that
 	 $\lossexp(v)=\inf_{ w \in \basicset }\lossexp(w)$.
 \item 	 It holds
   for all  $x\in D$,  $y\in \thmW$
   that 
  $
(\mathscr{D}(v))(x)=0 \qquad \text{and} \qquad
(\mathscr{B}(v))(y)=0$.
\end{enumerate}

 }
 \end{theorem}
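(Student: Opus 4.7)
The plan is to reduce the theorem to the abstract reformulation result \cref{lemmaBcor} after combining the interior and boundary residuals into a single $\R^{\delta+\thmb}$-valued observation. I would first introduce the solution set $\solutionset\subseteq\basicset$ consisting of those $w\in\basicset$ with $(\mathscr{D}(w))(x)=0$ and $(\mathscr{B}(w))(y)=0$ for all $x\in D$ and $y\in\thmW$; by \cref{weakpdeuacopy} we have $u\in\solutionset$, so $\solutionset$ is nonempty. I would then equip $\mathfrak{R}=D\times\thmW$ with its Borel $\sigma$-algebra, take $R=(\xrandom,\yrandom)\colon\Omega\to\mathfrak{R}$, and for each $v\in\basicset$ define the measurable map $U_v\colon\mathfrak{R}\to\R^{\delta+\thmb}$ by $U_v(x,y)=((\mathscr{D}(v))(x),(\mathscr{B}(v))(y))$. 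Since $\|U_v(R)\|^2$ coincides with the integrand in \cref{Ivgeneraleqcopy}, the expectation hypothesis of \cref{lemmaBcor} is satisfied with $\lossexp(v)=\E[\|U_v(R)\|^2]$.

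The main step, which I expect to be the principal obstacle, is to verify the biconditional $\P(U_v(R)=0)=1\iff v\in\solutionset$ required by \cref{lemmaBcor}. The direction $v\in\solutionset\Rightarrow\P(U_v(R)=0)=1$ is immediate since such a $v$ makes $U_v$ vanish identically. For the converse, assuming $\P(U_v(R)=0)=1$ one obtains both $\P((\mathscr{D}(v))(\xrandom)=0)=1$ and $\P((\mathscr{B}(v))(\yrandom)=0)=1$ by monotonicity. To apply \cref{sec2lem6} with $f\with\mathscr{D}(v)$ and $X\with\xrandom$, I would verify that $\P(\|\xrandom-x\|<\varepsilon)>0$ for every $x\in D$ and every $\varepsilon>0$: taking $\mathcal{X}=\{z\in\R^d\colon\|z-x\|<\varepsilon\}$ together with any fixed open $\mathcal{Y}\subseteq\R^d$ with $\mathcal{Y}\cap\thmW\neq\emptyset$, the joint positivity hypothesis of the theorem yields $\P(\{\xrandom\in\mathcal{X}\}\cap\{\yrandom\in\mathcal{Y}\})>0$, which is a lower bound for $\P(\xrandom\in\mathcal{X})$. \cref{sec2lem6} then gives $(\mathscr{D}(v))(x)=0$ for all $x\in D$. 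Swapping the roles of $\xrandom$ and $\yrandom$ and reapplying \cref{sec2lem6} with $D$ replaced by $\thmW\subseteq\R^d$ (which is legitimate because the proof of that lemma relies only on the ambient Euclidean distance and on continuity of the relevant function on the set in question) gives $(\mathscr{B}(v))(y)=0$ for all $y\in\thmW$, so $v\in\solutionset$.

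With the biconditional established, \cref{lemmaBcor} applied with $\basicset\with\basicset$, $\solutionset\with\solutionset$, $N\with\delta+\thmb$, $R\with(\xrandom,\yrandom)$, and $(U_v)_{v\in\basicset}\with(U_v)_{v\in\basicset}$ directly yields \cref{lemmaBcoritemi}, which is precisely the claimed equivalence between $\lossexp(v)=\inf_{w\in\basicset}\lossexp(w)$ and $v\in\solutionset$. Existence of a minimizer is not asserted by the theorem, so item \cref{lemmaBcoritemii} of the corollary is not strictly needed here, although it would come for free from the same application.
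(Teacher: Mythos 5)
Your proposal is correct and takes essentially the same route as the paper: the same $\R^{\delta+\thmb}$-valued residual $U_v(x,y)=((\mathscr{D}(v))(x),(\mathscr{B}(v))(y))$, the same solution set, the same derivation of the one-sided marginal positivity conditions from the joint hypothesis, the same two applications of \cref{sec2lem6}, and the same final invocation of item \cref{lemmaBcoritemi} of \cref{lemmaBcor}. Two small remarks: \cref{sec2lem6} is already stated for arbitrary $D\subseteq\R^d$, so applying it with $D\with\thmW$ is a direct instantiation rather than a generalization as your parenthetical suggests; and the existence of an open $\mathcal{Y}$ meeting $\thmW$ presupposes $\thmW\neq\emptyset$, which the paper secures first by noting that $\Omega\neq\emptyset$ forces both $D\neq\emptyset$ and $\thmW\neq\emptyset$ since $\xrandom,\yrandom$ are functions into these sets.
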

 \begin{cproof}{thm213acopy}
 Throughout  this proof let $\solutionset\subseteq  \basicset$ satisfy 
 \beq\label{defofthmSacopy}
 \solutionset=\bigl\{v\in \basicset\colon 
  \bigl(\forall \,   x\in D,  y\in  \thmW\colon  \|(\mathscr{D}(v))(x)\|=
\|(\mathscr{B}(v))(y)\|=0
\bigr)\bigr\}
\eeq
  and for every $v\in  \basicset$   let  $U_v\colon
   D\times  \thmW
    \to \R^{\delta+b}$  satisfy
   for all   $x\in D$,        $y\in \thmW$ that
 \beq\label{scrFwacopy}
 U_v(x,y)=\bigl( (\mathscr{D}(v))(x), (\mathscr{B}(v))(y) \bigr).
 \eeq
\Nobs\  the assumption that $(\Omega,\mathcal F,\PP)$ is a probability space  \proves\  that $\Omega \neq \emptyset$.  This and the fact that $\xrandom\colon \Omega\to D$ and  $\yrandom\colon \Omega\to \thmW$ 
    are functions \prove\ that
\beq D \neq \emptyset \qquad \text{and} \qquad \thmW \neq \emptyset. \eeq
\Moreover\   \cref{defofthmSacopy} and
\cref{scrFwacopy} \prove\ that for all  $v\in\solutionset$ it holds that 
$ \P( U_v(\xrandom,\yrandom)= 0)=1$.
\Moreover\
the assumption that 
 for all  
   $\mathcal{X},\mathcal{Y}\in\{\mathcal{Z}\subseteq \R^d\colon\mathcal{Z}\text{ is open}\}$
   with $\mathcal{X}\cap D\neq \emptyset$ and $\mathcal{Y}\cap \thmW\neq \emptyset $ 
it holds  that $\PP( \{\xrandom\in \mathcal{X}\} \cap \{\yrandom\in \mathcal{Y}\} )>0$
 \proves\  that 
for all  open  $\mathcal{X}\subseteq \R^d$ with $\mathcal{X}\cap  D\neq  \emptyset$ it holds
 that $\PP( \{\xrandom\in \mathcal{X}\} )>0$.
\Moreover\    \cref{scrFwacopy} \proves\ that  for all  $v\in \basicset$ with $ \P( U_v(\xrandom,\yrandom)= 0)=1$      it holds that
\beq
 \P\bigl((\mathscr{D}(v))(\xrandom)=0\bigr)=\P\bigl((\mathscr{B}(v))(\yrandom)=0\bigr)=1.
 \eeq
 Combining this with the fact that
  for all   open  $\mathcal{X}\subseteq \R^d$ with $\mathcal{X}\cap  D\neq  \emptyset$
it holds that $\PP( \{\xrandom\in \mathcal{X}\} )>0$ and \cref{sec2lem6}
 (applied 
  for every $v\in \basicset$
  with  $d\with d$,  $ \delta \with \delta$,   $f\with (  D \ni x  \mapsto (\mathscr{D}(v))(x) \in \R^{\delta})$ in the notation of \cref{sec2lem6})  \proves\ that
   for all  $v\in \basicset$,  $x\in D$  with $ \P( U_v(\xrandom,\yrandom)= 0)=1$
it holds that 
     \begin{equation}\label{Dv0acopy}
(\mathscr{D}(v))(x)=0.
\end{equation}
\Moreover\
the assumption that 
 for all  
   $\mathcal{X},\mathcal{Y}\in\{\mathcal{Z}\subseteq \R^d\colon\mathcal{Z} \text{ is open}\}$
   with $\mathcal{X}\cap D\neq \emptyset$ and $\mathcal{Y}\cap \thmW\neq \emptyset $ it holds
 that $\PP( \{\xrandom\in \mathcal{X}\} \cap \{\yrandom\in \mathcal{Y}\} )>0$
 \proves\ that 
for all  open  $\mathcal{Y}\subseteq \R^d$ with $\mathcal{Y}\cap  \thmW \neq  \emptyset$ it holds that $\PP( \{\yrandom\in \mathcal{Y}\} )>0$.  Combining this and 
\cref{sec2lem6} (applied  
  for every $v\in \basicset$
  with
  $d\with d$,  $ \delta \with b$, 
   $f\with  ( \thmW   \ni  y \mapsto ( \mathscr{B}(v))(y)\in \R^{\thmb} ) $ in the notation of \cref{sec2lem6})  \proves\ that
 for all  $v\in \basicset$,   $y\in \thmW$ with $ \P( U_v(\xrandom,\yrandom)= 0)=1$   it holds that 
  \beq \label{Bv0acopy}
  (\mathscr{B}(v))(y)=0.
  \eeq
 This and \cref{Dv0acopy} \prove\ that  for all $v\in \basicset$ with $ \P( U_v(\xrandom,\yrandom)= 0)=1$    it holds  that
 $v\in \solutionset$.
The assumption that $u\in \basicset$ \hence  \proves\  that $S \neq \emptyset$ and $V \neq \emptyset$.
Combining  this with \cref{Dv0acopy}, \cref{Bv0acopy},  and  \cref{lemmaBcor} (applied with
$ \mathfrak{R}\with   D\times  \thmW $,
$N\with \delta+b$,  $R\with (\xrandom, \yrandom)$, 
$(U_v)_{v\in\basicset} \with(U_v)_{v\in\basicset}$,
$\solutionset\with\solutionset$,  $\basicset\with \basicset$,     $(\Omega, \F,\P) \with (\Omega, \F,\P)$ in the notation of  \cref{lemmaBcor})    \proves\ 
(\cref{thm23item1} \allowbreak$  \longleftrightarrow  $\allowbreak \cref{thm23item2}).
  \end{cproof}

\begin{corollary}[\PINNs\ for Laplace equations]\label{thm213acopyLap}
  {%
 Let  $d,\thmb \in \N$,    let $D\subseteq \R^d$,
    let $f\in C(\partial D,\R)$,  
  $u\in C^2(  D,\R)$ satisfy  for all  $x\in D$,  $y\in \partial D$  that 
    \begin{equation}\label{weakpdeuacopylap}
  (  \Delta u)(x)=0 \qquad \text{and} \qquad
  u(y)=f(y),  \qquad 
  \end{equation}
    let $(\Omega,\mathcal F,\PP)$ be a probability space,
    let  $\xrandom\colon \Omega\to D$ and $\yrandom\colon \Omega\to \partial D$
      be independent  random variables, 
        assume 
      for all
      $\mathcal{X},\mathcal{Y}\in\{\mathcal{Z}\subseteq \R^d\colon\mathcal{Z} \text{ is open}\}$
        with $\mathcal{X}\cap D \neq \emptyset$ and $\mathcal{Y}\cap \partial D \neq \emptyset $ 
      that $\PP( \{\xrandom\in \mathcal{X}\} \cap \{\yrandom\in \mathcal{Y}\})>0$,  
  let  $\lossexp\colon C^2(  D,\R) \to [0,\infty]$ satisfy for all $v\in C^2(  D,\R)$ that
  \beq\label{IvgeneraleqcopyLap}\begin{split}
    \lossexp(v)=\E \bigl[|(  \Delta v)(\xrandom) |^2 +|v(\yrandom) - f(\yrandom)|^2 \bigr],\end{split}\eeq
    and let $v\in C^2(  D,\R)$.  Then the following two statements are equivalent:
    \begin{enumerate}[(i)]
    \item\label{thm23item2copyLap}
  It holds that
      \beq\label{eq99acopyLap}
  \lossexp(v)=\inf_{ w \in C^2(  D,\R) }\lossexp(w).
  \eeq
    \item\label{thm23item1copyLap} 
    It holds
    for all  $x\in D$,  $y\in \partial D$
    that 
    \begin{equation}\label{eq100acopyLap}
  (  \Delta v)(x)=0 \qquad \text{and} \qquad
  v(y)=f(y).
  \end{equation}
    
    \end{enumerate}

  }
  \end{corollary}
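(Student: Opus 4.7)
The plan is to derive the corollary as a direct specialization of \cref{thm213acopy}. Concretely, I would apply that theorem with $\delta = b = 1$, $\basicset = C^2(D,\R)$, $\thmW = \partial D$, and with operators $\mathscr{D}\colon C^2(D,\R) \to C(D,\R)$ and $\mathscr{B}\colon C^2(D,\R) \to C(\partial D,\R)$ defined by $\mathscr{D}(v) = \Delta v$ and $(\mathscr{B}(v))(y) = v(y) - f(y)$.

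First I would check that these maps land in the stated codomains. For $v \in C^2(D,\R)$ the Laplacian $\Delta v$ is continuous on $D$, so $\mathscr{D}$ is well defined; since both $v$ restricted to $\partial D$ and $f$ are continuous, $\mathscr{B}(v) \in C(\partial D,\R)$. Under these identifications, the system $(\mathscr{D}(u))(x) = 0$ and $(\mathscr{B}(u))(y) = 0$ from the theorem is exactly the Laplace/Dirichlet system \eqref{weakpdeuacopylap} assumed on $u$, so the existence hypothesis for $u \in \basicset$ is met; analogously, $(\mathscr{B}(v))(y) = 0$ is equivalent to $v(y) = f(y)$, so the two alternative conclusions align as well.

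Next, I would rewrite the loss functional from \cref{thm213acopy}: substituting $\delta = b = 1$ and the chosen operators and using that $\|\cdot\|$ coincides with $|\cdot|$ on scalars, the loss becomes
\begin{equation*}
\E\bigl[|(\Delta v)(\xrandom)|^2 + |v(\yrandom) - f(\yrandom)|^2\bigr],
\end{equation*}
which is exactly \eqref{IvgeneraleqcopyLap}. The probabilistic hypotheses---independence of $\xrandom$ and $\yrandom$, and the joint nondegeneracy on open sets meeting $D$ and $\partial D$---transfer verbatim from the corollary to the theorem.

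With these substitutions in place, the equivalence of \cref{thm23item2copyLap} and \cref{thm23item1copyLap} follows immediately from the equivalence of (i) and (ii) in \cref{thm213acopy}. I do not anticipate any substantive obstacle; the argument is essentially bookkeeping, and the only care required is in correctly recasting the Dirichlet residual $v - f$ as the boundary operator $\mathscr{B}$ and in verifying that the differential and boundary operators map into the codomains required by the abstract theorem.
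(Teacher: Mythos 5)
Your proposal is correct and matches the paper's approach exactly: the paper likewise dispatches the corollary by applying \cref{thm213acopy} with $\delta = 1$, $\basicset = C^2(D,\R)$, $\thmW = \partial D$, $\mathscr{D}(v) = \Delta v$, and $\mathscr{B}(v) = v|_{\partial D} - f$, at which point the two equivalences line up term by term. Your explicit choice $b = 1$ is in fact the only consistent one (the paper writes $\thmb \with \thmb$, which is vacuous but still forces $\thmb = 1$ through the codomain $C(\partial D,\R)$ of $\mathscr{B}$), and the codomain and loss-functional checks you spell out are exactly the implicit bookkeeping behind the paper's one-line invocation.
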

  \begin{cproof}{thm213acopyLap} 
  \Nobs\ 
  \cref{thm213acopy} (applied with  $d\with d$,  $\delta\with 1$,  $\thmb\with\thmb$, $D\with D$,  $\thmW\with \partial D$, $\basicset\with   C^2(  D,\R)$,   $\mathscr{D}\with (  C^2(D,\R)\ni u\mapsto \Delta u\in C(D,\R))  $,   $\mathscr{B}\with (C^2(D,\R)\ni v\mapsto v|_{ \partial D } -f\in  C(\partial D,\R))$ in the notation of \cref{thm213acopy}) \proves\    (\cref{thm23item2copyLap}\allowbreak$  \longleftrightarrow  $\allowbreak\cref{thm23item1copyLap}).
  \end{cproof}
    
  \cfclear

 \begin{lemma}\label{NinCP}
{%
  Let $ \mathfrak{d},p, L\in \N$,   $\theta \in \R^{ \mathfrak{d} }$, $\act\in C^p(\R,\R)$,   $l_0,l_1,\dots,l_L
\in \N$ satisfy $\mathfrak d=\sum_{ k = 1 }^{ L } l_k (l_{k-1} + 1)$.
Then
$ \cfadd{def:nets}{ \mathcal N}^{l_0,l_1,\dots,l_L}_{\act,\theta}
 \in C^p(\R^{l_0},\R^{l_L})$
\cfload.
}
\end{lemma}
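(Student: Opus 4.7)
The plan is to decompose the realization $\mathcal{N}^{l_0,l_1,\dots,l_L}_{\act,\theta}$ according to its defining formula \eqref{eq:paramFFNN} and argue that each factor in the composition is $C^p$; then $C^p$ regularity of the whole realization follows because compositions of finitely many $C^p$ maps between Euclidean spaces are $C^p$.

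First I would observe from \cref{def:affine} that for any $m, n \in \N$, $s \in \N_0$, and admissible parameter vector, the affine map $\Aff_{m,n}^{\theta,s} \colon \R^n \to \R^m$ has each coordinate given by an affine-linear polynomial in the input, and is hence $C^\infty$, in particular $C^p$. Second, I would observe from \cref{def:multidim_version} that when restricted to $\R^n$ (for any $n \in \N$), the map $\multdim_{\act} \colon \R^n \to \R^n$ acts coordinatewise by $\act$, that is, $\multdim_{\act}(x_1,\ldots,x_n) = (\act(x_1),\ldots,\act(x_n))$. Since $\act \in C^p(\R,\R)$, each coordinate function of $\multdim_{\act}$ depends in a $C^p$ manner on a single input variable, so the restriction $\multdim_{\act}|_{\R^n} \colon \R^n \to \R^n$ is $C^p$.

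Next I would invoke the standard chain-rule fact that if $f \in C^p(\R^a, \R^b)$ and $g \in C^p(\R^b, \R^c)$, then $g \circ f \in C^p(\R^a, \R^c)$ (a routine consequence of Faà di Bruno's formula, or of an induction on $p$ using the chain rule). By a straightforward induction on the number of factors, the composition of any finite list of $C^p$ maps between Euclidean spaces is again $C^p$.

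Finally, I would apply this to \eqref{eq:paramFFNN}: the realization is an alternating composition of the affine maps $\Aff_{l_{k},l_{k-1}}^{\theta,\mathfrak{d}_{k-1}}$ (all $C^\infty$) and the multi-dimensional versions $\multdim_{\act}$ restricted to $\R^{l_k}$ for the appropriate $k$ (all $C^p$), and hence belongs to $C^p(\R^{l_0}, \R^{l_L})$. No step is truly an obstacle; the only mild subtlety is making the coordinatewise argument for $\multdim_{\act}$ explicit, which I would handle by noting that the partial derivatives of order $\leq p$ of $(x_1,\ldots,x_n) \mapsto \act(x_i)$ exist and are continuous because $\act \in C^p(\R,\R)$.
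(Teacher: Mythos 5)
Your proof is correct and self-contained. The paper itself does not give this argument explicitly: its proof of \cref{NinCP} simply cites item~(i) of Lemmas~6.1.5, 6.1.6, and 6.1.7 in \cite{Jentzen2023} (pointing further to the proofs of Lemma~10.13.2, Lemma~10.13.3, and Corollary~10.13.4 there), and those cited results encapsulate precisely the three facts you use: affine maps are $C^\infty$, the coordinatewise application of a $C^p$ function is $C^p$ as a map $\R^n\to\R^n$, and finite compositions of $C^p$ maps between Euclidean spaces are again $C^p$. So your route and the paper's route are substantively the same; the difference is one of presentation — the paper defers to the reference, while you spell out the chain-rule/Fa\`a di Bruno reasoning and the coordinatewise analysis of $\multdim_{\act}$ directly. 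What your version buys is self-containedness and a clear picture of why the claim is true; what the paper's version buys is brevity and consistency with a reference it is already built on. One small point you handled correctly but worth being explicit about: $\multdim_{\act}$ as defined in \cref{def:multidim_version} has a large union of tensor spaces as its domain, so the object one actually needs to be $C^p$ is the restriction $\multdim_{\act}|_{\R^{l_k}}\colon\R^{l_k}\to\R^{l_k}$, which — as you note — is a map between Euclidean spaces whose coordinate functions are each $\act$ composed with a coordinate projection, hence $C^p$.
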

 \begin{cproof}{NinCP}
 \Nobs\
  item (i) in Lemma  6.1.5,  item (i) in  Lemma 6.1.6,  and  item (i) in 6.1.7  in \cite{Jentzen2023} \prove\ that 
 $ \cfadd{def:nets}{ \mathcal N}^{l_0,l_1,\dots,l_L}_{\act,\theta}
 \in C^p(\R^{l_0},\R^{l_L})$
    (cf.,  \ex  the proofs of Lemma 10.13.2,  Lemma 10.13.3,  and Corollary 10.13.4 in \cite{Jentzen2023}).
 \end{cproof}

  \begin{algo}[\PINN\  methods  for general boundary value \PDE\ problems\footnote{Note that for every $ k, d, \delta \in \N $ and every $D \subseteq \R^d$ it holds that $C^k( D, \R^{ \delta } ) = \{ f \in C( D, \R^{ \delta } ) \colon ( \exists\, g \in C^k( \R^d, \R^{ \delta } ) \colon g|_D = f )\}$.}]\label{algo26copy}
  {%
 Let  $d, \delta,  b,\thmp \in \N$,     let $D\subseteq \R^d$,   
  let $\thmW \subseteq \partial D$,
  let $\mathscr{D}\colon  C^{\thmp}(  D, \R^{ \delta } ) \to C( D,\R^{\delta})$ and $\mathscr{B}\colon  C^{\thmp}(  D, \R^{ \delta } ) \to C(\thmW,\R^b)$ be functions,  let 
 $u\in C^p(  D,\R^{\delta})$ satisfy  for all  $x\in D$,  $y\in \thmW$  that 
  \begin{equation}\label{weakpdeu2copy}
(\mathscr{D}(u))(x)=0 \qquad \text{and}\qquad
(\mathscr{B}(u))(y)=0,  
\end{equation}
  let  $\act \in C^{ \thmp }( \R, \R )$,   $\mathfrak{d},L\in \N$,  $ l_0, l_1,\dots, l_{L} \in \N$ 
  satisfy $l_0=d$,  $l_L = \delta$,  and $\mathfrak d=\sum_{ k = 1 }^{ L } l_k (l_{k-1} + 1)$,
 let  $ \mathscr{L} \colon \R^{ \mathfrak{d}} \times D\times \thmW \to \R$ satisfy for all $\theta \in \R^{ \mathfrak{d}} $,  $x\in D$, $y\in \thmW$ that
 \beq 
  \mathscr{L}( \theta,x,y ) =   \|(\mathscr{D}(\cfadd{def:nets,NinCP}{\mathcal N}^{l_0,l_1,\dots,l_L}_{\act,\theta}|_{ D }))(x) \|^2 +\|(\mathscr{B}(\cfadd{def:nets,NinCP}{\mathcal N}^{l_0,l_1,\dots,l_L}_{\act,\theta}|_{ D })(y) \|^2
 \eeq  \cfload, \cfclear
   let $ \algoG \colon \R^{ \mathfrak{d} } \times D\times \thmW \to \R^{ \mathfrak{d} }$ 
satisfy
for all $x \in D$,  $y \in \thmW$, $ \theta \in \{ \vartheta\in \R^{ \mathfrak{d}}\colon   \mathscr{L}( \cdot,x,y )  \text{  is  differentiable at }  \vartheta \}$ 
that $ \algoG( \theta,x,y)=( \nabla_{ \theta } \mathscr{L} )(\theta,x,y)$, 
   let $(\Omega,\mathcal F,\PP)$ be a probability space,
     let  $M, \algoM \in \N$, 
     let  $\xrandom_{ m}\colon \Omega\to D$,  $m \in\{1,2,\dots, M\}$, 
   be  i.i.d.\  random variables,     let $\yrandom_{m} \colon \Omega\to W $,  $m \in\{1,2,\dots, M\}$, be  i.i.d.\  random variables, 
  for every $n\in \N$, $m\in\{1,2,\dots, \algoM\}$  let $ \xi_{ n, m } \colon\Omega \to \{ 1, 2, \dots, M \}$ be  a random variable,
 assume that       $( \xrandom_{ m})_{m\in  \{1,2,\dots, M\}}$,    $( \yrandom_{ m})_{m\in  \{1,2,\dots, M\}}$,   and     $ (\xi_{ n, m })_{(n, m)\in\N\times \{1,2,\dots, \algoM\}}$      are  independent,
let $(\gamma_n )_{n \in \N} \subseteq \R$, 
 and let $\Theta\colon \N_0 \times \Omega \to \R^{ \mathfrak{d}}$ satisfy for all $n \in \N_0$ that 
\begin{equation}
\Theta_{ n  } = \Theta_{n-1} - \frac{ \gamma_n }{ \algoM }\left[  \sum_{ m = 1 }^{\algoM}\algoG( \Theta_{ n - 1 }, \xrandom_{ \xi_{ n, m } }, \yrandom_{ \xi_{ n, m } } )\right].
\end{equation}  }
   \end{algo}

\begin{remark}[Explanations for \cref{algo26copy}]
Loosely speaking, 
in \cref{algo26copy} we think for sufficiently large 
  $N \in \N$  
of the \ANN\ realization 
$
  \cfadd{def:nets,NinCP}{\mathcal N}^{l_0,l_1,\dots,l_L}_{\act,\Theta_N}|_{ D }
$
as an approximation 
\begin{equation}
\label{T_B_D}
\begin{split}
  &\cfadd{def:nets,NinCP}{\mathcal N}^{l_0,l_1,\dots,l_L}_{\act,\Theta_N}|_{ D } \approx u
\end{split}
\end{equation}
of the solution $u\in C^p(  D,\R^{\delta})$ of the boundary value \PDE\ problem in \eqref{weakpdeu2copy}.
\end{remark}
    
\begin{remark}
In \cref{algo26copy} we employ fully-connected feedforward \ANNs\ as defined in \cref{def:nets} to approximate \PDE\ solutions.
Many other architectures could be used; for example, random feature networks where only the last layer is learned 
(cf., e.g., \cite{MR4633578} for results analyzing such methods).
\end{remark}

\subsubsection{PINNs for time-dependent  initial value PDE problems}
\label{sect:PINNsIVP}

In this section we introduce a \PINNs\ methodology for general time-dependent initial value \PDE\ problems in \cref{algo26} below.
This section proceeds in an analogous way to \cref{sect:PINNsBVP}:
We motivate \cref{algo26} with a reformulation result in \cref{thm213a} which allows recasting time-dependent initial value \PDE\ problems as stochastic minimization problems.

 \begin{corollary}[\PINNs\ for time-dependent initial value \PDE\ problems]\label{thm213a}
 {%
 Let  $T\in (0,\infty)$,  $d,\delta,\thmb \in \N$,    let $D\subseteq \R^d$,
  let   $\phi\in C(D,\R^{\delta})$,  let $\basicset\subseteq C( [0,T] \times D,\R^{\delta})$,  let $\mathscr{D}\colon \basicset\to C([0,T] \times D,\R^{\delta}) $ and $\mathscr{B}\colon \basicset\to C([0,T]\times\partial D,\R^{\thmb})$ be functions,  let 
 $u\in \basicset$ satisfy  for all $t\in [0,T]$,  $x\in D$,  $y\in \partial D$  that 
  \begin{equation}\label{weakpdeua}
(\mathscr{D}(u))(t,x)=0,  \qquad
(\mathscr{B}(u))(t,y)=0,  \qquad \text{and} \qquad
u(0,x)=\phi(x),
\end{equation}
  let $(\Omega,\mathcal F,\PP)$ be a probability space,
  let  $\xrandom\colon \Omega\to D$,  $\yrandom\colon \Omega\to \partial D$,  and $\trandom\colon \Omega\to[0,T]$ 
    be independent  random variables, 
     assume 
    for all  $r\in (0,T)$,  $s\in (r,T)$, 
   $\mathcal{X},\mathcal{Y}\in\{\mathcal{Z}\subseteq \R^d\colon\mathcal{Z} \text{ is open}\}$
    with $\mathcal{X}\cap D\neq \emptyset$ and $\mathcal{Y}\cap \partial D\neq \emptyset $ 
 that $\PP( \{\xrandom\in \mathcal{X}\} \cap \{\yrandom\in \mathcal{Y}\} \cap\{  r<\trandom<s\})>0$,  
and   let  $\lossexp\colon \basicset\to [0,\infty]$ satisfy for all $v\in \basicset$ that
 \beq\label{Ivgeneraleq}\begin{split}
  \lossexp(v)=\E \bigl[\|(\mathscr{D}(v))(\trandom,\xrandom) \|^2 +\|(\mathscr{B}(v))(\trandom,\yrandom) \|^2
+ \| v(0,\xrandom)-\phi(\xrandom)\|^2\bigr].\end{split}\eeq
 	Then 
 	\begin{enumerate}[(i)]
 	\item\label{thm23item1} 
 	for all $v\in \basicset$  it holds that
 	 \beq\label{eq99a}
\lossexp(v)=\inf_{ w \in \basicset }\lossexp(w)
\eeq
  if and only if 	 it holds
   for all $t\in [0,T]$,  $x\in D$,  $y\in \partial D$
   that 
  \begin{equation}\label{eq100a}
(\mathscr{D}(v))(t,x)=0,  \qquad
(\mathscr{B}(v))(t,y)=0,  \qquad \text{and} \qquad
v(0,x)=\phi(x)
\end{equation}
and
 	\item\label{thm23item2}
 	there exists $v\in \basicset$ such that 
 	\beq
 	\lossexp(v)=\inf_{ w \in \basicset }\lossexp(w).
 	\eeq
 	\end{enumerate}

 }
 \end{corollary}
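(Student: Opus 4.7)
The plan is to reduce \cref{thm213a} to the abstract reformulation result \cref{lemmaBcor}, following the same template as the proof of \cref{thm213acopy} but enlarging the residual to incorporate the initial condition. Concretely, I would define $\solutionset \subseteq \basicset$ to consist of those $v \in \basicset$ that satisfy all three equations in \cref{eq100a} pointwise for every $(t,x,y) \in [0,T] \times D \times \partial D$; the assumption $u \in \basicset$ together with \cref{weakpdeua} guarantees $\solutionset \neq \emptyset$. For every $v \in \basicset$ I would then set
\beq
  U_v(t,x,y) = \bigl((\mathscr{D}(v))(t,x),\; (\mathscr{B}(v))(t,y),\; v(0,x) - \phi(x)\bigr) \in \R^{2\delta+\thmb}
\eeq
and take $R = (\trandom,\xrandom,\yrandom)$, which is measurable into $[0,T] \times D \times \partial D$. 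Expanding $\|U_v(R)\|^2$ then yields exactly the integrand of \cref{Ivgeneraleq}, so $\lossexp(v) = \E[\|U_v(R)\|^2]$ and the hypotheses of \cref{lemmaBcor} collapse to verifying the single equivalence $\PP(U_v(R) = 0) = 1 \Longleftrightarrow v \in \solutionset$.

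The implication $v \in \solutionset \Rightarrow \PP(U_v(R) = 0) = 1$ is immediate from the definition of $\solutionset$. For the converse, I would first extract marginal positivity statements from the joint hypothesis: by independence of $\trandom,\xrandom,\yrandom$ and by picking auxiliary open witnesses for whichever variable is being marginalized out, one obtains that $\PP(\xrandom \in \mathcal{X}) > 0$ for every open $\mathcal{X} \subseteq \R^d$ meeting $D$, that $\PP((\trandom,\xrandom) \in \mathcal{U}) > 0$ for every open $\mathcal{U} \subseteq \R \times \R^d$ meeting $(0,T) \times D$, and analogously that $\PP((\trandom,\yrandom) \in \mathcal{U}') > 0$ for every open $\mathcal{U}'$ meeting $(0,T) \times \partial D$. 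I would then apply \cref{sec2lem6} three times, namely to the continuous maps $(t,x) \mapsto (\mathscr{D}(v))(t,x)$ on $(0,T) \times D$, $(t,y) \mapsto (\mathscr{B}(v))(t,y)$ on $(0,T) \times \partial D$, and $x \mapsto v(0,x) - \phi(x)$ on $D$, thereby upgrading the almost-sure vanishing of each coordinate of $U_v(R)$ into pointwise vanishing on the corresponding open slice. Continuity of $\mathscr{D}(v)$ on $[0,T]\times D$ and of $\mathscr{B}(v)$ on $[0,T]\times\partial D$ then extends the vanishing to the temporal endpoints $t\in\{0,T\}$, placing $v$ in $\solutionset$. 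Once this equivalence is secured, \cref{lemmaBcor} applied with $\mathfrak{R} = [0,T]\times D\times\partial D$, $N = 2\delta + \thmb$, $R$ as above, and the family $(U_v)_{v\in\basicset}$ delivers both \cref{thm23item1} and \cref{thm23item2} at once.

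The only point that requires a bit of care is the marginalization step: the positivity hypothesis in \cref{thm213a} only charges joint open cylinders in the full triple $(\trandom,\xrandom,\yrandom)$, so to feed \cref{sec2lem6} on the pairs $(\trandom,\xrandom)$ and $(\trandom,\yrandom)$ one must pick auxiliary open witnesses for the missing variable and collapse via independence. Together with the temporal continuity extension to $t\in\{0,T\}$, these are the only additions compared to the proof of \cref{thm213acopy}; everything else is routine and structurally identical to that earlier argument.
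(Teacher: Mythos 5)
Your proof is correct, but it takes a different route from the paper. The paper's own proof of \cref{thm213a} consists of a single invocation of \cref{thm213acopy} with $d\with d+1$, $D\with[0,T]\times D$, and $\thmW\with[0,T]\times\partial D$, leaving implicit how the initial-condition residual is to be absorbed into the abstract operators and how the random inputs are to be identified with those of \cref{thm213acopy}. Taken literally, that invocation does not obviously satisfy the hypotheses of \cref{thm213acopy}: the natural candidates $(\trandom,\xrandom)$ and $(\trandom,\yrandom)$ for the roles of the interior and boundary sampling variables are not independent (they share $\trandom$), and the two-term loss of \cref{thm213acopy} has no slot for the term $\|v(0,\xrandom)-\phi(\xrandom)\|^2$. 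You instead apply \cref{lemmaBcor} directly with the three-component residual $U_v(t,x,y)=\bigl((\mathscr{D}(v))(t,x),(\mathscr{B}(v))(t,y),v(0,x)-\phi(x)\bigr)\in\R^{2\delta+\thmb}$ and $R=(\trandom,\xrandom,\yrandom)$, followed by three applications of \cref{sec2lem6}. This is structurally the same argument as the paper's proof of \cref{thm213acopy} itself, with a third residual component added, and it sidesteps both concerns above; it is therefore a more explicit and more self-contained version of what the paper's one-line proof sketches, at the modest cost of not re-using \cref{thm213acopy} as a black box.

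One small point worth tightening: you propose to apply \cref{sec2lem6} to $(\trandom,\xrandom)$ on the open domain $(0,T)\times D$ and then extend to $t\in\{0,T\}$ by continuity. As written this has a minor mismatch, since \cref{sec2lem6} requires its random variable to take values in the chosen domain, whereas $\trandom$ is only assumed to map into $[0,T]$ and is not excluded from charging $\{0,T\}$. The cleaner route is to apply \cref{sec2lem6} directly on $[0,T]\times D$ (and likewise on $[0,T]\times\partial D$): for any $(t,x)\in[0,T]\times D$ and $\varepsilon\in(0,\infty)$ one can find $r,s$ with $0<r<s<T$ and an open $\mathcal X\subseteq\R^d$ with $x\in\mathcal X$ such that $(r,s)\times\mathcal X$ lies inside the $\varepsilon$-ball about $(t,x)$, so the joint positivity hypothesis (with $\mathcal Y=\R^d$) already yields $\PP\bigl(\|(\trandom,\xrandom)-(t,x)\|<\varepsilon\bigr)>0$, and no separate continuity-extension step is needed. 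With that adjustment the argument is airtight.
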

 
 \begin{cproof}{thm213a}
 \Nobs\ \cref{thm213acopy} (applied with $d\with d+1$, $\delta\with\delta$, $b\with b$,  $D\with [0,T]\times D$,  $\thmW\with [0,T]\times \partial D$   in the notation of \cref{thm213acopy})
 \proves\ \cref{thm23item1,thm23item2}.
 \end{cproof}

  \begin{algo}[\PINN-\SGD\ methods for time-dependent initial value \PDE\ problems\footnote{Note that for every $ k, d, \delta \in \N $ and every $D \subseteq \R^d$ it holds that $C^k( D, \R^{ \delta } ) = \{ f \in C( D, \R^{ \delta } ) \colon ( \exists\, g \in C^k( \R^d, \R^{ \delta } ) \colon g|_D = f )\}$.}]\label{algo26}
  {%
 Let $T\in (0,\infty)$,  $d, \delta,  b,\thmp \in \N$,     let $D\subseteq \R^d$,  let   $\phi\in C(D,\R^{\delta})$,   let $\mathscr{D}\colon  C^{\thmp}( [0,T] \times D, \R^{ \delta } ) \to C([0,T] \times D,\R^{\delta})$ and $\mathscr{B}\colon  C^{\thmp}( [0,T] \times D, \R^{ \delta } ) \to C([0,T]\times\partial D,\R^b)$ be functions,  let 
 $u\in C^p( [0,T] \times D,\R^{\delta})$ satisfy  for all $t\in [0,T]$,  $x\in D$,  $y\in \partial D$  that 
  \begin{equation}\label{weakpdeu2}
(\mathscr{D}(u))(t,x)=0,  \qquad
(\mathscr{B}(u))(t,y)=0,  \qquad \text{and} \qquad
u(0,x)=\phi(x),
\end{equation}
let  $\act \in C^{ \thmp }( \R, \R )$, 
$\mathfrak{d},L\in \N$,  $ l_0, l_1,\dots, l_{L} \in \N$ 
  satisfy $l_0=d+1$,  $l_L = \delta$,  and $\mathfrak d=\sum_{ k = 1 }^{ L } l_k (l_{k-1} + 1)$,
   let $ \mathscr{L}\colon  \R^{ \mathfrak{d}}\times [0,T]\times   D  \times \partial D \to \R$ satisfy for all $\theta \in \R^{ \mathfrak{d}} $,  $t\in [0,T]$, $x\in D$, $y\in \partial D$ that
 \beq 
 \begin{split}
  \mathscr{L}(\theta,t,x,y )& = 
  \|(\mathscr{D}(\cfadd{def:nets,NinCP}{\mathcal N}^{l_0,l_1,\dots,l_L}_{\act,\theta}|_{  [0,T] \times D }))(t,x) \|^2 +\|(\mathscr{B}(\cfadd{def:nets,NinCP}{\mathcal N}^{l_0,l_1,\dots,l_L}_{\act,\theta}|_{  [0,T] \times  D }))(t,y) \|^2\\
  &\quad+ \| \cfadd{def:nets,NinCP}{\mathcal N}^{l_0,l_1,\dots,l_L}_{\act,\theta}(0,x)-\phi(x)\|^2
  \end{split}
 \eeq 
 \cfload, \cfclear
  let $ \algoG\colon   \R^{ \mathfrak{d}}\times [0,T]\times   D  \times \partial D  \to \R^{ \mathfrak{d} }$ 
satisfy
for all
$t\in [0,T]$,
 $x \in D$,  $y \in \partial D$, $ \theta \in \{ \vartheta\in \R^{ \mathfrak{d}}\colon   \mathscr{L}( \cdot,x,y )  \text{  is  differentiable at }  \vartheta \}$ 
that $ \algoG( t,\theta,x,y)=( \nabla_{ \theta } \mathscr{L} )(t,\theta,x,y)$, 
   let $(\Omega,\mathcal F,\PP)$ be a probability space,
     let  $M, \algoM \in \N$, 
     let  $\xrandom_{ m}\colon \Omega\to D$,  $m \in\{1,2,\dots, M\}$, 
   be  i.i.d.\  random variables,     let $\yrandom_{m} \colon \Omega\to \partial D $,  $m \in\{1,2,\dots, M\}$, be  i.i.d.\  random variables, 
     let $\trandom_{m} \colon \Omega\to [0,T] $,  $m \in\{1,2,\dots, M\}$, be  i.i.d.\  random variables, 
   for every $n\in \N$, $m\in\{1,2,\dots, \algoM\}$   let $ \xi_{ n, m } \colon\Omega \to \{ 1, 2, \dots, M \}$ be  a random variable,
 assume that       $( \xrandom_{ m})_{m\in  \{1,2,\dots, M\}}$,    $( \yrandom_{ m})_{m\in  \{1,2,\dots, M\}}$,  $( \trandom_{ m})_{m\in  \{1,2,\dots, M\}}$,   and     $ (\xi_{ n, m })_{(n,m)\in\N\times \{1,2,\dots, \algoM\}}$      are  independent,
let $(\gamma_n )_{n \in \N} \subseteq \R$, 
 and let $\Theta\colon \N_0 \times \Omega \to \R^{ \mathfrak{d}}$ satisfy for all $n \in \N_0$ that 
\begin{equation}
\Theta_{ n + 1 } = \Theta_n - \frac{ \gamma_n }{ \algoM }\left[  \sum_{ m = 1 }^{\algoM}\algoG( \Theta_{ n - 1 }, \trandom_{ \xi_{ n, m } },\xrandom_{ \xi_{ n, m } }, \yrandom_{ \xi_{ n, m } } )\right].
\end{equation}
  }
\end{algo}

\begin{remark}[Explanations for \cref{algo26}]
  Loosely speaking, 
  in \cref{algo26} we think for sufficiently large 
    $N \in \N$  
  of the \ANN\ realization 
  $
  \cfadd{def:nets,NinCP}{\mathcal N}^{l_0,l_1,\dots,l_L}_{\act,\Theta_N}|_{  [0,T] \times D }
  $
  as an approximation 
  \begin{equation}
  \label{T_B_D}
  \begin{split}
  \cfadd{def:nets,NinCP}{\mathcal N}^{l_0,l_1,\dots,l_L}_{\act,\Theta_N}|_{  [0,T] \times D }\approx u
  \end{split}
  \end{equation}
  of the solution $u \in C^p( [0,T] \times D,\R^{\delta})$ of the time-dependent initial value \PDE\ problem in \cref{weakpdeu2}.
\end{remark}

\subsubsection{PINNs for free boundary Stefan problems}

In this section we introduce a \PINNs\ methodology for free boundary Stefan problems in \cref{algo:stefan} below.
This section proceeds in an analogous way to \cref{sect:PINNsBVP,sect:PINNsBVP}:
We motivate \cref{algo:stefan} with a reformulation result in \cref{stefanproblem} which allows recasting free boundary Stefan problems as stochastic minimization problems.
The Stefan problem considered in this section and the corresponding \PINNs\ methodology in \cref{algo:stefan} are based on \cite{MR4199366}.

\begin{theorem}[\PINNs\ for free boundary Stefan problems]\label{stefanproblem}
	{%
 Let  $T, \psi \in (0,\infty)$,    
	$\phi,g,h_0,h_1 \in C(\R,\R)$,
	 let $\basicset = C^{ 1, 2 }( [0,T]\times\R, \R ) \times C( [0,T], \R)$, 
	 let $ u = (u_1, u_2) = ((u_1(t,x))_{ (t,x) \in [0,T]\times\R }, (u_2(t))_{t \in [0,T] }) \in \basicset
	 $,
let $D=\{(t,x) \in (0,T) \times (0,\infty) \colon x < u_2(t)  \}$,
  assume   for all $(t,x) \in  D$  that
\beq  \tfrac{\partial u_1}{\partial t}(t,x)=\tfrac{\partial^2 u_1}{\partial x^2}(t,x), \qquad
\tfrac{\partial u_1}{\partial x}(t,0) = g(t),   \qquad u_1(t,u_2 (t)) = h_0(t),  \eeq
 \beq 
   \tfrac{\partial u_1}{\partial x}(t,u_2 (t)) =  h_1(t), \qquad  u_1(0,x) =  \phi(x),  \qquad\text{and}\qquad 
     u_2(0)=\psi, \eeq
 let $(\Omega,\mathcal F,\PP)$ be a probability space,
  let   $\trandom\colon \Omega\to \R$  and $\xrandom\colon \Omega\to \R$ be  random variables which satisfy  for all non-empty  open $U\subseteq D$ that $0<\P((\trandom,\xrandom)\in U)\leq \P((\trandom,\xrandom)\in D)=1$,
and  let $\lossexp\colon \basicset \to [0,\infty]$ satisfy for all $v=(v_1,v_2)\in \basicset$  that
 \beq
\begin{split}
\lossexp(v)&=
\E\big[|\tfrac{\partial v_1}{\partial t}(\trandom,\xrandom) - \tfrac{\partial^2 v_1}{\partial x^2}(\trandom,\xrandom)|^2 
+ |\tfrac{\partial v_1}{\partial x}(\trandom,0)- g(\trandom)  |^2+|v_1(\trandom,v_2(\trandom))-h_0(\trandom)|^2\\&\quad +|\tfrac{\partial v_1}{\partial x}(\trandom,v_2(\trandom)) - h_1(\trandom)|^2+| v_1(0,\xrandom)- \phi(\xrandom) |^2+|v_2(0) - \psi|^2\big].
\end{split}
\eeq
 	Then 
 	\begin{enumerate}[(i)]
 	\item\label{thm28item1} 
 	for all $v=(v_1,v_2)\in \basicset$  it holds that
 	 \beq\label{eq992}
\lossexp(v)=\inf_{ w \in \basicset}\lossexp(w)
\eeq
  if and only if 	 
    it holds  for all $(t,x) \in  D$ that
\beq  \tfrac{\partial v_1}{\partial t}(t,x)  = \tfrac{\partial^2 v_1}{\partial x^2}(t,x),
\qquad \tfrac{\partial v_1}{\partial x}(t,0) = g(t),  \qquad v_1(t,v_2(t)) = h_0(t), 
\eeq
 \beq   \qquad
   \tfrac{\partial v_1}{\partial x}(t,v_2(t)) =  h_1(t), \qquad v_1(0,x) =  \phi(x), \qquad \text{and}
   \qquad v_2(0) = \psi,\eeq
and
   	\item\label{thm28item2} 
	there exists $v\in \basicset$ such that 
	\beq   \lossexp(v)=\inf_{ w \in \basicset}\lossexp(w).           \eeq
 	\end{enumerate}
	}
\end{theorem}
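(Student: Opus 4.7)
My plan is to apply \cref{lemmaBcor} in essentially the same way as the proof of \cref{thm213acopy}. Set $R = (\trandom, \xrandom)$, and for each $v = (v_1, v_2) \in \basicset$ define a map $U_v \colon D \to \R^{6}$ by packaging the six residuals appearing in the integrand of $\lossexp(v)$ as coordinates:
\begin{equation}
\begin{split}
U_v(t,x) = \bigl( & \tfrac{\partial v_1}{\partial t}(t,x) - \tfrac{\partial^2 v_1}{\partial x^2}(t,x),\ \tfrac{\partial v_1}{\partial x}(t,0) - g(t),\ v_1(t, v_2(t)) - h_0(t),\\
& \tfrac{\partial v_1}{\partial x}(t, v_2(t)) - h_1(t),\ v_1(0,x) - \phi(x),\ v_2(0) - \psi \bigr).
\end{split}
\end{equation}
Since $\P((\trandom,\xrandom) \in D) = 1$, the identity $\lossexp(v) = \E[\|U_v(R)\|^2]$ holds. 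Let $\solutionset \subseteq \basicset$ be the set of $v$ satisfying all six conditions listed in item \cref{thm28item1} for every $(t,x) \in D$; by hypothesis $u \in \solutionset$, so $\solutionset \neq \emptyset$.

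The heart of the argument is establishing the equivalence $\P(U_v(R) = 0) = 1 \Leftrightarrow v \in \solutionset$. The direction ``$\Leftarrow$'' is immediate from the definitions together with the fact that $R \in D$ almost surely. For ``$\Rightarrow$'' I would proceed coordinate-by-coordinate: each of the six components of $U_v$ is a continuous function on $D$ (using $v_1 \in C^{1,2}$ and $v_2 \in C([0,T])$, together with continuity of $g, h_0, h_1, \phi$), and hence Lemma~\ref{sec2lem6} applies once I verify its support hypothesis. For any $(t_0,x_0) \in D$ and $\eta \in (0,\infty)$ the set $D \cap \{(t,x) \in \R^2 : \|(t,x)-(t_0,x_0)\| < \eta\}$ is a non-empty open subset of $D$, so the assumption $\P((\trandom,\xrandom) \in U) > 0$ for all such $U$ yields $\P(\|R - (t_0,x_0)\| < \eta) > 0$. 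Consequently each component of $U_v$ vanishes on all of $D$, which is exactly the statement $v \in \solutionset$.

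With the equivalence in hand, both items \cref{thm28item1,thm28item2} follow by invoking \cref{lemmaBcor} (with $\mathfrak{R} \with D$, $N \with 6$, $R \with (\trandom,\xrandom)$, $(U_v)_{v \in \basicset}$ as above, $\solutionset \with \solutionset$, and $\basicset \with \basicset$). Item \cref{lemmaBcoritemi} of the corollary is exactly item \cref{thm28item1}, and item \cref{lemmaBcoritemii} produces the minimizer required in item \cref{thm28item2}.

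The main obstacle is bookkeeping: unlike in \cref{thm213acopy}, where interior and boundary terms are tested against independent random variables on separate domains, here every term in the loss is tested against a single random pair $(\trandom,\xrandom)$ distributed on the two-dimensional region $D$. I need to make sure the continuity-plus-support argument of Lemma~\ref{sec2lem6} is applied to functions on $D$ (some of which, such as $\partial_x v_1(t,0) - g(t)$, $v_2(0) - \psi$, or $v_1(0,x) - \phi(x)$, only depend on one of the coordinates or on neither), and to confirm that this single application produces the pointwise identities ``for all $(t,x) \in D$'' demanded by the theorem statement, rather than on some larger auxiliary set where the identities are not actually required.
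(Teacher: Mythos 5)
Your proposal matches the paper's proof essentially exactly: same $U_v$, same $\solutionset$, same forward and converse arguments, same invocation of \cref{sec2lem6} (the paper applies it once to the $\R^6$-valued map with $\delta = 6$ rather than coordinate-by-coordinate, but this is the same argument), and the same final appeal to \cref{lemmaBcor}. Your verification of the support hypothesis for \cref{sec2lem6} via intersecting open balls with the open set $D$ is also the intended step.
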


\begin{cproof}{stefanproblem}
Throughout  this proof
for every $v \in V$ let $U_v \colon D \to  \R^6$ satisfy for all $(t,x) \in D$ that
\begin{multline}\label{defoffvtx}
U_{v}(t,x)=\bigl(\tfrac{\partial v_1}{\partial t}(t,x) - \tfrac{\partial^2 v_1}{\partial x^2}(t,x),\tfrac{\partial v_1}{\partial x}(t,0)- g(t) , 
v_1(t,v_2(t))-h_0(t), \\ \tfrac{\partial v_1}{\partial x}(t,v_2(t)) - h_1(t),v_1(0, x)- \phi(x),v_2(0) - \psi\bigr)
\end{multline}
and let
 $\solutionset\subseteq   V$ satisfy 
 \beq\label{defofSthm215} \begin{split}
 \solutionset=\{ v\in    V \colon
 \textstyle\sup_{(t,x)\in D}\| U_{v}(t,x)\|=0    \}.
\end{split}\eeq
\Nobs\  \cref{defofSthm215}, the assumption that $\P((\trandom,\xrandom)\in D)=1$,  the fact
that 
$ \{ \phi, g, h_0, h_1 \} \subseteq C( R, R )$,   and   the fact that  $[0,T]\times\R \supseteq D$
\prove\  that  for all $v\in \solutionset$ it holds that 
$ \P( U_{v}(\trandom,\xrandom)= 0)=1$.
\Moreover\ \cref{defoffvtx} \proves\  that for all $v\in V$
with 
\beq\label{plv0eq1}  \P( U_{v}(\trandom,\xrandom)= 0)=1 \eeq
it holds that  $v_2(0)=\psi$ and 
\beq\begin{split}
\P\bigl(\tfrac{\partial v_1}{\partial t}(\trandom,\xrandom)&=\tfrac{\partial^2 v_1}{\partial x^2}(\trandom,\xrandom)\bigr)=\P\bigl(v_1(0,\xrandom)= \phi(\xrandom)
\bigr)=\P\bigl( \tfrac{\partial v_1}{\partial x}(\trandom,0)= g(\trandom)  \bigr)\\
&=\P\bigl( v_1(\trandom,v_2(\trandom))=h_0(\trandom)\bigr)
= \P\bigl( \tfrac{\partial v_1}{\partial x}(\trandom,v_2(\trandom)) = h_1(\trandom)\bigr)
=1.
\end{split}\eeq
Combining  this,  the fact that  for all $v\in \basicset$  with
\cref{plv0eq1} it holds that  $v_2(0)=\psi$,   the assumption 
that for all non-empty  open $U\subseteq D$ it holds that $\P((\trandom,\xrandom)\in U)>0$,
 and 
\cref{sec2lem6} (applied with  $d\with 2$,  $ \delta \with 6$,   $f\with (  D \ni (t,x ) \mapsto U_{v}(t,x) \in \R^6)$ in the notation of \cref{sec2lem6})  \proves\ that
for all  $(t,x)\in D$,  $v\in  V$
with $ \P(  U_{v}(\trandom,\xrandom)= 0)=1$ it holds   that 
\beq U_{v}(t,x)=0.\eeq
\Hence that for all $v\in  V$
with $ \P(  U_{v}(\trandom,\xrandom)= 0))=1$ it holds that   $v\in S$.
Combining this and \cref{lemmaBcor}
(applied with
$N\with 6$,  $R\with (\trandom, \xrandom)$, 
$(U_v)_{v\in\basicset} \with (U_v)_{v\in\basicset}$,
$\solutionset\with\solutionset$,  $\basicset\with \basicset$,     $(\Omega, \F,\P) \with (\Omega, \F,\P)$ in the notation of  \cref{lemmaBcor}) \proves\  \cref{thm28item1,thm28item2}.
\end{cproof}
\cfclear

\begin{algo}[\PINNs\ for free boundary Stefan problems]
\label{algo:stefan}
{%
   Let  $T, \psi \in (0,\infty)$,    
	$\phi,g,h_0,h_1 \in C(\R,\R)$,
	 let $V = C^{ 1, 2 }( [0,T]\times\R, \R ) \times C( [0,T], \R)$, 
	 let $ u = (u_1, u_2) =\allowbreak ((u_1(t,\allowbreak x))_{ (t,x) \in [0,T]\times\R }, \allowbreak(u_2(t))_{t \in [0,T] }) \in V
	 $,
let $D=\{(t,x) \in (0,T) \times (0,\infty) \colon x < u_2(t)  \}$,
  assume   for all $(t,x) \in  D$  that
\beq  \label{stefan:eq1}
\tfrac{\partial u_1}{\partial t}(t,x)=\tfrac{\partial^2 u_1}{\partial x^2}(t,x), \qquad
\tfrac{\partial u_1}{\partial x}(t,0) = g(t),   \qquad u_1(t,u_2 (t)) = h_0(t),  \eeq
 \beq \label{stefan:eq2}
   \tfrac{\partial u_1}{\partial x}(t,u_2 (t)) =  h_1(t), \qquad  u_1(0,x) =  \phi(x),  \qquad\text{and}\qquad 
     u_2(0)=\psi, \eeq
     let $\act \in C^{ 2 }( \R, \R )$,  $\mathfrak{d}_1,\mathfrak{d}_2,L_1,L_2 \in \N$,   $l_0^{1}, l_1^{1}, \dots, l_{L_1}^{1},  l_0^{2}, l_1^{2}, \dots, l_{L_2}^{2}  \in \N$ 
  satisfy 
  for all $j\in \{1,2\}$ that $l_0^j=3-j$,
   $l_{L_j}^j=1$,  $\mathfrak d_j=\sum_{ k = 1 }^{ L_j } l_k^j(l_{k-1}^j+ 1)$,
 for every $n\in \N$ let $ \mathscr{L}_n\colon \R^{ \mathfrak{d}_1+\mathfrak{d}_2}  \times \Omega \to \R$ satisfy for all
$ \theta = ( \theta_1,  \dots,  \theta_{  \mathfrak{d}_1 +  \mathfrak{d}_2 } ) \in \R^{  \mathfrak{d}_1 +  \mathfrak{d}_2 }$, $(t,x)\in D$
    that
   \beq\begin{split}
    \mathscr{L}(\theta,t,x)&= 
  \Bigl( 
    | 
      \pr{
        \tfrac{\partial }{\partial t}\cfadd{def:nets,NinCP}{\mathcal N}^{l_0^1,l_1^1,\dots,l_{L_1}^1}_{\act,(  \theta_1, \theta_2,  \dots, \theta_{  \mathfrak{d}_1  } )}
      }(t,x)
      - 
      \pr{
        \tfrac{\partial^2 }{\partial x^2}\cfadd{def:nets,NinCP}{\mathcal N}^{l_0^1,l_1^1,\dots,l_{L_1}^1}_{\act,(  \theta_1, \theta_2,  \dots, \theta_{  \mathfrak{d}_1  } )}
      }(t,x)
    |^2 
    + 
    |
      \pr{
        \tfrac{\partial }{\partial x}\cfadd{def:nets,NinCP}{\mathcal N}^{l_0^1,l_1^1,\dots,l_{L_1}^1}_{\act,(  \theta_1, \theta_2,  \dots, \theta_{  \mathfrak{d}_1  } )}
      }(t,0)
      - 
      g(t)  
    |^2 
    \\&\quad
    + 
    |
        \cfadd{def:nets,NinCP}{\mathcal N}^{l_0^1,l_1^1,\dots,l_{L_1}^1}_{\act,(  \theta_1, \theta_2,  \dots, \theta_{  \mathfrak{d}_1  } )}(t,\cfadd{def:nets,NinCP}{\mathcal N}^{l_0^2,l_1^2,\dots,l_{L_2}^2}_{\act,(\theta_{\mathfrak{d}_1+1},    \theta_{\mathfrak{d}_1+2},\dots,   \theta_{  \mathfrak{d}_1 +  \mathfrak{d}_2 } )        }(t))
      -
      h_0(t)
    |^2
    \\&\quad
    +
    |
      \pr{
        \tfrac{\partial }{\partial x}\cfadd{def:nets,NinCP}{\mathcal N}^{l_0^1,l_1^1,\dots,l_{L_1}^1}_{\act,(  \theta_1, \theta_2,  \dots, \theta_{  \mathfrak{d}_1  } )}
      }(
        t,
        \cfadd{def:nets,NinCP}{\mathcal N}^{l_0^2,l_1^2,\dots,l_{L_2}^2}_{\act,(\theta_{\mathfrak{d}_1+1},    \theta_{\mathfrak{d}_1+2},\dots,   \theta_{  \mathfrak{d}_1 +  \mathfrak{d}_2 }}
      )
      - 
      h_1(t)
    |^2\\
    &\quad
    +
    | 
      \cfadd{def:nets,NinCP}{\mathcal N}^{l_0^1,l_1^1,\dots,l_{L_1}^1}_{\act,(  \theta_1, \theta_2,  \dots, \theta_{  \mathfrak{d}_1  } )}(0,x)
      - 
      \phi(x) 
    |^2
    +
    |
      \cfadd{def:nets,NinCP}{\mathcal N}^{l_0^2,l_1^2,\dots,l_{L_2}^2}_{\act,(\theta_{\mathfrak{d}_1+1},    \theta_{\mathfrak{d}_1+2},\dots,   \theta_{  \mathfrak{d}_1 +  \mathfrak{d}_2 })}(0) 
      - 
      \psi
    |^2
  \Bigr),
\end{split}
\eeq
 \cfload, \cfclear
  let $ \algoG\colon \R^{ \mathfrak{d}_1+\mathfrak{d}_2 } \times [0,T]\times\R \to \R^{ \mathfrak{d}_1+\mathfrak{d}_2}$ 
satisfy for all $t\in [0,T]$, $x\in\R$,  $\theta  \in \{ \vartheta\in \R^{ \mathfrak{d}_1+\mathfrak{d}_2}\colon   \mathscr{L}( \cdot,t,x )  \text{  is  differentiable at }  \vartheta \}$
that
     $ \algoG( t,\theta,x,y)=( \nabla_{ \theta } \mathscr{L} )(t,\theta,x,y)$, 
      let $(\Omega,\mathcal F,\PP)$ be a probability space,
  let  $M, \algoM \in \N$, 
     let  $\trandom_{ m}\colon \Omega\to D$,  $m \in\{1,2,\dots, M\}$, 
   be  i.i.d.\  random variables,     let $\xrandom_{m} \colon \Omega\to \partial D $,  $m \in\{1,2,\dots, M\}$, be  i.i.d.\  random variables, 
   for every $n\in \N$, $m\in\{1,2,\dots, \algoM\}$   let $ \xi_{ n, m } \colon\Omega \to \{ 1, 2, \dots, M \}$ be  a random variable,
 assume that       $( \trandom_{ m})_{m\in  \{1,2,\dots, M\}}$,    $( \xrandom_{ m})_{m\in  \{1,2,\dots, M\}}$,   and     $ (\xi_{ n, m })_{(n,m)\in \N\times\{1,2,\dots, \algoM\}}$   
    are  independent,
let $(\gamma_n )_{n \in \N} \subseteq \R$, 
 and let $\Theta\colon \N_0 \times \Omega \to \R^{ \mathfrak{d}_1+\mathfrak{d}_2 }$ satisfy for all $n \in \N_0$ that 
\begin{equation}
\Theta_{ n + 1 } = \Theta_n - \frac{ \gamma_n }{ \algoM }\left[  \sum_{ m = 1 }^{\algoM}\algoG( \Theta_{ n - 1 }, \trandom_{ \xi_{ n, m } },\xrandom_{ \xi_{ n, m } }, )\right].
\end{equation}
}
\end{algo}

\begin{remark}[Explanations for \cref{algo:stefan}]
  Loosely speaking, 
  in \cref{algo:stefan} we think for sufficiently large 
    $N \in \N$  
  for all
      $\theta = ( \theta_1,  \dots,  \theta_{  \mathfrak{d}_1 +  \mathfrak{d}_2 } ) \colon \Omega \to \R^{  \mathfrak{d}_1 +  \mathfrak{d}_2 }$
  with $\theta = \Theta_N$
  of the \ANN\ realizations 
  $
  \cfadd{def:nets,NinCP}{\mathcal N}^{l_0^1,l_1^1,\dots,l_{L_1}^1}_{\act,(  \theta_1, \theta_2,  \dots, \theta_{  \mathfrak{d}_1  } )}\big|_{ [0,T] \times \R }
  $
  and
  $
  \cfadd{def:nets,NinCP}{\mathcal N}^{l_0^2,l_1^2,\dots,l_{L_2}^2}_{\act,(\theta_{\mathfrak{d}_1+1},    \theta_{\mathfrak{d}_1+2},\dots,   \theta_{  \mathfrak{d}_1 +  \mathfrak{d}_2 } )        }\big|  _{ [0,T] }
  $
  as an approximation 
  \begin{equation}
  \label{T_B_D}
  \begin{split}
    \pr[\big]{
      \cfadd{def:nets,NinCP}{\mathcal N}^{l_0^1,l_1^1,\dots,l_{L_1}^1}_{\act,(  \theta_1, \theta_2,  \dots, \theta_{  \mathfrak{d}_1  } )}\big|_{ [0,T] \times \R }
      ,\,
      \cfadd{def:nets,NinCP}{\mathcal N}^{l_0^2,l_1^2,\dots,l_{L_2}^2}_{\act,(\theta_{\mathfrak{d}_1+1},    \theta_{\mathfrak{d}_1+2},\dots,   \theta_{  \mathfrak{d}_1 +  \mathfrak{d}_2 } )        }\big|  _{ [0,T] }
    }
    \approx 
    (
      u_1
      ,
      u_2
    )
  \end{split}
  \end{equation}
  of the solution $u \in V$ of the Stefan problem in \cref{stefan:eq1,stefan:eq2}.
\end{remark}

\section{Machine learning approximation methods for PDEs  based on stochastic Feynman-Kac-type representations}
\label{sec:FCformulations}

In this section we discuss machine learning approximation methods for \PDEs\ which employ Feynman-Kac-type formulas in their derivation.
We first introduce in \cref{sect:reformulationII} a refinement of the abstract reformulation result in \cref{sect:reformulationI} involving conditional expectations.
We then recall linear and nonlinear Feynman-Kac formulas in \cref{sect:FeynmanKac} upon which the machine learning methods discussed in this section are based.
Thereafter, in \cref{sect:DKM} we discuss methods which rely on linear Feynman-Kac formulas and in \cref{sec23} we discuss methods which rely on nonlinear Feynman-Kac formulas.

\subsection{Basic reformulation result for machine learning methods for PDEs II}
\label{sect:reformulationII}

As discussed in \cref{sect:reformulationI}, machine learning methods for \PDEs\ generally involve the transformation of \PDE\ problems into stochastic optimization problems.
In \cref{lemmaBcor} in \cref{sect:reformulationI} we introduced a basic reformulation result which can be used to derive such transformations.
\cref{lemmaBcor} is however not sufficient for some of the transformations considered in \cref{sec:FCformulations} which involve Feynman-Kac-type formulas.
It is the subject of this section to introduce a refined abstract reformulation result in \cref{lemmaB2} below which involves conditional expectations and is suitable for the derivation of some of the machine learning methods for \PDEs\ based on Feynman-Kac-type formulas.
To prove \cref{lemmaB2} we need two elemantary and well-known results presented in \cref{prop31,prop31copy} below (cf., e.g., \cite[Theorem~8.20]{MR4201399}).
Furthermore, in \cref{lemmaBcor2} below we demonstrate how the reformulation result in \cref{lemmaBcor} above can be derived as a consequence of the more general result in \cref{lemmaB2} below.

\begin{proposition}\label{prop31}
{%
 Let $(\Omega, \F,\P)$  be a probability space,    let $\mathcal{G} \subseteq \F$ be a 
sigma-algebra,   let $N\in\N$,  and let 
$X \colon \Omega\to \R^N$ be a random variable with
$\E[\|X\|^2]<\infty$. Then 
\begin{enumerate}[(i)]
\item\label{prop31item1} it holds for all $\mathcal{G}$-measurable $Y\colon \Omega\to \R^N$ that
 \beq\label{prop31item1100}
  \E\big[\|X-\E[X|\mathcal{G}]\|^2\bigr]\leq \E\big[\|X-\E[X|\mathcal{G}]\|^2\bigr]+\E\big[\|\E[X|\mathcal{G}]-Y\|^2\bigr]=
   \E\bigl[\|X-Y\|^2\bigl],
   \eeq
\item\label{prop31copyitem2}  it holds for all $\mathcal{G}$-measurable $Y\colon \Omega\to \R^N$
with $\P(Y= \E[ X | \mathcal{G}]    )<1$
   that
   \beq
   \E\big[\|X-\E[X|\mathcal{G}]\|^2\bigr]<\E\bigl[\|X-Y\|^2\bigl],
   \eeq
   and
   \item\label{prop31item3}  it holds that 
\beq\begin{split}
\E\big[\|X-\E[X|\mathcal{G}]\|^2\bigr]&=\inf_{{\substack{Y\colon \Omega\to \R^N \\ 
Y \text{ is }\mathcal{G}\text{-measurable } }}} \E\bigl[\|X-Y\|^2\bigl].
\end{split}\eeq
\end{enumerate}
}
\end{proposition}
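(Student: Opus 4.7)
The plan is to establish a single Pythagorean-type identity that proves all three items at once. Specifically, I will show that for every $\mathcal{G}$-measurable $Y\colon\Omega\to\R^N$,
\begin{equation*}
  \E\bigl[\|X-Y\|^2\bigr] \;=\; \E\bigl[\|X-\E[X|\mathcal{G}]\|^2\bigr] + \E\bigl[\|\E[X|\mathcal{G}]-Y\|^2\bigr].
\end{equation*}
Once this identity is in hand, item \cref{prop31item1} follows immediately (the inequality uses non-negativity of the last summand, the equality is just the identity rewritten), item \cref{prop31copyitem2} follows from the fact that $\P(Y\neq\E[X|\mathcal{G}])>0$ forces $\E[\|\E[X|\mathcal{G}]-Y\|^2]>0$, and item \cref{prop31item3} follows from item \cref{prop31item1} by noting that $Y=\E[X|\mathcal{G}]$ is itself $\mathcal{G}$-measurable (hence achieves the lower bound).

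First I would reduce to the case where $Y \in L^2$. If $\E[\|Y\|^2] = \infty$, then the triangle inequality together with $\E[\|X\|^2]<\infty$ shows $\E[\|X-Y\|^2]=\infty$, so the inequality in \cref{prop31item1} is trivial (and the identity in \cref{prop31item1} holds in $[0,\infty]$). I would also record the standard consequence of conditional Jensen's inequality that $\E[\|\E[X|\mathcal{G}]\|^2]\leq\E[\|X\|^2]<\infty$, so that $\E[X|\mathcal{G}]$ is itself square-integrable and all cross terms in the expansion below are integrable.

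Then I would expand
\begin{equation*}
  \|X-Y\|^2 \;=\; \|X-\E[X|\mathcal{G}]\|^2 \;+\; 2\langle X-\E[X|\mathcal{G}],\,\E[X|\mathcal{G}]-Y\rangle \;+\; \|\E[X|\mathcal{G}]-Y\|^2
\end{equation*}
and take expectations. The key step is to show the cross term vanishes. Since $\E[X|\mathcal{G}]-Y$ is $\mathcal{G}$-measurable and componentwise bounded in $L^2$, the tower property together with the defining property $\E[X-\E[X|\mathcal{G}]\mid\mathcal{G}]=0$ (applied componentwise) gives
\begin{equation*}
  \E\bigl[\langle X-\E[X|\mathcal{G}],\,\E[X|\mathcal{G}]-Y\rangle\bigr]
  \;=\; \E\bigl[\langle \E[X-\E[X|\mathcal{G}]\mid\mathcal{G}],\,\E[X|\mathcal{G}]-Y\rangle\bigr]
  \;=\; 0.
\end{equation*}
Substituting into the expanded expression yields the identity. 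The only mild obstacle is bookkeeping the integrability needed to pull the $\mathcal{G}$-measurable factor inside the conditional expectation in the cross term, but this is covered by Cauchy--Schwarz applied to each coordinate of $X$, $\E[X|\mathcal{G}]$, and $Y$, all of which lie in $L^2$. With the identity established, items \cref{prop31item1}, \cref{prop31copyitem2}, and \cref{prop31item3} follow as described above.
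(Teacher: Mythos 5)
Your proposal is correct and follows essentially the same route as the paper's proof: establish the Pythagorean identity $\E[\|X-Y\|^2]=\E[\|X-\E[X|\mathcal{G}]\|^2]+\E[\|\E[X|\mathcal{G}]-Y\|^2]$ by expanding the square, using conditional Jensen to see $\E[X|\mathcal{G}]\in L^2$, and killing the cross term via the tower property, after which all three items follow directly. The one place you are slightly more careful than the paper is the explicit reduction to $Y\in L^2$ (handling the case $\E[\|Y\|^2]=\infty$ up front), which the paper leaves implicit; otherwise the arguments coincide.
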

\begin{cproof}{prop31}
Throughout this proof 
let $e_1, e_2, \dots, e_N \in \R^N$ satisfy $ e_1 = ( 1, 0, \dots, 0 ) $, $ e_2 = ( 0, 1, 0, \dots, 0 ) $, $ \dots $, $ e_N = ( 0, \dots, 0, 1 ) $.
\Nobs\  
Jensen's inequality for conditional expectations
(see,  \ex  
Klenke \cite[Theorem~8.20]{MR4201399}),   the tower property for conditional expectations,   and the assumption that   $\E[\|X\|^2]<\infty$ \prove\ 
that
\beq
	\begin{split}
	&\E\bigl[\|\E[ X | \mathcal{G}]\|^2\bigr]
	=\E\!\left[ \textstyle \sum\limits_{ n = 1 }^N |  \langle e_n,   \E[ X | \mathcal{G}] \rangle |^2 \right] 
	=\E\!\left[\textstyle \sum\limits_{ n = 1 }^N \big| \E \bigl[ \langle e_n,   X\rangle| \mathcal{G} \bigr]\big|^2  \right] \\
	&\leq   \E\!\left[\textstyle \sum\limits_{ n = 1 }^N \E \bigl[ | \langle e_n,   X\rangle |^2 | \mathcal{G} \bigr]\right]  \leq  \E\bigl[|\E[ \|X\| | \mathcal{G}]|^2\bigr]
	= \E [ \E[ \| X \|^2 | \mathcal{G} ] ] 
	= \E\bigl[ \|X\|^2 \bigr] <\infty . \end{split}\eeq
	The tower property for conditional expectations  \hence  \proves\  
	that for all  $\mathcal{G}$-measurable $Y\colon \Omega\to \R^N$  it holds that
	\beq
	\begin{split}
	&\E[ \| X - Y \|^2 ] = \E[ \| ( X - \E[ X | \mathcal{G}] ) + (  \E[ X | \mathcal{G}] - Y ) \|^2 ]\\
	&= \E\bigl[ \| X - \E[ X | \mathcal{G}]\|^2 + 2\langle X - \E[ X | \mathcal{G}],  \E[ X | \mathcal{G}] - Y  \rangle + \| \E[ X | \mathcal{G}] - Y\|^2 \bigr]\\
&= \E\bigl[\|X - \E[ X | \mathcal{G}]\|^2\bigr] + 2\,  \E\bigl[\langle X - \E[ X | \mathcal{G}],  \E[ X | \mathcal{G}] - Y  \rangle \bigr] + \E\bigl[ \| \E[ X | \mathcal{G}] - Y\|^2 \bigr]\\
&=\E\bigl[\|X - \E[ X | \mathcal{G}]\|^2\bigr] + 2\, \E\Bigl[ \E\bigl[\langle X - \E[ X | \mathcal{G}],  \E[ X | \mathcal{G}] - Y  \rangle\big|\mathcal{G} \bigr] \Bigr]+ \E\bigl[ \| \E[ X | \mathcal{G}] - Y\|^2 \bigr].
\end{split}
\eeq
\Hence that 
\beq
\begin{split}
&\E[ \| X - Y \|^2 ] \\
&=\E\bigl[\|X - \E[ X | \mathcal{G}]\|^2\bigr] +
2\, \E\bigl[ \langle \E[ X - \E[X|\mathcal{G}] | \mathcal{G}] , \E[ X | \mathcal{G} ] - Y \rangle \bigr]
+ \E\bigl[ \| \E[ X | \mathcal{G}] - Y\|^2 \bigr]\\
&=\E\bigl[\|X - \E[ X | \mathcal{G}]\|^2\bigr] +
2\, \E\bigl[ \langle  \E[X|\mathcal{G}] - \E[X|\mathcal{G}] , \E[ X | \mathcal{G} ] - Y \rangle \bigr]
+ \E\bigl[ \| \E[ X | \mathcal{G}] - Y\|^2 \bigr]\\
&=\E\bigl[\|X - \E[ X | \mathcal{G}]\|^2\bigr] +\E\bigl[ \| \E[ X | \mathcal{G}] - Y\|^2 \bigr].
\end{split}
	\eeq
This \proves\  \cref{prop31item1}.
\Nobs\  \cref{prop31item1} \proves\ \cref{prop31item3,prop31copyitem2}.
\end{cproof}

\begin{corollary}\label{prop31copy}
{%
 Let $(\Omega, \F,\P)$  be a probability space,    let $\mathcal{G} \subseteq \F$ be a 
sigma-algebra,   let $N\in\N$,  and let 
$X = ( X_1, \dots, X_N ) \colon \Omega\to \R^N$ be a random variable with
$\E[\|X\|^2]<\infty$. Then 
\beq\label{Cor32mainresult}\begin{split}
\E\big[\|X-\E[X|\mathcal{G}]\|^2\bigr]= \sum_{ n = 1 }^N \left(\inf_{\substack{ Y \colon \Omega \to \R \\ Y \text{is } \mathcal{G}\text{-measurable}} } \E\bigl[ | X_n-Y |^2 \bigr]\right).
\end{split}\eeq
}
\end{corollary}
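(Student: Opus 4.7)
The plan is to reduce the vector-valued identity to its $N$ scalar components and apply item (iii) of \cref{prop31} to each one. First I would observe that, by the componentwise definition of conditional expectations of vector-valued random variables, one has $\E[X|\mathcal{G}] = (\E[X_1|\mathcal{G}], \E[X_2|\mathcal{G}], \dots, \E[X_N|\mathcal{G}])$, and hence
\beq
  \|X - \E[X|\mathcal{G}]\|^2 = \sum_{n=1}^N |X_n - \E[X_n|\mathcal{G}]|^2.
\eeq

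Next I would note that for each $n \in \{1,2,\dots,N\}$ the bound $|X_n|^2 \leq \|X\|^2$ together with the assumption $\E[\|X\|^2] < \infty$ yields $\E[|X_n|^2] < \infty$. This places us in the setting of \cref{prop31} applied in dimension one to the scalar random variable $X_n$. Taking expectations in the displayed identity above and invoking \cref{prop31item3} of \cref{prop31} (with $N \with 1$ and $X \with X_n$) for each $n$ gives
\beq
  \E\bigl[\|X - \E[X|\mathcal{G}]\|^2\bigr]
  = \sum_{n=1}^N \E\bigl[|X_n - \E[X_n|\mathcal{G}]|^2\bigr]
  = \sum_{n=1}^N \Biggl( \inf_{\substack{Y\colon\Omega\to\R \\ Y \text{ is } \mathcal{G}\text{-measurable}}} \E\bigl[|X_n - Y|^2\bigr]\Biggr),
\eeq
which is exactly \cref{Cor32mainresult}.

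There is no real obstacle here; the only point requiring a line of justification is the componentwise identity for the vector-valued conditional expectation, which is immediate from the standard definition (one defines $\E[X|\mathcal{G}]$ coordinatewise for $X$ with $\E[\|X\|]<\infty$). The integrability hypothesis needed for \cref{prop31} is inherited automatically from $\E[\|X\|^2]<\infty$.
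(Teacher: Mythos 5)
Your proof is correct and takes essentially the same approach as the paper: apply \cref{prop31item3} of \cref{prop31} componentwise (with $N \with 1$, $X \with X_n$) and combine with the identity $\E\bigl[\|X-\E[X|\mathcal{G}]\|^2\bigr]=\sum_{n=1}^N \E\bigl[|X_n - \E[X_n|\mathcal{G}]|^2\bigr]$. You are somewhat more explicit than the paper about spelling out the coordinatewise definition of the conditional expectation and the inherited square-integrability of each $X_n$, but the substance is identical.
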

\begin{cproof}{prop31copy}
\Nobs\   \cref{prop31} (applied for every $n \in \{ 1, 2, \dots, N \}$ with $N\with 1  $, $X\with X_n$   in the notation of \cref{prop31}) \proves\ that for all $n \in \{ 1, 2, \dots, N \}$  it holds that
\begin{equation}
\E[ | X_n - \E[ X_n | \mathcal{G} ] |^2 ] =\inf_{\substack{ Y \colon \Omega \to \R \\ Y \text{is } \mathcal{G}\text{-measurable}} } \E\bigl[ | X_n-Y |^2\big].
\end{equation}
Combining this with  the fact that
$\E\big[\|X-\E[X|\mathcal{G}]\|^2\bigr]=\sum_{ n = 1 }^N  \E[ | X_n - \E[ X_n | \mathcal{G} ] |^2 ]        $
\proves\ \cref {Cor32mainresult}.
\end{cproof}

\begin{theorem}[Basic reformulation result for machine learning methods for \PDEs\ II]\label{lemmaB2}
 {%

 Let $\basicset$ and $\solutionset$  be non-empty sets with $\solutionset  \subseteq \basicset$,  let $(\Omega, \F,\P)$  be a probability space,   
 for every $k \in \{ 1, 2 \}$  let $( \mathfrak{R}_k, \mathcal{R}_k )$ be a measurable space,  for every $k \in \{ 1, 2 \} $ let $R_k \colon \Omega \to  \mathfrak{R}_k $ be a random variable,  let $N \in \N$, 
 for every $v \in \basicset$ let $U_v \colon   \mathfrak{R}_1   \to \R^N$ be  measurable,
 let $\phi\colon  \mathfrak{R}_1\times  \mathfrak{R}_2\to\R^N $ be measurable, 
 assume $\E[ \| \phi( R_1,  R_2)\|^2 ] < \infty$, 
 assume for all $v \in \basicset $ 
  that
 \beq\label{ifandonlyifinS} \P( U_v(  R_1 )=\E[ \phi( R_1,  R_2)  |  R_1] ) = 1\eeq
 if and only if $v \in \solutionset$,
 and let $ \lossexp \colon V \to [0,\infty]$  satisfy for all $v \in \basicset $ that
 \beq\label{lvdef108}  \lossexp(v) = \E[ \| \phi(  R_1,  R_2 ) - U_v( R_1 )\|^2 ].\eeq
 Then
 \begin{enumerate}[(i)]
 \item \label{lemmaB2anotheritem1}  
 for all $v \in \basicset$  it holds that
  \beq \lossexp(v) = \inf_{w \in \basicset}\lossexp(w)  \eeq
if and only if
 $ v \in \solutionset   $
and
 \item \label{lemmaB2anotheritem2}  
  there exists $v \in \basicset$ such that
\beq\lossexp(v) = \inf_{w \in \basicset}\lossexp(w).\eeq
  \end{enumerate}
 }
 \end{theorem}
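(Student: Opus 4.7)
The plan is to reduce \cref{lemmaB2} to the $L^2$-projection property of conditional expectation established in \cref{prop31}. The key observation is that for every $v \in \basicset$ the random variable $U_v(R_1)$ is $\sigma(R_1)$-measurable (since $U_v$ is measurable), so the variational characterization of the conditional expectation $\E[\phi(R_1,R_2)\mid R_1]$ as the $L^2$-minimizer over $\sigma(R_1)$-measurable random variables will directly control the size of $\lossexp(v)$.

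First I would apply \cref{prop31} with $\mathcal{G} = \sigma(R_1)$ and $X = \phi(R_1, R_2)$ (whose $L^2$-integrability is assumed) and with $Y = U_v(R_1)$. This yields for every $v \in \basicset$ the Pythagorean-type identity
\begin{equation*}
\lossexp(v) = \E\bigl[\|\phi(R_1,R_2) - \E[\phi(R_1,R_2) \mid R_1]\|^2\bigr] + \E\bigl[\|\E[\phi(R_1,R_2) \mid R_1] - U_v(R_1)\|^2\bigr].
\end{equation*}
Setting $c := \E[\|\phi(R_1,R_2) - \E[\phi(R_1,R_2)\mid R_1]\|^2] \in [0,\infty)$, it follows that $\lossexp(v) \geq c$ for every $v \in \basicset$, with equality if and only if $\E[\|\E[\phi(R_1,R_2)\mid R_1] - U_v(R_1)\|^2] = 0$, which is equivalent to $\P(U_v(R_1) = \E[\phi(R_1,R_2)\mid R_1]) = 1$. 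By the hypothesis \cref{ifandonlyifinS}, this last condition is in turn equivalent to $v \in \solutionset$.

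To conclude, the non-emptiness of $\solutionset$ supplies some $v_0 \in \solutionset$, for which the equivalence above yields $\lossexp(v_0) = c$, and hence $\inf_{w \in \basicset} \lossexp(w) = c$ with the infimum being attained. Combining this with the equivalence $\lossexp(v) = c \iff v \in \solutionset$ simultaneously yields \cref{lemmaB2anotheritem1,lemmaB2anotheritem2}.

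I do not expect any serious obstacle, since the substantive work is already packaged in \cref{prop31}; the only subtlety is that $U_v(R_1)$ need not a priori be square-integrable, but the inequality $\lossexp(v) \geq c$ persists trivially when $\E[\|U_v(R_1)\|^2] = \infty$ (in which case $\lossexp(v) = \infty > c$ and $v$ cannot minimize), so this edge case does not affect the argument.
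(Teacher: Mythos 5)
Your proof is correct and takes essentially the same route as the paper: both reduce the claim to the $L^2$-projection characterization of the conditional expectation in \cref{prop31}, with $\mathcal{G} = \sigma(R_1)$, $X = \phi(R_1,R_2)$, and $Y = U_v(R_1)$. You organize the argument around the Pythagorean identity from \cref{prop31item1} (and explicitly dispatch the case $\E[\|U_v(R_1)\|^2]=\infty$), whereas the paper's proof works through the strict-inequality statement in \cref{prop31copyitem2}; these are two faces of the same projection property and yield the same conclusion.
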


\begin{cproof}{lemmaB2}
Throughout this proof  let $\mathcal{G}\subseteq \mathcal{F}$ be the sigma-algebra generated by $R_1$. 
\Nobs\  \cref{lvdef108}   \proves\  that
for all $v,w \in \basicset$ with 
$ \P(U_{v}(  R_1 )=\E[ \phi( R_1,  R_2)  |  R_1])= \P(U_{w}(  R_1 )=\E[ \phi( R_1,  R_2)  |  R_1])=1$ it holds
that
\beq\label{lv1eqlv2}\begin{split}  \lossexp(v) &=\E[ \| \phi(R_1,R_2) - U_v(  R_1 )\|^2 ]= \E[ \| \phi(R_1,R_2 ) - \E[\phi(  R_1, R_2 ) | R_1 ] \|^2 ]\\& = \E[ \| \phi(R_1,R_2) - U_w(  R_1 )\|^2 ] = \lossexp(w).\end{split}\eeq
\Moreover\   \cref{prop31copyitem2} in   \cref{prop31} (applied for every $w\in \basicset$ with $(\Omega, \F,\P)\with (\Omega, \F,\P)$,  $\mathcal{G}\with \mathcal{G}$,    $N\with N$,  $X\with
( \Omega \ni \omega \mapsto \phi(R_1(\omega),R_2(\omega)) \in \R^N )$, $Y\with ( \Omega \ni \omega \mapsto U_w(R_1(\omega))\in\R^N)$ in the notation of  \cref{prop31})  and \cref{lvdef108}
\prove\  that for all $v,w\in \basicset$  with $ \P(U_v(  R_1 )=\E[ \phi( R_1,  R_2)  |  R_1])=1$ and
$ \P(U_w(  R_1 )=\E[ \phi( R_1,  R_2)  |  R_1])<1$
it holds that  
\beq\begin{split}\label{lvsmallerthanlw} \lossexp(v)& =\E[ \| \phi(R_1,R_2) - U_v(  R_1 )\|^2 ]= \E[ \| \phi(R_1,R_2 ) - \E[\phi(  R_1, R_2 ) | R_1 ] \|^2 ] \\
&< \E[ \| \phi(R_1,R_2) - U_w(  R_1 )\|^2 ] = \lossexp(w).\end{split}\eeq
Combining this and \cref{lv1eqlv2} \proves\ that
for all $v, w \in \basicset$
with $ \P(U_v(  R_1 )=\E[ \phi( R_1,  R_2)  |  R_1])=1$
  it holds that
 \beq \lossexp(v) \leq \lossexp(w).\eeq
 \Hence that for all  $v \in \basicset$
with $ \P(U_v(  R_1 )=\E[ \phi( R_1,  R_2)  |  R_1])=1$  it holds  that
\beq\label{clossexp115}  \lossexp(v) =\inf_{w \in \basicset} \lossexp(w).\eeq
 Combining this with \cref{ifandonlyifinS} \proves\  that for all $v \in \solutionset$ it holds that 
\begin{equation}\label{allvinS116}
  \lossexp(v) = \inf_{w\in\basicset} \lossexp(w).
  \end{equation}
\Moreover\   \cref{lvsmallerthanlw} and \cref{clossexp115} \prove\ that
 for all  $v, w \in \basicset$ with
   $ \P(U_v(  R_1 )=\E[ \phi( R_1,  R_2)  |  R_1])=1$
and
$ \P(U_w(  R_1 )=\E[ \phi( R_1,  R_2)  |  R_1])<1$ 
   it holds that
  \beq\label{losslveqc}\inf_{z\in\basicset} \lossexp(z) = \lossexp(v) < \lossexp(w).\eeq
  \Moreover\  \cref{ifandonlyifinS} and the assumption that $S \neq \emptyset$  \prove\  that there exists $v \in \basicset$ such that $ \P(U_v(  R_1 )=\E[ \phi( R_1,  R_2)  |  R_1])=1$.
Combining this with \cref{losslveqc}  \proves\ that for all $w\in\basicset$  with $ \P(U_w(  R_1 )=\E[ \phi( R_1,  R_2)  |  R_1])<1$  it holds that 
 \beq\label{losslveqc2}\inf_{z\in\basicset} \lossexp(z) < \lossexp(w).\eeq
 \Hence that for all $v \in \basicset$ with  $\lossexp(v) =\inf_{z\in\basicset} \lossexp(z)$
 it holds that 
   $ \P(U_v(  R_1 )=\E[ \phi( R_1,  R_2)  |  R_1])=1$.
   This and \cref{ifandonlyifinS} \prove\ that 
   for all $v\in\basicset$ with   $\lossexp(v) =\inf_{z\in\basicset} \lossexp(z)$ it holds that $v \in \solutionset$.
   Combining this with  \cref{allvinS116}
\proves\  \cref{lemmaB2anotheritem1}.  
\Nobs\   \cref{lemmaB2anotheritem1} and 
 the assumption that $S \neq \emptyset$  \prove\  \cref{lemmaB2anotheritem2}. 
\end{cproof}

Note that the next result is identical to \cref{lemmaBcor} in \cref{sect:reformulationI}.

 \begin{corollary}[Basic reformulation result for machine learning methods for \PDEs\ I]\label{lemmaBcor2}
 {%

 Let $\solutionset$ and $\basicset$ be non-empty sets with $\solutionset  \subseteq \basicset$,  let $(\Omega, \F,\P)$  be a probability space,   
   let $( \mathfrak{R}, \mathcal{R})$ be a measurable space,   let $R \colon \Omega \to  \mathfrak{R}$ be a random variable,  let $N\in \N$,  for every $v \in \basicset$
 let $U_v \colon   \mathfrak{R}  \to \R^N$ be  measurable,
 assume for all $v \in \basicset$ that  $\P( U_v(R ) = 0 ) = 1 $ if and only if $v \in \solutionset $,   and let $\lossexp \colon \basicset \to [0,\infty]$ satisfy for all $v \in \basicset$ that $\lossexp(v) =
 \E[  \| U_v(R ) \|^2]$.
 Then
 \begin{enumerate}[(i)]
 \item 
 for all $v \in \basicset$  it holds that
  \beq \lossexp(v) = \inf_{w \in \basicset}\lossexp(w)  \eeq
if and only if
 $ v \in \solutionset   $
and
 \item
  there exists $v \in \basicset$ such that
\beq \lossexp(v) = \inf_{w \in \basicset}\lossexp(w).\eeq
  \end{enumerate}
 }
 \end{corollary}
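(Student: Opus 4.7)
The plan is to deduce this corollary as an immediate specialization of the more general reformulation result \cref{lemmaB2}. The strategy is to collapse the two-variable setup of \cref{lemmaB2} to the single-variable setup here by taking a trivial second random variable and a zero function $\phi$, so that the conditional expectation $\E[\phi(R_1,R_2)\mid R_1]$ vanishes almost surely.

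Concretely, first I would let $(\mathfrak{R}_1,\mathcal{R}_1) = (\mathfrak{R},\mathcal{R})$ and $R_1 = R$, and choose any trivial measurable space, for instance $(\mathfrak{R}_2,\mathcal{R}_2) = (\{0\},\{\emptyset,\{0\}\})$, together with the constant random variable $R_2 \equiv 0$. Next, define $\phi\colon \mathfrak{R}_1\times\mathfrak{R}_2 \to \R^N$ by $\phi(r_1,r_2) = 0$ for all $(r_1,r_2)\in \mathfrak{R}_1\times \mathfrak{R}_2$. Then $\phi$ is measurable, $\E[\|\phi(R_1,R_2)\|^2] = 0 < \infty$, and $\E[\phi(R_1,R_2)\mid R_1] = 0$ $\P$-almost surely.

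With these choices, the hypothesis of \cref{lemmaB2} that $\P(U_v(R_1) = \E[\phi(R_1,R_2)\mid R_1]) = 1$ if and only if $v\in \solutionset$ reduces to $\P(U_v(R) = 0) = 1$ if and only if $v\in \solutionset$, which is exactly the hypothesis given in the corollary. Similarly, the loss functional in \cref{lemmaB2} satisfies
\begin{equation}
\E\bigl[\|\phi(R_1,R_2) - U_v(R_1)\|^2\bigr]
= \E\bigl[\|U_v(R)\|^2\bigr] = \lossexp(v)
\end{equation}
for every $v\in \basicset$, so the loss assumption of \cref{lemmaB2} matches the one imposed here.

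Since all hypotheses of \cref{lemmaB2} hold, its conclusions \cref{lemmaB2anotheritem1} and \cref{lemmaB2anotheritem2} transfer verbatim, yielding the two claims of the corollary. There is no genuine obstacle: the only thing to check is the compatibility of the definitions, and the only mild subtlety is verifying that $\phi \equiv 0$ is a legitimate measurable map with finite second moment, which is immediate.
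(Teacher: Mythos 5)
Your proof is correct and follows essentially the same route as the paper's: both deduce \cref{lemmaBcor2} from \cref{lemmaB2} by taking $R_1 = R$, $\phi \equiv 0$, and observing that $\E[\phi(R_1,R_2)\mid R_1] = 0$ $\P$-a.s., which collapses the hypotheses and the loss of \cref{lemmaB2} to those of the corollary. The only (inessential) difference is the choice of the second ingredient: you take $(\mathfrak{R}_2,\mathcal{R}_2)$ to be a one-point space with $R_2$ constant, while the paper simply reuses $(\mathfrak{R},\mathcal{R})$ and sets $R_2 = R$; since $\phi$ is identically zero, either choice works equally well.
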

\begin{cproof}{lemmaBcor2}
\Nobs\   \cref{lemmaB2} (applied with $\basicset\with\basicset$,  $\solutionset\with\solutionset$, 
$( \mathfrak{R}_1, \mathcal{R}_1 )\with ( \mathfrak{R}, \mathcal{R} )$,  $( \mathfrak{R}_2, \mathcal{R}_2)\with ( \mathfrak{R}, \mathcal{R} )$,  $R_1\with R$,  $R_2\with R$,
 $N\with N$,   $(U_v)_{v\in\basicset}\with  (U_v)_{v\in\basicset}$,
  $\phi\with ( \mathfrak{R}\times  \mathfrak{R}\ni (x,y)\mapsto 0\in \R^N)$
 in the notation of  \cref{lemmaB2})
\proves\  
 \cref{lemmaBcoritemi,lemmaBcoritemii}.
\end{cproof}

\subsection[Stochastic representations (Feynman-Kac formulas) for PDEs]{Linear and nonlinear stochastic representations (Feynman-Kac formulas) for PDEs}
\label{sect:FeynmanKac}

In this section we recall well-known Feynman-Kac formulas for \PDEs.
We first prove a nonlinear Feynman-Kac formula for semilinear parabolic \PDEs\ in \cref{nonlinearFKformula} and thereafter obtain as a consequence linear Feynman-Kac formulas for heat \PDEs\ in \cref{cor36,noBMversion,notimeFKformula}.

 \begin{proposition}[Nonlinear Feynman-Kac Formula]\label{nonlinearFKformula}
 {%
  Let $T\in (0,\infty)$, $d\in \N$,
		let $\mu\colon[0,T]\times \R^d \rightarrow \R^d $,  
		$\sigma\colon[0,T]\times \R^d\rightarrow \R^{d\times d} $,  and $f\colon [0,T]\times \R^d \times \R\times\R^d  \rightarrow \R$ be  continuous,  let
	$u\in C^{1,2}([0,T]\times \R^d)$ satisfy\footnote{\Nobs\  for every $d \in \N$ and every $d \times d$-matrix $ M \in \R^{ d \times d }$ 
it holds that $M^*$ is the transpose of $M$. } for all $t\in [0,T]$, $x\in \R^d$ that
	\begin{multline}\label{thm22pde0}
	\tfrac{\partial u}{\partial t}(t,x)+\tfrac12 {\rm{Tr}}\bigl((\sigma(t,x)[\sigma(t,x)]^*({\rm{Hess}}_xu)(t,x) \bigr)     +\tfrac{\partial u}{\partial x} (t,x) \, \mu(t,x)\\
+f\bigl(t,x,u(t,x), [ \sigma( t, x ) ]^*( \nabla_x u )( t, x )\bigr)    =0,
	\end{multline}
	let $(\Omega,\F, \P)$ be a probability space with a normal\footnote{\Nobs\ for every $T \in (0,\infty)$ and every filtered probability space $(\Omega,\F, (\mathbb{F}_t)_{t\in [0,T]},\P)$ it holds that $(\mathbb{F}_t)_{t\in [0,T]}$ is a normal filtration on $(\Omega,\F, \P)$ if and only if it holds for all $t \in  [0,T)$  that $ \{ A\in \mathcal{F}\colon \P(A)=0 \} \subseteq\mathbb{F}_t =(\cap_{s \in (t,T]} \mathbb{F}_s)$  (cf., \ex  Pr\'{e}v\^{o}t \& R\"ockner~\cite[Definition
2.1.11]{MR2329435}).}  filtration $(\mathbb{F}_t)_{t\in [0,T]}$,  let 
		$\BM\colon [0,T]\times\Omega \to \R^d$ be a standard $(\mathbb{F}_t)_{t\in [0,T]}$-Brownian motion,
let $X\colon [0,T]\times \Omega \to \R^d$ be an  $(\mathbb{F}_t)_{t \in [0,T]}$-adapted stochastic process   with continuous sample paths which satisfies that for all $t\in [0,T]$  it holds $\P$-a.s.\  that 
\beq\label{bsdext0}
 X_t=X_0+\int_{0}^t\mu(s, X_s)\,\mathrm{d}s+\int_{0}^t\sigma(s, X_s)\,\mathrm{d}\BM_s,
\eeq 
and 
let  $Y\colon [0,T]\times \Omega \to \R$ and $Z\colon [0,T]\times \Omega \to \R^d$ be   $(\mathbb{F}_t)_{t \in [0,T]}$-adapted stochastic processes with continuous sample paths which satisfy for all $t\in [0,T]$  that 
$Y_t=u(t,X_t) $ and  $Z_t=[\sigma(t,X_t)]^*  ( \nabla_x u )(t,X_t) $.
Then  for all $t \in [0,T]$,  $s\in [0,t]$  it holds $\P$-a.s.\ that
  \beq\label{FBSDE0forward0}
\begin{split}
 Y_t-Y_s+\int_{s}^t f(r,X_r, Y_r, Z_r)\,\mathrm{d}r-\int_{s}^t (Z_r)^* \,\mathrm{d}\BM_r =0.
 \end{split}
\eeq
  
 } 
  \end{proposition}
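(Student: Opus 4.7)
The plan is to apply It\^o's formula to the process $(u(t,X_t))_{t\in[0,T]}$ and then to substitute the PDE~\eqref{thm22pde0} into the resulting drift term.

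More precisely, I would first observe that since $u \in C^{1,2}([0,T]\times\R^d)$ and $X$ is an It\^o process with drift $\mu(t,X_t)$ and diffusion $\sigma(t,X_t)$, the standard It\^o formula ensures that for all $t \in [0,T]$, $s \in [0,t]$ it holds $\P$-a.s.\ that
\begin{equation*}
\begin{split}
u(t,X_t)-u(s,X_s)
&= \int_s^t \tfrac{\partial u}{\partial r}(r,X_r)\,\diff r
 + \int_s^t \tfrac{\partial u}{\partial x}(r,X_r)\,\mu(r,X_r)\,\diff r \\
&\quad + \tfrac{1}{2}\int_s^t \Tr\!\bigl(\sigma(r,X_r)[\sigma(r,X_r)]^*(\Hess_x u)(r,X_r)\bigr)\,\diff r \\
&\quad + \int_s^t (\nabla_x u)(r,X_r)^*\,\sigma(r,X_r)\,\diff \BM_r.
\end{split}
\end{equation*}

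Next I would rewrite the stochastic integral using the definition $Z_r = [\sigma(r,X_r)]^*(\nabla_x u)(r,X_r)$, which identifies $(\nabla_x u)(r,X_r)^*\sigma(r,X_r) = Z_r^*$, so that the stochastic integral equals $\int_s^t Z_r^*\,\diff \BM_r$. For the deterministic (Lebesgue) integrals I would invoke the PDE~\eqref{thm22pde0}: evaluating \eqref{thm22pde0} pointwise along $(r,X_r(\omega))$ yields that the sum of the first three integrands in the It\^o expansion equals $-f(r,X_r,u(r,X_r),[\sigma(r,X_r)]^*(\nabla_x u)(r,X_r)) = -f(r,X_r,Y_r,Z_r)$. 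Substituting this and recalling $Y_t = u(t,X_t)$, $Y_s = u(s,X_s)$ then gives
\begin{equation*}
Y_t - Y_s = -\int_s^t f(r,X_r,Y_r,Z_r)\,\diff r + \int_s^t Z_r^*\,\diff \BM_r,
\end{equation*}
and a simple rearrangement yields~\eqref{FBSDE0forward0}.

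The main (and essentially only) subtlety is ensuring that It\^o's formula actually applies with the stated regularity: $u \in C^{1,2}$ and the continuity of $\mu$ and $\sigma$ combined with \eqref{bsdext0} and the continuous adaptedness of $X$ are enough for the classical It\^o formula to yield the identity above on the event of full measure where \eqref{bsdext0} holds. No integrability assumption on $Y$ or $Z$ is required for the identity itself (it is an almost-sure pathwise identity between the local-martingale and finite-variation parts of the semimartingale $u(\cdot,X_\cdot)$), so I would not need to invoke any moment bounds. Beyond this, the argument is a routine substitution, with care only needed to match the transposes/row-vector conventions used in~\eqref{thm22pde0} against $[\sigma]^*\nabla_x u$ in the definition of $Z$.
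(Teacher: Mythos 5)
Your proposal is correct and takes essentially the same approach as the paper's proof: apply It\^o's formula to $u(\cdot,X_\cdot)$, identify the stochastic integrand $(\nabla_x u)(r,X_r)^*\sigma(r,X_r)$ with $(Z_r)^*$, and substitute the PDE \eqref{thm22pde0} to collapse the drift terms into $-f(r,X_r,Y_r,Z_r)$. Your remark that no moment bounds on $Y$ or $Z$ are needed because the identity is a pathwise semimartingale decomposition is also consistent with the paper, which likewise invokes no integrability beyond what It\^o's formula requires.
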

  \begin{cproof}{nonlinearFKformula}
\Nobs\  \cref{bsdext0} and It\^{o}'s formula (cf., \ex  Klenke~\cite[Theorem~25.27]{MR4201399})
  \prove\
 that for all   $t \in [0,T]$,  $s\in [0,t]$  it holds $\P$-a.s.\ that
 \beq\begin{split}
  Y_t &= u(t,X_t)\\
  &=u(s,X_s)+\int_s^t\tfrac{ \partial u}{\partial r}(r,X_r)
  +\tfrac{\partial u}{\partial x} (r,X_r) \, \mu(r,X_r)\, \mathrm{d}r\\
   &\quad  +\int_s^t \tfrac{\partial u}{\partial x} (r,X_r)  \,  \sigma(r, X_r)\,\mathrm{d}\BM_r
 +\tfrac12 \int_s^t {\rm{Tr}}\bigl((\sigma(r,X_r)[\sigma(r,X_r)]^*({\rm{Hess}}_xu)(r,X_r) \bigr)\, \mathrm{d}r.
\end{split} \eeq
Combining this with \cref{thm22pde0} \proves\  that
 for all $t \in [0,T]$,  $s\in [0,t]$ it holds $\P$-a.s.\
that
  \beq\begin{split}
  Y_t &=u(s,X_s)-\int_s^t  \big[\tfrac12 {\rm{Tr}}\bigl((\sigma(r,X_r)[\sigma(r,X_r)]^*({\rm{Hess}}_xu)(r,X_r) \bigr)     +\tfrac{\partial u}{\partial x} (r,X_r) \, \mu(r,X_r)\\
&\quad+f\bigl(r,X_r,u(r,X_r), [ \sigma( r, X_r ) ]^*( \nabla_x u )( r, X_r )\bigr)\big]\, \mathrm{d}r
+\int_s^t \tfrac{\partial u}{\partial x} (r,X_r) \,   \mu(r, X_r)\,\mathrm{d}r\\
  &\quad  +\int_s^t \tfrac{\partial u}{\partial x} (r,X_r)  \,  \sigma(r, X_r)\,\mathrm{d}\BM_r +\tfrac12 \int_s^t {\rm{Tr}}\bigl((\sigma(r,X_r)[\sigma(r,X_r)]^*({\rm{Hess}}_xu)(r,X_r) \bigr)\, \mathrm{d}r\\
&=Y_s-\int_s^tf\bigl(r,X_r,u(r,X_r), [ \sigma( r, X_r) ]^*( \nabla_x u )( r, X_r )\bigr)\, \mathrm{d}r+\int_s^t \tfrac{\partial u}{\partial x} (r,X_r) \, \sigma(r,X_r) \, \mathrm{d}\BM_r\\
&=Y_s-\int_s^tf\bigl(r,X_r,Y_r,Z_r\bigr)\, \mathrm{d}r+\int_s^t(Z_r)^* \, \mathrm{d}\BM_r.
\end{split}
  \eeq
  This \proves\  \cref{FBSDE0forward0}.
  \end{cproof}

\begin{lemma}[Right continuity of the induced filtration]
\label{lem:properties_filtration}
{%
  Let $T\in (0,\infty)$,  let $ \left( \Omega, \mathcal{F}, \P \right) $
be a probability space, let
$ (\mathbb{F}_t)_{t\in [0,T]} $
be a filtration on $ ( \Omega, \mathcal{F} , \P) $,  and 
for every $t \in [0,T]$ 
let  $\mathbb{G}_t \subseteq \F $  satisfy 
$\mathbb{G}_t = 
    \cap_{ s \in  (t,T]\cup\{T\} } \,
   \mathbb{F}_s$.
Then 
\begin{enumerate}[(i)]
\item 
\label{item:filtration_minus_plus}
it holds for all $ t \in [0,T] $ that
$
  \mathbb{F}_t
  \subseteq
 \mathbb{G}_t
$,
\item
\label{item:filtration_plus}
it holds for all $ s, t \in [0,T] $
with 
$
  s < t
$
that
$
  \mathbb{F}_s
  \subseteq
 \mathbb{G}_s
  \subseteq
  \mathbb{F}_t
  \subseteq
   \mathbb{G}_t
$,   and 
\item\label{lem:further_properties_filtration}
 it holds for all $ t \in [0,T] $ that
\begin{equation}
 \mathbb{G}_t
  =\cap_{ s \in  (t,T]\cup\{T\}  } \,
   \mathbb{G}_s.
\end{equation}
\end{enumerate}}
\end{lemma}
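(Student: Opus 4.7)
The plan is to verify items \ref{item:filtration_minus_plus} and \ref{item:filtration_plus} directly from the definition of $\mathbb{G}_t$ as a right intersection together with the monotonicity of the filtration $(\mathbb{F}_t)_{t \in [0,T]}$, and then to deduce item \ref{lem:further_properties_filtration} by a standard index-shuffling argument, handling the endpoint $t = T$ as a trivial special case.

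For item \ref{item:filtration_minus_plus}, I would simply observe that since $(\mathbb{F}_t)_{t \in [0,T]}$ is a filtration, for every $t \in [0,T]$ and every $s \in (t,T] \cup \{T\}$ one has $s \geq t$ and hence $\mathbb{F}_t \subseteq \mathbb{F}_s$; taking the intersection over all such $s$ yields $\mathbb{F}_t \subseteq \mathbb{G}_t$. For item \ref{item:filtration_plus}, given $s, t \in [0,T]$ with $s < t$, the inclusions $\mathbb{F}_s \subseteq \mathbb{G}_s$ and $\mathbb{F}_t \subseteq \mathbb{G}_t$ are instances of \ref{item:filtration_minus_plus}, and the middle inclusion $\mathbb{G}_s \subseteq \mathbb{F}_t$ follows from the fact that $t \in (s,T] \cup \{T\}$, so $\mathbb{F}_t$ is one of the sets over which the intersection defining $\mathbb{G}_s$ is taken.

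For item \ref{lem:further_properties_filtration}, the inclusion $\mathbb{G}_t \subseteq \cap_{s \in (t,T] \cup \{T\}} \mathbb{G}_s$ I would obtain by noting that for any $s \in (t,T] \cup \{T\}$ with $s > t$ the index set $(s,T] \cup \{T\}$ is a subset of $(t,T] \cup \{T\}$, so $\mathbb{G}_t \subseteq \mathbb{G}_s$ (and for $s = t = T$ both sides coincide with $\mathbb{F}_T$). For the reverse inclusion, I would take $A \in \cap_{s \in (t,T] \cup \{T\}} \mathbb{G}_s$ and show $A \in \mathbb{F}_r$ for every $r \in (t,T] \cup \{T\}$. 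If $t = T$ this is immediate. If $t < T$, then given $r \in (t,T] \cup \{T\}$ one has $r > t$, so I can choose some $s \in (t, r) \subseteq (t,T] \cup \{T\}$; then $A \in \mathbb{G}_s = \cap_{q \in (s,T] \cup \{T\}} \mathbb{F}_q$, and since $r \in (s,T] \cup \{T\}$, this yields $A \in \mathbb{F}_r$, as desired.

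The argument involves no analytic or probabilistic content and is entirely set-theoretic. The only mildly delicate point is the index-shuffling in item \ref{lem:further_properties_filtration}, where one must confirm that an intermediate $s$ with $t < s < r$ can always be chosen inside the admissible range; I would spell this out explicitly by separating the boundary case $t = T$ from the case $t < T$ to avoid any concern about the index sets being empty.
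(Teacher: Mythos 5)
Your proof is correct and follows essentially the same route as the paper: item~(i) is read off the definition of $\mathbb{G}_t$ as an intersection over a filtration, item~(ii) combines (i) with the observation that $\mathbb{F}_t$ appears among the sets being intersected in $\mathbb{G}_s$, and item~(iii) is obtained by squeezing an intermediate index between $t$ and $r$ (the paper picks $(t+r)/2$ explicitly where you pick an arbitrary $s\in(t,r)$, which is the same idea). Your explicit separation of the endpoint case $t=T$ is a sensible bit of care, but the substance of the argument is the one the paper uses.
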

\begin{cproof}{lem:properties_filtration}
\Nobs\ the fact that for all $t\in[0,T]$ it holds that  
$\mathbb{G}_t=(\cap_{ s \in  (t,T]\cup\{T\} } \,  \mathbb{F}_s)\supseteq \mathbb{F}_t$  \proves\
\cref{item:filtration_minus_plus}.
\Nobs\  the assumption that for all $t \in [0,T]$ it holds that $\mathbb{G}_t =  \cap_{ s \in  (t,T]\cup\{T\} } \,
  \mathbb{F}_s$ \proves\  that for all $t,r \in [0,T]$ with $t<r$  it holds that
   \beq 
  \mathbb{G}_t = 
  (  \cap_{ s \in  (t,T]\cup\{T\} } \,
   \mathbb{F}_s )
   =(\cap_{ s \in (t,T] } \mathbb{F}_s)
   \subseteq ( \cap_{ s \in (t,r] } \, \mathbb{F}_s ) \subseteq \mathbb{F}_r .
  \eeq
  This and \cref{item:filtration_minus_plus} \prove\  \cref{item:filtration_plus}.
\Nobs\  \cref{item:filtration_plus} and the fact that $\mathbb{G}_T=\mathbb{F}_T$ 
 \prove\  that for all $ t \in [0,T] $,
$ r \in (t,T]\cup\{T\}$
it holds that
\begin{equation}
  (
    \cap_{ s \in (t, T]\cup\{T\} } \,
  \mathbb{G}_s
  )\subseteq( \cap_{ s \in ( t, ( r + t ) / 2 ] \cup \{ T \} } \, \mathbb{G}_s )
  \subseteq \mathbb{G}_{ ( r + t ) / 2 }
  \subseteq
  \mathbb{F}_r
  .
\end{equation}
Combining this with \cref{item:filtration_minus_plus}
\proves\ that for all $ t \in [0,T] $ it holds 
that
\begin{equation}
\label{eq:further_properties_B}
( \cap_{ s \in (t,T] \cup \{ T \} }\,\mathbb{F}_s )
 \subseteq   (   \cap_{ s \in  (t,T]\cup\{T\} } \,
  \mathbb{G}_s
  )
  \subseteq
  (
    \cap_{ r \in  (t,T]\cup\{T\} } \,
    \mathbb{F}_r
  )
  =
  \mathbb{G}_t
  .
\end{equation}
\end{cproof}

\begin{proposition}[Construction of a normal filtration]
\label{prop:construction_stochastic_basis}
{%
 Let $T\in(0,\infty)$,  let $ \left( \Omega, \mathcal{F},\P \right) $
be a probability space,  let
$ (\mathbb{F}_t)_{t\in [0,T]} $
be a filtration on $ ( \Omega, \mathcal{F} , \P) $,
for every $ t \in [0,T] $
 let $ \mathbb{G}_t\subseteq \mathcal{F}$ be  the sigma-algebra generated by  $\mathbb{F}_t\cup \{ A \in \mathcal{F} \colon
      \P( A ) = 0
    \}$,
and  for every $ t \in [0,T] $ 
 let $ \mathbb{H}_t\subseteq \mathcal{F}$ satisfy
$\mathbb{H}_t= \cap_{ s \in  (t,T]\cup\{T\} } \,
   \mathbb{G}_s$.
Then 
\begin{enumerate}[(i)]
\item\label{Gtnormalfiltration}
it holds that $ (\mathbb{H}_t)_{t\in [0,T]} $ is a normal filtration on 
$ ( \Omega, \mathcal{F} , \P) $ and 
\item  \label{item:smallest_basis}
it holds 
for all 
normal filtrations 
$ ( \mathbb{I}_t )_{ t \in [0,T] } $
on
$
  ( \Omega, \mathcal{F}, \P )
$
with 
$
  \forall \, t \in [0,T] \colon
  \mathbb{F}_t \subseteq \mathbb{I}_t
$
that
$ 
  \forall \, 
  t \in [0,T] 
  \colon
  \mathbb{H}_t
  \subseteq
  \mathbb{I}_t
$.
\end{enumerate}

}
\end{proposition}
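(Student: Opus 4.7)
The plan is to deduce both items by organizing the definitions and invoking \cref{lem:properties_filtration}. As a preliminary step, I would verify that $(\mathbb{G}_t)_{t\in[0,T]}$ is itself a filtration on $(\Omega,\mathcal{F},\P)$: since $(\mathbb{F}_t)_{t\in[0,T]}$ is a filtration, for $s\le t$ it holds that $\mathbb{F}_s \cup \{A\in\mathcal{F}\colon \P(A)=0\} \subseteq \mathbb{F}_t \cup \{A\in\mathcal{F}\colon\P(A)=0\}$, and the sigma-algebras generated by these sets inherit the inclusion, so $\mathbb{G}_s \subseteq \mathbb{G}_t$.

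For \cref{Gtnormalfiltration} I would then apply \cref{lem:properties_filtration} with the filtration $(\mathbb{G}_t)_{t\in[0,T]}$ in place of $(\mathbb{F}_t)_{t\in[0,T]}$, so that the role of the auxiliary sigma-algebras in the lemma is played here by $(\mathbb{H}_t)_{t\in[0,T]}$. Item \ref{item:filtration_plus} of \cref{lem:properties_filtration} then yields that $(\mathbb{H}_t)_{t\in[0,T]}$ is a filtration, and item \ref{lem:further_properties_filtration} yields the right-continuity $\mathbb{H}_t = \cap_{s\in(t,T]\cup\{T\}}\mathbb{H}_s$ for all $t\in[0,T]$. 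Since each $\mathbb{G}_s$ contains every $\P$-null set by construction, the intersection $\mathbb{H}_t$ inherits this property, and combining these three facts establishes that $(\mathbb{H}_t)_{t\in[0,T]}$ is a normal filtration on $(\Omega,\mathcal{F},\P)$.

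For \cref{item:smallest_basis}, let $(\mathbb{I}_t)_{t\in[0,T]}$ be any normal filtration on $(\Omega,\mathcal{F},\P)$ satisfying $\mathbb{F}_t\subseteq\mathbb{I}_t$ for every $t\in[0,T]$. The normality assumption implies that every $\P$-null set belongs to $\mathbb{I}_t$, so $\mathbb{I}_t$ is a sigma-algebra containing $\mathbb{F}_t\cup\{A\in\mathcal{F}\colon\P(A)=0\}$, and minimality of the generated sigma-algebra thus gives $\mathbb{G}_t \subseteq \mathbb{I}_t$ for all $t\in[0,T]$. Intersecting this inclusion over $s\in(t,T]\cup\{T\}$ yields $\mathbb{H}_t \subseteq \cap_{s\in(t,T]\cup\{T\}}\mathbb{I}_s$, and the right-continuity of $(\mathbb{I}_t)_{t\in[0,T]}$ (together with the observation that $T\in(t,T]$ for $t<T$ and that the intersection is trivially $\mathbb{I}_T$ when $t=T$) shows that $\cap_{s\in(t,T]\cup\{T\}}\mathbb{I}_s = \mathbb{I}_t$, from which $\mathbb{H}_t \subseteq \mathbb{I}_t$ follows.

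I do not expect a genuine obstacle in this argument; the only mildly delicate step is the bookkeeping around the edge case $t=T$ in the right-continuity condition, which is already handled cleanly by the explicit presence of $\{T\}$ in the set $(t,T]\cup\{T\}$ defining $\mathbb{H}_t$, so that all inclusions in the chain above remain valid uniformly across $t\in[0,T]$.
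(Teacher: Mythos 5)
Your proposal is correct and follows essentially the same route as the paper: both arguments apply \cref{lem:properties_filtration} to the filtration $(\mathbb{G}_t)_{t\in[0,T]}$ (so that $(\mathbb{H}_t)_{t\in[0,T]}$ plays the role of the auxiliary filtration), extract the filtration structure and right-continuity from items \ref{item:filtration_plus} and \ref{lem:further_properties_filtration}, note that null sets survive the intersection, and for item \ref{item:smallest_basis} use the generating-set minimality of $\mathbb{G}_t$ together with the right-continuity of $(\mathbb{I}_t)_{t\in[0,T]}$. Your preliminary check that $(\mathbb{G}_t)_{t\in[0,T]}$ is itself a filtration is a sensible small addition that the paper leaves implicit.
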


\begin{cproof}{prop:construction_stochastic_basis}
First, \nobs  that  the assumption that
for all $ t \in [0,T] $ it holds that 
 $ \mathbb{G}_t\subseteq \mathcal{F}$ is  the sigma-algebra generated by  $\mathbb{F}_t\cup \{ A \in \mathcal{F} \colon
      \P( A ) = 0
    \}$
\proves\  that 
for all $ t \in [0,T] $
it holds that
\begin{equation}
    \left\{ 
      A \in \mathcal{F} \colon
      \P( A ) = 0
    \right\}
  \subseteq
  \mathbb{G}_t 
  .
\end{equation}
\Cref{item:filtration_minus_plus} in \cref{lem:properties_filtration}
(applied with
$(\mathbb{F}_t)_{t\in [0,T]}\with (\mathbb{G}_t)_{t\in [0,T]} $,  $ (\mathbb{G}_t)_{t\in [0,T]}\with (\mathbb{H}_t)_{t\in [0,T]} $
  in the notation of  \cref{lem:properties_filtration})
  \hence \proves\ that
for all $ t \in [0,T] $
it holds that
\begin{equation}
    \left\{ 
      A \in \mathcal{F} \colon
      \P( A ) = 0
    \right\}
  \subseteq
  \mathbb{H}_t
  .
\end{equation}
Combining this with \cref{item:filtration_plus,lem:further_properties_filtration} in \cref{lem:properties_filtration}
\proves\  \cref{Gtnormalfiltration}.
\Nobs\
for all 
normal filtrations 
$ ( \mathbb{I}_t )_{ t \in [0,T] } $  with 
$
  \forall \, t \in [0,T] \colon
  \mathbb{F}_t \subseteq \mathbb{I}_t
$ it holds for all  $t \in [0,T] $ that 
\beq\label{129propertyofHt} \bigl(\mathbb{F}_t\cup    \left\{ 
      A \in \mathcal{F} \colon
      \P( A ) = 0
    \right\}\bigr)
  \subseteq  \mathbb{I}_t  \qquad \text{and} \qquad 
   \mathbb{I}_t=\cap_{ s \in  (t,T]\cup\{T\} } \,
  \mathbb{I}_s.
   \eeq 
  This and the fact that
  for all $ t \in [0,T] $ it holds that
  $ \mathbb{G}_t\subseteq \mathcal{F}$ is  the sigma-algebra generated by   $\mathbb{F}_t\cup \{ A \in \mathcal{F} \colon
      \P( A ) = 0
    \}$
  \prove\   that 
  for all  normal filtrations 
$ ( \mathbb{I}_t )_{ t \in [0,T] } $
on
$
  ( \Omega, \mathcal{F}, \P )
$
with 
$
  \forall \, t \in [0,T] \colon
  \mathbb{F}_t \subseteq \mathbb{I}_t
$
 it holds that
$  \forall \, t \in [0,T] \colon
  \mathbb{G}_t
  \subseteq
  \mathbb{I}_t 
$.
Combining this with \cref{129propertyofHt} \proves\  that
for all 
normal filtrations 
$ ( \mathbb{I}_t )_{ t \in [0,T] } $
on
$
  ( \Omega, \mathcal{F}, \P )
$
with 
$
  \forall \, t \in [0,T] \colon
  \mathbb{F}_t \subseteq \mathbb{I}_t 
$
it holds that
\begin{equation}
  \forall \, t \in [0,T] \colon   \mathbb{H}_t=(\cap_{ s \in  (t,T]\cup\{T\} } \, \mathbb{G}_s)
  \subseteq (\cap_{ s \in  (t,T]\cup\{T\} } \,  \mathbb{I}_s)=
 \mathbb{I}_t.
\end{equation}
\end{cproof}

\begin{corollary}\label{cor36}
 {%
 Let $T,\rho \in (0,\infty)$, 
 $d\in \N$,  
 $ \varrho = \sqrt{ 2 \rho } $,
 let $u\in C^{1,2}([0,T]\times \R^d)$ 
	have at most polynomially growing partial derivatives,  assume  for all $t\in [0,T]$, $x\in \R^d$ that
	\beq
	\tfrac{\partial u}{\partial t}(t,x)= \rho \, \Delta_x u(t,x),
	\eeq
	let $(\Omega,\F, \P)$ be a probability space,  and let 
		$\BM\colon [0,T]\times\Omega \to \R^d$ be a standard Brownian motion.
Then  it holds  for all $t\in [0,T]$, $x\in\R^d$ that
  \beq\label{cor36mainresult}
\begin{split}
u(t,x)=\E\bigl[ u( 0, x+\varrho\BM_t  ) \bigr].
 \end{split}
\eeq
 } 
  \end{corollary}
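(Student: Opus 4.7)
The plan is to derive \cref{cor36mainresult} from the nonlinear Feynman--Kac representation in \cref{nonlinearFKformula} after a time reversal. The given PDE $\partial_t u = \rho \, \Delta_x u$ evolves forward in time, whereas the PDE \cref{thm22pde0} in \cref{nonlinearFKformula} is a backward parabolic equation. To align the two I would fix $t_0 \in [0,T]$ and $x \in \R^d$ and introduce the time-reversed function $v \colon [0,t_0] \times \R^d \to \R$ defined by $v(s,y) = u(t_0 - s, y)$. A direct computation shows $v \in C^{1,2}([0,t_0] \times \R^d)$ and
\begin{equation*}
\tfrac{\partial v}{\partial s}(s,y) + \rho \, \Delta_y v(s,y) = -\tfrac{\partial u}{\partial t}(t_0-s,y) + \rho \, \Delta_x u(t_0-s,y) = 0,
\end{equation*}
which matches \cref{thm22pde0} with the constant coefficients $\mu \equiv 0 \in \R^d$, $\sigma \equiv \varrho I \in \R^{d \times d}$ (where $I$ is the identity), and $f \equiv 0$, using the identity $\tfrac12 \Tr(\varrho^2 I \, \mathrm{Hess}_y v) = \rho \, \Delta_y v$.

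Next, I would equip the probability space with a normal filtration by applying \cref{prop:construction_stochastic_basis} to the filtration generated by $B$; with respect to this normal filtration $B$ remains a standard Brownian motion. Choosing $X_s = x + \varrho B_s$ satisfies \cref{bsdext0} trivially with $X_0 = x$. Setting $Y_s = v(s,X_s) = u(t_0-s, x + \varrho B_s)$ and $Z_s = \varrho \, (\nabla_y v)(s,X_s) = \varrho \, (\nabla_x u)(t_0-s, X_s)$, all hypotheses of \cref{nonlinearFKformula} are met (with $T$ replaced by $t_0$), so applying its conclusion at $s=0$ and $t=t_0$ yields that $\P$-a.s.\
\begin{equation*}
u(0, x + \varrho B_{t_0}) - u(t_0, x) - \int_0^{t_0} Z_r^* \, \mathrm{d}B_r = 0.
\end{equation*}

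The main remaining step is to take expectations and argue that the stochastic integral has vanishing mean. This is precisely where the assumption that $u$ has at most polynomially growing partial derivatives enters: it provides constants $C, k \in (0,\infty)$ with $\|Z_r\| \leq \varrho C(1 + \|X_r\|^k)$ for all $r \in [0,t_0]$, and since $\|X_r\| \leq \|x\| + \varrho\|B_r\|$ has finite moments of all orders, standard moment bounds for Brownian motion give $\E[\int_0^{t_0} \|Z_r\|^2 \, \mathrm{d}r] < \infty$. Hence the stochastic integral is a true martingale with zero mean, and taking expectations produces
\begin{equation*}
\E\bigl[u(0, x + \varrho B_{t_0})\bigr] = u(t_0, x).
\end{equation*}
Since $t_0 \in [0,T]$ and $x \in \R^d$ were arbitrary (with $t_0 = 0$ being trivial), this establishes \cref{cor36mainresult}. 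The only genuine obstacle is the integrability check needed to kill the stochastic integral; everything else is a clean specialization of \cref{nonlinearFKformula} to $\mu \equiv 0$, $\sigma \equiv \varrho I$, $f \equiv 0$.
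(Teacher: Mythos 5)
Your proposal is correct and follows essentially the same route as the paper: reverse time so that the forward heat equation matches the backward form of \cref{nonlinearFKformula}, upgrade the filtration generated by $B$ to a normal one via \cref{prop:construction_stochastic_basis}, specialize \cref{nonlinearFKformula} with $\mu\equiv 0$, $\sigma\equiv\varrho I_d$, $f\equiv 0$, $X_s=x+\varrho B_s$, and then invoke the polynomial-growth hypothesis on $\nabla_x u$ together with the finiteness of all Brownian moments to show the stochastic integral is a square-integrable martingale with zero mean before taking expectations. The only cosmetic difference is that you fix $t_0$ and plug in $s=0$, $t=t_0$ directly, while the paper first records the identity for general $\mathcal{T}\in(0,T]$ and $s\le t$ before specializing; the substance is identical.
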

  
\begin{cproof}{cor36}
 Throughout this proof let $(\mathbb{F}_t)_{t \in [0,T]}$
be  the normal filtration generated by $B$ (cf.,  \ex \cref{prop:construction_stochastic_basis}).
\cref{nonlinearFKformula} (applied for every $\mathcal{T}\in(0,T]$,  $x\in\R^d$    with $T\with \mathcal{T}$,  $d\with d$,  
$\mu\with  ([0,\mathcal{T}]\times\R^d\ni (t,v)\mapsto 0\in \R^d)$, 
$\sigma\with ([0,\mathcal{T}]\times \R^d\ni (t,v) \mapsto \varrho\I_d\in \R^{d\times d})$,  $f\with ( [0,\mathcal{T}]\times \R^d \times \R\times\R^d\ni (t,v,y,z)\mapsto 0\in\R)$,
$u\with  ([0,\mathcal{T}]\times\R^d\ni (t,v) \mapsto u(\mathcal{T}-t,v)\in\R)$, 
$(\Omega,\F, \P)\with (\Omega,\F, \P)$,
$(\mathbb{F}_t)_{t \in [0,T]}\with (\mathbb{F}_t)_{t \in [0,T]}$,
$X\with ([0,\mathcal{T}]\times \Omega\ni (t, \omega)\mapsto x+\varrho B_t(\omega)\in\R^d)$
in the notation of \cref{nonlinearFKformula}) \proves\     that   for all $\mathcal{T}\in(0,T]$,  $t\in [0,\mathcal{T}]$, $s\in [0,t]$,  $x\in\R^d$  it holds $\P$-a.s.\  that 
 \beq\label{cor36eq132}\begin{split}
  u(\mathcal{T}-t,x+\varrho \BM_t)=u(\mathcal{T}-s,x+\varrho \BM_s)-\int_s^t \varrho[( \nabla_x u )(r,x+\varrho \BM_r)]^* \, \mathrm{d}\BM_r.
\end{split}
  \eeq
  \Hence  that for all $\mathcal{T}\in (0,T]$, $x\in\R^d$ it holds $\P$-a.s.\  that
  \beq\begin{split}
  u(0,x+\varrho \BM_\mathcal{T})=u(\mathcal{T},x)-\int_0^\mathcal{T}\varrho[( \nabla_x u )(r,x+\varrho \BM_r)]^* \, \mathrm{d}\BM_r.
\end{split}
  \eeq
   \Hence  that for all $t\in [0,T]$, $x\in\R^d$ it holds $\P$-a.s.\  that
  \beq\label{cor36eq132a}\begin{split}
  u(0,x+\varrho \BM_t)=u(t,x)-\int_0^t \varrho[( \nabla_x u )(r,x+\varrho \BM_r)]^* \, \mathrm{d}\BM_r.
\end{split}
  \eeq
  \Moreover\  the assumption that  $u\in C^{1,2}([0,T]\times \R^d)$ 
	has at most polynomially growing partial derivatives
  \proves\  that  there exists $c\in (0,\infty)$ such that for all $r\in [0,T]$,   $x\in\R^d$ it holds   that 
  \beq\label{polygrowth2}
		\|( \nabla_x u )(r,x)\|\leq c(1+\|x\|^c).
		\eeq
		Combining this with the fact that  for all $c\in (0,\infty)$ it holds that
		$\sup_{r\in [0,T]}\E[\| \BM_r\|^c ]<\infty$  \proves\  that for all $t\in [0,T]$, $x\in\R^d$  it holds   that 
		\beq\label{integrabilitymartingale132}
\E\!\left[\int_0^t  \| \varrho ( \nabla_x u )(r,x+\varrho \BM_r)\|^2 \, \mathrm{d}r\right]<\infty.
\eeq
This \proves\  that   for all $t\in [0,T]$, $x\in \R^d$ it holds that
\beq\label{eq143expsmallerthaninfinity}
\E\!\left[\left|\int_0^t \varrho[( \nabla_x u )(r,x+\varrho \BM_r)]^* \, \mathrm{d}\BM_r \right|^2\right]<\infty.
\eeq
This and  \cref{cor36eq132a} \prove\ that  for all $t\in [0,T]$,   $x\in\R^d$ it holds that
\beq \E\bigl[ |u( 0, x+\varrho\BM_t  ) |\bigr]<\infty.\eeq
Combining this with \cref{cor36eq132a},  \cref{integrabilitymartingale132},  and \cref{eq143expsmallerthaninfinity}  \proves\ that
for all $t\in [0,T]$, $x\in \R^d$ it holds that 
\beq \E\bigl[ u( 0, x+\varrho\BM_t  ) \bigr]=u(t,x).\eeq
This \proves\ \cref{cor36mainresult}.
\end{cproof}

\begin{corollary}\label{noBMversion}
{%

Let $T,\rho \in (0,\infty)$, 
 $d\in \N$,  
 $ \varrho = \sqrt{ 2 \rho } $,
 let $u\in C^{1,2}([0,T]\times \R^d)$ 
	have at most polynomially growing partial derivatives, 
	let $\varphi\colon \R^d\to \R$ be a function,
 assume  for all $t\in [0,T]$, $x\in \R^d$ that $ u(0,x) = \varphi(x) $ and 
	\beq\label{eq124}
	\tfrac{\partial u}{\partial t}(t,x)= \rho \, \Delta_x u(t,x),
	\eeq
	let $(\Omega,\F, \P)$ be a probability space, 
	let
$ \mathbb{B}\colon \Omega \to \R^d$ 
be a  standard normal random variable,   
 let  $\xi  \colon \Omega \to \R^d $ and $\tau\colon \Omega\to[0,T]$ be  random variables,
assume that $ \mathbb{B} $,   $ \xi $,        and $\tau$  are independent,  and let $\mathcal{G}\subseteq \mathcal{F}$ be the sigma-algebra generated by $\tau$ and $\xi$.
Then it holds $\P$-a.s.\ 
that
 \beq\label{utauxi125}
\begin{split}
u(\tau,\xi)=\E\bigl[ \varphi( \varrho \sqrt{\tau}  \mathbb{B} + \xi )|\mathcal{G} \bigr].
 \end{split}
\eeq
}
\end{corollary}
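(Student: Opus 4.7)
The plan is to derive the assertion from \cref{cor36} in two steps: (a) rewrite the unconditional Feynman--Kac formula provided by \cref{cor36} in terms of the standard normal $\mathbb{B}$ rather than a Brownian increment, and then (b) apply a standard freezing (independence) lemma for conditional expectations in order to substitute the $\mathcal{G}$-measurable pair $(\tau,\xi)$ for the deterministic arguments $(t,x)$.

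For step (a), I would invoke \cref{cor36}, if necessary on an auxiliary probability space carrying a standard Brownian motion $B$, to obtain, for all $(t,x) \in [0,T] \times \R^d$, that
\begin{equation*}
  u(t,x) = \E\bigl[u(0, x + \varrho B_t)\bigr] = \E\bigl[\varphi(x + \varrho B_t)\bigr].
\end{equation*}
Since $B_t$ is $N(0, tI_d)$-distributed and therefore equidistributed with $\sqrt{t}\,\mathbb{B}$, this rewrites as
\begin{equation*}
  u(t,x) = \E\bigl[\varphi(x + \varrho \sqrt{t}\,\mathbb{B})\bigr] \qquad \text{for all } (t,x) \in [0,T] \times \R^d.
\end{equation*}
Polynomial growth of the partial derivatives of $u$ ensures that $\varphi = u(0,\cdot)$ is polynomially bounded, so the Gaussian moments of $\mathbb{B}$ make these expectations finite and well-defined.

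For step (b), I would define the deterministic function $h\colon [0,T]\times\R^d \to \R$ by $h(t,x) = \E[\varphi(x + \varrho \sqrt{t}\,\mathbb{B})]$, so that $h = u$ on $[0,T] \times \R^d$ by step (a). Since $\mathbb{B}$ is independent of $(\tau,\xi)$, it is independent of $\mathcal{G} = \sigma(\tau,\xi)$, whereas $(\tau,\xi)$ is $\mathcal{G}$-measurable. The standard independence lemma for conditional expectations (cf., e.g., Klenke~\cite{MR4201399}) then yields
\begin{equation*}
  \E\bigl[\varphi(\varrho \sqrt{\tau}\,\mathbb{B} + \xi) \bigm| \mathcal{G}\bigr] = h(\tau,\xi) = u(\tau,\xi) \qquad \P\text{-a.s.},
\end{equation*}
which is precisely the identity in \cref{utauxi125}.

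The only genuine technical obstacle is the clean invocation of the freezing lemma, which requires some care with measurability and integrability. This is however routine given polynomial growth of $\varphi$, the boundedness $\tau \in [0,T]$, and the Gaussian moments of $\mathbb{B}$; in particular, conditioning on $\mathcal{G}$ effectively reduces the random identity to the deterministic pointwise identity established in step (a), so all the genuine analytic content is already contained in the proof of \cref{cor36}.
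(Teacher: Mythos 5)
Your proposal is correct and follows essentially the same route as the paper's own proof: invoke \cref{cor36} on an auxiliary probability space to obtain $u(t,x)=\E[\varphi(x+\varrho\sqrt{t}\,\mathbb{B})]$ for deterministic $(t,x)$, and then pass to the conditional identity by a factorization (freezing) lemma using that $\mathbb{B}$ is independent of $\mathcal{G}=\sigma(\tau,\xi)$ while $(\tau,\xi)$ is $\mathcal{G}$-measurable. The paper cites a specific factorization lemma (\cite[Corollary 2.9]{jentzen2018strong}) for this step and spells out the $\mathcal{G}$-measurability of $u(\tau,\xi)$, but the underlying argument is the same as yours.
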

\begin{cproof}{noBMversion}
Throughout this proof  
let $\probspacetilde$ be a probability space 
and let $\BM\colon [0,T]\times\tildeOmega \to \R^d$
be a standard Brownian motion.
\Nobs\  \cref{cor36}
(applied   with $T\with T$,  $d\with d$,  $\rho\with\rho$,   $u\with u$,  $(\Omega, \F, \P ) \with \probspacetilde$,  
 $\BM\with\BM$
 in the notation of  \cref{cor36}),
 the fact that  for all $x\in\R^d$ it holds that  $u(0,x) = \varphi(x) $,  and \cref{eq124} \prove\  that
 for all $t\in [0,T]$, $x\in \R^d$ it holds that
 \beq
\begin{split}
u(t,x)=\int_{\tildeOmega} u\bigl( 0, x+\varrho\BM_{t}(\omega)\bigr) \,\tilde{\P}(d\omega)=\int_{\tildeOmega} \varphi( x+\varrho\BM_{t}(\omega)) \,\tilde{\P}(d\omega).
 \end{split}
\eeq

Combining this and the fact that 
 for all $t \in [0, T]$,  $A \in \mathcal{B}( \R^d )$  it holds that $\tilde{\P}(   \BM_{t} \in A ) = \P( \sqrt{t} \mathbb{B} \in A )$ 
 \proves\  that  for all $t\in[0,T]$, $x\in\R^d$ it holds  that
 \beq\label{eq127}
\begin{split}
u(t,x)=
 \int_{ \tilde{\Omega} } \varphi( x+\varrho\BM_{t}(\omega)) \,\tilde{\P}(d\omega)= \int_{ \Omega }  \varphi( x+\sqrt{t} \mathbb{B}(\omega)) \,\P(d\omega)  =
\E\bigl[ \varphi(x+\varrho \sqrt{t} \mathbb{B} ) \bigr].
 \end{split}
\eeq
\Moreover\  the assumption that $u\in C^{1,2}([0,T]\times \R^d)$ and the assumption that $\mathcal{G}$ is the sigma-algebra generated by $\tau$ and $\xi$ \prove\  that 
$\Omega \ni \omega\mapsto u(\tau(\omega), \xi(\omega)) \in  \R$
 is $\mathcal{G}$/$\mathcal{B}(\R)$-measurable.
		\Moreover\  the factorization lemma for conditional expectations,    \ex in    \cite[Corollary 2.9]{jentzen2018strong} (applied with $(\Omega,\F, \P)\with (\Omega,\F, \P)$,   $\mathcal{G}\with \mathcal{G}$,
$(\mathbb{X},\mathcal{X})\with (\R^d,\mathcal{B}(\R^d))$,  $(\mathbb{Y},\mathcal{Y})\with ([0,T]\times\R^d,\mathcal{B}([0,T]\times\R^d))$, 
$X\with \mathbb{B}$,  $Y\with (\Omega\ni\omega\mapsto (\tau(\omega),\xi(\omega))\in [0,T]\times\R^d)$,
  $\Phi\with (\R^d\times[0,T]\times \R^d\ni(x,t,y)\mapsto  \varphi( \varrho \sqrt{t}x+ y )\in\R)$, $\phi\with u$ in the notation of \cite[Corollary 2.9]{jentzen2018strong}) and \cref{eq127}
  \prove\    that
 for all $A\in \mathcal{G}$ it holds  that 
\beq
\E\bigl[u(\tau,\xi)\id_{A}\big]=\E \bigl[\varphi(\xi + \varrho \sqrt{\tau} \mathbb{B} )\id_{A}\bigr].
\eeq
Combining this with the fact that $\Omega \ni \omega\mapsto u(\tau(\omega), \xi(\omega)) \in  \R$
 is $\mathcal{G}$/$\mathcal{B}(\R)$-measurable \proves\  \cref{utauxi125}.
\end{cproof}

\begin{corollary}\label{notimeFKformula}
{%

 Let $T,\rho \in (0,\infty)$, 
 $d\in \N$,  
 $ \varrho = \sqrt{ 2T \rho } $, 
 let $u\in C^{1,2}([0,T]\times \R^d)$ 
	have at most polynomially growing partial derivatives, 
	let $\varphi\colon \R^d\to \R$ be a function,  assume 
 for all $t\in [0,T]$, $x\in \R^d$ that $ u(0,x) = \varphi(x) $ and 
	\beq\label{ }
	\tfrac{\partial u}{\partial t}(t,x)= \rho \, \Delta_x u(t,x),
	\eeq
	let $(\Omega,\F, \P)$ be a probability space, 
	let
$ \mathbb{B}\colon \Omega \to \R^d$ 
be a  standard normal random variable,   
 let  $\xi  \colon \Omega \to \R^d $ be a  random variable, 
assume that $ \mathbb{B} $    and $ \xi $ are independent,   and let $\mathcal{G}\subseteq\mathcal{F}$ be the sigma-algebra generated by $\xi$.
Then it holds $\P$-a.s.\ 
that
 \beq\label{eq130}
\begin{split}
u(T,\xi)=\E\bigl[ \varphi( \varrho \mathbb{B} + \xi )|\mathcal{G} \bigr].
 \end{split}
\eeq
}

\end{corollary}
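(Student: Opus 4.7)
The plan is to derive \cref{notimeFKformula} as an immediate specialization of \cref{noBMversion} by taking the random time $\tau$ to be the deterministic constant $T$.

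First, I would introduce an auxiliary random variable $\tau\colon \Omega \to [0,T]$ defined by $\tau(\omega) = T$ for all $\omega \in \Omega$. Since $\tau$ is constant, it is trivially independent of any other random variables, so $\mathbb{B}$, $\xi$, and $\tau$ are independent as required by the hypotheses of \cref{noBMversion}. Moreover, the sigma-algebra generated by the constant random variable $\tau$ is $\{\emptyset, \Omega\}$, which \proves\ that the sigma-algebra generated by $\tau$ and $\xi$ coincides with $\mathcal{G}$, the sigma-algebra generated by $\xi$.

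Next, I would apply \cref{noBMversion} with $T\with T$, $\rho\with \rho$, $d\with d$, $u\with u$, $\varphi\with \varphi$, $(\Omega, \F, \P)\with (\Omega, \F, \P)$, $\mathbb{B}\with \mathbb{B}$, $\xi\with \xi$, $\tau\with \tau$, and $\mathcal{G}\with \mathcal{G}$. Writing $\tilde{\varrho} = \sqrt{2\rho}$ for the constant appearing in \cref{noBMversion}, the conclusion of \cref{noBMversion} gives $\P$-a.s.\ that
\begin{equation}
u(\tau, \xi) = \E\bigl[\varphi(\tilde{\varrho}\sqrt{\tau}\,\mathbb{B} + \xi) \,\big|\, \mathcal{G}\bigr].
\end{equation}
Since $\tau = T$ identically, we have $\tilde{\varrho}\sqrt{\tau} = \sqrt{2\rho}\sqrt{T} = \sqrt{2T\rho} = \varrho$, so the right-hand side equals $\E[\varphi(\varrho\mathbb{B} + \xi) \,|\, \mathcal{G}]$ and the left-hand side equals $u(T, \xi)$, which establishes \cref{eq130}.

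There is no substantial obstacle: the entire argument is a matter of verifying that the deterministic specialization $\tau \equiv T$ satisfies the hypotheses of \cref{noBMversion} and simplifies the conclusion to the desired form. The only minor point to check is the identification of the conditioning sigma-algebras, which follows immediately from the fact that a constant random variable generates the trivial sigma-algebra.
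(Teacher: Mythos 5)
Your proposal is correct and takes essentially the same approach as the paper: both apply \cref{noBMversion} with the deterministic choice $\tau \equiv T$. The paper's proof is terser — it simply cites the application of \cref{noBMversion} with $\tau\with (\Omega \ni \omega \mapsto T \in [0,T])$ — whereas you additionally spell out the verification of the independence hypothesis, the identification of the conditioning sigma-algebras, and the simplification $\sqrt{2\rho}\sqrt{T} = \varrho$, all of which are valid.
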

\begin{cproof}{notimeFKformula}
\Nobs\ 
\cref{noBMversion} (applied with  $T\with T$, $d\with d$,  $x\with x$,  $(\Omega,\F, \P)\with (\Omega,\F, \P)$, 
$\mathbb{B}\with \mathbb{B}$, $\xi\with\xi$,
 $\tau\with (\Omega \ni \omega \mapsto T \in [0,T])$
in the notation of \cref{noBMversion})
\proves\  \cref{eq130}.
\end{cproof}

\subsection{Deep Kolmogorov methods}
\label{sect:DKM}

In this section we discuss the deep Kolmogorov methods introduced in \cite{Beck2021} for solving linear Kolmogorov \PDEs.
Specifically, we present two reformulation results upon which deep Kolmogorov methods are based.
The first reformulation result in \cref{sec3thm31b} casts terminal values of a heat \PDEs\ as solutions of infinite-dimensional stochastic optimization problems.
The second reformulation result in \cref{sec3thm31a} casts the entire solution of a heat \PDEs\ as a solution of infinite-dimensional stochastic optimization problem.
Obtaining deep Kolmogorov methods from these reformulation results consists of replacing these infinite-dimensional stochastic optimization problems by finite-dimensional stochastic optimization problems over the parameters of \ANNs\ and thereafter applying an \SGD-type method to solve the finite-dimensional stochastic optimization problems (see, e.g., \cite{Beck2021} and \cite[Section 2]{beck2020overview}).

\begin{theorem}[Deep Kolmogorov methods for terminal values of heat \PDEs]
  \label{sec3thm31b}
{%
 Let
$ T, \rho \in (0,\infty) $,  $ d \in \N $, 
$ \varrho = \sqrt{ 2 T\rho } $,  let 
$ \varphi \colon \R^d \to \R $ be a function,
 let $u\in C^{1,2}([0,T]\times\R^d,\R)$ 
	have at most polynomially growing partial derivatives, 
assume 
 for all $t\in [0,T]$, $x\in \R^d$ that
$ u(0,x) = \varphi(x) $ and 
\begin{equation}
\label{eq:differentialu0b}
  \tfrac{ \partial u}{\partial t}(t,x) 
  = 
  \rho \, \Delta_x u(t,x),
\end{equation}
let 
$ (\Omega, \F, \P ) $ be a probability space, 
let
$ \mathbb{B}\colon \Omega \to \R^d $ and $\xi  \colon \Omega \to \R^d $ be standard normal random variables, 
assume that $ \mathbb{B}$ and $\xi$ are    independent, 
let $\basicset= C(\R^d,\R)$,
and  let $\lossexp\colon \basicset \to [0,\infty]$ satisfy for all $v\in \basicset$  that
\beq
\lossexp(v)=
\E\br[\big]{ \abs{ \varphi( \varrho \mathbb{B} + \xi ) - v(\xi) }^2 }.
\eeq
Then 
\begin{enumerate}[(i)]
 	\item\label{thm31bitem1} 
 	for all $v\in \basicset$  it holds that
 	 \beq\label{thm31bitem1a}
\lossexp(v)=\inf_{ w \in \basicset}\lossexp(w)
\eeq
  if and only if 	 it holds for all $x\in  \R^d$ that 
$ v(x) 
  = u(T,x)$ and
   	\item\label{thm31bitem2} 
	there exists $v\in \basicset$ such that 
	\beq   \lossexp(v)=\inf_{ w \in \basicset}\lossexp(w).           \eeq
 	\end{enumerate}
}
\end{theorem}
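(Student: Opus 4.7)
The plan is to combine the linear Feynman--Kac representation from \cref{notimeFKformula} with the abstract conditional-expectation reformulation result \cref{lemmaB2}, with the conditioning sigma-algebra taken to be $\sigma(\xi)$. As a preliminary step, I would verify that $\varphi$ is polynomially bounded: since $u$ has polynomially growing partial derivatives, integrating $\nabla_x u(0, \cdot)$ along a ray from the origin shows that $\varphi = u(0, \cdot)$ is polynomially bounded on $\R^d$. Combined with the Gaussian moments of $\xi$ and $\mathbb{B}$, this ensures $\E[|\varphi(\varrho \mathbb{B} + \xi)|^2] < \infty$, so the moment hypothesis of \cref{lemmaB2} is satisfied and the loss $\lossexp(v)$ is finite at least for $v = u(T, \cdot)$.

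Next, I would invoke \cref{notimeFKformula} (with the same $T$, $d$, $\rho$, $\varphi$, $u$, $\mathbb{B}$, $\xi$ and with $\mathcal{G} \with \sigma(\xi)$) to identify the relevant conditional expectation. The corollary yields
\[
u(T, \xi) = \E\bbr{\varphi(\varrho \mathbb{B} + \xi) \mid \sigma(\xi)} \qquad \P\text{-a.s.}
\]
This sets up a direct application of \cref{lemmaB2} with the choices $\basicset \with C(\R^d, \R)$, $\solutionset \with \{u(T, \cdot)\}$, $R_1 \with \xi$, $R_2 \with \mathbb{B}$, $N \with 1$, $\phi \with ((x, y) \mapsto \varphi(\varrho y + x))$, and, for each $v \in V$, $U_v \with v$. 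Measurability of $\phi$ and of $U_v$ follows from continuity of $\varphi$ (which itself follows from $u \in C^{1,2}$) and of $v$, respectively.

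The key verification is the equivalence required by \cref{lemmaB2}: for $v \in C(\R^d, \R)$,
\[
\P\bpr{v(\xi) = u(T, \xi)} = 1 \quad \Longleftrightarrow \quad \bpr{\forall \, x \in \R^d \colon v(x) = u(T, x)}.
\]
The reverse implication is immediate. The forward implication is the only genuinely delicate point and uses the full topological support of $\xi$: since $\xi$ is standard normal, every nonempty open subset of $\R^d$ has positive probability under the law of $\xi$, so if $v(\xi) = u(T, \xi)$ $\P$-a.s., then the set $\{x \in \R^d \colon v(x) = u(T, x)\}$ is dense in $\R^d$; continuity of both $v$ and $u(T, \cdot)$ then forces pointwise equality everywhere.

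With these ingredients in place, statements \ref{thm31bitem1} and \ref{thm31bitem2} follow directly from items \ref{lemmaB2anotheritem1} and \ref{lemmaB2anotheritem2} of \cref{lemmaB2}, respectively. The main obstacle is essentially just the polynomial-growth bookkeeping required to simultaneously invoke \cref{notimeFKformula} and the moment hypothesis of \cref{lemmaB2}; the structural argument itself is a clean application of the two cited results together with the full-support property of the Gaussian law.
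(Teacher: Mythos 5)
Your proposal is correct and follows essentially the same route as the paper: invoke \cref{notimeFKformula} to identify $u(T,\xi)$ with $\E[\varphi(\varrho\mathbb{B}+\xi)\mid\sigma(\xi)]$, then feed this into \cref{lemmaB2} with $R_1=\xi$, $R_2=\mathbb{B}$, $U_v=v$, and $\phi(x,y)=\varphi(\varrho y+x)$, with the forward direction of the key ``$\P(U_v(R_1)=\E[\phi\mid R_1])=1\Leftrightarrow v\in S$'' equivalence resolved via full support of the Gaussian law combined with continuity of $v$ and $u(T,\cdot)$. If anything you are slightly more explicit than the paper in checking the integrability hypothesis $\E[|\varphi(\varrho\mathbb{B}+\xi)|^2]<\infty$ of \cref{lemmaB2} (via polynomial growth of $\varphi$ and Gaussian moments), a point the paper's proof leaves implicit.
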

\begin{cproof}{sec3thm31b}
Throughout this proof   let $\lambda\colon \mathcal{B}(\R^d) \to [0,\infty]$ be the Lebesgue-Borel  measure,
let 
 $\solutionset\subseteq   V$ satisfy 
 \beq\label{eq148defofsolutionset} \begin{split}
 \solutionset=\bigl\{ v\in    V \colon
 \textstyle\sup_{x\in \R^d}|u(T,x)-v(x)|=0    \bigr\},
\end{split}\eeq
and let $\mathcal{G}\subseteq\mathcal{F}$ be the sigma-algebra generated by  $\xi$.
 \cref{notimeFKformula}
(applied   with $u\with u$,  $\varphi\with\varphi$,  $(\Omega, \F, \P ) \with (\Omega, \F, \P ) $,  
 $\mathbb{B}\with \mathbb{B}$,
 $\xi\with\xi$,  $\mathcal{G}\with \mathcal{G}$
 in the notation of  \cref{notimeFKformula})  and \cref{eq148defofsolutionset} 
 \prove\ that 
for all $v\in \solutionset$ 
it holds $\P$-a.s.\   that
\beq
\begin{split}
v(\xi)=\E\bigl[\varphi( \varrho\sqrt{T} \mathbb{B} + \xi )|\mathcal{G} \bigr].
 \end{split}
\eeq
\Hence that for all $v\in \solutionset$ 
it holds  that 
 \beq \P(v(\xi)-\E[ \varphi( \varrho\sqrt{T} \mathbb{B} + \xi )  |  \mathcal{G}] = 0 ) = 1.\eeq
 \Moreover\ \cref{notimeFKformula} \proves\  that  for all  $v\in \basicset  $  with $ \P(v(\xi)-\E[ \varphi( \varrho\sqrt{T} \mathbb{B} + \xi )  |  \mathcal{G}] = 0 ) = 1$  it holds $\P$-a.s.\ that 
\beq  u(T,\xi)=v(\xi).\eeq
Combining this with the  assumption that 
$\xi  \colon \Omega \to \R^d$ 
is a  standard normal random variable  \proves\ that
  for all  $v\in \basicset  $  with $ \P(v(T,\xi)-\E[ \varphi( \varrho\sqrt{T} \mathbb{B} + \xi )  |  \mathcal{G}] = 0 ) = 1$  it holds that
 \beq  \lambda\bigl( \{ x \in \R^d : u(T,x) \neq v(x) \} \bigr) = 0.
 \eeq
 The fact that  $ \R^d \ni x \mapsto u(T,x)-v(x)\in\R$ is continuous 
\proves\ that for all 
$v\in \basicset  $  with $ \P(v(T,\xi)-\E[ \varphi( \varrho\sqrt{T} \mathbb{B} + \xi )  |  \mathcal{G}] = 0 ) = 1$  it holds that
$\sup_{ x \in \R^d} | u(T,x) - v(x) | = 0.$
          \cref{lemmaB2} (applied with $(U_v)_{v\in \basicset}\with (v)_{v\in\basicset}$,  $R_1\with \xi $,  $R_2\with \mathbb{B}$,
          $\mathfrak{R}_1\with  \R^d$,  $\mathfrak{R}_2\with   \R^d$,
          $\phi\with (( \xi ,   \mathbb{B} )\mapsto  \varphi( \varrho\sqrt{T} \mathbb{B} + \xi )   $
          in the notation of \cref{lemmaB2}) \hence  \proves\  
\cref{thm31aitem1,thm31aitem2}.
\end{cproof}

\begin{theorem}[Deep Kolmogorov methods for full solutions of heat \PDEs]
  \label{sec3thm31a}
{%
 
Let
$ d \in \N $, 
$ T, \rho \in (0,\infty) $, 
$ \varrho = \sqrt{ 2 \rho } $, 
let 
$ \varphi \colon \R^d \to \R $ be a function,
 let $u\in C^{1,2}([0,T]\times\R^d,\R)$ 
	have at most polynomially growing partial derivatives, 
assume 
 for all $t\in [0,T]$, $x\in \R^d$ that
$ u(0,x) = \varphi(x) $ and 
\begin{equation}
\label{eq:differentialu0}
  \tfrac{ \partial u}{\partial t}(t,x) 
  = 
  \rho \, \Delta_x u(t,x),
\end{equation}
let 
$ (\Omega, \F, \P ) $ be a probability space, 
let
$ \mathbb{B}\colon \Omega \to \R^d$ and $\xi  \colon \Omega \to \R^d $ 
be  standard normal random variables,   
 let $\tau\colon \Omega\to[0,T]$ be a continuous uniformly
  distributed random variable,
assume that $ \mathbb{B} $,   $\tau$,        and $ \xi $ are independent,  let $\basicset= C^{1,2}([0,T]\times\R^d,\R)$,
and  let $\lossexp\colon \basicset \to [0,\infty]$ satisfy for all $v\in \basicset$  that
\beq\label{defoflossfunction32}
\lossexp(v)=
\E\br[\big]{ \abs{ \varphi( \varrho\sqrt{\tau} \mathbb{B} + \xi ) - v(\tau,\xi) }^2 }.
\eeq
Then 
\begin{enumerate}[(i)]
 	\item\label{thm31aitem1} 
 	for all $v\in \basicset$  it holds that
 	 \beq\label{thm31aitem1a}
\lossexp(v)=\inf_{ w \in \basicset}\lossexp(w)
\eeq
  if and only if 	 
    it holds  for all $t \in  [0,T]$,  $x\in \R^d$ that
$ v(0,x) = \varphi(x) $ and 
\begin{equation}
  \tfrac{ \partial v}{\partial t}(t,x) 
  = 
  \rho \, \Delta_x v(t,x),
\end{equation}
and
   	\item\label{thm31aitem2} 
	there exists $v\in \basicset$ such that 
	\beq   \lossexp(v)=\inf_{ w \in \basicset}\lossexp(w).           \eeq
 	\end{enumerate}
}
\end{theorem}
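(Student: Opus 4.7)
The plan is to reduce the claim to \cref{lemmaB2} by mirroring the proof of \cref{sec3thm31b}, with \cref{noBMversion} taking over the role there played by \cref{notimeFKformula} so as to accommodate the random evaluation time $\tau$. Specifically, I would apply \cref{lemmaB2} with $R_1 \with (\tau,\xi)$, $R_2 \with \mathbb{B}$, with the function $\phi \with ([0,T]\times\R^d\times\R^d \ni (t,x,b) \mapsto \varphi(\varrho\sqrt{t}\,b + x) \in \R)$, with the family $(U_v)_{v\in\basicset}$ specified by $U_v \with ([0,T]\times\R^d \ni (t,x) \mapsto v(t,x) \in \R)$, and with the solution set
\begin{equation}
\solutionset = \bigl\{ w \in \basicset : \bigl(\forall\, t \in [0,T],\, x \in \R^d \colon w(0,x) = \varphi(x) \text{ and } \tfrac{\partial w}{\partial t}(t,x) = \rho\, \Delta_x w(t,x)\bigr) \bigr\}.
\end{equation}
The hypothesis $u \in \solutionset$ ensures $\solutionset \neq \emptyset$.

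For the forward direction I would invoke \cref{noBMversion} applied to each $v \in \solutionset$ (with $\mathcal{G}$ taken to be the sigma-algebra generated by $(\tau,\xi)$) to conclude that it holds $\P$-a.s.\ that $v(\tau,\xi) = \E[\varphi(\varrho\sqrt{\tau}\,\mathbb{B}+\xi) \mid \sigma(\tau,\xi)]$. For the converse, if $v \in \basicset$ satisfies this almost-sure identity, then applying \cref{noBMversion} to $u$ in place of $v$ yields $\P(v(\tau,\xi) = u(\tau,\xi)) = 1$. The assumed independence of $\tau$ and $\xi$, the continuous uniform distribution of $\tau$ on $[0,T]$, and the standard normality of $\xi$ on $\R^d$ together ensure that $\P(\|(\tau,\xi)-(t_0,x_0)\| < \varepsilon) > 0$ for every $(t_0,x_0) \in [0,T]\times\R^d$ and every $\varepsilon > 0$. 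Since $v - u \in C([0,T]\times\R^d,\R)$, \cref{sec2lem6} (applied with $d \with d+1$, $\delta \with 1$, $D \with [0,T]\times\R^d$, $f \with v - u$, $X \with (\tau,\xi)$) then gives $v(t,x) = u(t,x)$ for every $(t,x) \in [0,T]\times\R^d$. Hence $v$ inherits the initial condition and the heat equation from $u$, which places $v \in \solutionset$. With both directions in hand, \cref{lemmaB2} will deliver \cref{thm31aitem1,thm31aitem2}.

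The principal obstacle will be the converse step, namely upgrading the $\P$-a.s.\ equality $v(\tau,\xi) = u(\tau,\xi)$ to the pointwise identity $v \equiv u$ on the entire closed cylinder $[0,T]\times\R^d$, particularly at the parabolic boundary $\{0\}\times\R^d$ where $\{\tau = 0\}$ has probability zero. The argument rests on the local positivity of the joint law of $(\tau,\xi)$ on every open set meeting $[0,T]\times\R^d$ combined with the global $C^{1,2}$-continuity of both $v$ and $u$, which together make \cref{sec2lem6} precisely the right tool to carry out this upgrade.
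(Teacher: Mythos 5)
Your proposal is correct and takes essentially the same route as the paper's proof: both define the solution set $\solutionset$ as the $C^{1,2}$ functions satisfying the initial condition and the heat equation, use \cref{noBMversion} in both directions to reduce the almost-sure conditional-expectation identity to $\P(v(\tau,\xi)=u(\tau,\xi))=1$, upgrade this to the pointwise identity $v\equiv u$ on $[0,T]\times\R^d$ using the full support of the law of $(\tau,\xi)$ and continuity, and then conclude by applying \cref{lemmaB2} with exactly the same $R_1=(\tau,\xi)$, $R_2=\mathbb{B}$, $\phi$, and $(U_v)_{v\in\basicset}$. The only minor deviation is that you package the upgrade step as an application of \cref{sec2lem6}, whereas the paper passes through a Lebesgue-almost-everywhere equality and then invokes continuity of $v-u$ directly; the two arguments are equivalent.
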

\begin{cproof}{sec3thm31a}
Throughout this proof  
for every $v\in \basicset  $ 
let 
 $F_v \colon [0,T]\times\R^d  \to  \R $ satisfy for all  $t\in [0,T]$, $x\in\R^d$
that
\beq F_v(t,x)=|  v(0,x) - \varphi(x) |^2+| \tfrac{ \partial v}{\partial t}(t,x)-\Delta_x v(t,x) |^2,\eeq
let
 $\solutionset\subseteq   V$ satisfy 
 \beq \begin{split}
 \solutionset=\{ v\in    V \colon
 \textstyle\sup_{(t,x)\in  [0,T]\times\R^d} F_{v}(t,x)=0    \},
\end{split}\eeq
and let $\mathcal{G}\subseteq\mathcal{F}$ be the sigma-algebra generated by $\tau$ and $\xi$.
 \cref{noBMversion}
(applied for every $v\in \solutionset$  with $u\with v$,  $\varphi\with\varphi$,  $(\Omega, \F, \P ) \with (\Omega, \F, \P ) $,  
 $\mathbb{B}\with \mathbb{B}$,
$\tau\with\tau$,    $\xi\with\xi$,    $\mathcal{G}\with \mathcal{G} $
 in the notation of  \cref{noBMversion})
 \proves\ that 
for all $v\in \solutionset$ 
it holds $\P$-a.s.\   that
\beq
\begin{split}
v(\tau,\xi)=\E\bigl[\varphi( \varrho\sqrt{\tau} \mathbb{B} + \xi )|\mathcal{G} \bigr].
 \end{split}
\eeq
\Hence that for all $v\in \solutionset$ 
it holds  that 
 $ \P(v(\tau,\xi)-\E[ \varphi( \varrho\sqrt{\tau} \mathbb{B} + \xi )  |  \mathcal{G}] = 0 ) = 1$.
 Combining  \cref{noBMversion} and the assumption that for all $t\in [0,T]$, $x\in\R^d$ it holds  that 
 $ u(0,x) = \varphi(x) $ and 
$ \tfrac{ \partial u}{\partial t}(t,x) 
  = 
  \rho \, \Delta_x u(t,x)$
\proves\ that  it holds $\P$-a.s.\ that 
\beq
u(\tau,\xi)=\E\bigl[\varphi( \varrho\sqrt{\tau} \mathbb{B} + \xi )|\mathcal{G} \bigr].
\eeq
 \Hence that for all  $v\in \basicset  $  with $ \P(v(\tau,\xi)-\E[ \varphi( \varrho\sqrt{\tau} \mathbb{B} + \xi )  |  \mathcal{G}] = 0 ) = 1$  it holds $\P$-a.s.\ that 
$u(\tau,\xi)=v(\tau,\xi)$.
Combining this with the  assumption that 
$\xi  \colon \Omega \to \R^d $ 
is a  standard normal random variable and  the  assumption
that  $\tau\colon \Omega\to[0,T]$ is a continuous uniformly
  distributed random variable \proves\ that
  for all  $v\in \basicset  $  with $ \P(v(\tau,\xi)-\E[ \varphi( \varrho\sqrt{\tau} \mathbb{B} + \xi )  |  \mathcal{G}] = 0 ) = 1$  it holds for  a.e.\  $(t,x)\in [0,T]\times\R^d$  that 
$u(t,x)=v(t,x)$.
The assumption that $
  u 
  \in \basicset
$ 
\hence \proves\ that
  for all  $v\in \basicset  $,  $(t,x)\in [0,T]\times\R^d$  with $ \P(v(\tau,\xi)-\E[ \varphi( \varrho\sqrt{\tau} \mathbb{B} + \xi )  |  \mathcal{G}] = 0 ) = 1$  it holds   that 
$u(t,x)=v(t,x)$.
          \cref{lemmaB2} (applied with $U_v\with v$,  $R_1\with (\tau,\xi) $,  $R_2\with \mathbb{B}$,
          $\mathfrak{R}_1\with  [0,T]\times\R^d$,  $\mathfrak{R}_2\with   \R^d$,
          $\phi\with ( (\tau,\xi),   \mathbb{B} )\mapsto  \varphi( \varrho\sqrt{\tau} \mathbb{B} + \xi )   $
          in the notation of  \cref{lemmaB2}) \hence  \proves\  
\cref{thm31aitem1} and 
\cref{thm31aitem2}.
\end{cproof}

\subsection{Deep BSDE methods}\label{sec23}

In this section we discuss the deep \BSDE\ methods introduced in \cite{MR3847747,MR3736669} for the approximation of semilinear parabolic \PDEs.
First, we present some well-known uniqueness results for solutions of \BSDEs\ in \cref{sec:uniquenessBSDE}.
Thereafter, we introduce a reformulation result for semilinear parabolic \PDEs\ in \cref{sec:reformulationPDEasOptimization} which casts terminal values of semilinear parabolic \PDEs\ as solutions of infinite-dimensional stochastic optimization problems involving \BSDEs.
Obtaining deep \BSDE\ methods from this reformulation result then consists of 
  discretizing the involved \BSDEs,
  replacing the unknown functions in the \BSDEs\ by \ANNs,
  and
  applying \SGD-type methods to the resulting finite-dimensional stochastic optimization problems.

\subsubsection{Uniqueness for solutions of BSDEs}
\label{sec:uniquenessBSDE}

\begin{lemma}[Uniqueness for solutions for BSDEs]\label{uniqueofbsde}
{%
 Let $T,M\in (0,\infty)$, $d\in \N$,
	let $(\Omega,\F, \P)$ be a probability space with a normal filtration $(\mathbb{F}_t)_{t\in [0,T]}$, let $\xi\colon \Omega\to \R^d $  be  $\mathbb{F}_0$-measurable with $\E[\|\xi\|^2]< \infty $,
	let $\BM\colon [0,T]\times\Omega \to \R^d$ be a standard $(\mathbb{F}_t)_{t\in [0,T]}$-Brownian motion,
		let $\mu\colon[0,T]\times \R^d \rightarrow \R^d $,
		$\sigma\colon[0,T]\times \R^d\rightarrow \R^{d\times d} $,
		$g\colon \R^d \to \R $,
		 and $f\colon [0,T]\times \R^d \times \R\times\R^d  \rightarrow \R$ be Lipschitz continuous, 
	let $X\colon [0,T]\times \Omega \to \R^d$  be an $(\mathbb{F}_t)_{t \in [0,T]}$-adapted stochastic process with continuous sample paths (w.c.s.p.),  which satisfies that for every $t \in [0,T]$ it holds $\P$-a.s.\ that
\beq X_t=\xi+\int_{0}^t\mu(s, X_s)\,\mathrm{d}s+\int_{0}^t\sigma(s, X_s)\,\mathrm{d}\BM_s,
	\eeq
	and assume $\int_0^T \E\big[ \|X_s\|^2 \big] \,\mathrm{d}s < \infty$,
	for every $k \in \{ 1, 2 \}$ let $Y^k \colon [0,T]\times \Omega \to \R$ and $Z^k \colon [0,T]\times \Omega \to \R^d$  be  {$(\mathbb{F}_t)_{t \in [0,T]}$-adapted} stochastic processes with continuous sample paths,  which satisfy that for every $t \in [0,T]$ it holds $\P$-a.s.\ that
\beq\label{fbsdek12} Y^k_t-g(X_T)-\int_{t}^T f(s,X_s, Y^k_s, Z^k_s)\,\mathrm{d}s+\int_{t}^T (Z_s^k)^* \,\mathrm{d}\BM_s =0,\eeq  
and  assume $\int_0^T \E\big[ |Y_s|^2 \big] \,\mathrm{d}s < \infty$.
Then
\beq
  \P\bigl( Y^1 = Y^2 \bigr)= \P\bigl( Z^1 = Z^2 \bigr)= 1.
\eeq
}	
\end{lemma}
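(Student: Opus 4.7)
The strategy is a classical It\^o--Gronwall argument for BSDEs. Set $\delta Y = Y^1 - Y^2$ and $\delta Z = Z^1 - Z^2$. Subtracting the two instances of \cref{fbsdek12} cancels the terminal value $g(X_T)$ and yields, $\P$-a.s.\ for every $t \in [0,T]$,
\begin{equation}
\delta Y_t = \int_t^T \bigl[f(s,X_s,Y^1_s,Z^1_s) - f(s,X_s,Y^2_s,Z^2_s)\bigr]\,\mathrm{d}s - \int_t^T (\delta Z_s)^* \,\mathrm{d}\BM_s .
\end{equation}
In particular $\delta Y_T = 0$, and the Lipschitz hypothesis on $f$ makes the driver of this equation pointwise bounded by $M(|\delta Y| + \|\delta Z\|)$.

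The core step is to apply It\^o's formula to $e^{\beta t}|\delta Y_t|^2$ for a constant $\beta \in (0,\infty)$ to be chosen depending on $M$. Rearranging the resulting identity between $t$ and $T$ gives
\begin{equation}
\begin{split}
&e^{\beta t}|\delta Y_t|^2 + \int_t^T e^{\beta s}\bigl(\beta|\delta Y_s|^2 + \|\delta Z_s\|^2\bigr)\,\mathrm{d}s \\
&\quad = 2\int_t^T e^{\beta s}\, \delta Y_s \bigl[f(s,X_s,Y^1_s,Z^1_s) - f(s,X_s,Y^2_s,Z^2_s)\bigr]\,\mathrm{d}s - 2\int_t^T e^{\beta s}\, \delta Y_s (\delta Z_s)^*\,\mathrm{d}\BM_s.
\end{split}
\end{equation}
The Lipschitz bound on $f$ combined with the Young inequality $2ab \le \varepsilon a^2 + \varepsilon^{-1} b^2$ dominates the first right-hand term by $(2M + 2M^2\varepsilon^{-1})e^{\beta s}|\delta Y_s|^2 + \varepsilon e^{\beta s}\|\delta Z_s\|^2$. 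Taking $\varepsilon = 1/2$ and $\beta > 2M + 4M^2$ absorbs the $\|\delta Z\|^2$ contribution into the left-hand side, and passing to expectations (treating the stochastic integral as a true martingale, see below) yields
\begin{equation}
\E[e^{\beta t}|\delta Y_t|^2] + \tfrac12\, \E\!\left[\int_t^T e^{\beta s}\|\delta Z_s\|^2\,\mathrm{d}s\right] \le 0 \qquad \text{for all } t \in [0,T].
\end{equation}
This forces $\E[|\delta Y_t|^2] = 0$ for every $t$ and $\E[\int_0^T \|\delta Z_s\|^2\,\mathrm{d}s] = 0$. The conclusion $\P(Y^1 = Y^2) = 1$ then follows from continuity of sample paths of $Y^1,Y^2$ applied along rational $t$, while $\P(Z^1 = Z^2) = 1$ follows from Fubini (giving $\delta Z \equiv 0$ on a full-measure subset of $[0,T]$ $\P$-a.s.) together with continuity of sample paths of $Z^1,Z^2$.

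The main obstacle I anticipate is justifying that the expectation of the stochastic integral in the It\^o identity above vanishes: the hypotheses give $L^2$-integrability of $Y^k$ and $X$ but not directly of $Z^k$. The standard remedy is an a priori argument: apply It\^o's formula to $|Y^k_t|^2$, localize with the stopping times $\tau_n = \inf\{t \in [0,T] : \int_0^t\|Z^k_s\|^2\,\mathrm{d}s > n\} \wedge T$, use the Lipschitz bounds on $f$ and $g$ together with the integrability of $X$ and $Y^k$ and Young's inequality to produce a uniform-in-$n$ bound on $\E[\int_0^{\tau_n}\|Z^k_s\|^2\,\mathrm{d}s]$, and then invoke monotone convergence to conclude $\int_0^T \E[\|Z^k_s\|^2]\,\mathrm{d}s < \infty$ for $k \in \{1,2\}$. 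With this bound in hand, the same localization applied to the difference equation legitimizes the displayed inequality for $\delta Y$ and $\delta Z$, completing the argument.
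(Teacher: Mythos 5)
Your proof is correct and uses essentially the same underlying machinery as the paper (a weighted $L^2$ energy estimate for the difference, obtained via It\^o's formula on $e^{\beta t}|\delta Y_t|^2$ together with Young's inequality, followed by a contraction argument), but the two proofs diverge in how they package that machinery. You derive the It\^o--Young estimate from scratch and then close with Gronwall's inequality; the paper instead invokes a pre-packaged conditional version of that estimate, namely Hutzenthaler et al.~\cite[Lemma 3.1]{MR4488351}, and then closes not with Gronwall but with a Picard-type iteration of the integral inequality, using the factorial decay $\frac{(T-t_0)^N}{N!} \to 0$ to squeeze $\E[e^{\lambda t}|\delta Y_t|^2]$ to zero. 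These two closing steps are two standard faces of the same idea (both exploit the linear Volterra integral inequality that remains once the $\|\delta Z\|^2$ term is absorbed), so neither is ``more powerful'' here; Gronwall is slightly slicker, while the iterated-integral route stays entirely at the level of elementary estimates.

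One point where your proposal is actually more careful than the paper's write-up: you explicitly flag that $\int_0^T \E[\|Z_s^k\|^2]\,\mathrm{d}s < \infty$ is not among the stated hypotheses, and that without it the It\^o-formula stochastic integral term need not have vanishing expectation (and, in the paper's route, the hypotheses of the cited lemma may not be met). The paper simply applies \cite[Lemma 3.1]{MR4488351} without addressing this. Your proposed remedy --- localize $\int_0^\cdot \|Z_s^k\|^2\,\mathrm{d}s$ at level $n$, obtain a uniform-in-$n$ a priori bound from It\^o applied to $|Y_t^k|^2$ together with the $L^2$ integrability of $X$ and $Y^k$ and the linear growth of $f$ and $g$, then pass to the limit by monotone convergence --- is the standard fix and is sound, though spelling it out requires a little extra care (e.g.\ one should first observe that the Lipschitz SDE hypotheses and $\E[\|\xi\|^2]<\infty$ in fact give $\sup_{t\in[0,T]} \E[\|X_t\|^2]<\infty$, so that $\E[|g(X_T)|^2]<\infty$, and one should handle the stopped terminal value $Y^k_{\tau_n}$ appropriately). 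In short: same strategy as the paper modulo which black box is used for the weighted estimate and whether Gronwall or Picard iteration is used to close, with you being somewhat more explicit about the integrability bookkeeping.
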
 
 
 \begin{cproof}{uniqueofbsde}
\Nobs\ \cref{fbsdek12} \proves\
  that for all $ s \in [0,T] $ it holds $ \P $-a.s.\ that
\begin{align}\begin{split}
& Y^{1}_s-Y_s^2 \\
&=
\int_{s}^{T}\Big[ f(t,X_t, Y^1_t, Z^1_t)-f(t,X_t, Y^2_t, Z^2_t)\Bigr]
\,\mathrm{d}t
-\int_{s}^{T}\bigl[
  Z^{1}_t-Z_t^2\bigr] \,\mathrm{d}\BM_t.
\end{split}\label{k01}\end{align}
\Nobs\ Hutzenthaler et al.~\cite[Lemma 3.1]{MR4488351}
 (applied  with $Y\with Y^{1} -Y^2$, \\$A\with f(\cdot,X_{\cdot}, Y^1_{\cdot}, Z^1_{\cdot})-f(\cdot,X_{\cdot}, Y^2_{\cdot}, Z^2_{\cdot})$,
$Z \with   Z^{1} -Z^2$ in the notation of (46) in \cite{MR4488351}) 
\proves\ that for all $s\in[0,T]$, $\lambda\in (0,\infty)$  it holds that
\beq\begin{split}
& \E\Big[e^{\lambda s}|Y^{1}_s -Y^2_s|^2+\int_{s}^Te^{\lambda t}\|Z_t^{1} -Z_t^2\|^2\big |\mathbb{F}_s \,\mathrm{d}t\Bigr]\\ &\quad\leq \E\Big[\int_s^T\frac{e^{\lambda t}}{\lambda}\|f(t,X_{t}, Y^1_{t}, Z^1_{t})-f(t,X_{t}, Y^2_{t}, Z^2_{t})\|^2\,\mathrm{d}t\big |\mathbb{F}_s\Bigr].
\end{split}\eeq
Taking the expectation and for all $s\in[0,T]$, $\lambda\in (0,\infty)$
it holds  that
\beq\label{eq:b01}\begin{split}
& \E\Big[e^{\lambda s}|Y^{1}_s -Y^2_s|^2+\int_{s}^Te^{\lambda t}\|Z_t^{1} -Z_t^2\|^2 \,\mathrm{d}t\Bigr]\\ &\quad\leq \E\Big[\int_s^T\frac{e^{\lambda t}}{\lambda}\|f(t,X_{t}, Y^1_{t}, Z^1_{t})-f(t,X_{t}, Y^2_{t}, Z^2_{t})\|^2\,\mathrm{d}t\Bigr]\\
&\quad \leq 
\E\Big[\int_s^T\frac{Le^{\lambda t}}{\lambda}\bigl(|Y^{1}_t -Y^2_t|^2+\|Z_t^{1} -Z_t^2\|^2\bigr) \,\mathrm{d}t\Bigr],
\end{split}\eeq
where $L$ is the Lipschitz constant of $f$.
Let $\lambda >2LT+1$ and we immediately have for all $s\in [0,T]$,  $N\in \N$ it holds that 
\beq\begin{split}
\E\bigl[e^{\lambda s}|Y^{1}_s -Y^2_s|^2\bigr]&\leq \E\left[\int_s^T\frac{e^{\lambda t}}{2T}|Y_t^{1} -Y_t^2|^2 \,\mathrm{d}t\right]=\frac{1}{2T}\int_s^T\E\bigl[e^{\lambda t}|Y_t^{1} -Y_t^2|^2\bigr] \,\mathrm{d}t.
\end{split}\eeq
Induction \hence shows that  for all $N\in \N$,  $t_0\in [0,T]$  it holds  that
\beq\begin{split}
& \E\bigl[e^{\lambda t_0}|Y^{1}_{t_0} -Y^2_{t_0}|^2\bigr]
\leq \Big[\frac{1}{2T}\Bigr]^2\int_{t_0}^T\int_{t_1}^T \E\bigl[e^{\lambda t_2}|Y^{1}_{t_2} -Y^2_{t_2}|^2\bigr] \,\mathrm{d}t_2 \,\mathrm{d}t_1\\
&\leq \Big[\frac{1}{2T}\Bigr]^{N+1}\int_{t_0}^T\int_{t_1}^T\int_{t_2}^T\ldots\int_{t_{N-1}}^T\int_{t_{N}}^T  \E\bigl[e^{\lambda s}|Y^{1}_{s} -Y^2_{s}|^2\bigr] \,\mathrm{d}s \,\mathrm{d}t_{N} \ldots \, \mathrm{d}t_{3}  \,\mathrm{d}t_2 \,\mathrm{d}t_1\\
&\leq \Big[\frac{1}{2T}\Bigr]^{N+1}\int_{t_0}^T\int_{t_1}^T\int_{t_2}^T\ldots\int_{t_{N-1}}^T\int_{t_{0}}^T  \E\big[e^{\lambda s}|Y^{1}_{s} -Y^2_{s}|^2\bigr] \,\mathrm{d}s \,\mathrm{d}t_{N} \ldots \, \mathrm{d}t_{3}  \,\mathrm{d}t_2 \,\mathrm{d}t_1\\
&= \Big[\frac{1}{2T}\Bigr]^{N+1}
\left[\int_{t_0}^T\E \bigl[e^{\lambda s}|Y^{1}_{s} -Y^2_{s}|^2\bigr] \,\mathrm{d}s\right]
\left[\int_{t_0}^T\int_{t_1}^T\int_{t_2}^T\ldots\int_{t_{N-1}}^T   \,\mathrm{d}t_{N} \ldots \, \mathrm{d}t_{3}  \,\mathrm{d}t_2 \,\mathrm{d}t_1\right]\\
&\leq  \Big[\frac{1}{2T}\Bigr]^{N+1}\left[\int_{t_0}^T\E \bigl[e^{\lambda s}|Y^{1}_{s} -Y^2_{s}|^2\bigr] \,\mathrm{d}s\right]\frac{(T-t_0)^N}{N!}\\
&\leq  \Big[\frac{1}{2T}\Bigr]^{N+1}\frac{(T-t_0)^N}{N!}e^{\lambda T}\int_{t_0}^T\E \bigl[|Y^{1}_{s} -Y^2_{s}|^2\bigr] \,\mathrm{d}s.
\end{split}
\eeq
\Hence that  for all $t\in [0,T]$ it holds that 
\beq
\E\bigl[e^{\lambda t}|Y^{1}_{t} -Y^2_{t}|^2\bigr]=0.
\eeq
\Hence that for all $t\in [0,T]$ it holds that
\beq
  \P\bigl( Y_t^1 = Y_t^2 \bigr) = 1.
\eeq
The fact the every $k \in \{ 1, 2 \}$ it holds that $Y^k$  is a stochastic process with continuous sample paths \hence \proves\   that 
 \beq
  \P\bigl( Y^1 = Y^2 \bigr) = 1.
\eeq
Combining this with  \cref{eq:b01} \proves\ that it holds that 
\beq
\E\Big[\int_{s}^Te^{\lambda t}\|Z_t^{1} -Z_t^2\|^2 \,\mathrm{d}t\Bigr]=0, 
\eeq
\hence for a.e.\  $t\in [0,T]$  it holds that
\beq
  \P\bigl( Z_t^1 = Z_t^2 \bigr) = 1.
\eeq
The fact that  $Z$ is a stochastic process with continuous  sample paths \proves\ that
for all $t\in [0,T]$  it holds that
\beq
  \P\bigl( Z_t^1 = Z_t^2 \bigr) = 1,
\eeq
and 
\beq  \P\bigl( Z^1 = Z^2 \bigr) = 1. \eeq
 \end{cproof}

 \begin{lemma}\label{uniqueytvv}
{%
 Let $v\in C_b(\R^d,\R)$, $V\in C_b([0,T]\times \R^d,\R^d)$,  let $f\colon [0,T]\times \R^d \times \R\times\R^d  \rightarrow \R$ be Lipschitz continuous,  let $(\Omega,\F, (\mathbb{F}_t)_{t\in [0,T]},\P)$  be a filtered probability space, 
let $X\colon [0,T]\times \Omega \to \R^d$  be an $(\mathbb{F}_t)_{t \in [0,T]}$-adapted stochastic process with continuous sample paths,
and
for every $k \in \{ 1, 2 \}$ let $Y^k \colon [0,T]\times \Omega \to \R$ be a  stochastic process with continuous sample paths which satisfies that  for all $t\in[0,T]$ it holds $\P$-a.s.\ that
\beq\label{YtvV0}
Y^k_t=v(X_0)-\int_{0}^t f\bigl(s,X_s, Y_s^k,V(s,X_s)\bigr)\,\mathrm{d}s+\int_{0}^t [V(s,X_s)]^* \,\mathrm{d}\BM_s.\eeq
Then $\P( Y^1=Y^2)=1      $.

}
\end{lemma}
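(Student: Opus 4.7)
The plan is to exploit the fact that the coefficient $V(s, X_s)$ appearing in \cref{YtvV0} does not depend on $Y^k$, so the stochastic integrals for $Y^1$ and $Y^2$ are identical and cancel upon subtraction. This reduces the uniqueness question to a deterministic Gronwall argument carried out pathwise, rather than requiring the full BSDE machinery of \cref{uniqueofbsde}. The term $v(X_0)$ likewise cancels.

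Concretely, I would first choose a $\P$-null set outside of which \cref{YtvV0} holds simultaneously for $k=1$ and $k=2$ and for every rational $t \in [0,T] \cap \mathbb{Q}$; this is possible because the identity is assumed to hold $\P$-a.s.\ for each fixed $t$ and there are only countably many rationals. Subtracting the equations for $Y^1$ and $Y^2$ then yields, on this event of full probability and for every rational $t \in [0,T]$,
\begin{equation*}
Y^1_t - Y^2_t = -\int_{0}^{t} \bigl[f(s, X_s, Y^1_s, V(s, X_s)) - f(s, X_s, Y^2_s, V(s, X_s))\bigr] \, \mathrm{d}s.
\end{equation*}
Both sides are continuous in $t$ by the assumed continuity of the sample paths of $X$, $Y^1$, $Y^2$ and the continuity of $f$ and $V$, so the identity extends from $\mathbb{Q} \cap [0,T]$ to all $t \in [0,T]$ on the chosen event.

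Next, I would invoke Lipschitz continuity of $f$ in its third argument, with some constant $L \in [0,\infty)$ uniform in the other arguments, to deduce pathwise on the same event that
\begin{equation*}
\abs{Y^1_t - Y^2_t} \leq L \int_{0}^{t} \abs{Y^1_s - Y^2_s} \, \mathrm{d}s \qquad \text{for all } t \in [0,T].
\end{equation*}
Since $Y^1 - Y^2$ has continuous sample paths on the compact interval $[0,T]$, the map $s \mapsto \abs{Y^1_s - Y^2_s}$ is finite-valued and bounded pathwise, so Gronwall's inequality yields $Y^1_t = Y^2_t$ for every $t \in [0,T]$ on this event. Since the event has probability one, this establishes $\P(Y^1 = Y^2) = 1$.

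I do not expect any real obstacle here: the proof is essentially a pathwise version of classical ODE uniqueness, made possible precisely because $V$ is prescribed rather than unknown, so no $Z$-component needs to be controlled via the conditional expectation / weighted-norm techniques used in \cref{uniqueofbsde}. The only mild technicality is the standard countable-union step to pass from the almost-sure validity of \cref{YtvV0} at each fixed $t$ to its validity for all rational $t$ on a single event, with continuity of sample paths then promoting the conclusion to all $t \in [0,T]$.
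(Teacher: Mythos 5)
Your proof is correct and follows the same core argument as the paper: subtract the two representations so that the stochastic integral and the $v(X_0)$ term cancel, then apply the Lipschitz bound on $f$, Gronwall's inequality, and continuity of sample paths to conclude $\P(Y^1 = Y^2) = 1$. The only difference is that you are somewhat more careful about first obtaining the integral inequality simultaneously for all $t$ on a single event of full probability (via the countable union over rational times and continuity) before applying Gronwall pathwise, which is a technical refinement of the same route rather than a genuinely different argument.
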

\begin{cproof}{uniqueytvv}
\Nobs\ \cref{YtvV0}  \proves\ that  for all $t\in [0,T]$ 
it holds $\P$-a.s.\ that
\beq\label{YtvV2}
\begin{split}
\left| Y_t^1-Y_t^2\right|&=\left|-\int_{0}^t\big[ f\bigl(s,X_s, Y_s^1,V(s,X_s)\bigr)-f\bigl(s,X_s, Y_s^2,V(s,X_s)\bigr)\big]\,\mathrm{d}s \right| 
\\
&\leq  L\int_{0}^t \big| Y_s^1-Y_s^2\big| \,\mathrm{d}s.
\end{split}
\eeq
Combining this with  Gronwall's inequality and the fact that it holds  $\P$-a.s.\ that $ Y_0^1=Y_0^2$ \proves\ 
 that $\P( Y_t^1=Y_t^2)=1      $.
Combining this with the fact that  for every $k \in \{ 1, 2 \}$  $Y^k$ has  continuous sample paths  \proves\ 
that
 \beq
  \P\bigl( Y^1 = Y^2 \bigr) = 1.\eeq

\end{cproof}

\subsubsection{Reformulating semilinear PDEs as infinite-dimensional stochastic optimization problems}
\label{sec:reformulationPDEasOptimization}

\begin{theorem}\label{bsdethm}
	{%
 
	Let $T\in (0,\infty)$, $d\in \N$,
	let $\mu\colon[0,T]\times \R^d \rightarrow \R^d $,  
		$\sigma\colon[0,T]\times \R^d\rightarrow \R^{d\times d} $, 
		$g\colon \R^d\to \R$,
		 and $f\colon [0,T]\times \R^d \times \R\times\R^d  \rightarrow \R$ be Lipschitz continuous,  let
	$u\in C^{1,2}([0,T]\times \R^d)$ satisfy for all $t\in [0,T]$, $x\in \R^d$ that
	$u(T,x)=g(x)$ and
	\begin{multline}\label{thm22pde}
	\tfrac{\partial u}{\partial t}(t,x)+\tfrac12 {\rm{Tr}}\bigl((\sigma(t,x)[\sigma(t,x)]^*({\rm{Hess}}_xu)(t,x) \bigr)     +\tfrac{\partial u}{\partial x} (t,x) \, \mu(t,x)\\
+f\bigl(t,x,u(t,x), [ \sigma( t, x ) ]^*( \nabla_x u )( t, x )\bigr)    =0,
	\end{multline}
	let $(\Omega,\F, (\mathbb{F}_t)_{t\in [0,T]},\P)$  be a filtered probability space,  let 
		$\BM\colon [0,T]\times\Omega \to \R^d$ be a standard $(\mathbb{F}_t)_{t\in [0,T]}$-Brownian motion,  let $X\colon [0,T]\times \Omega \to \R^d$  be an   $(\mathbb{F}_t)_{t \in [0,T]}$-adapted stochastic processes with continuous sample paths which satisfies that  for all $t\in [0,T]$ it holds $\P$-a.s.\ that
\beq\label{FBSDE}
\begin{split}
 X_t=X_0+\int_{0}^t\mu(s, X_s)\,\mathrm{d}s+\int_{0}^t\sigma(s, X_s)\,\mathrm{d}\BM_s,
 \end{split}
\eeq
and for\footnote{\Nobs\ for all metric spaces $( E, d )$ and $( \mathcal{E}, \delta )$ it holds that $C_b( E, \mathcal{E} )$ is the set of all bounded continuous functions from $E$ to $ \mathcal{E}$.} every $v\in C_b(\R^d,\R)$, $V\in C_b([0,T]\times \R^d,\R^d)$ let $Y^{v,V}\colon [0,T]\times \Omega \to \R $ be a stochastic process with continuous sample paths which satisfies that for all $t \in [0,T]$ it holds $\P$-a.s.\ that
\beq\label{YtvV}
Y_t^{v,V}=v(X_0)-\int_{0}^t f\bigl(s,X_s, Y_s^{v,V},V(s,X_s)\bigr)\,\mathrm{d}s+\int_{0}^t [V(s,X_s)]^* \,\mathrm{d}\BM_s.\eeq
Then 
\begin{enumerate}[(i)]
\item\label{thm24item1} for all $v\in C_b(\R^d,\R)$, $V\in C_b([0,T]\times \R^d,\R^d)$ it holds that 
\beq\label{ivVmini}
\E\bigl[| Y_T^{v,V}-g(X_T)|^2\bigr]=
\inf_{ \substack{ w \in C_b(\R^d,\R), \\ W \in C_b([0,T]\times \R^d,\R^d) } }
 \E\bigl[| Y_T^{w,W}-g(X_T)|^2\bigr]
\eeq
if and only if 
\beq
\P\Bigl(v(X_0)=u(0,X_0)\Bigr)=\P\Bigl( \forall \, t \in [0,T]\colon V(t,X_t)=[\sigma(t,X_t)]^*  ( \nabla_x u )(t,X_t) \Bigr)=1
\eeq 
and 
\item\label{thm24item2} there exist $v\in C_b(\R^d,\R)$, $V\in C_b([0,T]\times \R^d,\R^d)$ such that 
\beq\label{ivVmini0}
\E\bigl[| Y_T^{v,V}-g(X_T)|^2\bigr]=
\inf_{ \substack{ w \in C_b(\R^d,\R), \\ W \in C_b([0,T]\times \R^d,\R^d) } }
 \E\bigl[| Y_T^{w,W}-g(X_T)|^2\bigr]=0.
\eeq
\end{enumerate}

	 \begin{cproof}{bsdethm}
Throughout this proof 
let 
\begin{multline}
\mathcal{S}=\{(v,V) \in C_b(\R^d,\R)\times C_b([0,T]\times \R^d,\R^d)\colon \\  \P\Bigl(v(X_0)=u(0,X_0)\Bigr)=\P\Bigl( \forall \, t \in [0,T]\colon V(t,X_t)=[\sigma(t,X_t)]^*  ( \nabla_x u )(t,X_t) \Bigr)=1 \},         \end{multline}
let $c\in [0,\infty]$ satisfy 
\beq\label{defofc} c=\inf_{ \substack{ w \in C_b(\R^d,\R), \\ W \in C_b([0,T]\times \R^d,\R^d) } }
 \E\bigl[| Y_T^{w,W}-g(X_T)|^2\bigr],\eeq
 let 
$\mathscr{v} \in C_b(\R^d,\R)$,   $\mathscr{V} \in C_b([0,T]\times \R^d,\R^d)$ satisfy for all 
$t\in [0,T]$,  $x\in \R^d$ that 
\beq \label{eq31} \mathscr{v}(x) = u(0,x)\qquad \text{and} \qquad
\mathscr{V} (t,x) = [ \sigma(t,x) ]^* (\nabla_xu)(t,x)\eeq
 and let 
	 	$\mathcal{Y}\colon   [0,T]\times \Omega \to \R    $ and $\mathcal{Z}\colon   [0,T]\times \Omega \to \R^d   $ satisfy for all $t\in [0,T]$ that
	 	\beq\label{eq32}
	 	 \mathcal{Y}_t=u(t,X_t)\qquad \text{and} \qquad \mathcal{Z}_t=\mathscr{V}(t,X_t).
	 	 \eeq
	\Nobs\  	 $(\mathscr{v}, \mathscr{V})\in  \mathcal{S}   $.  \Hence that $\mathcal{S}$ is not empty.
	 	  \Nobs\ \cref{thm22pde}, \cref{FBSDE},  \cref{eq32},  and  \cref{nonlinearFKformula} \prove\  that 
	 for all $s, t\in [0,T]$  it holds $\P$-a.s.\ that
  \beq\label{FBSDE0forward2}
\begin{split}
  \mathcal{Y}_t- \mathcal{Y}_s+\int_{s}^t f(r,X_r,  \mathcal{Y}_r, \mathcal{Z}_r)\,\mathrm{d}r-\int_{s}^t (\mathcal{Z}_r)^* \,\mathrm{d}\BM_r =0.
 \end{split}
\eeq
\Hence that 
\beq\label{ytvv2}
\begin{split}
 \mathcal{Y}_t-u(0,X_0)+\int_{0}^t f(s,X_s, \mathcal{Y}_s, \mathcal{Z}_s)\,\mathrm{d}s-\int_{0}^t (\mathcal{Z}_s)^* \,\mathrm{d}\BM_s =0.
 \end{split}
\eeq
\Moreover\ 	combining \cref{FBSDE0forward2} with the fact that $u(T,x)=g(x)$ \proves\ that  for all $t\in [0,T]$   it holds $\P$-a.s.\  that 
\beq\label{ytvv22}
\begin{split}
\mathcal{Y}_t- g(X_T)-\int_t^T f(s,X_s, \mathcal{Y}_s, \mathcal{Z}_s)\,\mathrm{d}s+\int_t^T (\mathcal{Z}_s)^* \,\mathrm{d}\BM_s =0.
 \end{split}
\eeq
\Moreover\   \cref{YtvV} \proves\  that  for all 
$(v,V)\in  \mathcal{S}  $
	 it holds
	 	 $\P$-a.s.\    that
\beq\label{ytvv3}
\begin{split}
 Y_t^{v,V}-u(0,X_0)+\int_{0}^t f(s,X_s, Y_s^{v,V}, \mathcal{Z}_s)\,\mathrm{d}s-\int_{0}^t (\mathcal{Z}_s)^* \,\mathrm{d}\BM_s =0.
 \end{split}
\eeq
Combining this with \cref{ytvv2} and \cref{uniqueytvv}  \proves\  that
 for all 
$(v,V)\in  \mathcal{S}$  it holds that
	 	 \beq\label{eq35} \E\bigl[| Y_T^{v,V}-g(X_T)|^2\bigr]=\E\bigl[| \mathcal{Y}_T-g(X_T)|^2\bigr]=\E\bigl[| u(T,X_T)-g(X_T)|^2\bigr]=0.
	 	 \eeq
\Nobs\  \cref{eq35},  \cref{ivVmini} and the fact that $\mathcal{S}$ is not empty  \prove\ that 
for all $v\in C_b(\R^d,\R)$, $V\in C_b([0,T]\times \R^d,\R^d)$  with
$\E\bigl[| Y_T^{v,V}-g(X_T)|^2\bigr]=c$
it holds that 
\beq
\E\bigl[| Y_T^{v,V}-g(X_T)|^2\bigr]=0.
\eeq
 \Hence that 
 for all $v\in C_b(\R^d,\R)$, $V\in C_b([0,T]\times \R^d,\R^d)$  with
$\E\bigl[| Y_T^{v,V}-g(X_T)|^2\bigr]=c$  that
 $\P(Y_T^{v,V}=g(X_T))=1$.   Combining this with
	 	\cref{YtvV}   \proves\  that   for all  $t\in [0,T]$,   $v\in C_b(\R^d,\R)$,  $V\in C_b([0,T]\times \R^d,\R^d)$  with
$\E\bigl[| Y_T^{v,V}-g(X_T)|^2\bigr]=c$ it holds    $\P$-a.s.\  that 
	 	 \beq
\begin{split}
Y_t^{v,V}=g(X_T)+\int_{t}^T f\bigl(s,X_s, Y_s^{v,V},V(s,X_s)\bigr)\,\mathrm{d}s-\int_{t}^T [V(s,X_s)]^* \,\mathrm{d}\BM_s.
 \end{split}
\eeq
	 		      \Cref{uniqueofbsde} and  \cref{ytvv22}  \hence   \prove\  that   for all $t\in [0,T]$,    $v\in C_b(\R^d,\R)$, $V\in C_b([0,T]\times \R^d,\R^d)$  with
$\E\bigl[| Y_T^{v,V}-g(X_T)|^2\bigr]=c$ it holds    $\P$-a.s.\  that     
	 	 $\bigl(Y^{v,V},V(\cdot,X_{\cdot})\bigr)=\bigl(\mathcal{Y}, \mathcal{Z} \bigr) $.
	\Hence that 	
	for all $s\in [0,T]$,  $v\in C_b(\R^d,\R)$, $V\in C_b([0,T]\times \R^d,\R^d)$  with
$\E\bigl[| Y_T^{v,V}-g(X_T)|^2\bigr]=c$ it holds  $\P$-a.s.\  that  $v(X_0)=u(0,X_0)$, $V(s,X_s)=\mathcal{Z}_s=[\sigma(s,X_s)]^*(\nabla_xu)(s,X_s)$,   for all $s\in [0,T]$. 	
\Hence that for all    $v\in C_b(\R^d,\R)$, $V\in C_b([0,T]\times \R^d,\R^d)$  with
$\E\bigl[| Y_T^{v,V}-g(X_T)|^2\bigr]=c$,   it holds that $(v,V)\in \mathcal{S}$.
 \Cref{lemmaBcor} (applied with  $S\with \mathcal{S}$,  $V\with C_b(\R^d,\R)\times C_b([0,T]\times \R^d,\R^d)$,   $(U_v)_{v\in V}\with (U_v)_{v\in V}$,    $(\Omega, \F,\P) \with (\Omega, \F,\P)$ in the notation of  \cref{lemmaBcor})    \proves\  \cref{thm24item1,thm24item2}.
 	 \end{cproof}	}
\end{theorem}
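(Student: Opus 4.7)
The plan is to use the nonlinear Feynman--Kac formula (\cref{nonlinearFKformula}) to exhibit a canonical pair $(\mathscr{v},\mathscr{V})$ achieving loss zero, and then invoke forward and backward uniqueness lemmas to pin down all other minimizers. Specifically, I will set $\mathscr{v}(x) = u(0,x)$, $\mathscr{V}(t,x) = [\sigma(t,x)]^*(\nabla_x u)(t,x)$, together with the associated processes $\mathcal{Y}_t = u(t,X_t)$ and $\mathcal{Z}_t = \mathscr{V}(t,X_t)$. Applying \cref{nonlinearFKformula} to $u$ gives that $(\mathcal{Y},\mathcal{Z})$ satisfies, for all $0 \le s \le t \le T$, the identity $\mathcal{Y}_t - \mathcal{Y}_s + \int_s^t f(r,X_r,\mathcal{Y}_r,\mathcal{Z}_r)\,\mathrm{d}r - \int_s^t \mathcal{Z}_r^{\,*}\,\mathrm{d}\BM_r = 0$ a.s. Taking $s=0$ exhibits $\mathcal{Y}$ as a solution of the forward equation \eqref{YtvV} with $v = \mathscr{v}$ and $V = \mathscr{V}$, while combining $t = T$ with the terminal condition $u(T,\cdot) = g$ shows $\mathcal{Y}_T = g(X_T)$.

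Next I would introduce the subset $\mathcal{S} \subseteq C_b(\R^d,\R) \times C_b([0,T]\times \R^d, \R^d)$ of pairs $(v,V)$ for which $v(X_0) = u(0,X_0)$ a.s.\ and $V(t,X_t) = [\sigma(t,X_t)]^*(\nabla_x u)(t,X_t)$ for all $t\in[0,T]$ a.s. By construction $(\mathscr{v},\mathscr{V}) \in \mathcal{S}$, so $\mathcal{S} \neq \emptyset$. For any $(v,V) \in \mathcal{S}$, both $Y^{v,V}$ and $\mathcal{Y}$ solve the same forward equation \eqref{YtvV} with the same driving integrand $V(\cdot,X_\cdot)$ and agree at time $0$, so \cref{uniqueytvv} forces $Y^{v,V} = \mathcal{Y}$ a.s. Evaluating at $T$ gives $Y_T^{v,V} = g(X_T)$ a.s., hence $\E[\,|Y_T^{v,V} - g(X_T)|^2\,] = 0$. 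In particular, the infimum in \eqref{ivVmini} equals $0$ and is achieved on every element of $\mathcal{S}$.

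For the converse direction, I will take any minimizer $(v,V)$, so that $Y_T^{v,V} = g(X_T)$ a.s. Rearranging the forward identity \eqref{YtvV} with this terminal equality recasts $Y^{v,V}$ as a solution of the BSDE with terminal data $g(X_T)$, driver $f$, and martingale integrand $V(\cdot,X_\cdot)$. Applying \cref{uniqueofbsde} to the two solution pairs $(Y^{v,V}, V(\cdot,X_\cdot))$ and $(\mathcal{Y},\mathcal{Z})$ then yields $Y^{v,V} = \mathcal{Y}$ and $V(\cdot,X_\cdot) = \mathcal{Z}$ a.s.; in particular $v(X_0) = Y_0^{v,V} = \mathcal{Y}_0 = u(0,X_0)$ a.s., so $(v,V) \in \mathcal{S}$. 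Both inclusions are then packaged via \cref{lemmaBcor}, applied with basic set $C_b(\R^d,\R) \times C_b([0,T]\times \R^d, \R^d)$, solution set $\mathcal{S}$, $N = 1$, and $U_{(v,V)}$ encoding $Y_T^{v,V} - g(X_T)$ through the relevant randomness, to conclude \cref{thm24item1,thm24item2}.

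The main obstacle I anticipate is verifying the $L^2$-integrability hypotheses required by \cref{uniqueofbsde}: finiteness of $\int_0^T \E[\,\|X_s\|^2\,]\,\mathrm{d}s$, $\int_0^T \E[\,|Y^{v,V}_s|^2\,]\,\mathrm{d}s$, and control of $V(\cdot,X_\cdot)$. These should follow from boundedness of $v$ and $V$, Lipschitz continuity of $f$, the standard moment estimates for SDEs with Lipschitz coefficients, and a Gronwall argument on the forward equation \eqref{YtvV}, but the bookkeeping is delicate. A secondary subtlety is the directional mismatch between the forward formulation of \eqref{YtvV} (which only sees $v$ through its initial value) and the backward formulation required for \cref{uniqueofbsde}: these two perspectives coincide precisely on the zero-loss set, which is exactly why the squared-error terminal loss is the right probe for identifying $(v,V)$ with the PDE data.
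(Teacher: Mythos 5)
Your proposal follows the paper's proof essentially step for step: the same canonical pair $(\mathscr{v},\mathscr{V})$ and processes $(\mathcal{Y},\mathcal{Z})$ built from $u$, the same solution set $\mathcal{S}$, and the same sequence of lemma invocations — \cref{nonlinearFKformula} to produce the BSDE identity, \cref{uniqueytvv} for the forward direction (every $(v,V)\in\mathcal{S}$ achieves zero loss), \cref{uniqueofbsde} for the backward direction (every minimizer lies in $\mathcal{S}$), and \cref{lemmaBcor} to package the equivalence. The integrability caveats you raise regarding \cref{uniqueofbsde} are likewise left implicit in the paper's proof, so there is no gap relative to its level of rigor.
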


\begin{corollary}\label{cor25}
{%

Let $T\in (0,\infty)$, $d\in \N$,
		let $\mu\colon[0,T]\times \R^d \rightarrow \R^d $,  
		$\sigma\colon[0,T]\times \R^d\rightarrow \R^{d\times d} $,  $g\colon \R^d\to \R$,   and $f\colon [0,T]\times \R^d \times \R\times\R^d  \rightarrow \R$ be Lipschitz continuous,  let
	$u\in C^{1,2}([0,T]\times \R^d)$ satisfy for all $t\in [0,T]$, $x\in \R^d$ that
	$u(T,x)=g(x)$ and
	\begin{multline}\label{thm22pdec}
	\tfrac{\partial u}{\partial t}(t,x)+\tfrac12 {\rm{Tr}}\bigl((\sigma(t,x)[\sigma(t,x)]^*({\rm{Hess}}_xu)(t,x) \bigr)     +\tfrac{\partial u}{\partial x} (t,x) \, \mu(t,x)\\
+f\bigl(t,x,u(t,x), [ \sigma( t, x ) ]^*( \nabla_x u )( t, x )\bigr)    =0,
	\end{multline}
let $(\Omega,\F, (\mathbb{F}_t)_{t\in [0,T]},\P)$  be a filtered probability space,  let 
		$\BM \colon [0,T]\times\Omega \to \R^d$ be a standard $(\mathbb{F}_t)_{t\in [0,T]}$-Brownian motion,	  let $X\colon [0,T]\times \Omega \to \R^d$  be an   $(\mathbb{F}_t)_{t \in [0,T]}$-adapted stochastic process with continuous sample paths which satisfies that  for all $t\in [0,T]$ it holds $\P$-a.s.\ that
\beq\label{FBSDEc}
\begin{split}
 X_t=X_0+\int_{0}^t\mu(s, X_s)\,\mathrm{d}s+\int_{0}^t\sigma(s, X_s)\,\mathrm{d}\BM_s,
 \end{split}
\eeq
 assume for all $\varepsilon \in (0,\infty)$, $x \in \R^d$ that $\P( \| X_0 - x \| < \varepsilon ) > 0$,
and  for  every $v\in C_b(\R^d,\R)$, $V\in C_b([0,T]\times \R^d,\R^d)$  let $Y^{v,V}\colon [0,T]\times \Omega \to \R $ be a stochastic process with continuous sample paths which satisfies that for all $t \in [0,T]$ it holds $\P$-a.s.\ that
\beq\label{YtvVc}
Y_t^{v,V}=v(X_0)-\int_{0}^t f\bigl(s,X_s, Y_s^{v,V},V(s,X_s)\bigr)\,\mathrm{d}s+\int_{0}^t [V(s,X_s)]^* \,\mathrm{d}\BM_s.\eeq
Then
\begin{enumerate}[(i)]
\item \label{cor25item1}   there exist $v\in C_b(\R^d,\R)$, $V\in C_b([0,T]\times \R^d,\R^d)$ which satisfy
\beq\label{ivVmini0c}
\E\bigl[| Y_T^{v,V}-g(X_T)|^2\bigr]=
\inf_{ \substack{ w \in C_b(\R^d,\R), \\ W \in C_b([0,T]\times \R^d,\R^d) } }
 \E\bigl[| Y_T^{w,W}-g(X_T)|^2\bigr]=0
\eeq
and
\item  \label{cor25item2} it holds for all $ x \in \R^d $ that $ v(x) = u(0,x) $.
\end{enumerate}

}

\end{corollary}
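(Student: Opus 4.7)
The plan is to derive both assertions as essentially immediate consequences of \cref{bsdethm} combined with \cref{sec2lem6}. Observe that the hypotheses on $T$, $d$, $\mu$, $\sigma$, $g$, $f$, $u$, $(\Omega, \F, (\mathbb{F}_t)_{t\in[0,T]}, \P)$, $B$, $X$, and the family $(Y^{v,V})_{v,V}$ in the present statement coincide with those of \cref{bsdethm}, so the latter is directly applicable.

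For \cref{cor25item1}, I would simply invoke \cref{thm24item2} in \cref{bsdethm}, whose conclusion matches \cref{ivVmini0c} verbatim, including the asserted infimum value of zero. This yields the existence of $v \in C_b(\R^d,\R)$ and $V \in C_b([0,T]\times\R^d,\R^d)$ with the required minimization property.

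For \cref{cor25item2}, I would next apply \cref{thm24item1} in \cref{bsdethm} to the pair $(v,V)$ from the previous step. Since $(v,V)$ achieves the infimum, that item yields in particular that $\P(v(X_0) = u(0,X_0)) = 1$. I would then introduce the auxiliary function $h \colon \R^d \to \R$ defined by $h(x) = v(x) - u(0,x)$. Since $v \in C_b(\R^d,\R)$ and $u \in C^{1,2}([0,T]\times\R^d)$, the function $h$ is continuous, and by the preceding identity it satisfies $\P(h(X_0) = 0) = 1$.

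The key observation is that the new support hypothesis in the present statement, namely that $\P(\|X_0 - x\| < \varepsilon) > 0$ for every $x \in \R^d$ and every $\varepsilon \in (0,\infty)$, is precisely what is needed to apply \cref{sec2lem6} with $d \with d$, $\delta \with 1$, $D \with \R^d$, $f \with h$, and $X \with X_0$. That lemma then forces $h(x) = 0$ for every $x \in \R^d$, which is equivalent to $v(x) = u(0,x)$ for every $x \in \R^d$, completing the proof of \cref{cor25item2}. I do not anticipate any real obstacle here: the argument is essentially a packaging of \cref{bsdethm} together with \cref{sec2lem6}, where the full-support assumption on $X_0$ provides the bridge from the almost-sure identity on $X_0$ to the desired pointwise identity of continuous functions on $\R^d$.
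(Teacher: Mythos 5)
Your proposal is correct and follows essentially the same approach as the paper's own proof: invoke item~(ii) of \cref{bsdethm} for \cref{cor25item1}, then item~(i) of \cref{bsdethm} to get $\P(v(X_0)=u(0,X_0))=1$, and finally apply \cref{sec2lem6} (via the full-support hypothesis on $X_0$ and the continuity of $x\mapsto v(x)-u(0,x)$) to upgrade the almost-sure identity to a pointwise one.
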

\begin{cproof}{cor25}
\Nobs\  \cref{thm24item2} in \cref{bsdethm} \proves\  that there exist\\ $v\in C_b(\R^d,\R)$, $V\in C_b([0,T]\times \R^d,\R^d)$ which satisfy
\beq\label{ivVmini0c1}
\E\bigl[| Y_T^{v,V}-g(X_T)|^2\bigr]=
\inf_{ \substack{ w \in C_b(\R^d,\R), \\ W \in C_b([0,T]\times \R^d,\R^d) } }
 \E\bigl[| Y_T^{w,W}-g(X_T)|^2\bigr]=0.
\eeq
This \proves\  \cref{cor25item1}. 
\Nobs\  \cref{thm24item1} in \cref{bsdethm} and  \cref{ivVmini0c1}  \prove\ that   
\beq
\P\bigl(v(X_0)=u(0,X_0)\bigr)=1.
\eeq 
Combining this and \cref{sec2lem6} (applied with $f\with ( \R^d \ni x \mapsto v(x) - u(0,x) \in \R )$ in the notation of \cref{sec2lem6}) \proves\ \cref{cor25item2}. 
\end{cproof}

\section{Operator learning methods}
\label{section:NeurOP}

\providecommandordefault{\nrEvalRuns}{1000}
\providecommandordefault{\nrEvalRunsOL}{\nrEvalRuns}

\newcommand{\OLBurgersT}{1}
\newcommand{\OLBurgersSpaceSize}{2\pi}
\newcommand{\OLBurgersLaplaceFactor}{\frac{1}{10}}

\newcommand{\OLBurgersdecayRate}{6}
\newcommand{\OLBurgersoffset}{10}
\newcommand{\OLBurgersvar}{10^6}
\newcommand{\OLBurgersinnerdecay}{2}
\newcommand{\OLBurgersInitDistr}{\mathcal{N}(0, \OLBurgersvar(\OLBurgersoffset \id_{\initialValues} - \Delta_x)^{-\OLBurgersdecayRate})}

\newcommand{\OLBurgersSpaceStep}{128}
\newcommand{\OLBurgersBatchSize}{1024}
\newcommand{\OLBurgersEvalSteps}{400}
\newcommand{\OLBurgersImprTol}{0.96}
\newcommand{\OLBurgersNrOLRuns}{5}
\newcommand{\OLBurgersRefAlgOne}{Spectral/Crank-Nicolson}
\newcommand{\OLBurgersRefAlgTwo}{explicit midpoint}
\newcommand{\OLBurgersNrTrainSamples}{2^{18}}
\newcommand{\OLBurgersNrValidSamples}{2^{14}}
\newcommand{\OLBurgersNrTestSamples}{2^{14}}
\newcommand{\OLBurgersNrTrainSpaceDiscr}{512}
\newcommand{\OLBurgersNrValidSpaceDiscr}{1024}
\newcommand{\OLBurgersNrTestSpaceDiscr}{1024}
\newcommand{\OLBurgersNrTrainTimeSteps}{1000}
\newcommand{\OLBurgersNrValidTimeSteps}{1500}
\newcommand{\OLBurgersNrTestTimeSteps}{1500}

\newcommand{\OLAConedSpaceStep}{64}
\newcommand{\OLAConedParams}{@@}
\newcommand{\OLAConedBatchSize}{128}
\newcommand{\OLAConedEvalSteps}{400}
\newcommand{\OLAConedImprTol}{0.97}
\newcommand{\OLAConedNrOLRuns}{3}
\newcommand{\OLAConedRefAlgOne}{Spectral/Crank-Nicolson}
\newcommand{\OLAConedRefAlgTwo}{explicit midpoint}
\newcommand{\OLAConedNrTrainSamples}{2^{18}}
\newcommand{\OLAConedNrValidSamples}{2^{14}}
\newcommand{\OLAConedNrTestSamples}{2^{14}}
\newcommand{\OLAConedNrTrainSpaceDiscr}{128}
\newcommand{\OLAConedNrValidSpaceDiscr}{256}
\newcommand{\OLAConedNrTestSpaceDiscr}{256}
\newcommand{\OLAConedNrTrainTimeSteps}{1000}
\newcommand{\OLAConedNrValidTimeSteps}{1400}
\newcommand{\OLAConedNrTestTimeSteps}{1400}

\newcommand{\OLACtwodSpaceStep}{64}
\newcommand{\OLACtwodParams}{@@}
\newcommand{\OLACtwodBatchSize}{64}
\newcommand{\OLACtwodEvalSteps}{400}
\newcommand{\OLACtwodImprTol}{0.97}
\newcommand{\OLACtwodNrOLRuns}{1}
\newcommand{\OLACtwodRefAlgOne}{Spectral/Crank-Nicolson}
\newcommand{\OLACtwodRefAlgTwo}{explicit midpoint}
\newcommand{\OLACtwodNrTrainSamples}{2^{10}}
\newcommand{\OLACtwodNrValidSamples}{2^{9}}
\newcommand{\OLACtwodNrTestSamples}{2^{9}}
\newcommand{\OLACtwodNrTrainSpaceDiscr}{128}
\newcommand{\OLACtwodNrValidSpaceDiscr}{256}
\newcommand{\OLACtwodNrTestSpaceDiscr}{256}
\newcommand{\OLACtwodNrTrainTimeSteps}{1000}
\newcommand{\OLACtwodNrValidTimeSteps}{1400}
\newcommand{\OLACtwodNrTestTimeSteps}{1400}

\newcommand{\OLACtwodT}{3}
\newcommand{\OLACtwodSpaceSize}{1}
\newcommand{\OLACtwodLaplaceFactor}{\frac{2}{1000}}

\newcommand{\OLACtwoddecayRate}{4}
\newcommand{\OLACtwodoffset}{\sqrt{5000}}
\newcommand{\OLACtwodvar}{25*10^6}
\newcommand{\OLACtwodinnerdecay}{1}
\newcommand{\OLACtwodInitDistr}{\mathcal{N}(0, \OLACtwodvar(\OLACtwodoffset \id_{\initialValues} - \Delta_x)^{-\OLACtwoddecayRate} - 0.8 \id_{\initialValues})}

\newcommand{\OLReactDiffT}{1}
\newcommand{\OLReactDiffSpaceSize}{2}
\newcommand{\OLReactDiffLaplaceFactor}{\frac{5}{100}}
\newcommand{\OLReactDiffReactionRate}{2}

\newcommand{\OLReactDiffdecayRate}{4}
\newcommand{\OLReactDiffoffset}{100}
\newcommand{\OLReactDiffvar}{10^8}
\newcommand{\OLReactDiffStartVar}{0.2}
\newcommand{\OLReactDiffinnerdecay}{1}
\newcommand{\OLReactDiffInitDistr}{\mathcal{N}(0, \OLReactDiffvar(\OLReactDiffoffset \id_{\initialValues} - \Delta_x)^{-\OLReactDiffdecayRate} - 0.8 \id_{\initialValues})}

\newcommand{\OLReactDiffSpaceStep}{128}
\newcommand{\OLReactDiffParams}{(0.1, 1.3)^2}
\newcommand{\OLReactDiffBatchSize}{256}
\newcommand{\OLReactDiffILRsteps}{50}
\newcommand{\OLReactDiffEvalSteps}{400}
\newcommand{\OLReactDiffImprTol}{0.97}
\newcommand{\OLReactDiffNrOLRuns}{3}
\newcommand{\OLReactDiffNrGridRuns}{16}
\newcommand{\OLReactDiffNrOptRuns}{-}
\newcommand{\OLReactDiffRefAlgOne}{\FDM/Crank-Nicolson}
\newcommand{\OLReactDiffRefAlgTwo}{explicit midpoint}
\newcommand{\OLReactDiffNrTrainSamples}{2^{18}}
\newcommand{\OLReactDiffNrValidSamples}{2^{14}}
\newcommand{\OLReactDiffNrTestSamples}{2^{14}}
\newcommand{\OLReactDiffNrTrainSpaceDiscr}{512}
\newcommand{\OLReactDiffNrValidSpaceDiscr}{1024}
\newcommand{\OLReactDiffNrTestSpaceDiscr}{1024}
\newcommand{\OLReactDiffNrTrainTimeSteps}{1000}
\newcommand{\OLReactDiffNrValidTimeSteps}{1500}
\newcommand{\OLReactDiffNrTestTimeSteps}{1500}

\newcommand{\OLReactDiffDMarch}{(\OLReactDiffSpaceStep, 512, 1024, 512, \OLReactDiffSpaceStep)}

\makeatletter
\let\scp\@undefined 
\makeatother
\DeclarePairedDelimiterXPP{\scp}[2]{}{\langle}{\rangle}{}{#1,#2}

In this section we provide an introduction to operator learning methods.
Roughly speaking, the field of operator learning is concerned with the approximation of mappings which map functions to functions, a type of mappings usually called \emph{operators}.
In the context of \PDEs, operators naturally arise, for example, as mappings from boundary conditions or initial values of \PDEs\ to corresponding \PDE\ solutions or terminal values.
To approximately learn such operators it is necessary to develop new types of trainable models which take functions as inputs and outputs. 
We refer to such models as \emph{neural operators}.

In \cref{sect:data_driven_OL} we present a basic method for training such neural operators using evaluations of the target operator.
In \cref{sect:OL_architectures} we discuss precise definitions of different types of neural operators.
We first consider neural operators which rely on interpolation to move between discrete data and functions
and are based on classical \ANNs\ 
such as 
	fully connected feed-forward \ANNs\ (see \cref{sect:ANN_NO}) 
	and 
	\CNNs\ (see \cref{sect:CNN_NO}).
We then present a selection of recent and advanced neural operators that are directly formulated in function space without making use of interpolation 
such as 
	\IKNOs\ (see \cref{sect:IKNO}),
	\FNOs\ (see \cref{sect:FNO}),
	and
	\DeepONets\ (see \cref{sect:deepONets}).
In \cref{sect:PINO} we introduce the \PINO\ methodology for operators related to \PDEs, which combines the data-driven operator learning approach in \cref{sect:data_driven_OL} with the \PINNs\ methodology in \cref{secpinn}.
In \cref{sect:OL_refs} we survey additional operator learning methods from the literature.
Lastly, in \cref{sect:OL_simul} we present numerical simulations which illustrate the performance of the neural operators discussed in this section.

\subsection{The canonical data-driven operator learning framework}
\label{sect:data_driven_OL}

In \cref{algo:operator_learning} below we illustrate how a neural operator can be trained using the \SGD\ optimization method when evaluations of the target operator are available as training data.

\begingroup
\providecommandordefault{\indim}{d}
\providecommandordefault{\outdim}{\mathbf{d}}
\providecommandordefault{\B}{\mathcal{M}}
\providecommandordefault{\inDom}{D}
\providecommandordefault{\outDom}{\mathbf{D}}
\providecommandordefault{\Grid}{G}
\providecommandordefault{\In}{\mathcal{I}}
\providecommandordefault{\Out}{\mathcal{O}}
\providecommandordefault{\S}{\mathcal{S}}
\providecommandordefault{\nrparams}{\fd}
\providecommandordefault{\i}{i}
\providecommandordefault{\x}{x}
\providecommandordefault{\Loss}{\mathscr{L}}
\providecommandordefault{\g}{g}
\providecommandordefault{\II}{\mathbb{I}}
\providecommandordefault{\XX}{\mathbb{X}}
\providecommandordefault{\b}{m}
\providecommandordefault{\Grad}{\mathscr{G}}

\begin{algo}[General data-driven operator learning]
\label{algo:operator_learning}
Let 
	$\indim, \outdim, \B \in \N$,
	$\inDom \subseteq \R^\indim$,	
	$\outDom \subseteq \R^\outdim$,
let
	$\In \subseteq \cL^0(\inDom; \R)$,
	$\Out \subseteq \cL^0(\outDom; \R)$
be vector spaces,
let $\lossmetric{\cdot} \colon \EndValues \to [0, \infty)$ be measurable,
let 
	$\S \colon \In \to \Out$, 
	$\neuralOp \colon \R^\nrparams \times \In \to \Out$, and
	$\Loss \colon \R^\nrparams \times \In \to \R$ 
satisfy for all
	$\theta \in \R^\nrparams$,
	$\i \in \In$
that
\begin{equation}
\label{algo:operator_learning:eq0}
\begin{split} 
	\Loss(\theta, \i)
=
	\lossmetric{(\neuralOp_{\theta}(\i)) - (\S(\i))}^2,
\end{split}
\end{equation}
let $\Grad \colon \R^\nrparams \times \In \to \R^{\nrparams}$ satisfy for all
	$\theta \in \R^\nrparams$,
	$\i \in \In$
with 
	$ \Loss(\cdot, \i) \colon \R^{ \nrparams} \to \R$ is differentiable at $\theta$
that
$\Grad(\theta, \i) = (\nabla_\theta \Loss)( \theta , \i)$,
let $(\Omega, \cF, \P)$ be a probability space,
let $\II_{n, \b} \colon \Omega \to \In$, $n, \b \in \N$, be random variables,
let $(\gamma_n )_{n \in \N} \subseteq \R$, 
and let $\Theta\colon \N_0 \times \Omega \to \R^{ \nrparams}$ satisfy for all $n \in \N_0$ that 
\begin{equation}
\label{algo:operator_learning:eq1}	
	\Theta_{ n + 1 } 
=
	\Theta_n - \frac{\gamma_n}{\B} 
	\br*{
		\sum_{\b = 1}^{\B}\Grad( \Theta_n , \II_{n, \b})
	}.
\end{equation}
\end{algo}

\begin{remark}[Explanations for \cref{algo:operator_learning}]
{\sl 
We think of $\S$ as the target operator %
that we want to approximate,
we think of $\neuralOp$ as the neural operator that we want to use to approximate $\S$,
we think of $\lossmetric{\cdot}$ as a norm or seminorm on the output space $\Out$ used to measure the distance between training samples and the output of the neural operator (cf., e.g., \cref{OLReactionDiffusion:eq3,OLAC:eq3,OLBurgers:eq3} for concrete examples), 
we think of $(\II_{n, \b})_{(n, \b) \in \N^2}$ as the training samples for the optimization procedure,
and we think of $\Theta$ as an \SGD\ process for the loss $\Loss$ with generalized gradient $\Grad$, learning rates $(\gamma_n )_{n \in \N}$, constant batch size $\B$, initial values $\Theta_0$, and data $(\II_{n, \b})_{(n, \b) \in \N \times \{1, 2, \ldots, \B\}}$.
For sufficiently large $n \in \N$ we hope that 
$
	\neuralOp_{\Theta_n} \approx \S
$.

For simplicity in \cref{algo:operator_learning} we choose to describe the plain-vanilla \SGD\ method with constant learning rates for the learning process in \cref{algo:operator_learning:eq1}.
In practice, however, typically other more advanced \SGD-type optimization methods are employed.
For example, in our numerical simulations in \cref{sect:OL_simul} we use an adaptive trainig procedure based on the Adam optimizer which is described in \cite[Appendix A.2]{adanns2023}.
Moreover, in typical applications the target operator $\S$ is not known exactly and needs to be replaced with approximations of it to compute reference solutions in the loss function in \cref{algo:operator_learning:eq0}.
}
\end{remark}

\endgroup

\subsection{Neural operator architectures}
\label{sect:OL_architectures}

In this section we introduce various neural operator architectures. 
In the treatment of several types of neural operators, we will for simplicity restrict ourselves to the case of periodic functions. %
In these situations, we will make use of the modulo operation in \cref{def:mod} below and of multi-linear periodic interpolation in \cref{def:interpolator} below.

\renewcommand{\mod}[2]{\operatorname{mod}_{#1}(#2)\cfadd{def:mod}}

\cfclear
\begin{definition}[Modulo operators]
\label{def:mod}
Let $a \in \N$.
Then we denote by $\operatorname{mod}_{a} \colon \R \to [0,a)$
the function which satisfies for all 
	$b \in \R$
that
\begin{equation}
\label{T_B_D}
\begin{split} 
	\mod{a}{b}
=	
	\min \pr*{
		\{
			b - ka \in \R \colon k \in \Z
		\}
		\cap 
		[0,a)
	}.
\end{split}
\end{equation}

\end{definition}

\begingroup
\providecommandordefault{\a}{\matrixdim}
\providecommandordefault{\I}{\mathcal{I}}
\providecommandordefault{\x}{x}
\providecommandordefault{\xalt}{\mathbf{x}}
\cfclear
\begin{lemma}[Properties of multi-linear periodic interpolators]
\label{interpolation}
Let
	$\tensordim \in \N$,
	$\a_1, \a_2, \ldots, \a_\tensordim \in \N$,
	$
		\xalt = (\xalt_{\matrixindex_1, \matrixindex_2, \ldots, \matrixindex_\tensordim})_{
				(\matrixindex_1, \matrixindex_2, \ldots, \matrixindex_\tensordim) \in 
				(\bigtimes_{\dimindex = 1}^\tensordim  \{0, 1, 2, \ldots, \a_\dimindex\} )
		} 
		\in 
		\R^{(\a_1+1) \times (\a_2+1) \times \ldots \times (\a_\tensordim+1)} 
	$
and let 
	$\I \colon[0,1]^\tensordim \to \R$
satisfy for all 
	$
		y = (y_1, y_d, \ldots, y_\tensordim) 
	\in 
		[0,1]^\tensordim
	$
that
\begin{equation}
\label{interpolation:eq1}
\begin{split} 
	\I (y)
=
	\sum_{
		\substack{
			(\matrixindex_1, \matrixindex_2, \ldots, \matrixindex_\tensordim) 
			\in 
			(\bigtimes_{\dimindex = 1}^\tensordim  \{0, 1, \ldots, \a_\dimindex\} )
		}
	}
		\br*{
			\prod_{\dimindex = 1}^\tensordim 
				\pr{
					1
					-
					\abs{
						\a_\dimindex y_{\dimindex} - \matrixindex_{\dimindex}
					}
				}
			\indicator{[-1,1]}(\a_\dimindex y_{\dimindex} - \matrixindex_{\dimindex})
		}
		\xalt_{
			\matrixindex_1, \matrixindex_2, \ldots, \matrixindex_\tensordim
		}.
\end{split}
\end{equation}
Then
\begin{enumerate}[label=(\roman*)]
\item \label{interpolation:item1}
it holds for all
	$
		(\matrixindex_1, \matrixindex_2, \ldots, \matrixindex_\tensordim) 
		\in 
		(\bigtimes_{\dimindex = 1}^\tensordim  \{0, 1, \ldots, \a_\dimindex\})
	$
that
$
	I(\frac{\matrixindex_1}{\a_1}, \frac{\matrixindex_2}{\a_2}, \ldots, \frac{\matrixindex_\tensordim}{\a_\tensordim})
=
	\xalt_{
		\matrixindex_1, \matrixindex_2, \ldots, \matrixindex_\tensordim
	}
$
and
\item \label{interpolation:item2}
it holds that
$\I \in C([0,1]^\tensordim, \R)$.
\end{enumerate}
\end{lemma}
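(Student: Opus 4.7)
\medskip

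The plan is to verify the two items by a direct calculation based on \eqref{interpolation:eq1}. The key observation, which drives both parts, is that the univariate function $\R \ni t \mapsto \varphi(t) = (1 - |t|) \indicator{[-1,1]}(t) \in \R$ is the standard continuous tent function: it agrees with the continuous function $\max\{1 - |t|, 0\}$ because whenever $|t| > 1$ both expressions vanish, and at the endpoints $t = \pm 1$ the factor $1 - |t|$ vanishes. Consequently $\varphi \in C(\R, \R)$ and $\varphi$ takes value $1$ at $t = 0$, value $0$ at every nonzero integer, and more generally satisfies $\varphi(k) = \indicator{\{0\}}(k)$ for all $k \in \Z$.

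For \lref{item1} I would substitute $y_\dimindex = \matrixindex_\dimindex/\a_\dimindex$ for some fixed tuple $(\matrixindex_1, \dots, \matrixindex_\tensordim) \in \bigtimes_{\dimindex=1}^\tensordim \{0, 1, \ldots, \a_\dimindex\}$ into \eqref{interpolation:eq1}. Then for each summation index $(j_1, \ldots, j_\tensordim)$ the factor corresponding to coordinate $\dimindex$ becomes $\varphi(\matrixindex_\dimindex - j_\dimindex)$, and since $\matrixindex_\dimindex - j_\dimindex \in \Z$ this equals $\indicator{\{0\}}(\matrixindex_\dimindex - j_\dimindex)$. The product over $\dimindex$ therefore vanishes unless $j_\dimindex = \matrixindex_\dimindex$ for every $\dimindex$, which shows that only the single summand with $(j_1, \ldots, j_\tensordim) = (\matrixindex_1, \ldots, \matrixindex_\tensordim)$ contributes, and that summand equals $\xalt_{\matrixindex_1, \ldots, \matrixindex_\tensordim}$.

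For \lref{item2} I would rewrite each summand in \eqref{interpolation:eq1} as $\xalt_{\matrixindex_1, \ldots, \matrixindex_\tensordim} \prod_{\dimindex=1}^\tensordim \varphi(\a_\dimindex y_\dimindex - \matrixindex_\dimindex)$. Since $\varphi \in C(\R, \R)$ and each map $[0,1]^\tensordim \ni y \mapsto \a_\dimindex y_\dimindex - \matrixindex_\dimindex \in \R$ is continuous, each summand is a finite product of continuous functions on $[0,1]^\tensordim$ and therefore continuous. As a finite sum of continuous functions, $\I$ is then continuous on $[0,1]^\tensordim$.

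There is no real obstacle here; the only subtle point is the need to recognize that the product $(1 - |t|) \indicator{[-1,1]}(t)$ appearing in \eqref{interpolation:eq1} is continuous across $t = \pm 1$ despite the presence of the indicator, which is exactly the content of the tent-function observation above. Once that is established, both items follow by elementary manipulations.
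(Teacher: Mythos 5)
Your proposal is correct, and it is essentially the only sensible route: the paper's own proof is a one-line assertion that \eqref{interpolation:eq1} implies both items, with no details supplied. Your argument fills in exactly what that assertion requires. In particular, your observation that $t\mapsto(1-\abs{t})\indicator{[-1,1]}(t)$ coincides with the continuous tent function $t\mapsto\max\{1-\abs{t},0\}$ is the right way to dispose of the apparent discontinuity coming from the indicator, and it simultaneously yields $\varphi(k)=\indicator{\{0\}}(k)$ for integer $k$, which is precisely what makes the sum in item (i) collapse to the single surviving term. Item (ii) then follows, as you say, because each summand is a constant times a finite product of compositions $\varphi\circ(\text{affine map})$, hence continuous, and $\I$ is a finite sum of such functions. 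Your write-up is complete and correct; it simply makes explicit what the paper leaves to the reader.
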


\begin{proof}[Proof of \cref{interpolation}]
\Nobs \eqref{interpolation:eq1} implies \cref{interpolation:item1,interpolation:item2}.
The proof of \cref{interpolation} is thus complete.
\end{proof}

\cfclear
\cfadd{interpolation}
\begin{definition}[Multi-linear periodic interpolators]
\label{def:interpolator}
Let
	$\tensordim \in \N$,
	$\a_1, \a_2, \ldots, \a_\tensordim \in \N$.
Then we say that $\I$ is the periodic multi-linear interpolator on $[0,1]^\tensordim$ 
	with spatial discretization $(\a_1, \a_2, \ldots, \a_\tensordim)$
if and only if it holds that
	$\I \colon \R^{\a_1 \times \a_2 \times \ldots \times \a_\tensordim} \to C([0,1]^\tensordim, \R)$ 
is the function which satisfies for all
	$
		\x = (\x_{\matrixindex_1, \matrixindex_2, \ldots, \matrixindex_\tensordim})_{
				(\matrixindex_1, \matrixindex_2, \ldots, \matrixindex_\tensordim) \in 
				(\bigtimes_{\dimindex = 1}^\tensordim  \{0, 1, 2, \ldots, \a_\dimindex-1\} )
		} 
		\in 
		\R^{\a_1 \times \a_2 \times \ldots \times \a_\tensordim} 
	$,
	$
		\xalt = (\xalt_{\matrixindex_1, \matrixindex_2, \ldots, \matrixindex_\tensordim})_{
				(\matrixindex_1, \matrixindex_2, \ldots, \matrixindex_\tensordim) \in 
				(\bigtimes_{\dimindex = 1}^\tensordim  \{0, 1, 2, \ldots, \a_\dimindex\} )
		} 
		\in 
		\R^{(\a_1+1) \times (\a_2+1) \times \ldots \times (\a_\tensordim+1)} 
	$,
	$
		y = (y_1, y_d, \ldots, y_\tensordim) 
	\in 
		[0,1]^\tensordim
	$
with
$
	\forall \, 
		(\matrixindex_1, \matrixindex_2, \ldots, \matrixindex_\tensordim) 
		\in 
		(\bigtimes_{\dimindex = 1}^\tensordim  \{0, 1, 2, \ldots, \a_\dimindex\} )
	\colon \allowbreak
		\xalt_{\matrixindex_1, \matrixindex_2, \ldots, \matrixindex_\tensordim}
		=
		x_{\mod{\a_1}{\matrixindex_1}, \mod{\a_2}{\matrixindex_2}, \ldots, \mod{\a_\tensordim}{\matrixindex_\tensordim}}
$
that
\begin{equation}
\label{T_B_D}
\begin{split} 
	(\I(x)) (y)
=
	\sum_{
		\substack{
			(\matrixindex_1, \matrixindex_2, \ldots, \matrixindex_\tensordim) 
			\in 
			(\bigtimes_{\dimindex = 1}^\tensordim  \{0, 1, \ldots, \a_\dimindex\} )
		}
	}
		\br*{
			\prod_{\dimindex = 1}^\tensordim 
				\pr{
					1
					-
					\abs{
						\a_\dimindex y_{\dimindex} - \matrixindex_{\dimindex}
					}
				}
			\indicator{[-1,1]}(\a_\dimindex y_{\dimindex} - \matrixindex_{\dimindex})
		}
		\xalt_{
			\matrixindex_1, \matrixindex_2, \ldots, \matrixindex_\tensordim
		}
\end{split}
\end{equation}
\cfload.
\end{definition}

\endgroup

\subsubsection{Fully-connected feed-forward neural operators}
\label{sect:ANN_NO}

In this section we introduce basic neural operator models based on vanilla fully-connected feed-forward \ANNs.
In order to use fully-connected feed-forward \ANNs\ to define models which take functions as inputs and outputs, we will employ evaluations on grids and multi-linear interpolation to move between discrete data and functions.
We will first introduce fully-connected feed-forward \ANN\ models which take multi-dimensional matrices as inputs and outputs (see \cref{def:FCFFANNModels} below) and then use these models to define corresponding
neural operator models (see \cref{def:FCFFANNOperators} below).
Roughly speaking, we think of the multi-dimensional matrices that serve as inputs and outputs of the fully-connected feed-forward \ANN\ models as discretizations of functions on a grid.

\cfclear
\begingroup
\providecommand{\a}{}
\renewcommand{\a}{\matrixdim}
\providecommandordefault{\Flatten}{\mathcal{F}}
\providecommandordefault{\A}{A}
\begin{definition}[Fully-connected feed-forward \ANN\ models]
\label{def:FCFFANNModels}
Let
	$\tensordim, L \in \N$,
	$l_0,l_1,\ldots, l_L \in \N$, 
	$\a_1, \a_2, \ldots, \a_\tensordim \in \N$,
	$\activation \in C(\R, \R)$
satisfy 
	$l_0 = \a_1 \a_2 \cdots \a_\tensordim  = l_L$
and let 
	$\Flatten \colon \R^{\a_1 \times \a_2 \times \ldots \times \a_\tensordim} \to \R^{\a_1  \a_2  \cdots \a_\tensordim}$
satisfy for all
	$
		\A
	=
		(\A_{\matrixindex_1, \matrixindex_2, \ldots, \matrixindex_\tensordim})_{
				(\matrixindex_1, \matrixindex_2, \ldots, \matrixindex_\tensordim) \in 
				(\bigtimes_{\dimindex = 1}^\tensordim  \{1, 2, \ldots, \a_\dimindex\} )
			} 
	 \in 
	 	\R^{\a_1 \times \a_2 \times \ldots \times \a_\tensordim}
	 $,
	$x = (x_1, \ldots, x_{\a_1  \a_2  \cdots \a_\tensordim}) \in \R^{\a_1  \a_2  \cdots \a_\tensordim}$
with 
	$\forall \, (\matrixindex_1, \matrixindex_2, \ldots, \matrixindex_\tensordim) \in (\bigtimes_{\dimindex = 1}^\tensordim  \{1, 2, \ldots, \a_\dimindex\} )\colon$
\begin{equation}
\label{T_B_D}
\begin{split} 
	\A_{\matrixindex_1, \matrixindex_2, \ldots, \matrixindex_\tensordim}
=
	x_{
		(\matrixindex_1-1) \a_2 \a_3 \cdots \a_\tensordim +
		(\matrixindex_2-1) \a_3 \a_4 \cdots \a_\tensordim +
		\ldots +
		(\matrixindex_{\tensordim-1}-1) \a_\tensordim +
		\matrixindex_{\tensordim}
	}
\end{split}
\end{equation}
that 
$
	\Flatten(\A) = x
$.
Then we call
	$\neuralOp$
the fully-connected feed-forward \ANN\ model 
	with input-output shape  $(\a_1, \a_2, \ldots, \a_\tensordim)$, 
	architecture $(l_0,l_1,\ldots, l_L )$, and
	activation function $\activation$
if and only if it holds that
\begin{equation}
\label{T_B_D}
\begin{split} \textstyle
		\neuralOp
	\colon
	\pr*{
		\bigtimes_{{\layerindex} = 1}^L 
	      	\R^{l_{{\layerindex}} \times l_{{\layerindex}-1}} \times \R^{l_{{\layerindex}}}
	}
	\times
		\R^{\a_1 \times \a_2 \times \ldots \times \a_\tensordim}
	\to
		\R^{\a_1 \times \a_2 \times \ldots \times \a_\tensordim}
\end{split}
\end{equation}
is the function which satisfies for all
	$
		\Phi 
	=
		(
			(
				W_{\layerindex}, B_{\layerindex}
			)
		)_{\layerindex \in \{1, 2, \ldots, L\}}
	\in \allowbreak
		\pr[\big]{
			\bigtimes_{{\layerindex} = 1}^L \R^{l_{{\layerindex}} \times l_{{\layerindex}-1}} \times \R^{l_{{\layerindex}}}
		}
	$,
	$\A \in \R^{\a_1 \times \a_2 \times \ldots \times \a_\tensordim}$,
	$x_0 \in \R^{l_0}$,
	$x_1 \in \R^{l_1}$,
	$\dots$,
	$x_L \in \R^{l_L}$
with
	$\forall \, {\layerindex} \in \{1,2,\dots,L-1\} \colon$
\begin{equation}
\label{FCFFANNModels:eq1}
\begin{split} 
	x_0
=
	\Flatten(\A),
\qquad
	x_{{\layerindex}} 
=
	\multdim_{
		a
	}
	(
		W_{{\layerindex}} x_{\layerindex-1} + B_{\layerindex}
	),
\qandq 
	x_{{L}}
=
	W_{{L}} x_{L-1} + B_{L}
\end{split}
\end{equation}
that
$
	\neuralOp_{\Phi}(\A)
=
	\Flatten^{-1}(x_L)
$
\cfload.
\end{definition}

\begin{remark}[Explanations for \cref{def:FCFFANNModels}]
\label{rem:FCFFANNModels}
In \eqref{FCFFANNModels:eq1}
we think of $\Phi$ as the parameters of the \ANN,
we think of $\A$ as the input of the \ANN,
we think of $x_0$ as a reshaped version of the input $\A$,
for every $\layerindex \in \{1,2,\dots,L-1\}$ we think of $x_\layerindex$ as the output of the $\layerindex$-th layer of the \ANN\ given the input $\A$,
and
we think of the output $\Flatten^{-1}(x_L)$ of the \ANN\ as a reshaped version of the output of $x_L$ of the last layer of the \ANN.
\end{remark}

\endgroup

\cfclear
\begingroup
\providecommandordefault{\a}{\matrixdim}
\providecommandordefault{\neuralOplocal}{\mathcal{N}}
\providecommandordefault{\I}{\mathcal{I}}
\begin{definition}[Periodic fully-connected feed-forward neural operators]
\label{def:FCFFANNOperators}
Let
	$\tensordim, L \in \N$,
	$l_0,l_1,\ldots, l_L \in \N$, 
	$\a_1, \a_2, \ldots, \a_\tensordim \in \N$,
	$\activation \in C(\R, \R)$
satisfy 
	$l_0 = \a_1 \a_2 \cdots \a_\tensordim  = l_L$,
let\cfadd{def:interpolator}
	$\I \colon \R^{\a_1 \times \a_2 \times \ldots \times \a_\tensordim} \to C([0,1]^\tensordim, \R)$ 
be the periodic multi-linear interpolator on $[0,1]^\tensordim$
with spatial discretization $(\a_1, \a_2, \ldots, \a_\tensordim)$, 
and
let\cfadd{def:FCFFANNModels}
	$\neuralOplocal$
be the fully-connected feed-forward \ANN\ model 
	with input-output shape  $(\a_1, \a_2, \ldots, \a_\tensordim)$, 
	architecture $(l_0,l_1,\ldots, l_L )$, and
	activation function $\activation$
\cfload.
Then we call
	$\neuralOp$
the periodic fully-connected feed-forward neural operator on $[0,1]^\tensordim$ with
	spatial discretization $(\a_1, \a_2, \ldots, \a_\tensordim)$,
	architecture $(l_0,l_1,\ldots, l_L )$, and
	activation function $\activation$
if and only if it holds that
\begin{equation}
\begin{split} \textstyle
		\neuralOp
	\colon
	\pr[\big]{
		\bigtimes_{{\layerindex} = 1}^L 
	      	\R^{l_{{\layerindex}} \times l_{{\layerindex}-1}} \times \R^{l_{{\layerindex}}}
	}
	\times
		C([0,1]^{\tensordim}, \R)
	\to
		C([0,1]^{\tensordim}, \R)
\end{split}
\end{equation}
is the function which satisfies for all
	$f \in C([0,1]^\tensordim, \R)$,
	$
			\Phi 
		=
			(
				(
					W_{\layerindex}, B_{\layerindex}
				)
			)_{\layerindex \in \{1, 2, \ldots, L\}}
		\in \allowbreak
				(\bigtimes_{{\layerindex} = 1}^L \R^{l_{{\layerindex}} \times l_{{\layerindex}-1}} \times \R^{l_{{\layerindex}}})
		$
that
\begin{equation}
	\neuralOp_{\Phi}(f)
=
	\I\pr[\Big]{
		\neuralOplocal_{\Phi}\pr[\Big]{
			\pr[\big]{
				f(
					\nicefrac{\matrixindex_1}{\a_1},
					\nicefrac{\matrixindex_2}{\a_2}, \ldots,
					\nicefrac{\matrixindex_\tensordim}{\a_\tensordim}
				)
			}_{
				(\matrixindex_1, \matrixindex_2, \ldots, \matrixindex_\tensordim) \in 
				(\bigtimes_{\dimindex = 1}^\tensordim  \{0, 1, \ldots, \a_\dimindex-1\} )
			} 
		}
	}.
\end{equation}
\end{definition}
\endgroup

\subsubsection{Architectures based on convolutional neural networks}
\label{sect:CNN_NO}

In this section, we illustrate with two basic examples how \CNNs\ can be used to construct neural operators.
Since \CNNs\ make use of the spatial structure of their input, they are a natural approach in the context of operator learning, in particular when the action of the neural operator should be based on the values of the input function sampled on a grid.
For more background on \CNNs, cf., e.g., \cite[Chapter 9]{Goodfellow2016}.

\paragraph{Simple periodic convolutional neural operators}
\label{sect:periodic_cnn}

In this section we present a simple example of a neural operator architecture based on discrete periodic convolutions (cf.\ \cref{def:convolution}).
Roughly speaking, the discrete periodic convolution operation allows to define \CNNs\ with the property that the shape of the feature maps stays the same on each layer (cf.\ \cref{rem:perdiodicCNN}).
Like in the case of fully-connected feed-forward \ANNs\ in \cref{sect:ANN_NO},
we will first introduce \CNN\ models which take multi-dimensional matrices as inputs and outputs (see \cref{def:perdiodicCNN} below) and then use these models to define corresponding neural operator models (see \cref{def:periodicCNO} below).
Such simple periodic neural operators were employed by \cite{Khoo2021} as a part of their proposed operator learning methodology.

\begingroup
\providecommand{\a}{}
\renewcommand{\a}{a}
\begin{definition}[Discrete periodic convolutions]
\label{def:convolution}
Let
	$d \in \N$,
	$\a_1, \a_2, \ldots, \a_\tensordim,  w_1, w_2, \ldots, w_\tensordim \allowbreak \in \N_0$,
	$
		A = 
		(A_{\matrixindex_1, \matrixindex_2, \ldots, \matrixindex_\tensordim})_{
			(\matrixindex_1, \matrixindex_2, \ldots, \matrixindex_\tensordim) \in 
			(\bigtimes_{\dimindex = 1}^\tensordim  \{0, 1, \ldots, \a_\dimindex-1\} )
		} 
		\in \R^{\a_1 \times \a_2 \times \ldots \times \a_\tensordim}
	$,
	$
		W = \allowbreak 
		(W_{\matrixindexalt_1, \matrixindexalt_2, \ldots, \matrixindexalt_\tensordim})_{
			(\matrixindexalt_1, \matrixindexalt_2, \ldots, \matrixindexalt_\tensordim) \in 
			(\bigtimes_{\dimindex = 1}^\tensordim  \{-w_\dimindex, -w_\dimindex + 1, \ldots, w_\dimindex\})
		} 
		\in \R^{(2w_1+1) \times (2w_2+1) \times \ldots \times (2w_\tensordim+1)}
	$
satisfy for all 
	$ \dimindex \in \{1, 2, \ldots, \tensordim \}$
that
\begin{equation}
\begin{split} 
	\a_\dimindex \geq 2 w_\dimindex + 1.
\end{split}
\end{equation}
Then
we denote by 
$
	A \convolution W =
		((A \convolution W)_{\matrixindex_1, \matrixindex_2, \ldots, \matrixindex_\tensordim})_{
			(\matrixindex_1, \matrixindex_2, \ldots, \matrixindex_\tensordim) \in 
			(\bigtimes_{\dimindex = 1}^\tensordim  \{0, 1, \ldots, \a_\dimindex-1\})
		} 
	\in \R^{\a_1 \times \a_2 \times \ldots \times \a_\tensordim}
$ 
the tensor which satisfies for all
	$(\matrixindex_1, \matrixindex_2, \ldots, \matrixindex_\tensordim) \in \bigtimes_{\dimindex = 1}^\tensordim  \{0, 1, \ldots, \a_\dimindex-1\}$
that
\begin{equation}
\begin{split} 
	(A \convolution W)_{\matrixindex_1, \matrixindex_2, \ldots, \matrixindex_\tensordim}
=
	\sum_{\matrixindexalt_1 = -w_1}^{w_1} 
	\sum_{\matrixindexalt_2 = -w_2}^{w_2} 
	\ldots 
	\sum_{\matrixindexalt_\tensordim = -w_\tensordim}^{w_\tensordim}
		A_{
			\mod{\a_1}{\matrixindex_1 + \matrixindexalt_1}, 
			\mod{\a_2}{\matrixindex_2 + \matrixindexalt_2}, \ldots,
			\mod{\a_\tensordim}{\matrixindex_\tensordim + \matrixindexalt_\tensordim} 
		} 
		W_{\matrixindexalt_1, \matrixindexalt_2, \ldots, \matrixindexalt_\tensordim}
\end{split}
\end{equation}\cfclear\cfadd{def:mod}\cfload.
\end{definition}
\endgroup

\begingroup
\providecommand{\d}{}
\renewcommand{\d}{\matrixdim}
\begin{definition}[One tensor]
	\label{def:onetensor}
	Let
		$\tensordim \in \N$,
		$\d_1, \d_2, \ldots, \d_\tensordim \in \N$.
	Then we denote by 
	$
		\onetensor^{\d_1, \d_2, \ldots, \d_\tensordim} 
	= 
		(\onetensor^{\d_1, \d_2, \ldots, \d_\tensordim}_{i_1, i_2, \ldots, i_\tensordim})_{
			(i_1, i_2, \ldots, i_\tensordim) 
			\in 
			(\bigtimes_{\dimindex = 1}^\tensordim \{1, 2, \dots, \d_\dimindex\})
		} 
		\in \R^{\d_1 \times \d_2 \times \ldots \times \d_\tensordim}
	$
	the tensor which satisfies for all
		$i_1 \in \{1, 2, \dots, \d_1 \}$,
		$i_2 \in \{1, 2, \dots, \d_2 \}$,
		$\dots$,
		$i_\tensordim \in \{1, 2, \dots, \d_\tensordim\}$
	that
	\begin{equation}
	\begin{split} 
		\onetensor^{\d_1, \d_2, \ldots, \d_\tensordim}_{i_1, i_2, \ldots, i_\tensordim}
	=
		1.
	\end{split}
	\end{equation}
\end{definition}
\endgroup

\cfclear
\begingroup
\providecommand{\a}{}
\renewcommand{\a}{\matrixdim}
\begin{definition}[Simple periodic \CNN\ models]
\label{def:perdiodicCNN}
Let
	$\tensordim, L \in \N$,
	$l_0,l_1,\ldots, l_L \in \N$, 
	$\a_1, \a_2, \ldots, \a_\tensordim \in \N$,
	$(w_{{\layerindex}, \dimindex})_{({\layerindex}, \dimindex) \in \{1, 2, \ldots, L\} \times \{1, 2, \ldots, \tensordim\}} \subseteq \N_0$,
	$\activation \in C(\R, \R)$
satisfy for all 
	$\layerindex \in \{1, 2, \ldots, L\}$,
	$ \dimindex \in \{1, 2, \ldots, \tensordim \}$
that
	$l_0 = 1 = l_L$	
and
$
	\a_\dimindex \geq 2 w_{\layerindex, \dimindex} + 1
$.
Then we call
	$\neuralOp$
the simple periodic \CNN\ model with
	input-output shape $(\a_1, \a_2, \ldots, \a_\tensordim)$,
	channel structure $(l_0,l_1,\ldots, l_L )$,
	kernel sizes $(w_{{\layerindex}, \dimindex})_{({\layerindex}, \dimindex) \in \{1, 2, \ldots, L\} \times \{1, 2, \ldots, \tensordim\}}$, and
	activation function $\activation$
if and only if it holds that
\begin{equation}
\label{T_B_D}
\begin{split} \textstyle
		\neuralOp
	\colon
	\pr[\Big]{
		\bigtimes_{{\layerindex} = 1}^L 
	      	\pr[\big]{\R^{ 
	      		(2w_{{\layerindex}, 1}+1) \times (2w_{{\layerindex}, 2}+1) \times \ldots \times (2w_{{\layerindex}, \tensordim}+1)
	      	}}^{l_{{\layerindex}} \times l_{{\layerindex}-1}} \times \R^{l_{{\layerindex}}}
	}
	\times
		\R^{\a_1 \times \a_2 \times \ldots \times \a_\tensordim}
	\to
		\R^{\a_1 \times \a_2 \times \ldots \times \a_\tensordim}
\end{split}
\end{equation}
is the function which satisfies for all
	$
		\Phi 
	=
		(
			(
				(W_{\layerindex,n,m})_{(n,m) \in \{1, 2, \ldots, l_\layerindex\} \times \{1, 2, \ldots, l_{\layerindex-1}\}}
				, \allowbreak 
				(B_{\layerindex,n})_{n \in \{1, 2, \ldots, l_\layerindex\}}
			)
		)_{\layerindex \in \{1, 2, \ldots, L\}}
	\in \allowbreak
			\bigtimes_{{\layerindex} = 1}^L ((\R^{w_{{\layerindex}, 1} \times w_{{\layerindex}, 2} \times \ldots \times w_{{\layerindex}, \tensordim}})^{l_{{\layerindex}} \times l_{{\layerindex}-1}} \times \R^{l_{{\layerindex}}})
	$,
	$x_0 = (x_{0,1}, \ldots, x_{0,l_{0}})  \in (\R^{ \a_{1} \times \a_{2} \times \ldots \times \a_{ \tensordim}})^{l_0}$,
	$x_1 = (x_{1,1}, \ldots, x_{1,l_{1}}) \in (\R^{ \a_{1} \times \a_{2} \times \ldots \times \a_{ \tensordim}})^{l_1}$,
	$\dots$,
	$x_{L-1} = (x_{{L-1},1}, \ldots, x_{{L-1},l_{{L-1}}}) \in (\R^{\a_{1} \times \a_{2} \times \ldots \times \a_{ \tensordim}})^{l_{L-1}}$
with
	$\forall \, {\layerindex} \in \{1,2,\dots,L-1\}$, $n \in \{1,2,\dots,l_{{\layerindex}}\} \colon \allowbreak
	x_{{\layerindex},n} 
=
	\multdim_{
		\activation
	}
	\bigl( \allowbreak
		B_{\layerindex, n} \onetensor^{\a_{1}, \a_{2}, \ldots, \a_{\tensordim}} + \sum_{m = 1}^{l_{{\layerindex}-1}} x_{\layerindex-1,m} \convolution W_{{\layerindex},n,m}
	\bigr)
$
that
\begin{equation} 
\label{perdiodicCNN:eq1}\textstyle
	\neuralOp_{\Phi}(x_{0,1})
=
	B_{L, 1} \onetensor^{\a_{1}, \a_{2}, \ldots, \a_{\tensordim}} 
	+ 
	\sum_{m = 1}^{l_{{L}-1}} x_{L-1,m} \convolution W_{{L},1,m}
\end{equation}
\cfload.
\end{definition}

\begin{remark}[Explanations for \cref{def:perdiodicCNN}]
\label{rem:perdiodicCNN}
In \eqref{perdiodicCNN:eq1} 
we think of $\Phi$ as the parameters of the \CNN,
we think of $x_0$ as the input of the \CNN,
and for every $\layerindex \in \{1,2,\dots,L-1\}$ we think of $x_\layerindex$ as the output of the $\layerindex$-th layer of the \CNN\ given the input $x_0$ consisting of the $l_{{\layerindex}}$ feature maps $x_{\layerindex,1}, x_{\layerindex,2}, \ldots, x_{\layerindex,l_{\layerindex}} \in \R^{\a_{1} \times \a_{2} \times \ldots \times \a_{ \tensordim}}$.
\end{remark}
\endgroup

\cfclear
\begingroup
\providecommand{\a}{}
\renewcommand{\a}{\matrixdim}
\providecommandordefault{\neuralOplocal}{\mathcal{N}}
\providecommandordefault{\interpol}{\mathcal{I}}
\begin{definition}[Simple periodic convolutional neural operators]
\label{def:periodicCNO}
Let
	$\tensordim, L \in \N$,
	$l_0,l_1,\ldots, l_L \in \N$, 
	$\a_1, \a_2, \ldots, \a_\tensordim \in \N$,
	$(w_{{\layerindex}, \dimindex})_{({\layerindex}, \dimindex) \in \{1, 2, \ldots, L\} \times \{1, 2, \ldots, \tensordim\}} \subseteq \N_0$,
	$\activation \in C(\R, \R)$
satisfy for all 
	$\layerindex \in \{1, 2, \ldots, L\}$,
	$ \dimindex \in \{1, 2, \ldots, \tensordim \}$
that
	$l_0 = 1 = l_L$	
and
$
	\a_\dimindex \geq 2 w_{\layerindex, \dimindex} + 1
$,
let\cfadd{def:interpolator}
	$\interpol \colon \R^{\a_1 \times \a_2 \times \ldots \times \a_\tensordim} \to C([0,1]^\tensordim, \R)$ 
be the periodic multi-linear interpolator on $[0,1]^\tensordim$
with spatial discretization $(\a_1, \a_2, \ldots, \a_\tensordim)$,
and
let\cfadd{def:perdiodicCNN}
	$\neuralOplocal$
the simple periodic \CNN\ model with
	input-output shape $(\a_1, \a_2, \ldots, \a_\tensordim)$,
	channel structure $(l_0,l_1,\ldots, l_L )$,
	kernel sizes $(w_{{\layerindex}, \dimindex})_{({\layerindex}, \dimindex) \in \{1, 2, \ldots, L\} \times \{1, 2, \ldots, \tensordim\}}$, and
	activation function $\activation$
\cfload.
Then we call
	$\neuralOp$
the simple periodic convolutional neural operator on $[0,1]^\tensordim$ with
	spatial discretization $(\a_1, \a_2, \ldots, \a_\tensordim)$,
	channel structure $(l_0,l_1,\ldots, l_L )$,
	kernel sizes $(w_{{\layerindex}, \dimindex})_{({\layerindex}, \dimindex) \in \{1, 2, \ldots, L\} \times \{1, 2, \ldots, \tensordim\}}$, and
	activation function $\activation$
if and only if it holds that
\begin{equation}
\label{T_B_D}
\begin{split} \textstyle
		\neuralOp
	\colon
	\pr[\Big]{
		\bigtimes_{{\layerindex} = 1}^L 
	      	\pr[\big]{\R^{ 
	      		(2w_{{\layerindex}, 1}+1) \times (2w_{{\layerindex}, 2}+1) \times \ldots \times (2w_{{\layerindex}, \tensordim}+1)
	      	}}^{l_{{\layerindex}} \times l_{{\layerindex}-1}} \times \R^{l_{{\layerindex}}}
	}
	\times
		C([0,1]^\tensordim, \R)
	\to
		C([0,1]^\tensordim, \R)
\end{split}
\end{equation}
is the function which satisfies for all
	$f \in C([0,1]^\tensordim, \R)$,
	$
		\Phi 
	\in \allowbreak
		\pr[\big]{
			\bigtimes_{{\layerindex} = 1}^L 
				(\R^{
					w_{{\layerindex}, 1} \times 
					w_{{\layerindex}, 2} \times 
					\ldots \times w_{{\layerindex}, \tensordim}}
				)^{l_{{\layerindex}} \times l_{{\layerindex}-1}} 
				\times 
				\R^{l_{{\layerindex}}}
		}
	$
that
\begin{equation}
	\neuralOp_{\Phi}(f)
=
	\interpol\pr[\Big]{
		\neuralOplocal_{\Phi}\pr[\Big]{
			\pr[\big]{
				f(
					\nicefrac{\matrixindex_1}{\a_1},
					\nicefrac{\matrixindex_2}{\a_2}, \ldots,
					\nicefrac{\matrixindex_\tensordim}{\a_\tensordim}
				)
			}_{
				(\matrixindex_1, \matrixindex_2, \ldots, \matrixindex_\tensordim) \in 
				(\bigtimes_{\dimindex = 1}^\tensordim  \{0, 1, \ldots, \a_\dimindex-1\} )
			} 
		}
	}.
\end{equation}
\end{definition}
\endgroup

\paragraph{Simple convolutional neural operators with encoder-decoder architecture}
\label{sect:dec_enc_cnn}

A common technique for designing \CNNs\ is to use so-called encoder-decoder architectures. 
This means that the network architecture is divided into two segments: an encoder segment followed by a decoder segment. 
Roughly speaking, in the encoder part, the shape of the feature maps is successively contracted while the number of channels (i.e.\ the number of feature maps) is increased. 
In the decoder part, this operation is reversed, in the sense that successively the shape of the feature maps is expanded while the number of channels is decreased. 
For an explanation of the encoder-decoder architecture in a mathematical context, see \cref{rem:DecEncCNN}.

In this section we introduce a simple way to achieve an encoder-decoder architecture %
by employing full stride convolutions (cf.\ \cref{def:convolution_stride}) to contract the shape of the feature maps, and 
utilizing transposed convolutions (cf.\ \cref{def:deconvolution_trans} and \cite{Zeiler2010}) to expand the shape of the feature maps.
Similar to \cref{sect:periodic_cnn,sect:ANN_NO}, we will first introduce \CNN\ models which take multi-dimensional matrices as inputs and outputs (see \cref{def:DecEncCNN} below) and then use these models to define corresponding neural operator models (see \cref{def:DecEncCNO} below).
This approach is very similar to the \CNNs\ used in the operator learning method in \cite{Guo2016}.

\newcommand{\convdim}{b}
\newcommand{\strideconvolution}{\cfadd{def:convolution_stride}{\,\underline{\ast}\,}}

\cfclear
\begingroup
\providecommand{\a}{}
\renewcommand{\a}{a}
\providecommand{\b}{}
\renewcommand{\b}{b}
\begin{definition}[Discrete convolutions with full stride]
	\label{def:convolution_stride}
	Let
		$d \in \N$,
		$\a_1, \a_2, \ldots, \a_\tensordim,  \allowbreak
		\b_1, \b_2, \ldots, \b_\tensordim, \allowbreak
		w_1, w_2, \ldots, w_\tensordim \allowbreak \in \N$,
		$
			A = 
			(A_{\matrixindex_1, \matrixindex_2, \ldots, \matrixindex_\tensordim})_{
				(\matrixindex_1, \matrixindex_2, \ldots, \matrixindex_\tensordim) \in 
				(\bigtimes_{\dimindex = 1}^\tensordim  \{0, 1, \ldots, \a_\dimindex - 1\} )
			} 
			\in \R^{\a_1 \times \a_2 \times \ldots \times \a_\tensordim}
		$,
		$
			W =  \linebreak
			(W_{\matrixindexalt_1, \matrixindexalt_2, \ldots, \matrixindexalt_\tensordim})_{
				(\matrixindexalt_1, \matrixindexalt_2, \ldots, \matrixindexalt_\tensordim) \in 
				(\bigtimes_{\dimindex = 1}^\tensordim  \{0, 1, \ldots, w_\dimindex-1\})
			} 
			\in \R^{w_1 \times w_2 \times \ldots \times w_\tensordim}
		$
	satisfy for all 
		$ \dimindex \in \{1, 2, \ldots, \tensordim \}$
	that
	\begin{equation}
	\label{T_B_D}
	\begin{split} 
		\b_\dimindex = \a_\dimindex / w_\dimindex .
	\end{split}
	\end{equation}
	Then
	we denote by 
	$
		A \strideconvolution W =
			((A \strideconvolution W)_{\matrixindex_1, \matrixindex_2, \ldots, \matrixindex_\tensordim})_{
				(\matrixindex_1, \matrixindex_2, \ldots, \matrixindex_\tensordim) \in 
				(\bigtimes_{\dimindex = 1}^\tensordim  \{0, 1, \ldots, \b_\dimindex-1\})
			} 
		\in \R^{\b_1 \times \b_2 \times \ldots \times \b_\tensordim}
	$ 
	the tensor which satisfies for all
		$(\matrixindex_1, \matrixindex_2, \ldots, \matrixindex_\tensordim) \in \times_{\dimindex = 1}^\tensordim  \{0, 1, \ldots, \b_\dimindex-1\}$
	that
	\begin{equation}
	\begin{split} 
		(A \strideconvolution W)_{\matrixindex_1, \matrixindex_2, \ldots, \matrixindex_\tensordim}
	=
		\sum_{\matrixindexalt_1 = 0}^{w_1-1} 
		\sum_{\matrixindexalt_2 = 0}^{w_2-1} 
		\ldots 
		\sum_{\matrixindexalt_\tensordim = 0}^{w_\tensordim-1}
			A_{
				{\matrixindex_1 w_1 + \matrixindexalt_1}, 
				{\matrixindex_2 w_2 + \matrixindexalt_2}, \ldots,
				{\matrixindex_\tensordim w_\tensordim + \matrixindexalt_\tensordim} 
			} 
			W_{\matrixindexalt_1, \matrixindexalt_2, \ldots, \matrixindexalt_\tensordim}.
	\end{split}
	\end{equation}
\end{definition}
\endgroup

\newcommand{\deconvolution}{\cfadd{def:deconvolution_trans}{\,\overline{\ast}\,}}

\cfclear
\begingroup
\providecommand{\a}{}
\renewcommand{\a}{a}
\providecommand{\b}{}
\renewcommand{\b}{b}
\begin{definition}[Discrete transposed convolutions with full stride]
	\label{def:deconvolution_trans}
	Let
		$d \in \N$,
		$\a_1, \a_2, \ldots, \a_\tensordim,  \allowbreak
		\b_1, \b_2, \ldots, \b_\tensordim, \allowbreak
		w_1, w_2, \ldots, w_\tensordim \allowbreak \in \N$,
		$
			B = 
			(B_{\matrixindex_1, \matrixindex_2, \ldots, \matrixindex_\tensordim})_{
				(\matrixindex_1, \matrixindex_2, \ldots, \matrixindex_\tensordim) \in 
				(\bigtimes_{\dimindex = 1}^\tensordim  \{0, 1, \ldots, \a_\dimindex-1\} )
			} 
			\in \R^{\b_1 \times \b_2 \times \ldots \times \b_\tensordim}
		$,
		$
			W =  \linebreak
			(W_{\matrixindexalt_1, \matrixindexalt_2, \ldots, \matrixindexalt_\tensordim})_{
				(\matrixindexalt_1, \matrixindexalt_2, \ldots, \matrixindexalt_\tensordim) \in 
				(\bigtimes_{\dimindex = 1}^\tensordim  \{0, 1, \ldots, w_\dimindex-1\})
			} 
			\in \R^{w_1 \times w_2 \times \ldots \times w_\tensordim}
		$
	satisfy for all 
		$ \dimindex \in \{1, 2, \ldots, \tensordim \}$
	that
	\begin{equation}
	\label{T_B_D}
	\begin{split} 
		\a_\dimindex = \b_\dimindex  w_\dimindex .
	\end{split}
	\end{equation}
	Then
	we denote by 
	$
		B \deconvolution W =
			((B \deconvolution W)_{\matrixindex_1, \matrixindex_2, \ldots, \matrixindex_\tensordim})_{
				(\matrixindex_1, \matrixindex_2, \ldots, \matrixindex_\tensordim) \in 
				(\bigtimes_{\dimindex = 1}^\tensordim  \{0, 1, \ldots, \a_\dimindex-1\})
			} 
		\in \R^{\a_1 \times \a_2 \times \ldots \times \a_\tensordim}
	$ 
	the tensor which satisfies for all
		$(\matrixindex_1, \matrixindex_2, \ldots, \matrixindex_\tensordim) \in \times_{\dimindex = 1}^\tensordim  \{0, 1, \ldots, \a_\dimindex-1\}$,
		$(\matrixindexalt_1, \matrixindexalt_2, \ldots, \matrixindexalt_\tensordim) \in \times_{\dimindex = 1}^\tensordim  \{0, 1, \ldots, \b_\dimindex-1\}$
	that
	\begin{equation}
	\begin{split} 
		(B \deconvolution W)_{
			{\matrixindex_1 w_1 + \matrixindexalt_1}, 
			{\matrixindex_2 w_2 + \matrixindexalt_2}, \ldots,
			{\matrixindex_\tensordim w_\tensordim + \matrixindexalt_\tensordim} 
		} 
	=
		B_{\matrixindex_1, \matrixindex_2, \ldots, \matrixindex_\tensordim}
		W_{\matrixindexalt_1, \matrixindexalt_2, \ldots, \matrixindexalt_\tensordim}.
	\end{split}
	\end{equation}
\end{definition}
\endgroup

\cfclear
\begingroup
\providecommand{\a}{}
\renewcommand{\a}{\matrixdim}
\providecommand{\Wup}{}
\renewcommand{\Wup}{\mathbf{W}}
\providecommand{\Wdown}{}
\renewcommand{\Wdown}{{W}}
\providecommand{\Bup}{}
\renewcommand{\Bup}{\mathbf{B}}
\providecommand{\Bdown}{}
\renewcommand{\Bdown}{{B}}
\providecommand{\Phiup}{}
\renewcommand{\Phiup}{\mathbf{\Phi}}
\providecommand{\Phidown}{}
\renewcommand{\Phidown}{{\Phi}}
\providecommand{\xup}{}
\renewcommand{\xup}{\mathbf{x}}
\providecommand{\xdown}{}
\renewcommand{\xdown}{{x}}
\begin{definition}[Simple \CNN\ models with encoder-decoder architecture]
\label{def:DecEncCNN}
Let
	$\tensordim, L \in \N$,
	$l_0,l_1,\ldots, l_L \in \N$, 
	$(\a_{{\layerindex}, \dimindex})_{({\layerindex}, \dimindex) \in \{0, 1, \ldots, L\} \times \{1, 2, \ldots, \tensordim\}} \subseteq \N$,	
	$(w_{{\layerindex}, \dimindex})_{({\layerindex}, \dimindex) \in \{1, 2, \ldots, L\} \times \{1, 2, \ldots, \tensordim\}} \subseteq \N$,
	$\activation \in C(\R, \R)$
satisfy for all 
	$\layerindex \in \{1, 2, \ldots, L\}$,
	$ \dimindex \in \{1, 2, \ldots, \tensordim \}$
that
	$l_0 = 1$	
and
\begin{equation}
\label{T_B_D}
\begin{split} 
	\a_{\layerindex, \dimindex}
=
	\a_{\layerindex-1, \dimindex }/
	w_{\layerindex-1, \dimindex }.
\end{split}
\end{equation}
Then we call
	$\neuralOp$
the simple \CNN\ model with encoder-decoder architecture
	with 
	input-output shape $(\a_{0, 1}, \a_{0, 2}, \ldots, \a_{0, \tensordim})$,
	channel structure $(l_0,l_1,\ldots, l_{L-1}, l_L, l_{L-1}, \ldots, l_1, l_0)$, 
	kernel sizes \linebreak $(w_{{\layerindex}, \dimindex})_{({\layerindex}, \dimindex) \in \{1, 2, \ldots, L\} \times \{1, 2, \ldots, \tensordim\}} $,
	and
	activation function $\activation$
if and only if it holds that
\begin{multline}
\label{T_B_D} \textstyle
	\neuralOp
\colon
	\pr[\bigg]{
		\pr[\Big]{
			\bigtimes_{{\layerindex} = 1}^L 
		      	(\R^{ 
		      		w_{{\layerindex}, 1} \times w_{{\layerindex}, 2} \times \ldots \times w_{{\layerindex}, \tensordim}
		      	})^{l_{{\layerindex}} \times l_{{\layerindex}-1}} \times \R^{l_{{\layerindex}}}
		}
		\times \\ \textstyle
		\pr[\Big]{
			\bigtimes_{{\layerindex} = 1}^L
		      	(\R^{ 
		      		w_{{\layerindex}, 1} \times w_{{\layerindex}, 2} \times \ldots \times w_{{\layerindex}, \tensordim}
		      	})^{l_{{\layerindex}-1} \times l_{{\layerindex}}} 
		      	\times 
		      	\R^{l_{{\layerindex}-1}}
		}
	}
	\times
		\R^{\a_{0, 1} \times \a_{0, 2} \times \ldots \times \a_{0, \tensordim}}
	\to
		\R^{\a_{0, 1} \times \a_{0, 2} \times \ldots \times \a_{0, \tensordim}}
\end{multline}
is the function which satisfies for all
	$
		\Phidown
	=
		(
			(
				(\Wdown_{\layerindex,n,m})_{(n,m) \in \{1, 2, \ldots, l_\layerindex\} \times \{1, 2, \ldots, l_{\layerindex-1}\}}
				, \allowbreak 
				(\Bdown_{\layerindex,n})_{n \in \{1, 2, \ldots, l_\layerindex\}}
			)
		)_{\layerindex \in \{1, 2, \ldots, L\}}
	\in \allowbreak
		\bigtimes_{{\layerindex} = 1}^L\pr{
		      	(\R^{ 
   		      		w_{{\layerindex}, 1} \times w_{{\layerindex}, 2} \times \ldots \times w_{{\layerindex}, \tensordim}
   		      	})^{l_{{\layerindex}} \times l_{{\layerindex}-1}} 
   		      	\times 
   		      	\R^{l_{{\layerindex}}}
	      }
	$,
	$
		\Phiup
	=
		(
			(
				(\Wup_{\layerindex,n,m})_{(n,m) \in \{1, 2, \ldots, l_{\layerindex-1}\} \times \{1, 2, \ldots, l_{\layerindex}\}}
				, \allowbreak 
				(\Bup_{\layerindex,n})_{n \in \{1, 2, \ldots, l_{\layerindex-1}\}}
			)
		)_{\layerindex \in \{1, 2, \ldots, L\}}
	\in \allowbreak
		\bigtimes_{{\layerindex} = 1}^L\pr{
		      	(\R^{ 
		      		w_{{\layerindex}, 1} \times w_{{\layerindex}, 2} \times \ldots \times w_{{\layerindex}, \tensordim}
		      	})^{l_{{\layerindex}-1} \times l_{{\layerindex}}} 
		      	\times 
		      	\R^{l_{{\layerindex}-1}}
    }
	$,
	$\xdown_0 = (\xdown_{0,1}, \ldots, \xdown_{0,l_{0}})  \in (\R^{ \a_{0, 1} \times \a_{0, 2} \times \ldots \times \a_{0,  \tensordim}})^{l_0}$,
	$\xdown_1 = (\xdown_{1,1}, \ldots, \xdown_{1,l_{1}}) \in (\R^{ \a_{1, 1} \times \a_{1, 2} \times \ldots \times \a_{1,  \tensordim}})^{l_1}$,
	$\dots$,
	$\xdown_{L} = (\xdown_{{L},1}, \ldots, \xdown_{{L},l_{{L}}}) \in (\R^{\a_{L, 1} \times \a_{L, 2} \times \ldots \times \a_{L,  \tensordim}})^{l_{L}}$,
	$\xup_{L} = (\xup_{{L},1}, \ldots, \xup_{{L},l_{{L}}}) \in (\R^{\a_{L, 1} \times \a_{L, 2} \times \ldots \times \a_{L,  \tensordim}})^{l_{L}}$,
	$\xup_{L-1} = (\xup_{{L-1},1}, \ldots, \xup_{{L-1},l_{{L-1}}}) \in (\R^{\a_{L-1, 1} \times \a_{L-1, 2} \times \ldots \times \a_{L-1,  \tensordim}})^{l_{L-1}}$,
	$\ldots$ ,
	$\xup_{0} \allowbreak = \linebreak  (\xup_{{0},1}, \ldots, \xup_{{0},l_{{0}}}) \in (\R^{\a_{0, 1} \times \a_{0, 2} \times \ldots \times \a_{0,  \tensordim}})^{l_{0}}$
with
	$\forall \, {\layerindex} \in \{1,2,\dots,L\}, n \in \{1,2,\dots,l_{{\layerindex}}\}  \colon$
\begin{equation}
\label{DecEncCNN:eq1}
\textstyle
	\xdown_{{\layerindex},n} 
=
	\multdim_{
		\activation
	}
	\pr*{ 
		\Bdown_{\layerindex, n} \onetensor^{\a_{\layerindex, 1}, \a_{\layerindex, 2}, \ldots, \a_{\layerindex, \tensordim}} + \sum_{m = 1}^{l_{{\layerindex}-1}} \xdown_{\layerindex-1,m} \strideconvolution \Wdown_{{\layerindex},n,m}
	},
\qquad
	\xup_{L} = \xdown_{L},
\end{equation}
\begin{equation}
\label{DecEncCNN:eq2}
\begin{split} \textstyle
\andq
		\xup_{{\layerindex-1},n} 
	=
		\multdim_{
			\activation \indicator{(1,L]}(\layerindex)
			+
			\id_\R \indicator{\{1\}}(\layerindex) 
		}
		\pr*{ 
			\Bup_{\layerindex, n} \onetensor^{\a_{\layerindex-1, 1}, \a_{\layerindex-1, 2}, \ldots, \a_{\layerindex-1, \tensordim}} 
			+ 
			\sum_{m = 1}^{l_{{\layerindex}}} \xup_{\layerindex,m} \deconvolution \Wup_{{\layerindex},n,m}
		} 
\end{split}
\end{equation}
that
$
	\neuralOp_{(\Phidown, \Phiup)}(\xdown_{0, 1})
=
	\xup_{0, 1}
$
\cfload.
\end{definition}

\begin{remark}[Explanations for \cref{def:DecEncCNN}]
\label{rem:DecEncCNN}
In \eqref{DecEncCNN:eq1} and \eqref{DecEncCNN:eq2} 
we think of $\Phidown$ as the parameters of encoder \CNN,
we think of $\Phiup$ as the parameters of decoder \CNN,
we think of $\xdown_0$ as the input of the encoder \CNN,
for every 
	$\layerindex \in \{1,2,\dots,L-1\}$ 
we think of $\xdown_\layerindex$ as the output of the $\layerindex$-th layer of the encoder \CNN\ 
	given the input $\xdown_0$ 
	consisting of the $l_{{\layerindex}}$ feature maps 
	$
		\xdown_{\layerindex,1}, \xdown_{\layerindex,2}, \ldots, \xdown_{\layerindex,l_{\layerindex}} 
	\in 
		\R^{ \a_{\layerindex, 1} \times \a_{\layerindex, 2} \times \ldots \times \a_{\layerindex,  \tensordim}}
	$
with successively contracting shape $(\a_{\layerindex, 1} , \a_{\layerindex, 2} , \ldots , \a_{\layerindex,  \tensordim})$,
we think of $\xdown_{L} = \xup_{L}$ as the output of the encoder \CNN\ and simultaneously the input of the decoder \CNN,
and for every 
	$\layerindex \in \{L-1, L-2, \ldots, 1\}$ 
we think of $\xup_\layerindex$ as the output of the $\layerindex$-th layer of the encoder \CNN\ 
	given the input $\xup_L$ 
	consisting of the $l_{{\layerindex}}$ feature maps 
	$
		\xup_{\layerindex,1}, \xup_{\layerindex,2}, \ldots, \xup_{\layerindex,l_{\layerindex}} 
	\in 
		\R^{ \a_{\layerindex, 1} \times \a_{\layerindex, 2} \times \ldots \times \a_{\layerindex,  \tensordim}}
	$
with successively expanding shape $(\a_{\layerindex, 1} , \a_{\layerindex, 2} , \ldots , \a_{\layerindex,  \tensordim})$.
\end{remark}
\endgroup

\cfclear
\begingroup
\providecommand{\a}{}
\renewcommand{\a}{\matrixdim}
\providecommand{\Wup}{}
\renewcommand{\Wup}{\mathbf{W}}
\providecommand{\Wdown}{}
\renewcommand{\Wdown}{{W}}
\providecommand{\Bup}{}
\renewcommand{\Bup}{\mathbf{B}}
\providecommand{\Bdown}{}
\renewcommand{\Bdown}{{B}}
\providecommand{\Phiup}{}
\renewcommand{\Phiup}{\mathbf{\Phi}}
\providecommand{\Phidown}{}
\renewcommand{\Phidown}{{\Phi}}
\providecommand{\xup}{}
\renewcommand{\xup}{\mathbf{x}}
\providecommand{\xdown}{}
\renewcommand{\xdown}{{x}}
\providecommandordefault{\neuralOplocal}{\mathcal{N}}
\providecommandordefault{\interpol}{\mathcal{I}}
\begin{definition}[Simple convolutional neural operator with encoder-decoder architecture]
	\label{def:DecEncCNO}
Let
	$\tensordim, L \in \N$,
	$l_0,l_1,\ldots, l_L \in \N$, 
	$(\a_{{\layerindex}, \dimindex})_{({\layerindex}, \dimindex) \in \{0, 1, \ldots, L\} \times \{1, 2, \ldots, \tensordim\}} \subseteq \N$,
	$(w_{{\layerindex}, \dimindex})_{({\layerindex}, \dimindex) \in \{1, 2, \ldots, L\} \times \{1, 2, \ldots, \tensordim\}} \subseteq \N$,
	$\activation \in C(\R, \R)$
satisfy for all 
	$\layerindex \in \{1, 2, \ldots, L\}$,
	$ \dimindex \in \{1, 2, \ldots, \tensordim \}$
that
	$l_0 = 1$	
and
\begin{equation}
\label{T_B_D}
\begin{split} 
	\a_{\layerindex, \dimindex}
=
	\a_{\layerindex-1, \dimindex }/
	w_{\layerindex-1, \dimindex },
\end{split}
\end{equation}
let\cfadd{def:interpolator}
	$\interpol \colon  \R^{\a_{0,1} \times \a_{0,2} \times \ldots \times \a_{0,\tensordim}} \to C([0,1]^\tensordim, \R)$ 
be the periodic multi-linear interpolator on $[0,1]^\tensordim$
with spatial discretization $(\a_{0,1} \times \a_{0,2} \times \ldots \times \a_{0,\tensordim})$,
and let\cfadd{def:DecEncCNN}
	$\neuralOplocal$
be the simple \CNN\ model with encoder-decoder architecture with
	input-output shape $(\a_{0, 1}, \a_{0, 2}, \ldots, \a_{0, \tensordim})$,
	channel structure $(l_0,l_1,\ldots, l_{L-1}, l_L, l_{L-1}, \ldots, l_1, l_0)$, 
	kernel sizes $(w_{{\layerindex}, \dimindex})_{({\layerindex}, \dimindex) \in \{1, 2, \ldots, L\} \times \{1, 2, \ldots, \tensordim\}} $,
	and
	activation function $\activation$
\cfload.
Then we call
	$\neuralOp$
the simple convolutional neural operator with encoder-decoder architecture on $[0,1]^\tensordim$ with
	spatial discretization $(\a_{0, 1}, \a_{0, 2}, \ldots, \a_{0, \tensordim})$,
	channel structure $(l_0,l_1,\allowbreak \ldots, l_{L-1}, l_L, l_{L-1}, \ldots, l_1, l_0)$, 
	kernel sizes $(w_{{\layerindex}, \dimindex})_{({\layerindex}, \dimindex) \in \{1, 2, \ldots, L\} \times \{1, 2, \ldots, \tensordim\}} $,
	and
	activation function $\activation$
if and only if it holds that
\begin{multline}
\label{T_B_D}\textstyle
	\neuralOp
\colon
	\pr[\bigg]{
		\pr[\Big]{
			\bigtimes_{{\layerindex} = 1}^L 
		      	(\R^{ 
		      		w_{{\layerindex}, 1} \times w_{{\layerindex}, 2} \times \ldots \times w_{{\layerindex}, \tensordim}
		      	})^{l_{{\layerindex}} \times l_{{\layerindex}-1}} \times \R^{l_{{\layerindex}}}
		}
		\times \\ \textstyle
		\pr[\Big]{
			\bigtimes_{{\layerindex} = 1}^L
		      	(\R^{ 
		      		w_{{\layerindex}, 1} \times w_{{\layerindex}, 2} \times \ldots \times w_{{\layerindex}, \tensordim}
		      	})^{l_{{\layerindex}-1} \times l_{{\layerindex}}} 
		      	\times 
		      	\R^{l_{{\layerindex}-1}}
		}
	}
	\times
		C([0,1]^\tensordim, \R)
	\to
		C([0,1]^\tensordim, \R)
\end{multline}
is the function which satisfies for all
	$f \in C([0,1]^\tensordim, \R)$,
	$
		\Phidown
	\in \allowbreak
	\pr[\big]{
		\bigtimes_{{\layerindex} = 1}^L
		      	(\R^{ 
   		      		w_{{\layerindex}, 1} \times w_{{\layerindex}, 2} \times \ldots \times w_{{\layerindex}, \tensordim}
   		      	})^{l_{{\layerindex}} \times l_{{\layerindex}-1}} 
   		      	\times 
   		      	\R^{l_{{\layerindex}}}
	 }
	$,
	$
		\Phiup
	\in \allowbreak
	\pr[\big]{
		\bigtimes_{{\layerindex} = 1}^L
		      	(\R^{ 
		      		w_{{\layerindex}, 1} \times w_{{\layerindex}, 2} \times \ldots \times w_{{\layerindex}, \tensordim}
		      	})^{l_{{\layerindex}-1} \times l_{{\layerindex}}} 
		      	\times 
		      	\R^{l_{{\layerindex}-1}}
    }
	$
that 
\begin{equation}
	\neuralOp_{(\Phidown, \Phiup)}(f)
=
	\interpol\pr[\Big]{
		\neuralOplocal_{(\Phidown, \Phiup)}\pr[\Big]{
			\pr[\big]{
				f(
					\nicefrac{\matrixindex_1}{\a_{0, 1}},
					\nicefrac{\matrixindex_2}{\a_{0, 2}}, \ldots,
					\nicefrac{\matrixindex_\tensordim}{\a_{0, \tensordim}}
				)
			}_{
				(\matrixindex_1, \matrixindex_2, \ldots, \matrixindex_\tensordim) \in 
				(\bigtimes_{\dimindex = 1}^\tensordim  \{0, 1, \ldots, \a_{0,\dimindex}-1\} )
			} 
		}
	}.
\end{equation}
\end{definition}

\endgroup

\subsubsection{Integral kernel neural operators}
\label{sect:IKNO}

In this section we introduce \IKNOs, an early architecture for operator learning developed in \cite{Li2020}.
The novel feature of \IKNOs\ compared to the neural operators introduced in the previous sections is that their definition is decoupled from a specific discretization of the function space. 

The \IKNOs\ introduced in this section are not immediately implementable numerically as they require the computation of an integral in \eqref{GKN:eq1}.
When numerically evaluating \IKNOs, \cite{Li2020} propose to approximate the integral using a discretization technique based on message passing graph networks, which can be applied with different discretization grids, and thus call their developed neural operator \GKNO.
In \cite{Li2020a} this discretization methodology is further improved with a multi-scale graph structure and the resulting neural operator is named \MGNO.

\newcommand{\map}{\mathbb{M}}
\newcommand{\spacedim}{d}
\newcommand{\Domain}{D}

\newcommand{\lift}{\mathscr{P}}
\newcommand{\liftdim}{\mathfrak{d}_{\mathscr{P}}}
\newcommand{\project}{\mathscr{Q}}
\newcommand{\projectdim}{\mathfrak{d}_{\mathscr{Q}}}
\newcommand{\intkernel}{\mathscr{K}}
\newcommand{\intkerneldim}{\mathfrak{d}_{\mathscr{K}}}

\newcommand{\nrchannels}{n}
\newcommand{\length}{L}
\newcommand{\meas}{\nu}

\begingroup
\providecommandordefault{\inputFunction}{f}
\cfclear
\begin{definition}[Integral kernel neural operators]
\label{def:GKN}
Let 
	$\spacedim, \nrchannels, \liftdim, \intkerneldim, \projectdim, \length \in \N$,
	$\Domain \in \Borel(\R^{\spacedim})$,
let 
$
		\lift 
	\colon 
		\R^{\liftdim} \times \R^{\spacedim} \times \R
	\to
		\R^\nrchannels
$,
$
		\intkernel 
	\colon 
		\R^{\intkerneldim} \times \R^{\spacedim} \times \R^{\spacedim} \times \R \times \R
	\to
		\R^{\nrchannels \times \nrchannels}
$, 
$
		\project 
	\colon 
		\R^{\projectdim} \times \R^{\nrchannels}
	\to
		\R
$,
and
$\activation \colon \R \to \R$
be measurable functions,
and
let $\meas_x \colon \Borel(\Domain) \to [0,\infty)$, $x \in \Domain$, be measures which satisfy for all 
	$B \in \Borel(\Domain)$
that
	$ \Domain \ni x \mapsto \meas_x(B) \in [0,\infty)$
is measurable.
Then we call 
$\neuralOp$
a integral kernel neural operator
	on the domain $\Domain$,
	with
	lifting model $\lift$,
	kernel model $\intkernel$,
	projection model $\project$,
	integration measures $(\meas_x)_{x \in \Domain}$,
	length $\length$, and 
	activation function $\activation$
if and only if it holds that
\begin{equation}
\label{T_B_D}
\begin{split} 
		\neuralOp
	\colon
		\R^{\liftdim} \times 
		(\R^{\nrchannels \times \nrchannels})^\length \times 
		(\R^{\intkerneldim})^\length \times 
		\R^{\projectdim} \times
		\mathcal{L}^0(\Domain; \R)
	\to
		\mathcal{L}^0(\Domain; \R)
\end{split}
\end{equation}
is a function which satisfies for all
	$P \in \R^{\liftdim}$,
	$W = (W_1, W_2, \ldots, W_L) \in (\R^{\nrchannels \times \nrchannels})^\length$,
	$K = (K_1, K_2, \allowbreak \ldots, K_L) \in (\R^{\intkerneldim})^\length$,
	$Q \in \R^{\projectdim}$,
	$\inputFunction \in \mathcal{L}^0(\Domain; \R)$,
	$v_0, v_1, \ldots, v_\length \in \mathcal{L}^0(\Domain; \R^\nrchannels)$
with 
	$\forall \, x \in \Domain, l \in \{1, 2,\allowbreak \ldots, \length\} \colon$
\begin{equation}
\label{T_B_D}
\begin{split} 
	v_0(x)
=
	\lift_P(x, \inputFunction(x)),
\qquad
	\int_{D}
		\abs*{\intkernel_{K_l}\pr[\big]{x, y, \inputFunction(x), \inputFunction(y)} v_{l-1}(y)}
	\meas_x(\mathrm{d} y) < \infty,
\end{split}
\end{equation}
\begin{equation}
\label{GKN:eq1}
\begin{split} 
\andq
	v_l(x)
=
	\multdim_{\activation} \pr*{
		W_l v_{l-1}(x)
		+
		\int_{D}
			\intkernel_{K_l}\pr[\big]{x, y, \inputFunction(x), \inputFunction(y)} v_{l-1}(y)
		\meas_x(\mathrm{d} y)
	}
\end{split}
\end{equation}
that
\begin{equation}
\label{T_B_D}
\begin{split} 
	\neuralOp_{(P, W, K, Q)}(\inputFunction)
=
	\project_Q(v_\length)
\end{split}
\end{equation}
\cfload.
\end{definition}

\begin{remark}[Choice of $(\meas_x)_{x \in \Domain}$ in \cref{def:GKN}]
Assume the setting in \cref{def:GKN} and let $r \in (0,\infty)$.
In \cite{Li2020} it is suggested to choose the integration measures $(\meas_x)_{x \in \Domain}$ for every 
	$x \in \Domain$
as the Lebesgue measure 
	$\meas_x(\mathrm{d}y) = \mathbbm{1}_{\{z \in \Domain \colon \norm{z-x} \leq r\}} \mathrm{d}y$
supported on a Ball with radius $r$ around $x$.
However, \cite{Li2020} also indicates that this choice can be refined further if knowledge of the target operator is available.
\end{remark}

\subsubsection[Fourier neural operators (FNOs)]{Fourier neural operators}
\label{sect:FNO}

In this section we introduce \FNOs\ as derived in \cite{Li2021,Kovachki2021a}. 
Roughly speaking, \cite{Li2021,Kovachki2021a} motivate \FNOs\ by considering \IKNOs\ with kernel models
(cf.\ $\intkernel$ in \cref{def:GKN})
which only depend on the distance of the two input points.
The authors propose a parametrization of the kernels' action in Fourier space, thereby transforming the integral operation in state space into a multiplication operation in Fourier space.

We introduce \FNOs\ in two steps. 
First, we illustrate the idea of \FNOs\ in \cref{def:FNO_FT} by presenting a version of \FNOs\ using Fourier transforms and inverse Fourier transforms  in $L^2$-spaces, which are not exactly implementable on a computer.
Secondly, we present an implementable version of \FNOs\ based on discrete Fourier transforms and discrete inverse Fourier transforms in \cref{def:FNO}, 
We begin with the standard definitions of Fourier transforms and inverse Fourier transforms in \cref{def:FT,def:IFT} below. 
For their well-definedness see, e.g., \cite[Section 3.4]{Einsiedler2017}.

\newcommand{\ft}{\mathtt{FT}\cfadd{def:FT}}
\newcommand{\ift}{\mathtt{IFT}\cfadd{def:IFT}}

\newcommand{\fourierkernel}{\mathscr{R}}
\newcommand{\fourierkerneldim}{{\mathfrak{d}_\mathscr{R}}}

\begin{definition}[Fourier transforms]
\label{def:FT}
Let $\spacedim, \nrchannels \in \N$, $f = (f_j)_{j \in \{1, 2, \ldots, \nrchannels\}}  \in L^2([0,1]^\spacedim; \C^\nrchannels)$.
Then we denote by $\ft(f) =  (\ft_{j}(f))_{j \in \{1, 2, \ldots, \nrchannels\}} \in L^2(\Z^\spacedim; \C^\nrchannels)$ the function which satisfies for all
	$k \in \Z^\spacedim$,
	$j \in  \{1, 2, \ldots, \nrchannels\}$
that
\begin{equation}
\label{T_B_D}
\begin{split} 
	(\ft_j(f))(k)
=
	\int_{[0,1]^\spacedim}
		f_j(x)
		\exp(-2\pi i \scp{k}{x}) 
	\, \mathrm{d} x.
\end{split}
\end{equation}
\end{definition}

\begingroup
\providecommandordefault{\v}{c}
\begin{definition}[Inverse Fourier transforms]
\label{def:IFT}
Let $\spacedim, \nrchannels \in \N$, $\v = (\v_j)_{j \in \{1, 2, \ldots, \nrchannels\}}  \in L^2(\Z^\spacedim; \C^\nrchannels)$.
Then we denote by $\ift(f) =  (\ift_{j}(f))_{j \in \{1, 2, \ldots, \nrchannels\}} \in L^2([0,1]^\spacedim; \C^\nrchannels)$ the equivalence class of function which satisfies for all
	$j \in  \{1, 2, \ldots, \nrchannels\}$
that
\begin{equation}
\label{T_B_D}
\begin{split} 
	\ift_j(f)
=
	\sum_{k \in \Z^\spacedim}
		\v_j(k)
		\exp(2\pi i \scp{k}{(\cdot)}).
\end{split}
\end{equation}
\end{definition}
\endgroup

\begingroup
\cfclear
\providecommandordefault{\inputFunction}{f}
\providecommandordefault{\Domain}{[0,1]^\spacedim}
\providecommandordefault{\inputspace}{L^0(\Domain; \R)}
\begin{definition}[\FNOs]
\label{def:FNO_FT}
Let 
	$\spacedim, \nrchannels, \liftdim, \fourierkerneldim, \projectdim, \length \in \N$
and
let 
$
		\lift 
	\colon 
		\R^{\liftdim} \times \R^{\spacedim} \times \R
	\to
		\C^\nrchannels
$,
$
		\fourierkernel 
	\colon 
		\R^{\fourierkerneldim} \times \Z^{\spacedim}
	\to
		\C^{\nrchannels \times \nrchannels}
$,
$
		\project 
	\colon 
		\R^{\projectdim} \times \C^{\nrchannels}
	\to
		\R
$,
and 
$\activation \colon \C \to \C$
be measurable functions.
Then we call 
$\neuralOp$
an \FNO\ with 
	lifting model $\lift$,
	Fourier kernel model $\fourierkernel$,
	projection model $\project$,
	length $\length$, and 
	activation function $\activation$
if and only if it holds that
\begin{equation}
\label{T_B_D}
\begin{split} 
		\neuralOp
	\colon
		\R^{\liftdim} \times 
		(\C^{\nrchannels \times \nrchannels})^\length \times 
		(\R^{\fourierkerneldim})^\length \times 
		\R^{\projectdim} \times
		\inputspace
	\to
		\inputspace
\end{split}
\end{equation}
is a function which satisfies for all
	$P \in \R^{\liftdim}$,
	$W = (W_1, W_2, \ldots, W_L) \in (\C^{\nrchannels \times \nrchannels})^\length$,
	$R = (R_1, R_2, \ldots, R_L) \in (\R^{\fourierkerneldim})^\length$,
	$Q \in \R^{\projectdim}$,
	$\inputFunction \in \inputspace$,
	$v_0, v_1, \ldots, v_\length \in L^2([0,1]^\spacedim; \C^\nrchannels)$
with 
	$\forall \, x \in \Domain, l \in \{1, 2, \allowbreak \ldots, \length\} \colon$
\begin{equation}
\label{T_B_D}
\begin{split} 
	v_0(x)
=
	\lift_P(x, \inputFunction(x)),
\qquad
	\fourierkernel_{R_l} \ft(v_{l-1}) \in L^2(\Z^\spacedim; \C^\nrchannels),
\end{split}
\end{equation}
\begin{equation}
\label{FNO_FT:eq1}
\begin{split} 
\andq
	v_l(x)
=
	\multdim_{\activation}
	\pr[\Big]{
		W_l v_{l-1}(x)
		+
		\br*{
			\ift\pr[\big]{ 
				\fourierkernel_{R_l} \ft(v_{l-1})
			}
		}(x)
	}
\end{split}
\end{equation}
that
\begin{equation}
\label{T_B_D}
\begin{split} 
	\neuralOp_{(P, W, R, Q)}(\inputFunction)
=
	\project_Q(v_\length)
\end{split}
\end{equation}
\cfload.
\end{definition}
\endgroup

\begin{definition}[Discretized Fourier transform]
\label{def:DFT}
Let $\spacedim, \nrspacediscr, \nrchannels \in \N$, 
$f = (f_j)_{j \in \{1, 2, \ldots, \nrchannels\}}  \in \mathcal{L}^0([0,1]^\spacedim; \C^\nrchannels)$.
Then we denote by $\dft_\nrspacediscr(f) =  (\dft_{\nrspacediscr, j}(f))_{j \in \{1, 2, \ldots, \nrchannels\}} \in \mathcal{L}^0(\{0, 1, \ldots, \nrspacediscr - 1\}^\spacedim; \C^\nrchannels)$ the function which satisfies for all
	$k \in \{0, 1, \ldots, \nrspacediscr - 1\}^\spacedim$,
	$j \in  \{1, 2, \ldots, \nrchannels\}$
that
\begin{equation}
\label{T_B_D}
\begin{split} 
	(\dft_{\nrspacediscr, j}(f))(k)
=
	\frac{1}{\nrspacediscr^\spacedim}
	\sum_{r \in \{0, 1, \ldots, \nrspacediscr - 1\}^\spacedim}
		f_j(\tfrac{r}{N})
		\exp
		\pr*{
			-2\pi i \tfrac{\scp{k}{r}{}}{\nrspacediscr}
		}
	.
\end{split}
\end{equation}
\end{definition}

\begin{definition}[Inverse discretized Fourier transform]
\label{def:IDFT}
Let 
	$\spacedim, \nrspacediscr, \nrchannels \in \N$, 
	$v = (v_j)_{j \in \{1, 2, \ldots, \nrchannels\}}  \in \mathcal{L}^0(\{0, 1, \ldots, \nrspacediscr - 1\}^\spacedim; \C^\nrchannels)$.
Then we denote by $\idft_{\nrspacediscr}(f) =  (\idft_{\nrspacediscr, j}(f))_{j \in \{1, 2, \ldots, \nrchannels\}} \in \mathcal{L}^0([0,1]^\spacedim; \C^\nrchannels)$ the function which satisfies for all
	$x \in [0,1]^\spacedim$,
	$j \in  \{1, 2, \ldots, \nrchannels\}$
that
\begin{equation}
\label{T_B_D}
\begin{split} 
	\idft_{\nrspacediscr, j}(f)(x)
=
	\sum_{k \in \{0, 1, \ldots, \nrspacediscr - 1\}^\spacedim}
		v_j(k)
		\exp(2\pi i \scp{k}{x}).
\end{split}
\end{equation}
\end{definition}

\cfclear
\begingroup
\providecommandordefault{\inputFunction}{f}
\providecommandordefault{\Domain}{[0,1]^\spacedim}
\begin{definition}[Discretized \FNOs]
\label{def:FNO}
Let 
	$\spacedim, \nrspacediscr, \nrchannels, \liftdim, \projectdim, \length \in \N$,
let 
$
		\lift 
	\colon 
		\R^{\liftdim} \times \R^{\spacedim} \times \R
	\to
		\C^\nrchannels
$,
$
		\project 
	\colon 
		\R^{\projectdim} \times \C^{\nrchannels}
	\to
		\R
$,
and 
$\activation \colon \C \to \C$ be measurable functions.
Then we call 
$\neuralOp$
the discretized \FNO\ with 
	spatial discretization $\nrspacediscr$,
	lifting model $\lift$,
	projection model $\project$,
	length $\length$, and 
	activation function $\activation$
if and only if it holds that
\begin{equation}
\label{FNO:eq1}
\begin{split} 
		\neuralOp
	\colon
		\R^{\liftdim} \times 
		(\C^{\nrchannels \times \nrchannels})^\length \times 
		((\C^{\nrchannels \times \nrchannels})^{\{0, 1, \ldots, \nrspacediscr - 1\}^\spacedim})^\length \times 
		\R^{\projectdim} \times
		\mathcal{L}^0([0,1]^\spacedim; \R)
	\to
		\mathcal{L}^0([0,1]^\spacedim; \R)
\end{split}
\end{equation}
is the function which satisfies for all
	$P \in \R^{\liftdim}$,
	$W = (W_1, W_2, \ldots, W_L) \in (\C^{\nrchannels \times \nrchannels})^\length$,
	$R = \allowbreak (R_1, R_2, \ldots, R_L) \in ((\C^{\nrchannels \times \nrchannels})^{\{0, 1, \ldots, \nrspacediscr - 1\}^\spacedim})^\length$,
	$Q \in \R^{\projectdim}$,
	$\inputFunction \in \mathcal{L}^0([0,1]^\spacedim; \R)$,
	$v_0, v_1, \ldots, v_\length \in \mathcal{L}^0([0,1]^\spacedim; \C^\nrchannels)$
with 
	$\forall \, x \in \Domain, l \in \{1, 2, \ldots, \length\} \colon$
\begin{equation}
\label{FNO:eq2}
\begin{split} 
	v_0(x)
=
	\lift_P(x, \inputFunction(x))
\quad\text{and}\quad
	v_l(x)
=
	\multdim_{\activation}
	\pr*{
		W_l v_{l-1}(x)
		+
		\br*{
			\idft_{\nrspacediscr}\pr[\big]{ 
					R_l \dft_{\nrspacediscr}(v_{l-1})
			}
		}(x)
	}
\end{split}
\end{equation}
that
\begin{equation}
\label{FNO:eq3}
\begin{split} 
	\neuralOp_{(P, W, R, Q)}(\inputFunction)
=
	\project_Q(v_\length)
\end{split}
\end{equation}
\cfload.
\end{definition}
\endgroup

\begin{remark}%

In the following we provide some explanations and remarks for \cref{def:FNO}.

\begin{enumerate}[label=\alph*)]
\item 
Note that in \eqref{FNO:eq2} we have that
	$R_l \in (\C^{\nrchannels \times \nrchannels})^{\{0, 1, \ldots, \nrspacediscr - 1\}^\spacedim}$
is a function from ${\{0, 1, \ldots, \nrspacediscr - 1\}^\spacedim}$ to $\C^{\nrchannels \times \nrchannels}$ and
	$\dft_{\nrspacediscr}(v_{l-1}) \in \mathcal{L}^0(\{0, 1, \ldots, \nrspacediscr - 1\}^\spacedim; \C^\nrchannels)$
is a function from ${\{0, 1, \ldots, \nrspacediscr - 1\}^\spacedim}$ to $\C^{\nrchannels}$.
Hence 
	$R_l\dft_{\nrspacediscr}(v_{l-1})$ 
is a function from ${\{0, 1, \ldots, \nrspacediscr - 1\}^\spacedim}$ to $\C^{\nrchannels}$. 

\item
Note that in \cref{def:FNO} instead of using an abstract kernel model as in \cref{def:FNO_FT}, we explicitly specify the action of the parameters $(R_1, R_2, \ldots, R_L)$ on the Fourier modes in \eqref{FNO:eq2} (cf.\ \eqref{FNO_FT:eq1} in \cref{def:FNO}). 
In \cite[top of page 23]{Kovachki2021a} and \cite[top of page 6]{Li2021} experiments with a more general kernel model for \FNOs\ have been performed, which yielded either same or worse accuracy for more computational effort.
Thus, the version presented in \cref{def:FNO} is the way in which \FNOs\ are typically employed.

\item 
In application scenarios of \FNOs\ when one is only interested in function values on an equidistant grid, the $\dft$ and $\idft$ can be implemented based on the \FFT\ and \IFFT, which significantly decreases the computational effort.

\item
A discretized \FNO\ with spatial discretization $\nrspacediscr \in \N$ can also be used when the input function is given through discretized values on a grid with $M \in \N$ gird points in each dimension and $M \geq \nrspacediscr$. In this case in \eqref{FNO:eq2} $\dft_{\nrspacediscr}$ should be replaced with $\dft_M$ composed with an operation to truncate the excess frequencies.

\item 
For brevity and simplicity we have presented \FNOs\ in \cref{def:FNO} with complex valued hidden functions (cf.\ $v_0, v_1, \ldots, v_\length$ in \eqref{FNO:eq1}). 
For reasons of computational efficiency or when there is a theoretical motivation, one might want to ensure that the hidden functions are real-valued. To achieve this, a few modifications to \cref{def:FNO} can be made:

\begin{itemize}
    \item Modify the codomain of $\lift$ to $\R^\nrchannels$,
    \item change the domain and codomain of $\act$ to $\R$, and 
    \item in the definition of $\neuralOp$, impose additional restrictions on $R = \allowbreak (R_1, R_2, \ldots, R_L) \in \linebreak ((\C^{\nrchannels \times \nrchannels})^{\{0, 1, \ldots, \nrspacediscr - 1\}^\spacedim})^\length$ such that for all 
    	$l \in \{1, 2, \ldots, \length\}$, 
    	$k = (k_1, k_2, \ldots, k_\spacedim) \in \{0, 1, \ldots, \nrspacediscr - 1\}^\spacedim$ 
    we have that 
    \begin{equation}
    \label{T_B_D}
    \begin{split} 
     	R_l(k) 
        = 
        \br{
            R_l(\mod{\nrspacediscr}{-k_1}, \mod{\nrspacediscr}{-k_2}, \ldots, \mod{\nrspacediscr}{-k_\spacedim})
        }^\ast.
    \end{split}
    \end{equation}
\end{itemize}
These modifications make it possible to utilize the real \FFT\ and inverse real \FFT\ in applications based on equidistant grids.
\end{enumerate}

\end{remark}

\subsubsection{Deep Operator Networks (DeepONets)}
\label{sect:deepONets}

In this section we discuss \DeepONets\ which were introduced in \cite{Lu2021}. 
\DeepONets\ are motivated by the fact that they
possess universal approximation properties for operators (cf.\ \cite[Theorem 1]{Lu2021})
similar to the universal approximation properties of fully-connected feedforward \ANNs\ for functions on finite dimensional spaces (cf.\ \cite{Cybenko1989,Hornik1989}).

\newcommand{\branch}{\mathscr{B}}
\newcommand{\trunk}{\mathscr{T}}

\newcommand{\branchdim}{{\mathfrak{d}_{\mathscr{B}}}}
\newcommand{\trunkdim}{{\mathfrak{d}_{\mathscr{T}}}}

\newcommand{\ONetwidth}{p}
\newcommand{\nrsensors}{m}

\begingroup
\providecommandordefault{\x}{x}
\begin{definition}[Unstacked \DeepONets]
\label{def:DeepONets}
Let 
	$\spacedim, \branchdim, \trunkdim, \nrsensors, \ONetwidth \in \N$,
let $\Domain$ be a set,
let $\x_1, \x_2, \ldots, \x_\nrsensors \in \Domain$
and
let 
$
		\branch 
	\colon 
		\R^{\branchdim} \times \R^{\nrsensors}
	\to
		\R^{\ONetwidth}
$ 
and
$
		\trunk 
	\colon 
		\R^{\trunkdim} \times \R^{\spacedim}
	\to
		\R^{\ONetwidth}
$ 
be measurable functions.
Then we call 
$\neuralOp$
the unstacked \DeepONet\ neural operator
	with sensors $\x_1, \x_2, \ldots, \x_\nrsensors$,
	branch model $\branch$, and
	trunk model $\trunk$
if and only if it holds that
\begin{equation}
\label{T_B_D}
\begin{split} 
		\neuralOp
	\colon
		\R^{\branchdim} \times
		\R^{\trunkdim} \times
		\mathcal{L}^0(\Domain; \R)
	\to
		\mathcal{L}^0(\R^\spacedim; \R)
\end{split}
\end{equation}
is the function which satisfies for all
	$B \in \R^{\branchdim}$,
	$T = \R^{\trunkdim}$,
	$u \in \mathcal{L}^0(\Domain; \R)$,
	$y \in \R^\spacedim$
that
\begin{equation}
\label{T_B_D}
\begin{split} 
	(\neuralOp_{(B, T)}(u))(y)
=
	\scp[\big]{
		\branch_B(u(\x_1), u(\x_2), \ldots, u(\x_\nrsensors))
	}{
		\trunk_T(y)
	}.
\end{split}
\end{equation}
\end{definition}

\begin{remark}[Stacked \DeepONets]
In \cite{Lu2021} two versions of \DeepONets\ are introduced: Stacked \DeepONets\ and unstacked \DeepONets. 
Stacked \DeepONets\ are a special case of the unstacked \DeepONets\ defined in \cref{def:DeepONets} when, roughly speaking, each output component of the branch model $\branch$ only depends on a distinct set of the $\branchdim$ model parameters.
\end{remark}

\endgroup

\subsection{Physics-informed neural operators}
\label{sect:PINO}

The \PINOs\ methodology was introduced in \cite{Li2021} to adapt the \PINNs\  methodology of \cite{Karniadakis21,Raissi19} to the task of operator learning.
Roughly speaking, the \PINO\ methodology expands the general framework for operator learning described in \cref{algo:operator_learning} by incorporating an additional loss term corresponding to the residual of the underlying \PDE. %

In this section we formulate the \PINO\ methodology in the context of two frameworks. Firstly, in \cref{algo:PINO_general}, we consider the case of a general abstract \PDE, possibly with boundary conditions (cf.\ \cref{sect:PINNsBVP}).
Secondly, in \cref{algo:PINO_time}, we specify the \PINO\ methodology to a more specific case of a time-dependent initial value \PDE\ with boundary conditions (cf.\ \cref{sect:PINNsIVP}).

\begingroup
\providecommandordefault{\d}{\matrixdim}
\providecommandordefault{\dparam}{\mathtt{d}}
\providecommandordefault{\dcond}{\mathbf{d}}
\providecommandordefault{\dsol}{d}
\providecommandordefault{\p}{p}
\providecommandordefault{\Dparam}{\mathtt{D}}
\providecommandordefault{\Dcond}{\mathbf{D}}
\providecommandordefault{\Dsol}{D}
\providecommandordefault{\In}{\mathcal{I}}
\providecommandordefault{\Diff}{\mathscr{D}}
\providecommandordefault{\S}{\mathcal{S}}
\providecommandordefault{\i}{i}
\providecommandordefault{\x}{x}
\providecommandordefault{\nrparams}{\mathfrak{d}} %
\providecommandordefault{\y}{y}
\providecommandordefault{\b}{m}
\providecommandordefault{\m}{m}
\providecommandordefault{\B}{B}
\providecommandordefault{\factData}{\lambda_{\text{Data}}}
\providecommandordefault{\factPDE}{\lambda_{\text{PDE}}}
\providecommandordefault{\B}{\mathcal{M}}
\providecommandordefault{\BData}{\mathcal{M}_{\text{Data}}}
\providecommandordefault{\BPDE}{\mathcal{M}_{\text{PDE}}}
\providecommandordefault{\LossData}{\mathscr{L}_{\text{Data}}}
\providecommandordefault{\LossPDE}{\mathscr{L}_{\text{PDE}}}
\providecommandordefault{\GradData}{\mathscr{G}_{\text{Data}}}
\providecommandordefault{\GradPDE}{\mathscr{G}_{\text{PDE}}}
\providecommandordefault{\II}{\mathbb{I}}
\providecommandordefault{\XX}{\mathbb{X}}
\providecommandordefault{\YY}{\mathbb{Y}}
\providecommandordefault{\empRisk}{\mathscr{R}}
\providecommandordefault{\Grad}{\mathscr{G}}
\begin{algo}[\PINO\ methods for boundary value \PDE\ problems]
\label{algo:PINO_general}
Let 
	$\dparam, \dcond, \dsol, \p, \BData, \allowbreak \BPDE \in \N$,
	$\factData, \factPDE \in [0,\infty)$,
	$\Dparam \subseteq \R^\d$,
	$\Dsol \subseteq \R^{\dsol}$,
	$\Dcond \subseteq \R^{\dcond}$,
	$\In \subseteq C(\Dparam, \R)$,
let
	$\Diff \colon \In \times C^\p(\Dsol, \R) \to C(\Dcond, \R^\dcond)$,
	$\S \colon \In \to C^\p(\Dsol, \R)$,
$\neuralOp \colon \R^\nrparams \times \In \to C^{\p}(\Dsol, \R)$,
	$\LossData \colon \R^\nrparams \times \In \times \Dsol \to \R$, and  
	$\LossPDE \colon \R^\nrparams \times \In \times \Dcond \to \R$
satisfy for all
	$\theta \in \R^\nrparams$,
	$\i \in \In$,
	$\x \in \Dsol$,
	$\y \in \Dcond$
that
\begin{equation}
\label{PINO_general:eq1}
\begin{split} 
	\LossData(\theta, \i, \x)
=
	\abs*{
		(\neuralOp_{\theta}(\i)) (\x) - (\S(\i))(\x)
	}^2
\qandq
	\LossPDE(\theta, \i, \y)
=
	\norm*{
		\pr[\big]{\Diff(\i, \neuralOp_{\theta}(\i))} (\y)
	}^2,
\end{split}
\end{equation}
let $(\Omega, \cF, \P)$ be a probability space,
let $\II_{k, n, \m} \colon \Omega \to \In$, $k, n, \m \in \N$, be random variables, 
let $\XX_{n, \m} \colon \Omega \to \Dsol$, $n, \m \in \N$, be random variables, 
let $\YY_{n, \m} \colon \Omega \to \Dcond$, $n, \m \in \N$, be random variables, 
for every
	$n \in \N$
let $\empRisk_n \colon \R^\nrparams \times \Omega \to \R$ satisfy for all
	$\theta \in \R^\nrparams$
that
\begin{equation}
\label{PINO_general:eq2}
\begin{split} \textstyle
	\empRisk_n(\theta)
=
	\frac{\factData}{\BData} 
	\br*{
		\sum_{\m = 1}^{\BData}
			\LossData( \theta,  \II_{1, n, m}, \XX_{n, \m})
	}
	+
	\frac{\factPDE}{\BPDE} 
	\br*{
		\sum_{\m = 1}^{\BPDE}
			\LossPDE( \theta,  \II_{2, n, m}, \YY_{n, \m})
	},
\end{split}
\end{equation}
for every
	$n \in \N$
let $\Grad_n \colon \R^\nrparams \times \Omega \to \R^\nrparams$ satisfy for all
	$\theta \in \R^\nrparams$,
	$\omega \in \{w \in \Omega \colon \empRisk_n(\cdot, w) \text{ is differentiable at } \theta\}$
that
$
	\Grad_n(\theta, \omega)
=
	(\nabla_\theta \empRisk_n)(\theta, \omega)
$,
let $(\gamma_n )_{n \in \N} \subseteq \R$, 
and let $\Theta\colon \N_0 \times \Omega \to \R^{ \nrparams}$ satisfy for all $n \in \N_0$ that 
\begin{equation}
\label{PINO_general:eq3}
\begin{split}
	\Theta_{ n + 1 } 
=
	\Theta_n 
	- 
	\gamma_n
	\Grad_n(\Theta_n).
\end{split}
\end{equation}
\end{algo}

\begin{remark}[Explanations for \cref{algo:PINO_general}]
{\sl 
We think of $\Diff$ as the residual of a parametric \PDE\ with boundary conditions,
we think of $\S$ as (an approximation of) the solution operator corresponding to the parametric \PDE\ 
in the sense that for every parameter 
	$\i \in \In$
we have that
\begin{equation}
\label{T_B_D}
\begin{split} 
	\Diff(\i, \S(\i)) = 0,
\end{split}
\end{equation}  
	we think of $\In$ as the set of admissible parameters of the \PDE,
	we think of $\Dparam$ as the domain of the functions which act as parameters for the \PDE,
	we think of $\Dsol$ as the domain of the solution of the \PDE, %
	we think of $\Dcond$ as the domain on which the \PDE\ and the boundary conditions are jointly evaluated, 
we think of $\neuralOp$ as the neural operator that we want to utilize to learn the operator $\S$,
we think of $\Theta$ as an \SGD\ optimizing process for the parameters of the model $\neuralOp$,
for every 
	$n \in \N$ 
we think of $\empRisk_n$ as the empirical risk employed in the $n$-th step of the \SGD\ optimization procedure $\Theta$,
we think of $\LossData$ as the loss coming from the deviation between the model and the target operator, 
we think of $\LossPDE$ as the loss coming from the deviation of the model from the \PDE\ and its boundary conditions,
we think of $\factData$ and $\factPDE$ as the factors which scale the losses $\LossData$ and $\LossPDE$ in the empirical risk evaluations,
we think of $\BData$ and $\BPDE$ as the number of samples for the losses $\LossData$ and $\LossPDE$ in the empirical risk evaluations,
we think of $(\II_{1, n, \b})_{(n, \b) \in \N^2}$ as the \PDE\ parameters for which reference solutions $(\S(\II_{1, n, \b}))_{(n, \b) \in \N^2}$ are computed in empirical risk evaluations,
we think of $(\XX_{ n, \b})_{(n, \b) \in \N^2}$ as the points at which the model is compared to the reference solutions $(\S(\II_{1, n, \b}))_{(n, \b) \in \N^2}$ in empirical risk evaluations,
we think of $(\II_{2, n, \b})_{(n, \b) \in \N^2}$ as the \PDE\ parameters for which the residual of the \PDE\ and its boundary conditions are computed in empirical risk evaluations, 
and
we think of $(\YY_{ n, \b})_{(n, \b) \in \N^2}$ as the points at which the residual of the \PDE\ and its boundary conditions are evaluated in empirical risk evaluations.
For sufficiently large $n \in \N$ we hope that 
$
	\neuralOp_{\Theta_n} \approx \S
$.
}
\end{remark}

\begin{remark}[Applications and extensions of \cref{algo:PINO_general}]
{\sl 
Being the squared residual of a \PDE, the term $\LossPDE$ in \cref{algo:PINO_general} typically involves the derivation of the neural operator $\neuralOp$ with respect to the input space variables. In \cite{Li2021} different ways to compute this derivation are proposed, including automatic derivation and, in the case of \FNOs, derivation in Fourier space.

Moreover, in \cite{Li2021} an extension of \cref{algo:PINO_general} is proposed which aims to further combine the \PINO\ methodology with the pure \PINNs\  methodology in \cite{Karniadakis21}. Roughly speaking, after an initial training phase as described in \cref{algo:PINO_general} they suggest using the \PINNs\ methodology (i.e., only using $\LossPDE$ as loss) for a fixed $\i \in \In$ to improve the approximation for this specific instance of the parametric \PDE.

}
\end{remark}
\endgroup

\begingroup
\providecommandordefault{\factData}{\lambda_{\text{Data}}}
\providecommandordefault{\factPDE}{\lambda_{\text{PDE}}}
\providecommandordefault{\factBd}{\lambda_{\text{Boundary}}}
\providecommandordefault{\factInit}{\lambda_{\text{Init}}}
\providecommandordefault{\BData}{\mathcal{M}_{\text{Data}}}
\providecommandordefault{\BPDE}{\mathcal{M}_{\text{PDE}}}
\providecommandordefault{\BBd}{\mathcal{M}_{\text{Boundary}}}
\providecommandordefault{\BInit}{\mathcal{M}_{\text{Init}}}
\providecommandordefault{\B}{\mathcal{M}}
\providecommandordefault{\Loss}{\mathscr{L}}
\providecommandordefault{\LossData}{\mathscr{L}_{\text{Data}}}
\providecommandordefault{\LossPDE}{\mathscr{L}_{\text{PDE}}}
\providecommandordefault{\LossBd}{\mathscr{L}_{\text{Boundary}}}
\providecommandordefault{\LossInit}{\mathscr{L}_{\text{Init}}}
\providecommandordefault{\Grad}{\mathscr{G}}
\providecommandordefault{\GradData}{\mathscr{G}_{\text{Data}}}
\providecommandordefault{\GradPDE}{\mathscr{G}_{\text{PDE}}}
\providecommandordefault{\GradBd}{\mathscr{G}_{\text{Boundary}}}
\providecommandordefault{\S}{\mathcal{S}}
\providecommandordefault{\II}{\mathbb{I}}
\providecommandordefault{\XX}{\mathbb{X}}
\providecommandordefault{\YY}{\mathbb{Y}}
\providecommandordefault{\TT}{\mathbb{T}}
\providecommandordefault{\In}{\mathcal{I}}
\providecommandordefault{\D}{D}
\providecommandordefault{\Diff}{\mathscr{D}}
\providecommandordefault{\Bd}{\mathscr{B}}
\providecommandordefault{\x}{x}
\providecommandordefault{\d}{d}
\providecommandordefault{\y}{y}
\providecommandordefault{\i}{i}
\providecommandordefault{\u}{u}
\providecommandordefault{\p}{p}
\providecommandordefault{\t}{t}
\providecommandordefault{\b}{m}
\providecommandordefault{\nrparams}{\fd}
\providecommandordefault{\m}{m}
\providecommandordefault{\empRisk}{\mathscr{R}}
\providecommandordefault{\Grad}{\mathscr{G}}
\begin{algo}[\PINO\ methods for time-dependent initial value \PDE\ problems]
\label{algo:PINO_time}
Let 
	$T \in (0,\infty)$,
	$\d, \p, \allowbreak \BData, \BPDE, \allowbreak \BBd, \BInit \in \N$,
	$\factData, \factPDE, \factBd, \factInit \in [0,\infty)$,
	$\D \subseteq \R^\d$,
	$\In \subseteq C(\D, \R)$,
let
	$\Diff \colon C^\p([0,T] \times \D, \R) \to C([0,T] \times \D, \R)$,
	$\Bd \colon C^\p([0,T] \times \D, \R) \to C([0,T] \times\partial \D, \R)$,
	$\S \colon \In \to C^\p([0,T] \times \D, \R)$,
	$\neuralOp \colon \R^\nrparams \times \In \to C^{\p}([0,T] \times \D, \R)$,
	$\LossData \colon \R^\nrparams \times \In \times [0,T] \times \D \to [0,\infty)$, 
	$\LossPDE \colon \R^\nrparams \times \In \times [0,T] \times \D \to [0,\infty)$, 
	$\LossBd \colon \R^\nrparams \times \In \times \partial  \D \to [0,\infty)$, and
	$\LossInit \colon \R^\nrparams \times \In \times \D \to [0,\infty)$,
satisfy for all
	$\theta \in \R^\nrparams$,
	$\i \in \In$,
	$t \in [0,T]$,
	$\x \in \D$,
	$\y \in \partial \D$
that
\begin{equation}
\label{T_B_D}
\begin{split} 
	\LossData(\theta, \i, t, \x)
=
	\abs*{
		\pr[\big]{\neuralOp_{\theta}(\i)} (t, \x) - (\S(\i)) (t, \x)
	}^2,
\qquad
	\LossPDE(\theta, \i, t, \x)
=
	\abs*{
		\pr[\big]{\Diff(\neuralOp_{\theta}(\i))} (t, \x)
	}^2,
\end{split}
\end{equation}
\begin{equation}
\label{T_B_D}
\begin{split} 
	\LossBd(\theta, \i, t, \y)
=
	\abs*{
		\pr[\big]{\Bd\pr[\big]{\neuralOp_{\theta}(\i)}}(t, \y)
	}^2,
\quad\text{and}\quad
	\LossInit(\theta, \i, \y)
=
	\abs*{
		\pr[\big]{\neuralOp_{\theta}(\i)} (t, \x) - \i(\x)
	}^2,
\end{split}
\end{equation}
let $(\Omega, \cF, \P)$ be a probability space,
let $\II_{k, n, \m} \colon \Omega \to \In$, $k, n, \m \in \N$, be random variables, 
let $\XX_{n, \m} \colon \Omega \to \D$, $k, n, \m \in \N$, be random variables, 
let $\TT_{n, \m} \colon \Omega \to [0,T]$, $k, n, \m \in \N$, be random variables, 
let $\YY_{n, \m} \colon \Omega \to \partial \D$, $k, n, \m \in \N$, be random variables, 
for every
	$n \in \N$
let $\empRisk_n \colon \R^\nrparams \times \Omega \to \R$ satisfy for all
	$\theta \in \R^\nrparams$
that
\begin{equation}
\label{T_B_D}
\begin{split} \textstyle
	\empRisk_n(\theta)
&=\textstyle
	\frac{\factData}{\BData} 
	\br*{
		\sum\limits_{\m = 1}^{\BData}
			\LossData( \theta,  \II_{1, n, m}, \TT_{1, n, \m}, \XX_{1, n, \m})
	}
	+
	\frac{\factPDE}{\BPDE} 
	\br*{
		\sum\limits_{\m = 1}^{\BPDE}
			\LossPDE( \theta,  \II_{2, n, m}, \TT_{2, n, \m}, \XX_{2, n, \m})
	}\\
&\quad\textstyle
	\frac{\factData}{\BData} 
	\br*{
		\sum\limits_{\m = 1}^{\BData}
			\LossBd( \theta,  \II_{3, n, m}, \TT_{3, n, \m}, \YY_{3, n, \m})
	}
	+
	\frac{\factPDE}{\BPDE} 
	\br*{
		\sum\limits_{\m = 1}^{\BPDE}
			\LossInit( \theta,  \II_{4, n, m}, \YY_{4, n, \m})
	},
\end{split}
\end{equation}
for every
	$n \in \N$
let $\Grad_n \colon \R^\nrparams \times \Omega \to \R^\nrparams$ satisfy for all
	$\theta \in \R^\nrparams$,
	$\omega \in \{w \in \Omega \colon \empRisk_n(\cdot, w) \text{ is differentiable at } \theta\}$
that
$
	\Grad_n(\theta, \omega)
=
	(\nabla_\theta \empRisk_n)(\theta, \omega)
$,
let $(\gamma_n )_{n \in \N} \subseteq \R$, 
and let $\Theta\colon \N_0 \times \Omega \to \R^{ \nrparams}$ satisfy for all $n \in \N_0$ that 
\begin{equation}
\begin{split}
	\Theta_{ n + 1 } 
=
	\Theta_n - 
	\gamma_n
	\Grad(\Theta_n)
	.
\end{split}
\end{equation}
\end{algo}

\begin{remark}[Explanations for \cref{algo:PINO_time}]
{\sl
We think of $\Diff$ as the residual of a time-dependent parametric \PDE\ with time horizon $T$,
we think of $\Bd$ as the residual of the boundary conditions of the \PDE,
we think of $\S$ as (an approximation of) the solution operator corresponding to the \PDE\ in the sense that for every parameter $\i \in \In$ we have that
\begin{equation}
\label{T_B_D}
\begin{split}
	(\S(\i))(0, \cdot) = \i,
\qquad
	\Diff(\i, \S(\i)) = 0,
\qandq
	\Bd(\S(\i)) = 0,
\end{split}
\end{equation}
we think of $\In$ as the set of admissible initial conditions of the time-dependent \PDE,
we think of $\D$ as the spatial domain of the \PDE,
we think of $\neuralOp$ as the neural operator that we want to utilize to learn the operator $\S$,
we think of $\Theta$ as an \SGD\ optimizing process for the parameters of the model $\neuralOp$,
for every $n \in \N$ we think of $\empRisk_n$ as the empirical risk employed in the $n$-th step of the \SGD\ optimization procedure $\Theta$,
we think of $\LossData$ as the loss coming from the deviation between the model and the target operator,
we think of $\LossPDE$ as the loss coming from the deviation of the model from the \PDE,
we think of $\LossBd$ as the loss coming from the deviation of the model from the boundary conditions of the \PDE,
we think of $\LossInit$ as the loss coming from the deviation of the model from the initial conditions of the \PDE,
we think of $\factData$, $\factPDE$, $\factBd$, and $\factInit$ as the factors which scale the losses $\LossData$, $\LossPDE$, $\LossBd$, and $\LossInit$ in the empirical risk evaluations,
we think of $\BData$, $\BPDE$, $\BBd$, and $\BInit$ as the number of samples for the losses $\LossData$, $\LossPDE$, $\LossBd$, and $\LossInit$ in the empirical risk evaluations,
and
we think of 
	$(\II_{k, n, \b})_{(k, n, \b) \in \N^3}$,
	$(\XX_{k, n, \b})_{(k, n, \b) \in \N^3}$,
	$(\TT_{k, n, \b})_{(k, n, \b) \in \N^3}$, and
	$(\YY_{k, n, \b})_{(k, n, \b) \in \N^3}$
as random variables which specify the training data for the \SGD\ optimization procedure.
For sufficiently large $n \in \N$ we hope that
$
	\neuralOp_{\Theta_n} \approx \S
$.
} 
\endgroup

\subsection{Other deep operator learning approaches}
\label{sect:OL_refs}

\begingroup 
\renewcommand{\ex}{e.g.,}
In this section we list a selection of other promising deep operator learning approaches in the literature.
We refer, \ex\ to \cite{Li2022} for a generalization of \FNOs\ to more complicated geometries,
we refer, \ex\ to \cite{Brandstetter2022} for an extension of \FNOs\ by using Clifford layers where calculations are performed in higher-dimensional non-commutative Clifford algebras,
we refer, \ex\ to \cite{Lanthaler2022} for a generalization of \DeepONets\ to a more sophisticated nonlinear architecture,
we refer, \ex\ to \cite{Wang2021} for physics-informed \DeepONets,
we refer, \ex\ to \cite{Meuris2023} for a combination of \DeepONets\ with spectral methods,
we refer, \ex\ to \cite{Liu2022a} for an approach to learn the solution operator of a time-evolution \PDE\ for long time horizons based on approximating the time propagation of the \PDE\ for short time steps using a variant of \DeepONet,
we refer, \ex\ to \cite{Goswami2023} for a variant of \DeepONets\ for learning solution operators to stiff initial value problems, 
we refer, \ex\ to \cite{Tan2022} for a variant of \DeepONets\ capable of taking several functions as input,
we refer, \ex\ to \cite{Kontolati2023} for a variant of \DeepONets\ which uses latent representations of the high-dimensional input and output functions,
we refer, \ex\ to \cite{Wu2023} for asymptotic-preserving convolutional \DeepONets\ for linear transport equations,
we refer, \ex\ to \cite{Du2023} for \DeepONets\ applied to singularly perturbed problems,
we refer, \ex\ to \cite{Zhang2023} for \DeepONets\ applied to evolutionary problems based on approximating the initial condition with a \DeepONet\ and then applying a time evolution to the parameters of the \DeepONet,
we refer, \ex\ to \cite{Lu2022} for a comparison between the \DeepONet\ and \FNO\ methodologies,
we refer, \ex\ to \cite{Rafiq2022} for a spatio-spectral neural operator,
we refer, \ex\ to \cite{Xiong2023} for the Koopman neural operator,
we refer, \ex\ to \cite{Fresca2021,Fresca2022} for deep learning-based reduced order models for parametric \PDE\ problems,
we refer,  \ex\ to \cite{Schwab2023,Deng2022,DeRyck2022} for approximation rates and convergence results for neural operators, 
we refer, \ex\ to \cite{Pham2022} for a generalization of \DeepONets\ to operator learning on Wasserstein spaces,
we refer, \ex\ to \cite{Liu2022} for a method to learn the entire flow map associated to \ODEs\ by training a different \ANN\ in each time-step and combining these \ANNs\ with classical Runge-Kutta methods on different time scales,
we refer, \ex\ to \cite{Guo2016,Raonic2023,Heiss2023,Zhu2018,Khoo2021} for operator learning architectures based on \CNNs,
we refer, \ex\ to \cite{Kovachki2021,Chen2023} for estimates for approximation and generalization errors in network-based operator learning for \PDEs,
and 
we refer, \ex\ to \cite[Appendix D]{Brandstetter2022} and \cite[Section 1.7.4]{Jentzen2023} for other literature overviews on operator learning approaches.

\subsection{Numerical results}
\label{sect:OL_simul}

In this section we numerically test some of the operator learning models discussed in \cref{sect:OL_architectures}
in the case of four operators related to parametric \PDE\ problems
and compare their performance with classical numerical methods.
Specifically, we consider 
	the numerical approximation of
	an operator mapping initial values to terminal values of the viscous Burgers equation in \cref{sect:OL_simul_Burgers},
	the numerical approximation of
	operators mapping
	initial values to terminal values of one and two-dimensional Allen-Cahn equations in \cref{sect:OL_simul_AC},	
	and
	the numerical approximation of
	an operator mapping source terms to terminal values of a reaction-diffusion equation in \cref{sect:OL_simul_ReactionDiffusion}.

In every considered problem, all models 
are trained using the same training and validation set with the Adam optimizer with adaptive learning rates starting at 
$\frac{1}{1000}$
instead of the \SGD\ optimizer considered in \cref{algo:operator_learning}
(cf.\ \cite[Appendix A.2]{adanns2023}
for a detailed description of our adaptive training procedure).
Moreover, 
we repeat the training of every operator learning model several times with different initializations and select the best performing trained model over all training runs as the approximation for that architecture.
In every considered problem, the $L^2$-errors of all methods are approximated using a \MC\ approximation based on the same test set.
The parameters chosen to generate train, validation, and test sets, as well as additional hyperparameters for each problem are listed in 
\cref{table:training_hyperparams_OL}.

All the simulations were run on a remote machine on \texttt{https://vast.ai} equipped with an
\textsc{NVIDIA GeForce RTX 3090} GPU with 24 GB RAM
and an
\textsc{AMD Ryzen} 5950X 16-core CPU with 16 GB of total system RAM.
As the evaluation time of models on GPUs can be highly variable, we report the average evaluation time 
over $\nrEvalRuns$ test set evaluations for each method
in \cref{table:OLBurgers,table:AC1,table:AC2,table:OLReactDiff}.
The code for all numerical simulations is available at 
\url{https://github.com/deeplearningmethods/deep-learning-pdes}.

\subsubsection{Viscous Burgers equation}
\label{sect:OL_simul_Burgers}

\begingroup
\providecommandordefault{\eqname}{Burgers_T1.0_S6.283185307179586_nu0.1_var1000_decay3.0_offset9.999999999999998_innerdecay2.0}

\providecommandordefault{\Domain}{(0, \OLBurgersSpaceSize)}
\providecommandordefault{\IVSpace}{H^2_\text{per}(\Domain; \R)}
\providecommandordefault{\EndSpace}{H^2_\text{per}(\Domain; \R)}

In this section we present numerical simulations for the approximation of an operator  mapping initial values to terminal values of the viscous Burgers equation with periodic boundary conditions.
We introduce below the viscous Burgers equation in conservative form and the corresponding operator.

Specifically,
assume \cref{algo:operator_learning}, 
let
$T = \OLBurgersT$,
$\laplaceFactor = \OLBurgersLaplaceFactor$,
for every 
$\IVfunction \in \IVSpace$
let 
$\sol_\IVfunction \colon [0, T] \to \EndSpace$ 
be a mild solution of the \PDE\
\begin{equation}
\label{OLBurgers:eq1}
\begin{split}
	\pr*{ \tfrac{\partial}{\partial t} \sol_\IVfunction } (t, x)
	=
	\laplaceFactor
	(\Delta_x \sol_\IVfunction)(t, x) 
	- 
	\tfrac{1}{2}
	\pr*{\tfrac{\partial}{\partial t} \sol_\IVfunction^2}(t, x),
\quad
	(t, x) \in [0, T] \times \Domain,
\quad
	\sol_\IVfunction(0) = \IVfunction
\end{split}
\end{equation}
with periodic boundary conditions,
assume 
$\In = \EndValues = \IVSpace$,
and assume that the operator
$\solOp \colon \In \to \EndValues$
we want to approximate is
given for all
$\IVfunction \in \In$ by
\begin{equation}
\label{OLBurgers:eq2}
\begin{split}
	\solOp(\IVfunction)
	=
	\sol_\IVfunction(T).
\end{split}
\end{equation}
Moreover, let 
$\initialRV \colon \Omega \to \In$
be a 
$\OLBurgersInitDistr$-distributed random initial value
where $\Delta_x$ is the Laplace operator on $\Ltwo{\Domain}$
with periodic boundary conditions, 
fix a space discretization $\nrspacediscr=\OLBurgersSpaceStep$,
and assume for all
$\EndVariable \in \EndValues$
that
\begin{equation}
\label{OLBurgers:eq3}
\begin{split}
		\lossmetric{\EndVariable}^2
	=
		\frac{
			\OLBurgersSpaceSize
		}{\nrspacediscr}
		\br*{
			\sum_{\fx \in \{\frac{0}{\nrspacediscr}, \frac{1}{\nrspacediscr}, \ldots, \frac{\nrspacediscr-1}{\nrspacediscr}\}}
				(\EndVariable(
					\OLBurgersSpaceSize 
					\fx
				))^2
		}
	\approx
		\int_0^{\OLBurgersSpaceSize}
			(\EndVariable(x))^2
		d x
		.
\end{split}
\end{equation}
We measure the quality of approximations $\solOpAlt \colon \In \to \EndValues$ by means of the $L^2$-error
\begin{equation}
	\label{OLBurgers:eq4}
	\begin{split}
		\pr[\big]{
			\Exp[\big]{
				\lossmetric{\solOpAlt(\initialRV) - \solOp(\initialRV)}^2
			}
		}^{1/2}
	\approx
		\pr*{
			\Exp*{
				\int_0^{\OLBurgersSpaceSize}
					\pr[\big]{
						\solOpAlt(\initialRV)(x) - \sol_\initialRV(T, x)
					}^2
				d x
			}
		}^{1/2}.
	\end{split}
\end{equation}

We test five different types of architectures for the neural operator $\neuralOp$.
Specifically, 
we test 
	\ANNs\ with \GELU\ activation function (cf.\ \cref{sect:ANN_NO}, rows 1-4 in \cref{table:OLBurgers}, and \cref{OLBurgersSamples:ann}),
	simple periodic convolutional neural operators (cf.\ \cref{sect:periodic_cnn}, rows 5-8 in \cref{table:OLBurgers}, and \cref{OLBurgersSamples:cnn_periodic}),
	simple convolutional neural operators with encoder-decoder architecture (cf.\ \cref{sect:dec_enc_cnn}, rows 9-12 in \cref{table:OLBurgers}, and \cref{OLBurgersSamples:cnn_enc_dec}),
	\FNOs\ (cf.\ \cref{sect:FNO}, rows 13-16 in \cref{table:OLBurgers}, and \cref{OLBurgersSamples:fno}),
	and
	\DeepONets\ (cf.\ \cref{sect:deepONets}, rows 17-20 in \cref{table:OLBurgers}, and  \cref{OLBurgersSamples:deeponet}).
We also evaluate three different classical approximation methods.
Specifically,
we evaluate 
	\FDMs\ with Crank-Nicolson explicit midpoint \LIRK\ discretization in time (cf.\ 
	rows 21-24 in \cref{table:OLBurgers} and \cref{OLBurgersSamples:fdm}),
	\FEMs\ with Crank-Nicolson explicit midpoint \LIRK\ discretization in time (cf.\ 
	rows 25-28 in \cref{table:OLBurgers} and \cref{OLBurgersSamples:fem}),
	and
	spectral-Galerkin methods with Crank-Nicolson explicit midpoint \LIRK\ discretization in time (cf.\ 
	rows 29-32 in \cref{table:OLBurgers} and \cref{OLBurgersSamples:spectral}).
The performance of all considered methods is summarized in \cref{table:OLBurgers} and graphically illustrated in \cref{fig:OLBurgers}.
In addition, some approximations for a randomly chosen test sample are shown in \cref{fig:OLBurgersSamples}.

\begin{table} 
\tiny
\resizebox{\textwidth}{!}{
\csvreader[
	tabular=|c|c|c|c|c|,
	separator=semicolon,
     table head=
     \hline 
     \thead{Method} &  
     \thead{Estimated \\ $L^2$-error \\ in \cref{OLBurgers:eq4}} &
	 \thead{Average evaluation time \\ for $\OLBurgersNrTestSamples$ test samples \\ over $\nrEvalRunsOL$ runs (in s)} &
     \thead{Number \\ of trainable \\ parameters} &
     \thead{Precomputation\\time (in s)} 
     \\\hline,
    late after line=\\\hline
]
{sections/OL_Phil/1_Tables/rounded_methods_data_\eqname.csv}
{
	Method=\method, 
	L2_error = \llerror, 
	nr_params = \numparams, 
	training_time = \traintime, 
	test_time = \evaltime
}
{\method& \llerror& \evaltime &\numparams&\traintime} %
}
\caption{\label{table:OLBurgers}
Comparison of the performance of different methods for the approximation of the operator in \eqref{OLBurgers:eq2} mapping initial values to terminal values of the viscous Burgers equation in \cref{OLBurgers:eq1}.}
\end{table}

\begin{figure}
\centering
\includegraphics[width=0.7\linewidth]{sections/OL_Phil/2_Plots/Burgers/error_scatter_plot_\eqname.pdf}
\caption{\label{fig:OLBurgers}
Graphical illustration of the performance of the methods in \cref{table:OLBurgers}.
}
\end{figure}

\begin{figure}
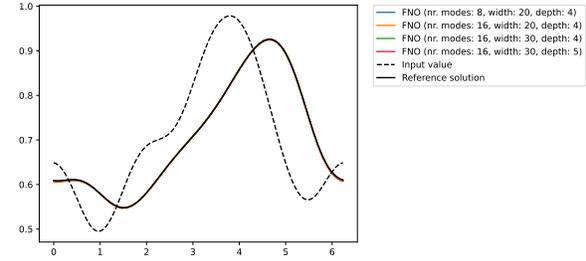
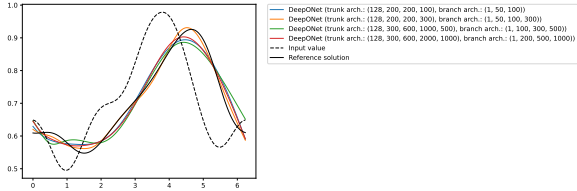
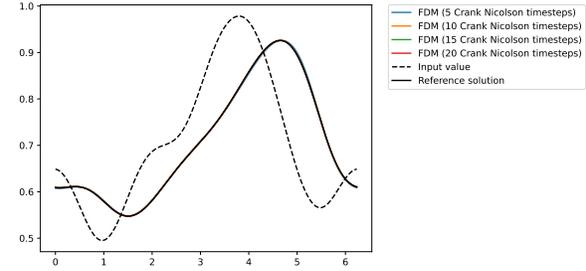
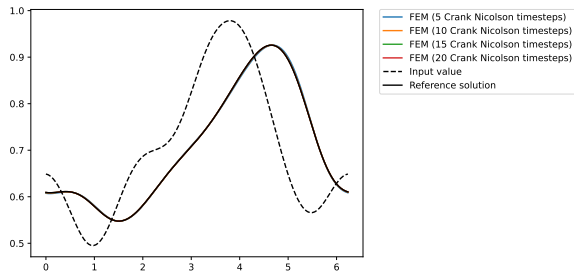
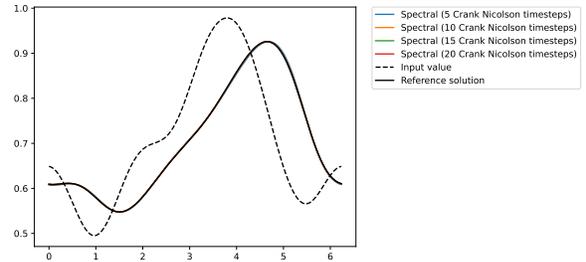

  \centering
  \begin{subfigure}{.45\textwidth}
    \centering
    \includegraphics[width=.95\linewidth]{sections/OL_Phil/2_Plots/Burgers/Sample_plots/ANN_plots_\eqname_0.pdf}
    \caption{\ANN\ plots}
    \label{OLBurgersSamples:ann}
    \vspace{1cm}
  \end{subfigure}%
  \begin{subfigure}{.45\textwidth}
    \centering
    \includegraphics[width=.95\linewidth]{sections/OL_Phil/2_Plots/Burgers/Sample_plots/Periodic-CNN_plots_\eqname_0.pdf}
    \caption{\CNN\ periodic plots}
    \label{OLBurgersSamples:cnn_periodic}
    \vspace{1cm}
  \end{subfigure}%
  
    \begin{subfigure}{.45\textwidth}
      \centering
      \includegraphics[width=.95\linewidth]{sections/OL_Phil/2_Plots/Burgers/Sample_plots/Enc.-Dec.-CNN_plots_\eqname_0.pdf}
      \caption{\CNN\ encoder-decoder plots}
      \label{OLBurgersSamples:cnn_enc_dec}
    	\vspace{1cm}
    \end{subfigure}
  \begin{subfigure}{.45\textwidth}
    \centering
    \includegraphics[width=.95\linewidth]{sections/OL_Phil/2_Plots/Burgers/Sample_plots/FNO_plots_\eqname_0.pdf}
    \caption{\FNO\ plots}
    \label{OLBurgersSamples:fdm}
    \vspace{1cm}
  \end{subfigure}%
  
    \begin{subfigure}{.45\textwidth}
      \centering
      \includegraphics[width=.95\linewidth]{sections/OL_Phil/2_Plots/Burgers/Sample_plots/DeepONet_plots_\eqname_0.pdf}
      \caption{\DeepONet\ plots}
      \label{OLBurgersSamples:deeponet}
      \vspace{1cm}
    \end{subfigure}
  \begin{subfigure}{.45\textwidth}
    \centering
    \includegraphics[width=.95\linewidth]{sections/OL_Phil/2_Plots/Burgers/Sample_plots/FDM_plots_\eqname_0.pdf}
    \caption{\FDM\ plots}
    \label{OLBurgersSamples:fem}
    \vspace{1cm}
  \end{subfigure}

  \begin{subfigure}{.45\textwidth}
    \centering
    \includegraphics[width=.95\linewidth]{sections/OL_Phil/2_Plots/Burgers/Sample_plots/FEM_plots_\eqname_0.pdf}
    \caption{\FEM\ plots}
    \label{OLBurgersSamples:fno}
    \vspace{1cm}
  \end{subfigure}%
  \begin{subfigure}{.45\textwidth}
    \centering
    \includegraphics[width=.95\linewidth]{sections/OL_Phil/2_Plots/Burgers/Sample_plots/Spectral_plots_\eqname_0.pdf}
    \caption{Spectral plots}
    \label{OLBurgersSamples:spectral}
    \vspace{1cm}
  \end{subfigure}
\caption{Example approximation plots for a randomly chosen initial value for the viscous Burgers equation in \cref{OLBurgers:eq1}.}
\label{fig:OLBurgersSamples}
\end{figure}

\endgroup

\subsubsection{Allen-Cahn equation}
\label{sect:OL_simul_AC}

\begingroup
\providecommandordefault{\eqnameone}{Semilinear_heat_1-dimensional_T_3.0_space_size_1.0_laplace_factor_0.002_nonlin_AllenCahn_var_5000_decay_rate_2_offset_70.71067811865476_inner_decay_1.0_start_var_0.2}
\providecommandordefault{\eqnametwo}{Semilinear_heat_2-dimensional_T_3.0_space_size_1.0_laplace_factor_0.002_nonlin_AllenCahn_var_5000_decay_rate_2_offset_70.71067811865476_inner_decay_1.0_start_var_0.2}
\providecommandordefault{\eqnamethree}{Semilinear_heat_3-dimensional_T_3.0_space_size_1.0_laplace_factor_0.002_nonlin_AllenCahn_var_5000_decay_rate_2_offset_70.71067811865476_inner_decay_1.0_start_var_0.2}

\providecommandordefault{\Domain}{(0,\OLACtwodSpaceSize)^d}
\providecommandordefault{\IVSpace}{H^2_{\text{per}}(\Domain; \R)}
\providecommandordefault{\EndSpace}{H^2_{\text{per}}(\Domain; \R)}

In this section we present numerical simulations for the approximation of operators mapping initial values to terminal values of one and two-dimensional Allen-Cahn equations.
We introduce below the considered Allen-Cahn equations and the corresponding operators.

Specifically,
assume \cref{algo:operator_learning}, 
let
$d \in \{1, 2\}$,
$T = \OLACtwodT$,
$\laplaceFactor = \OLACtwodLaplaceFactor$,
for every 
$\IVfunction \in \IVSpace$
let 
$\sol_\IVfunction \colon [0,T] \to \EndSpace$
be a mild solution of the \PDE\
\begin{equation}
	\label{OLAC:eq1}
	\begin{split} 
		\pr*{ \tfrac{\partial}{\partial t} \sol_\IVfunction } (t, x)
		=
		\laplaceFactor
		(\Delta_x \sol_\IVfunction)(t, x) -  (\sol_\IVfunction (t, x))^3 + (\sol_\IVfunction (t, x)),
	\quad 
		(t, x) \in [0,T] \times \Domain,
	\quad
		\sol_\IVfunction(0) = \IVfunction
	\end{split}
\end{equation} 
with periodic boundary conditions,
assume 
$\initialValues = \EndValues = \IVSpace$,
and 
assume 
that the operator
$\solOp \colon \initialValues \to \EndValues$
we want to approximate is
given for all
$\IVfunction \in \initialValues$ by
\begin{equation}
\label{OLAC:eq2}
\begin{split}
	\solOp(\IVfunction)
	=
	\sol_\IVfunction(T).
\end{split}
\end{equation}
Moreover, let
$\initialRV \colon \Omega \to \initialValues$
be a 
$\OLACtwodInitDistr$-distributed random initial value
where $\Delta_x$ is the Laplace operator on $\Ltwo{\Domain}$
with periodic boundary conditions,
fix a space discretization $\nrspacediscr \in \N$,
and assume for all
$\EndVariable \in \EndValues$
that
\begin{equation}
\label{OLAC:eq3}
\begin{split}
		\lossmetric{\EndVariable}^2
	=
		\frac{1}{\nrspacediscr^d}
		\br*{
			\sum_{\fx \in \{\frac{0}{\nrspacediscr}, \frac{1}{\nrspacediscr}, \ldots, \frac{\nrspacediscr-1}{\nrspacediscr}\}^d}
				(\EndVariable(\fx))^2
		}
	\approx
		\int_{\Domain} 
			(\EndVariable(x))^2
		d x
		.
\end{split}
\end{equation}
We measure the quality of approximations $\solOpAlt \colon \In \to \EndValues$ by means of the $L^2$-error
\begin{equation}
	\label{OLAC:eq4}
	\begin{split}
		\pr[\big]{
			\Exp[\big]{
				\lossmetric{\solOpAlt(\initialRV) - \solOp(\initialRV)}^2
			}
		}^{1/2}
	\approx
		\pr*{
			\Exp*{
				\int_{\Domain}
					\pr[\big]{
						\solOpAlt(\initialRV)(x) - 	\sol_\initialRV(T, x)
					}^2
				d x
			}
		}^{1/2}.
	\end{split}
\end{equation}

We test five different types of architectures for the neural operator $\neuralOp$.
Specifically, 
we test 
	\ANNs\ with \GELU\ activation function (cf.\ \cref{sect:ANN_NO}, rows 1-3 in \cref{table:AC1}, and rows 1-3 in \cref{table:AC2}),
	simple periodic convolutional neural operators (cf.\ \cref{sect:periodic_cnn}, rows 4-6 in \cref{table:AC1}, and rows 4-6 in \cref{table:AC2}),
	simple convolutional neural operators with encoder-decoder architecture (cf.\ \cref{sect:dec_enc_cnn}, rows 7-9 in \cref{table:AC1}, and rows 7-9 in \cref{table:AC2}),
	\FNOs\ (cf.\ \cref{sect:FNO}, rows 10-12 in \cref{table:AC1}, and rows 10-12 in \cref{table:AC2}),
	and
	\DeepONets\ (cf.\ \cref{sect:deepONets}, rows 13-15 in \cref{table:AC1}, and rows 13-15 in \cref{table:AC2}).
We also evaluate spectral-Galerkin methods with Crank-Nicolson explicit midpoint \LIRK\ discretization in time (cf.\ 
rows 16-18 in \cref{table:AC1} and rows 16-18 in \cref{table:AC2}).
The performance of all considered methods in the case $d =1$ is summarized in \cref{table:AC1} 
and graphically illustrated in \cref{fig:AC1}.
The performance of all considered methods in the case $d =2$ is summarized in \cref{table:AC2}
and graphically illustrated in \cref{fig:AC2}.
In addition, some approximations for a randomly chosen test samples are shown in \cref{fig:AC1_plots,fig:AC2_plots}.

\begin{table} 
\tiny
\resizebox{\textwidth}{!}{
\csvreader[
	tabular=|c|c|c|c|c|c|,
	separator=semicolon,
		table head=
		\hline 
		\thead{Method} &  
		\thead{Estimated \\ $L^2$-error} &
		\thead{Time for 1024\\evaluations (in s)} &
		\thead{Number \\ of trainable \\ parameters} &
		\thead{Precomputation\\time (in s)} &
		\thead{Number of \\ train steps}
		\\\hline,
	late after line=\\\hline
]
{sections/OL_Phil/1_Tables/rounded_methods_data_\eqnameone.csv}
{
	Method=\method, 
	L2_error = \llerror, 
	nr_params = \numparams, 
	training_time = \traintime, 
	test_time = \evaltime,
	done_trainsteps = \trainsteps
}
{\method& \llerror& \evaltime &\numparams&\traintime &\trainsteps}%
}
\caption{\label{table:AC1}
Comparison of the performance of different methods for the approximation of the operator in \eqref{OLAC:eq2} mapping initial values to terminal values of the Allen-Cahn equation in \cref{OLAC:eq1} in the case $d=1$.
}
\end{table}

\begin{figure}
\centering
\includegraphics[width=0.7\linewidth]{sections/OL_Phil/2_Plots/AC1/error_scatter_plot_\eqnameone.pdf}
\caption{
\label{fig:AC1}
Graphical illustration of the performance of the methods in \cref{table:AC1}.
}
\end{figure}

\begin{figure}
	\centering
	\begin{subfigure}{.45\textwidth}
	  \centering
	  \includegraphics[width=.95\linewidth]{sections/OL_Phil/2_Plots/AC1/Sample_plots/ANN_plots_\eqnameone_2.pdf}
	  \caption{\ANN\ plots}
	  \label{AC1_plots:ann}
	  \vspace{1cm}
	\end{subfigure}%
	\begin{subfigure}{.45\textwidth}
	  \centering
	  \includegraphics[width=.95\linewidth]{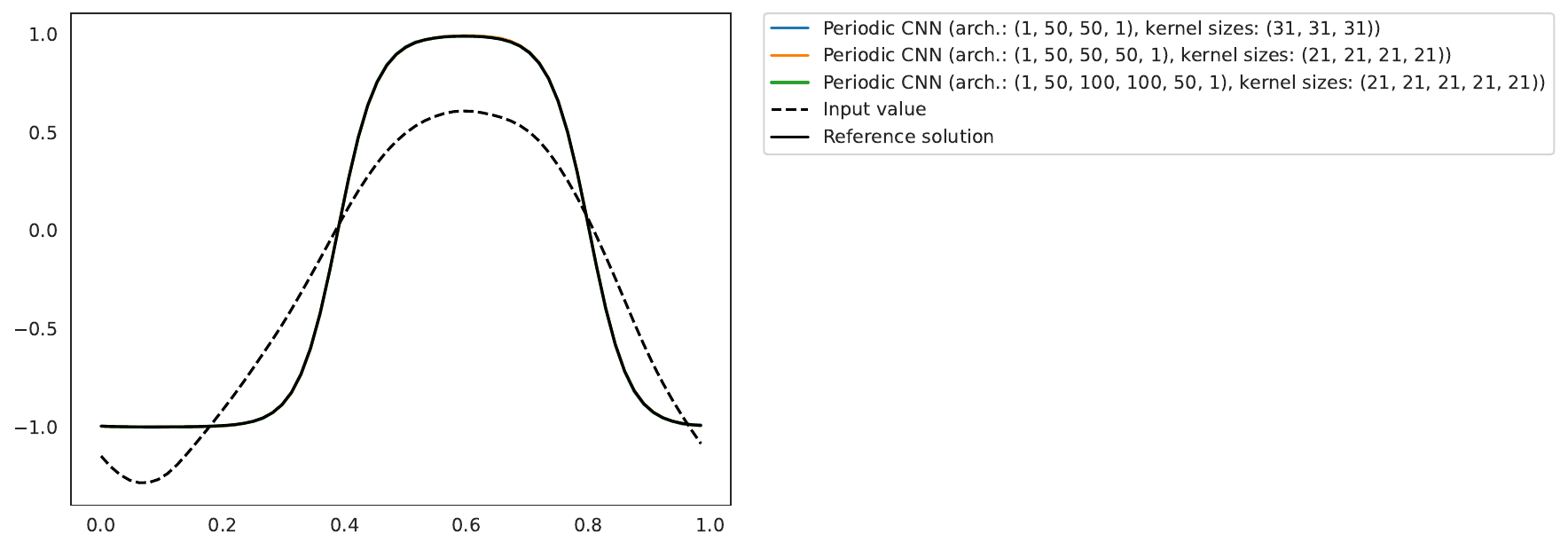}
	  \caption{\CNN\ periodic plots}
	  \label{AC1_plots:cnn_periodic}
	  \vspace{1cm}
	\end{subfigure}%
	
	  \begin{subfigure}{.45\textwidth}
		\centering
		\includegraphics[width=.95\linewidth]{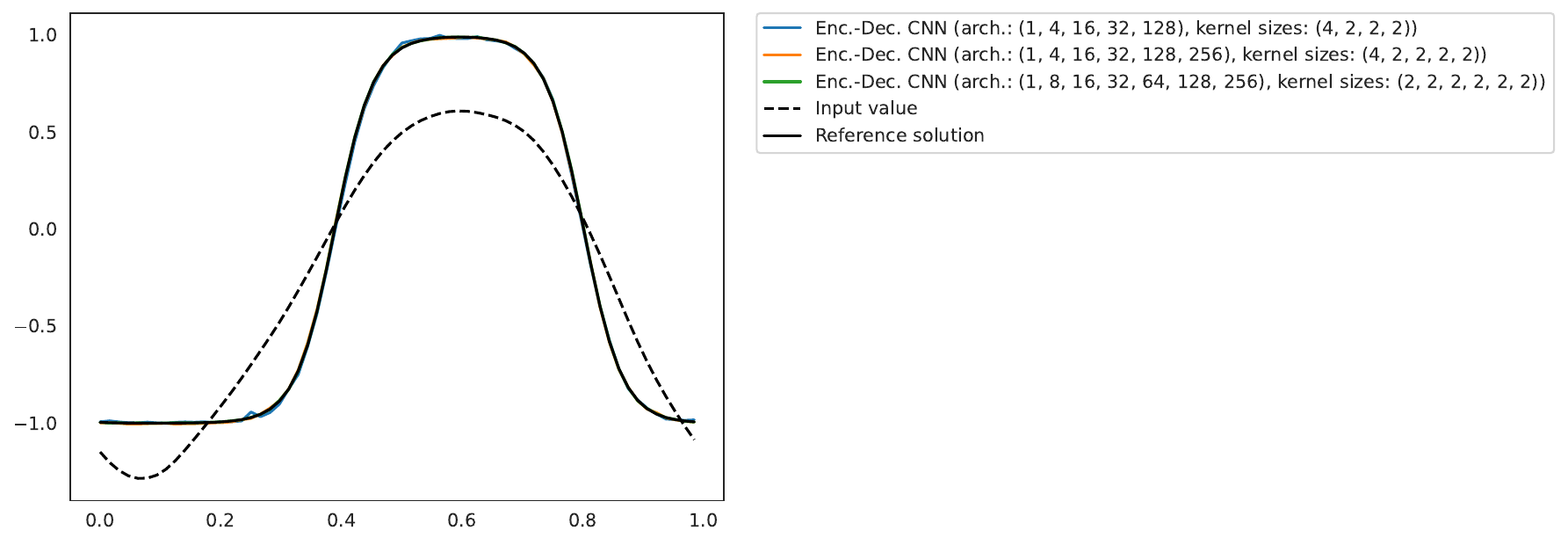}
		\caption{\CNN\ encoder-decoder plots}
		\label{AC1_plots:cnn_enc_dec}
		  \vspace{1cm}
	  \end{subfigure}
	\begin{subfigure}{.45\textwidth}
	  \centering
	  \includegraphics[width=.95\linewidth]{sections/OL_Phil/2_Plots/AC1/Sample_plots/FNO_plots_\eqnameone_2.pdf}
	  \caption{\FNO\ plots}
	  \label{AC1_plots:fdm}
	  \vspace{1cm}
	\end{subfigure}%
	
	  \begin{subfigure}{.45\textwidth}
		\centering
		\includegraphics[width=.95\linewidth]{sections/OL_Phil/2_Plots/AC1/Sample_plots/DeepONet_plots_\eqnameone_2.pdf}
		\caption{\DeepONet\ plots}
		\label{AC1_plots:deeponet}
		\vspace{1cm}
	  \end{subfigure}
	\begin{subfigure}{.45\textwidth}
	  \centering
	  \includegraphics[width=.95\linewidth]{sections/OL_Phil/2_Plots/AC1/Sample_plots/Spectral_plots_\eqnameone_2.pdf}
	  \caption{\FDM\ plots}
	  \label{AC1_plots:fem}
	  \vspace{1cm}
	\end{subfigure}
  \caption{
  Example approximation plots for a randomly chosen initial value for the Allen-Cahn equation in \cref{OLAC:eq1} in the case $d = 1$.
  }
  \label{fig:AC1_plots}
  \end{figure}

\begin{table} 
\tiny
\resizebox{\textwidth}{!}{
\csvreader[
	tabular=|c|c|c|c|c|c|,
	separator=semicolon,
     table head=
     \hline 
     \thead{Method} &  
     \thead{Estimated \\ $L^2$-error} &
      \thead{Time for 1024\\evaluations (in s)} &
     \thead{Number \\ of trainable \\ parameters} &
     \thead{Precomputation\\time (in s)} &
     \thead{Number of \\ train steps}
     \\\hline,
    late after line=\\\hline
]
{sections/OL_Phil/1_Tables/rounded_methods_data_\eqnametwo.csv}
{
	Method=\method, 
	L2_error = \llerror, 
	nr_params = \numparams, 
	training_time = \traintime, 
	test_time = \evaltime,
	done_trainsteps = \trainsteps
}
{\method& \llerror& \evaltime &\numparams&\traintime &\trainsteps}%
}
\caption{\label{table:AC2}
Comparison of the performance of different methods for the approximation of the operator in \eqref{OLAC:eq2} mapping initial values to terminal values of the Allen-Cahn equation in \cref{OLAC:eq1} in the case $d = 2$.}
\end{table}

\begin{figure}
\centering
\includegraphics[width=0.7\linewidth]{sections/OL_Phil/2_Plots/AC2/error_scatter_plot_\eqnametwo.pdf}
\caption{
\label{fig:AC2}
Graphical illustration of the performance of the methods in \cref{table:AC2}.
}
\end{figure}

\begin{figure} 
\centering
	\includegraphics[width=0.63\linewidth]{sections/OL_Phil/2_Plots/AC2/Sample_plots/all_plots_\eqnametwo_1.pdf}
\caption{
\label{fig:AC2_plots}
Example approximation plots for a randomly chosen initial value for the Allen-Cahn equation in \cref{OLAC:eq1} in the case $d = 2$.
}
\end{figure}

\endgroup

\subsubsection{Reaction-diffusion equation}
\label{sect:OL_simul_ReactionDiffusion}

\begingroup

\providecommandordefault{\eqname}{ReactionDiffusion_T1.0_S2.0_nu0.05_k2.0_nonlinAllenCahn_var10000_decay2_offset100.0_innerdecay1.0}

\providecommandordefault{\reactionRate}{k}

\providecommandordefault{\Domain}{(0, \OLReactDiffSpaceSize)}
\providecommandordefault{\IVSpace}{H^2_\text{per}(\Domain; \R)}
\providecommandordefault{\EndSpace}{H^2_\text{per}(\Domain; \R)}

In this section we present numerical simulations for the approximation of an operator  mapping source terms to terminal values of a reaction-diffusion equation.
The considered operator is inspired by the reaction-diffusion equation in \cite[Section 4.3]{Lu2021}.
We introduce below the considered reaction-diffusion equation and the corresponding operator.

Specifically,
assume \cref{algo:operator_learning},
let
$T = \OLReactDiffT$,
$\laplaceFactor = \OLReactDiffLaplaceFactor$,
$\reactionRate = \OLReactDiffReactionRate$,
for every 
$\IVfunction \in \IVSpace$
let 
$\sol_\IVfunction \colon [0, T] \to \EndSpace$
be a mild solution of the \PDE\
\begin{equation}
	\label{OLReactionDiffusion:eq1}
	\begin{split} 
		\pr*{ \tfrac{\partial}{\partial t} \sol_\IVfunction } (t, x)
		=
		\laplaceFactor
		(\Delta_x \sol_\IVfunction)(t, x) 
		+
		\reactionRate 
		\pr*{
			\sol_\IVfunction(t, x) - (\sol_\IVfunction(t, x))^3
		}
		+
		\IVfunction(x),
	\end{split}
\end{equation}
\begin{equation}
\label{OLReactionDiffusion:eq1.1}
	\begin{split}
		(t, x) \in [0, T] \times \Domain,
		\qquad
		\sol_\IVfunction(0) = 0
	\end{split}
\end{equation} 
with periodic boundary conditions,
assume
$\initialValues = \EndValues = \IVSpace$,
and assume that the operator
$\solOp \colon \initialValues \to \EndValues$
we want to approximate is
given for all
$\IVfunction \in \initialValues$ by
\begin{equation}
\label{OLReactionDiffusion:eq2}
\begin{split}
	\solOp(\IVfunction)
	=
	\sol_\IVfunction(T).
\end{split}
\end{equation}
Moreover, let
$\initialRV \colon \Omega \to \initialValues$
be a
$\OLReactDiffInitDistr$-distributed random source term
where $\Delta_x$ is the Laplace operator on 
$\Ltwo{\Domain}$
with periodic boundary conditions,
fix a space discretization $\nrspacediscr=128$,
and assume for all
$\EndVariable \in \EndValues$
that
\begin{equation}
\label{OLReactionDiffusion:eq3}
\begin{split}
		\lossmetric{\EndVariable}^2
	=
		\frac{\OLReactDiffSpaceSize}{\nrspacediscr}
		\br*{
			\sum_{\fx \in \{\frac{0}{\nrspacediscr}, \frac{1}{\nrspacediscr}, \ldots, \frac{\nrspacediscr-1}{\nrspacediscr}\}}
				(\EndVariable(\OLReactDiffSpaceSize\fx))^2
		}
	\approx
		\int_0^{\OLReactDiffSpaceSize}
			(\EndVariable(x))^2
		d x
		.
\end{split}
\end{equation}
We measure the quality of approximations $\solOpAlt \colon \In \to \EndValues$ by means of the $L^2$-error
\begin{equation}
	\label{OLReactionDiffusion:eq4}
	\begin{split}
		\pr[\big]{
			\Exp[\big]{
				\lossmetric{\solOpAlt(\initialRV) - \solOp(\initialRV)}^2
			}
		}^{1/2}
	\approx
		\pr*{
			\Exp*{
				\int_0^{\OLReactDiffSpaceSize}
					\pr[\big]{
						\solOpAlt(\initialRV)(x) - \sol_\initialRV(T, x)
					}^2
				d x
			}
		}^{1/2}.
	\end{split}
\end{equation}

We test five different types of architectures for the neural operator $\neuralOp$.
Specifically, 
we test 
	\ANNs\ with \GELU\ activation function (cf.\ \cref{sect:ANN_NO}, rows 1-3 in \cref{table:OLReactDiff}, and \cref{OLReactDiffSamples:ann}),
	simple periodic convolutional neural operators (cf.\ \cref{sect:periodic_cnn}, rows 4-6 in \cref{table:OLReactDiff}, and \cref{OLReactDiffSamples:cnn_periodic}),
	simple convolutional neural operators with encoder-decoder architecture (cf.\ \cref{sect:dec_enc_cnn}, rows 7-9 in \cref{table:OLReactDiff}, and \cref{OLReactDiffSamples:cnn_enc_dec}),
	\FNOs\ (cf.\ \cref{sect:FNO}, rows 10-12 in \cref{table:OLReactDiff}, and \cref{OLReactDiffSamples:fno}),
	and
	\DeepONets\ (cf.\ \cref{sect:deepONets}, rows 13-15 in \cref{table:OLReactDiff}, and  \cref{OLReactDiffSamples:deeponet}).
We also evaluate 
	\FDMs\ with Crank-Nicolson explicit midpoint \LIRK\ discretization in time (cf.\ 
	rows 16-18 in \cref{table:OLReactDiff} and \cref{OLReactDiffSamples:fdm}).
The performance of all considered methods is summarized in \cref{table:OLReactDiff} and graphically illustrated in \cref{fig:OLReactDiff}.
In addition, some approximations for a randomly chosen test sample are shown in \cref{fig:OLReactDiffSamples}.

\begin{table} 
\tiny
\resizebox{\textwidth}{!}{
\csvreader[
	tabular=|c|c|c|c|c|,
	separator=semicolon,
     table head=
     \hline 
     \thead{Method} &  
     \thead{Estimated \\ $L^2$-error \\  in \cref{OLReactionDiffusion:eq4}} &
	 \thead{Average evaluation time \\ for $\OLReactDiffNrTestSamples$ test samples \\ over $\nrEvalRunsOL$ runs (in s)} &
     \thead{Number \\ of trainable \\ parameters} &
     \thead{Precomputation\\time (in s)} 
     \\\hline,
    late after line=\\\hline
]
{sections/OL_Phil/1_Tables/rounded_methods_data_\eqname.csv}
{
	Method=\method, 
	L2_error = \llerror, 
	nr_params = \numparams, 
	training_time = \traintime, 
	test_time = \evaltime
}
{\method& \llerror& \evaltime &\numparams&\traintime} %
}
\caption{\label{table:OLReactDiff}
Comparison of the performance of different methods for the approximation of the operator in \eqref{OLReactionDiffusion:eq2} mapping source terms to terminal values of the reaction-diffusion equation in \cref{OLReactionDiffusion:eq1}.
}
\end{table}

\begin{figure}
\centering
\includegraphics[width=0.7\linewidth]{sections/OL_Phil/2_Plots/Reaction_Diffusion/error_scatter_plot_\eqname.pdf}
\caption{\label{fig:OLReactDiff}
Graphical illustration of the performance of the methods in \cref{table:OLReactDiff}.
}
\end{figure}

\begin{figure}
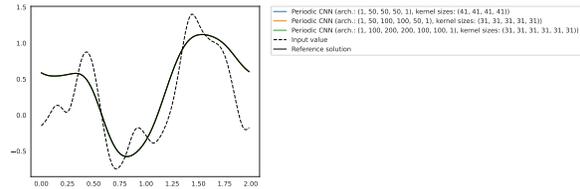
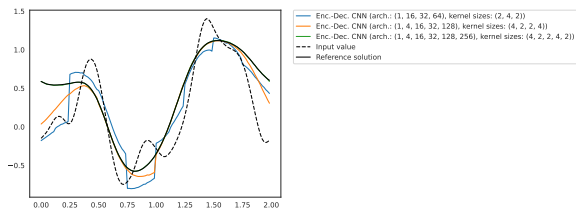
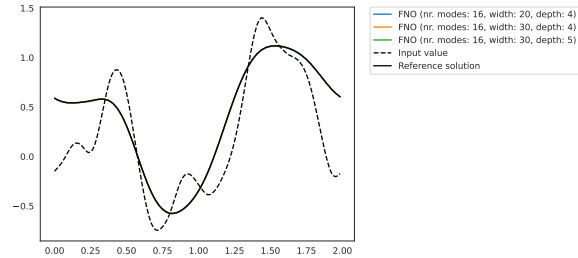
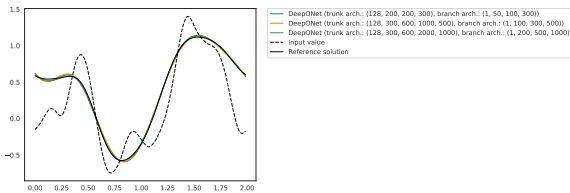
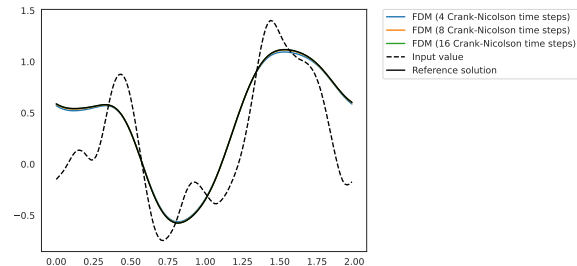

  \centering
  \begin{subfigure}{.45\textwidth}
    \centering
    \includegraphics[width=.95\linewidth]{sections/OL_Phil/2_Plots/Reaction_Diffusion/Sample_plots/ANN_plots_\eqname_1.pdf}
    \caption{\ANN\ plots}
    \label{OLReactDiffSamples:ann}
    \vspace{1cm}
  \end{subfigure}%
  \begin{subfigure}{.45\textwidth}
    \centering
    \includegraphics[width=.95\linewidth]{sections/OL_Phil/2_Plots/Reaction_Diffusion/Sample_plots/Periodic-CNN_plots_\eqname_1.pdf}
    \caption{\CNN\ periodic plots}
    \label{OLReactDiffSamples:cnn_periodic}
    \vspace{1cm}
  \end{subfigure}%
  
    \begin{subfigure}{.45\textwidth}
      \centering
      \includegraphics[width=.95\linewidth]{sections/OL_Phil/2_Plots/Reaction_Diffusion/Sample_plots/Enc.-Dec.-CNN_plots_\eqname_1.pdf}
      \caption{\CNN\ encoder-decoder plots}
      \label{OLReactDiffSamples:cnn_enc_dec}
    	\vspace{1cm}
    \end{subfigure}
  \begin{subfigure}{.45\textwidth}
    \centering
    \includegraphics[width=.95\linewidth]{sections/OL_Phil/2_Plots/Reaction_Diffusion/Sample_plots/FNO_plots_\eqname_1.pdf}
    \caption{\FNO\ plots}
    \label{OLReactDiffSamples:fno}
    \vspace{1cm}
  \end{subfigure}%
  
    \begin{subfigure}{.45\textwidth}
      \centering
      \includegraphics[width=.95\linewidth]{sections/OL_Phil/2_Plots/Reaction_Diffusion/Sample_plots/DeepONet_plots_\eqname_1.pdf}
      \caption{\DeepONet\ plots}
      \label{OLReactDiffSamples:deeponet}
      \vspace{1cm}
    \end{subfigure}
  \begin{subfigure}{.45\textwidth}
    \centering
    \includegraphics[width=.95\linewidth]{sections/OL_Phil/2_Plots/Reaction_Diffusion/Sample_plots/FDM_plots_\eqname_1.pdf}
    \caption{\FDM\ plots}
    \label{OLReactDiffSamples:fdm}
    \vspace{1cm}
  \end{subfigure}

\caption{
Example approximation plots for a randomly chosen initial value for the reaction-diffusion equation in \cref{OLReactionDiffusion:eq1}.
}
\label{fig:OLReactDiffSamples}
\end{figure}

\endgroup

\begin{sidewaystable}[h!]
	\centering
		\begin{adjustbox}{max width=1.1\paperwidth,center}
			\begin{tabular}{lcccc}
			\toprule
			&Burgers & Allen-Cahn ($d=1$) & Allen-Cahn ($d=2$) & Reaction-diffusion \\
			& (cf.\ \cref{sect:OL_simul_Burgers}) & (cf.\ \cref{sect:OL_simul_AC}) & (cf.\ \cref{sect:OL_simul_AC}) & (cf.\ \cref{sect:OL_simul_ReactionDiffusion}) \\
			\midrule
			\# space steps per dimension ($N$) 
				& $\OLBurgersSpaceStep$ & $\OLAConedSpaceStep$ & $\OLACtwodSpaceStep$ & $\OLReactDiffSpaceStep$ \\
			\midrule 
			Training &&&& \\
			\hspace{0.45cm} Batch size 
				& $\OLBurgersBatchSize$ & $\OLAConedBatchSize$ & $\OLACtwodBatchSize$ & $\OLReactDiffBatchSize$ \\
			\hspace{0.45cm} \# tr.\ steps between val.\ error eval.\
				& $\OLBurgersEvalSteps$ & $\OLAConedEvalSteps$ & $\OLACtwodEvalSteps$ & $\OLReactDiffEvalSteps$ \\
			\hspace{0.45cm} Improvement tolerance 
				& $\OLBurgersImprTol$ & $\OLAConedImprTol$ & $\OLACtwodImprTol$ & $\OLReactDiffImprTol$ \\
			\hspace{0.45cm} \# of tr.\ runs per model 
				& $\OLBurgersNrOLRuns$ & $\OLAConedNrOLRuns$ & $\OLACtwodNrOLRuns$ & $\OLReactDiffNrOLRuns$ \\

			\midrule
			Algorithm for reference sol.
				& \OLBurgersRefAlgOne  & \OLAConedRefAlgOne  & \OLACtwodRefAlgOne  & \OLReactDiffRefAlgOne  \\
				& \OLBurgersRefAlgTwo  & \OLAConedRefAlgTwo  & \OLACtwodRefAlgTwo  & \OLReactDiffRefAlgTwo  \\
			\midrule
			Training set  &&&& \\
			\hspace{0.45cm} \# samples 
				& $\OLBurgersNrTrainSamples$ & $\OLAConedNrTrainSamples$ & $\OLACtwodNrTrainSamples$ & $\OLReactDiffNrTrainSamples$ \\
			\hspace{0.45cm} \# space steps  per dimension
				& $\OLBurgersNrTrainSpaceDiscr$ & $\OLAConedNrTrainSpaceDiscr$ & $\OLACtwodNrTrainSpaceDiscr$ & $\OLReactDiffNrTrainSpaceDiscr$ \\
			\hspace{0.45cm} \# time steps 
				& $\OLBurgersNrTrainTimeSteps$ & $\OLAConedNrTrainTimeSteps$ & $\OLACtwodNrTrainTimeSteps$ & $\OLReactDiffNrTrainTimeSteps$ \\
			\midrule
			Validation set  &&&& \\
			\hspace{0.45cm} \# samples 
				& $\OLBurgersNrValidSamples$ & $\OLAConedNrValidSamples$ & $\OLACtwodNrValidSamples$ & $\OLReactDiffNrValidSamples$ \\
			\hspace{0.45cm} \# space steps  per dimension
				& $\OLBurgersNrValidSpaceDiscr$ & $\OLAConedNrValidSpaceDiscr$ & $\OLACtwodNrValidSpaceDiscr$ & $\OLReactDiffNrValidSpaceDiscr$ \\
			\hspace{0.45cm} \# time steps 
				& $\OLBurgersNrValidTimeSteps$ & $\OLAConedNrValidTimeSteps$ & $\OLACtwodNrValidTimeSteps$ & $\OLReactDiffNrValidTimeSteps$ \\
			\midrule
			Test set  &&&& \\
			\hspace{0.45cm} \# samples 
				& $\OLBurgersNrTestSamples$ & $\OLAConedNrTestSamples$ & $\OLACtwodNrTestSamples$ & $\OLReactDiffNrTestSamples$ \\
			\hspace{0.45cm} \# space steps per dimension
				& $\OLBurgersNrTestSpaceDiscr$ & $\OLAConedNrTestSpaceDiscr$ & $\OLACtwodNrTestSpaceDiscr$ & $\OLReactDiffNrTestSpaceDiscr$ \\
			\hspace{0.45cm} \# time steps
				& $\OLBurgersNrTestTimeSteps$ & $\OLAConedNrTestTimeSteps$ & $\OLACtwodNrTestTimeSteps$ & $\OLReactDiffNrTestTimeSteps$ \\
			\bottomrule
			\end{tabular}
		\end{adjustbox}
		\caption{\label{table:training_hyperparams_OL}
			Hyperparameters for the training of the models in the numerical simulations in \cref{sect:OL_simul}.
		}
\end{sidewaystable}

\clearpage

\section*{Acknowledgements}

This project has been partially funded by the National Science Foundation of China (NSFC) under grant number 12250610192.
This project has also been partially funded by the Deutsche Forschungsgemeinschaft (DFG, German Research Foundation) under Germany's Excellence Strategy EXC 2044-390685587, Mathematics M\"unster: Dynamics-Geometry-Structure.
This project has also been partially funded by the Deutsche Forschungsgemeinschaft (DFG, German Research Foundation) in the frame of the priority programme SPP 2298 '{Theoretical Foundations of Deep Learning}' – Project no.\ 464123384.
This project has also been partially funded by the Fundamental Research Funds for the Central Universities in China, No.\ 30924010943.
We gratefully acknowledge Thomas Kruse for several useful comments regarding the analysis of solutions of backward stochastic differential equations.

\bibliographystyle{acm}	
\bibliography{refnew1}

\end{document}